\setlist{noitemsep}
\newcommand{\td}{\,\mathrm{d}}
\newcommand{\Stab}{\textup{Stab}}
\newcommand{\const}{\textup{const}}
\newcommand{\Ad}{\textup{Ad}}
\newcommand{\ad}{\textup{ad}}
\renewcommand{\det}{\textup{det}}
\newcommand{\id}{\textup{id}}
\newcommand{\tr}{\textup{tr}}
\newcommand{\rk}{\textup{rk}}
\newcommand{\Tr}{\textup{Tr}}
\newcommand{\Sym}{\textup{Sym}}
\newcommand{\Skew}{\textup{Skew}}
\newcommand{\Herm}{\textup{Herm}}
\newcommand{\Str}{\textup{Str}}
\newcommand{\str}{\mathfrak{str}}
\newcommand{\Co}{\textup{Co}}
\newcommand{\co}{\mathfrak{co}}
\newcommand{\GL}{\textup{GL}}
\newcommand{\gl}{\mathfrak{gl}}
\newcommand{\SL}{\textup{SL}}
\renewcommand{\sl}{\mathfrak{sl}}
\newcommand{\Sp}{\textup{Sp}}
\renewcommand{\sp}{\mathfrak{sp}}
\newcommand{\Mp}{\textup{Mp}}
\newcommand{\upO}{\textup{O}}
\newcommand{\SO}{\textup{SO}}
\newcommand{\so}{\mathfrak{so}}
\newcommand{\SU}{\textup{SU}}
\newcommand{\su}{\mathfrak{su}}
\newcommand{\End}{\textup{End}}
\newcommand{\RR}{\mathbb{R}}
\newcommand{\KK}{\mathbb{K}}
\newcommand{\CC}{\mathbb{C}}
\newcommand{\ZZ}{\mathbb{Z}}
\newcommand{\NN}{\mathbb{N}}
\newcommand{\QQ}{\mathbb{Q}}
\newcommand{\HH}{\mathbb{H}}
\newcommand{\OO}{\mathbb{O}}
\newcommand{\DD}{\mathbb{D}}
\newcommand{\XX}{\mathbb{X}}
\newcommand{\lf}{\mathfrak{l}}
\newcommand{\gf}{\mathfrak{g}}
\newcommand{\nf}{\mathfrak{n}}
\newcommand{\nfo}{\overline{\mathfrak{n}}}
\newcommand{\kf}{\mathfrak{k}}
\renewcommand{\1}{\mathbf{1}}
\newcommand{\Ind}{\textup{Ind}}
\newcommand{\calF}{\mathcal{F}}
\newcommand{\calO}{\mathcal{O}}
\newcommand{\calS}{\mathcal{S}}
\newcommand{\calW}{\mathcal{W}}
\newcommand{\calB}{\mathcal{B}}
\newcommand{\calH}{\mathcal{H}}
\newcommand{\calP}{\mathcal{P}}
\newcommand{\calU}{\mathcal{U}}
\newcommand{\calL}{\mathcal{L}}
\newcommand{\calD}{\mathcal{D}}
\newcommand{\calV}{\mathcal{V}}
\newcommand{\calJ}{\mathcal{J}}
\newcommand{\frakg}{\mathfrak{g}}
\newcommand{\fraku}{\mathfrak{u}}
\newcommand{\frakk}{\mathfrak{k}}
\newcommand{\frakp}{\mathfrak{p}}
\newcommand{\fraks}{\mathfrak{s}}
\newcommand{\frakn}{\mathfrak{n}}
\newcommand{\frakq}{\mathfrak{q}}
\newcommand{\frakm}{\mathfrak{m}}
\newcommand{\frakl}{\mathfrak{l}}
\newcommand{\frakt}{\mathfrak{t}}
\newcommand{\frakh}{\mathfrak{h}}
\newcommand{\Det}{\textup{Det}}
\newcommand{\rank}{\textup{rank}}
\renewcommand{\mod}{\textup{mod}}
\newcommand{\rad}{\textup{rad}}
\newcommand{\res}{\textup{res}}
\newcommand{\spec}{\textup{spec}}
\newcommand{\Ann}{\textup{Ann}}
\newcommand{\gr}{\textup{gr}}
\newcommand{\Hom}{\textup{Hom}}
\newcommand{\diag}{\textup{diag}}
\theoremstyle{plain}
\newtheorem{theorem}{Theorem}[section]
\newtheorem*{theoremA}{Theorem A}
\newtheorem*{theoremB}{Theorem B}
\newtheorem*{theoremC}{Theorem C}
\newtheorem{proposition}[theorem]{Proposition}
\newtheorem{lemma}[theorem]{Lemma}
\newtheorem{corollary}[theorem]{Corollary}
\theoremstyle{definition}
\newtheorem{definition}[theorem]{Definition}
\newtheorem{example}[theorem]{Example}
\newtheorem{remark}[theorem]{Remark}
\theoremstyle{remark}
\numberwithin{equation}{section}
\date{}
\begin{document}

\nocite{howe}


\title{Minimal representations via Bessel operators}
\author{Joachim Hilgert\footnote{Part of this research was done at the Hausdorff Research Institute for Mathematics in the context of the trimester program ``Interaction of Representation Theory with Geometry and Combinatorics''}, Toshiyuki Kobayashi\footnote{Partially supported by Grant-in-Aid for Scientific Research (B) (22340026), Japan Society for the Promotion of Science, and the Alexander Humboldt Foundation.}\ \footnotemark[1], Jan M\"ollers\footnote{Partially supported by the International Research Training Group 1133 ``Geometry and Analysis of Symmetries'', and the GCOE program of the University of Tokyo.}}
\maketitle 

\begin{abstract}

We construct an $L^2$-model of \lq\lq very small" irreducible unitary representations of simple Lie groups $G$ which, up to finite covering, occur as conformal groups $\Co(V)$ of simple Jordan algebras $V$. 
If $V$ is split and $G$ is not of type $A_n$, then the representations are minimal in the sense that the annihilators are the Joseph ideals. Our construction allows the case where $G$ does not admit minimal representations.
In particular, applying to Jordan algebras of split rank one we obtain the entire complementary series representations of $\SO(n,1)_0$.
A distinguished feature of these representations in all cases is that they attain the minimum of the Gelfand--Kirillov dimensions among irreducible unitary representations.
Our construction provides a unified way to realize the irreducible unitary representations of the Lie groups in question as Schr\"odinger models in $L^2$-spaces on Lagrangian submanifolds of the minimal real nilpotent coadjoint orbits.
In this realization the Lie algebra representations are given explicitly by differential operators of order at most two, and the key new ingredient is a systematic use of specific second-order differential operators (\textit{Bessel operators}) which are naturally defined in terms of the Jordan structure. 
\\

\textit{2010 Mathematics Subject Classification:} Primary 22E45; Secondary 17C30, 33E30.\\

\textit{Key words and phrases:} minimal representation, conformal groups, Jordan algebras, Bessel operators, Schr\"odinger model, complementary series representations, special functions.

\end{abstract} 

\newpage

\tableofcontents

\newpage


\addcontentsline{toc}{section}{Introduction}
\section*{Introduction}

Minimal representations are building blocks of unitary representations. Every unitary representation can be built up from irreducible unitary representations by means of direct integrals. Further, the Kirillov--Kostant--Duflo--Vogan orbit philosophy suggests that a large part of irreducible unitary representations should be constructed from ``unipotent representations'' by using classical or cohomological induction functors. Minimal representations show up as the smallest kind of unipotent representations, and cannot be constructed by
the existing induction functors in general.\\

The subject of this paper is a unified construction of $L^2$-models for a family of ``smallest'' irreducible unitary representations including minimal representations. A key feature of those representations is that they attain the minimum of the Gelfand--Kirillov dimensions among all irreducible infinite dimensional unitary representations. This is reflected by the fact that in $L^2$-models of these representations we cannot expect geometric actions, and consequently the Lie algebra does not act by vector fields. For a general program of $L^2$-models and conformal models of minimal representations of real reductive groups, we refer to \cite[Chapter 1]{KM07b}. It should be noted that there is no known straightforward way to construct minimal representations.

Our construction is effected in the framework of Jordan algebras. To each simple real Jordan algebra $V$ with simple maximal euclidean subalgebra $V^+$ we associate its conformal group $\Co(V)$ which is a simple real Lie group. Let $G$ denote its identity component and $\frakg$ the Lie algebra of $G$. The structure of $V$ provides a Lagrangian submanifold $\calO$ of a real nilpotent coadjoint orbit $\calO_{\min}^G$ of minimal dimension (see Theorem \ref{lem:Lagrange}).

On $C^\infty(\calO)$ there is a natural representation of the maximal parabolic subalgebra $\frakq^{\max}=\str(V)\ltimes V$ of $\frakg$ by differential operators up to order $1$. The non-trivial part is to extend it to the semisimple Lie algebra $\frakg=\frakq^{\max}\oplus\overline{\frakn}$, and then to lift it to a Lie group with Lie algebra $\frakg$. The novelty here is a systematic use of a differential operator of order two, which we refer to as the ``Bessel operator''. The Bessel operator was originally studied for euclidean Jordan algebras (see e.g.\ \cite{FK94}) in a different context and for $V=\RR^{p,q}$ in \cite{KM07b}. Using the quadratic representation $P$ of the Jordan algebra, we define in \eqref{eq:BesselOp} the Bessel operator $\calB_\lambda:C^\infty(V)\rightarrow C^\infty(V)\otimes V$ by
\begin{align*}
 \calB_\lambda := P\left(\frac{\partial}{\partial x}\right)x+\lambda\frac{\partial}{\partial x}.
\end{align*}
For the special value $\lambda=\lambda_1\in\QQ$ (see Section \ref{sec:BesselMinO}) the operator $\calB:=\calB_\lambda$ is tangential to the submanifold $\calO$. The resulting $V$-valued differential operator may be interpreted as a family of second order differential operators $_\phi\calB:=\langle\phi,\calB\rangle$ on $\calO$ parameterized by $\phi\in V^*\cong\overline{\frakn}$. This family of operators complements the action of $\frakq^{\max}$ to define a Lie algebra representation $\td\pi$ of the semisimple Lie algebra $\frakg$ on $C^\infty(\calO)$. We note that this in fact defines a Lie algebra representation of $\frakg$ on the space of sections of any flat vector bundle over $\calO$.

To integrate the representation $\td\pi$ of the Lie algebra $\frakg$ to a representation of a Lie group, we use the specific generator
\begin{align*}
 \psi_0(x) &:= \widetilde{K}_{\frac{\nu}{2}}(|x|), & x\in\calO,
\end{align*}
which traces back to \cite{DS99,KO03c,Sah92}. Here we have renormalized the $K$-Bessel function as $\widetilde{K}_\alpha(z):=(\frac{z}{2})^{-\alpha}K_\alpha(z)$ following \cite{KM07b, KM07a} and the parameter $\nu\in\ZZ$ is defined in terms of the structure constants of $V$ (see Section~\ref{sec:MuNu} or Table~\ref{tb:MuNu}). In the case of $V=\Sym(k,\RR)$ the isotropy subgroup of the structure group on $\calO$ is disconnected, and we also use the specific generator
\begin{align*}
 \psi_0^-(x) &:= (x|c_1)^{\frac{1}{2}}\widetilde{K}_{\frac{\nu}{2}}(|x|) = \frac{\sqrt{\pi}}{2}\sqrt{x_{11}}e^{-|x|}, & x\in\calO,
\end{align*}
for the line bundle $\calL\to\calO$ associated to the sign representation (see \eqref{eq:DefPsi-}).

\begin{theoremA}[Theorems \ref{prop:Kfinite} and \ref{thm:IntgkModule}]\label{thm:A}
Assume that the split rank $r_0$ of the real simple Jordan algebra $V$ is larger than one.
\begin{enumerate}
\item[\textup{(1)}] The following conditions on $\frakg$ are equivalent:
\begin{enumerate}
\item[\textup{(i)}] $\psi_0$ is $\frakk$-finite.
\item[\textup{(ii)}] $\frakg\ncong\so(p,q)$ with $p+q$ odd, $p,q\geq3$.
\end{enumerate}
\end{enumerate}
Let $W$ be the subrepresentation of $(\td\pi,C^\infty(\calO))$ generated by $\psi_0$.
\begin{enumerate}
\item[\textup{(2)}] If the equivalent conditions in (1) are satisfied then $W$ is a dense subspace of the Hilbert space $L^2(\calO)$ and $\td\pi$ integrates to an irreducible unitary representation $\pi$ of a finite covering group of $G$ on $L^2(\calO)$.
\end{enumerate}
For $V=\Sym(k,\RR)$ let $W^-$ be the subrepresentation of $(\td\pi,C^\infty(\calO,\calL))$ generated by $\psi^-_0$.
\begin{enumerate}
\item[\textup{(3)}] $W^-$ is a dense subspace of the Hilbert space $L^2(\calO,\calL)$ of square integrable sections and $\td\pi$ integrates to an irreducible unitary representation $\pi^-$ of a finite covering group of $G$ on $L^2(\calO,\calL)$.
\end{enumerate}
\end{theoremA}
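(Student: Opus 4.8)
The plan is to establish Theorem A in three coordinated stages: first proving $\frakk$-finiteness of the generator $\psi_0$, then establishing that the cyclic module it generates is unitarizable and dense in $L^2(\calO)$, and finally integrating the resulting $(\frakg,\frakk)$-module to a group representation. The core difficulty throughout is that $\td\pi$ does \emph{not} act by vector fields, so ordinary geometric or induction arguments are unavailable; everything must be extracted from the explicit second-order Bessel operator $\calB$ together with the first-order action of $\frakq^{\max}=\str(V)\ltimes V$.

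For part (1), I would compute the action of the maximal compact subalgebra $\frakk$ on $\psi_0(x)=\widetilde{K}_{\nu/2}(|x|)$ directly. Since $\frakk$ sits inside $\frakg=\frakq^{\max}\oplus\overline{\frakn}$, its action mixes the first-order operators from $\str(V)\ltimes V$ with the Bessel operators $_\phi\calB$ coming from $\overline{\frakn}\cong V^*$. Applying these operators to the radial function $\widetilde{K}_{\nu/2}(|x|)$ reduces, via the Jordan-theoretic identities for $P$ and the $K$-Bessel recurrences (the reason for the renormalization $\widetilde{K}_\alpha(z)=(z/2)^{-\alpha}K_\alpha(z)$), to ordinary differential relations in the single variable $|x|$. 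The emergence of half-integer versus integer behaviour of the Bessel index $\nu/2$ is exactly what distinguishes the excluded case $\so(p,q)$ with $p+q$ odd: there the Bessel recursion fails to close on a finite-dimensional $\frakk$-span, whereas in all remaining cases one obtains a finite system of Bessel functions closed under the $\frakk$-action. I expect the equivalence $(\mathrm{i})\Leftrightarrow(\mathrm{ii})$ to follow by tracking precisely when this recursion terminates, using the structure constants $\nu$ tabulated in Table~\ref{tb:MuNu}.

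For part (2), the unitarity is where the real work lies. I would first verify that each $\td\pi(X)$ is formally skew-adjoint on $L^2(\calO)$ with respect to the given measure: for the first-order part this is a divergence computation, while for the Bessel part $_\phi\calB$ it amounts to showing that the second-order operator is symmetric against the $L^2$-inner product, which is precisely the integration-by-parts property that forced the tangency value $\lambda=\lambda_1$ and the definition of $\calO$ as a Lagrangian submanifold. Given skew-adjointness on the dense $\frakk$-finite module $W$, irreducibility should follow from the fact that $W$ is generated by the single lowest (or spherical) $\frakk$-type $\psi_0$ together with the identification of $W$ as an irreducible $(\frakg,\frakk)$-module of minimal Gelfand--Kirillov dimension; density of $W$ in $L^2(\calO)$ comes from the completeness of the $\frakk$-type decomposition, i.e.\ the Bessel/Laguerre-type functions generated from $\psi_0$ span $L^2$. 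The hardest single step is controlling the Bessel operator's adjoint near the singular locus of $\calO$, where the measure degenerates; establishing that no boundary term survives is the crux of unitarity.

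For the integration to a group in part (2), and its variant for the sign-twisted line bundle $\calL$ in part (3), I would invoke a criterion of the Nelson type: since $W$ is an irreducible $(\frakg,\frakk)$-module whose $\frakk$-types occur with finite multiplicity and on which $\frakk$ already integrates (being compact), the skew-adjointness together with admissibility yields integrability to a representation $\pi$ of a finite cover of $G$, with the cover arising from the possibly non-integral parameter $\nu/2$. For part (3) the case $V=\Sym(k,\RR)$ differs only in that the structure-group isotropy on $\calO$ is disconnected; I would run the identical argument on sections of $\calL\to\calO$, using the twisted generator $\psi_0^-(x)=(x|c_1)^{1/2}\widetilde{K}_{\nu/2}(|x|)$, whose explicit form $\tfrac{\sqrt\pi}{2}\sqrt{x_{11}}\,e^{-|x|}$ makes the $\frakk$-finiteness and $L^2$-norm computations completely parallel, the sign character accounting for the extra component of the isotropy group.
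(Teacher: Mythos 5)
Your plan for part (1) is essentially the paper's proof of Theorem \ref{prop:Kfinite}: via \eqref{eq:KActionInTermsOfBesselOp} the action of the non-compact part of $\frakk$ on radial functions reduces to the ordinary Bessel operator $B_\alpha$ of \eqref{eq:OrdinaryBesselOp}, the identity $B_{\nu/2}\widetilde{K}_{\nu/2}=0$ closes the computation at once unless $V\cong\RR^{p,q}$, and for $\RR^{p,q}$ one tracks a raising/lowering recursion on the spaces $\widetilde{K}_{\nu/2+k}\otimes\calH^k$. One correction to your stated mechanism: the recursion terminates because the raising coefficient $2k+p-q$ vanishes for some integer $k$, which happens iff $(q-p)/2\in\ZZ$, i.e.\ iff $p+q$ is even; it is \emph{not} governed by whether $\nu/2$ is a half-integer (e.g.\ $p=3$, $q=5$ has $\nu/2=1/2$ yet is a good case). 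The symmetry of the Bessel operators that you require for unitarity is the paper's Theorem \ref{thm:BlambdaTangential}, proved there by a zeta-function residue argument rather than your direct boundary-term analysis, which is a legitimate alternative route to the same ingredient.

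The genuine gap is in parts (2) and (3), where your three key claims --- density, irreducibility, and admissibility --- are each asserted rather than proved, and the single idea the paper uses to obtain all three is absent from your proposal: Mackey theory for the parabolic subgroup $Q=L\ltimes N$. The representation $\rho_\lambda$ of $Q$ on $L^2(\calO,\td\mu)$ given by \eqref{eq:L2Rep1}--\eqref{eq:L2Rep2} is unitary and \emph{irreducible} (Proposition \ref{prop:MackeyRep}), and $\td\pi$ is compatible with it in the sense of \eqref{eq:CompatibilityRhoPi}. From this the paper deduces: (a) every element of $Z(\frakg)$ acts on $\rho_\lambda(Q)W$ by a scalar (Schur's lemma), so $W$ is $Z(\frakg)$-finite and hence admissible by Wallach's criterion (Proposition \ref{prop:Admissible}) --- you instead posit finite $\frakk$-multiplicities as an input to your Nelson-type integration; (b) once the admissible, infinitesimally unitary $(\frakg,K^\vee)$-module integrates to a unitary representation on the closure $\calH=\overline{W}$, its restriction to $Q$ coincides with $\rho_\lambda$, whence irreducibility of $\rho_\lambda$ forces $\calH=L^2(\calO)$ and irreducibility of $\pi$ simultaneously. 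Your density argument (``completeness of the $\frakk$-type decomposition'') is circular: that the $\frakk$-finite vectors generated by $\psi_0$ span $L^2(\calO)$ is precisely what must be shown, and the full $\frakk$-type decomposition (Theorem \ref{thm: K-type formula}) is obtained in the paper only \emph{after} Theorem A, via the embedding into degenerate principal series. Your irreducibility argument also fails as stated, since a cyclic $(\frakg,\frakk)$-module need not be irreducible, and minimal Gelfand--Kirillov dimension does not imply irreducibility. Finally, the finite cover is not determined by ``non-integrality of $\nu/2$'': by Lemma \ref{lem:LiftLemma} it is determined by whether the generating $\frakk$-type $W_0$ lifts to $K^\vee$, which must be (and in the paper is) checked case by case to produce $G^\vee$ and $G^\vee_-$ (Definition \ref{def:minG}); this is exactly where $\pi^+$ and $\pi^-$ for $V=\Sym(k,\RR)$ end up living on two \emph{different} quotients of $\Mp(k,\RR)$, a phenomenon your uniform argument would miss.
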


The minimal covering groups for Theorem~A\,(2) \&\ (3), to be denoted by $G^\vee$ and $G^\vee_-$, are given in Definition \ref{def:minG}.

We shall also write $\pi^+$ for $\pi$, $W^+$ for $W$ and $\psi_0^+$ for $\psi_0$ to state results that include the representation $\pi^-$. Under the equivalent conditions in Theorem~A\,(1), the $\gf$-module $W^\pm$ is multiplicity-free as a $\frakk$-module. In light of the $\frakk$-type formula in Theorem \ref{thm: K-type formula}, $\psi_0^\pm$ belongs to the minimal $\frakk$-type of $\pi^\pm$.\\

For a real simple Lie algebra $\frakg$ there exists a unique minimal complex nilpotent orbit in $\frakg_\CC$, to be denoted by $\calO_{\min,\frakg}^{G_{\CC}}$, having real points (see Proposition~\ref{prop:nilcpx}). Then the nilpotent coadjoint orbit $\calO_{\min}^G$ is a connected component of $\calO_{\min,\frakg}^{G_\CC}\cap\frakg^*$, and our construction provides the smallest irreducible unitary representations on the Hilbert space $L^2(\calO)$ consisting of square integrable functions on a Lagrangian submanifold $\calO$ of the symplectic manifold $\calO_{\min}^G$ in the following sense.

\begin{theoremB}[Theorem \ref{thm:Minimality} and Corollary \ref{cor:UniqueMinRep}]\label{thm:A} We assume that the equivalent conditions of Theorem~A\,(1) hold. Let $\pi$ be the representation $\pi^\pm$ constructed in Theorem A, $\td\pi^\pm$ its differential representation on the space of smooth vectors, and $\mathcal J_\pi$ the annihilator ideal of the representation $\td\pi^\pm$ in the enveloping algebra of $\gf_{\CC}$.
\begin{enumerate}
\item[\textup{(1)}]
  $\mathcal J_\pi$ is completely prime and its associated variety $\mathcal V(\mathcal J_\pi)$ is equal to the closure of $\calO_{\min,\frakg}^{G_\CC}$ in $\frakg_\CC^*$.
\item[\textup{(2)}] If $V$ is a split Jordan algebra or a complex Jordan algebra then $\calO_{\min,\frakg}^{G_\CC}$ is a minimal nilpotent orbit. If in addition $\frakg_\CC$ is not of type $A_n$ then $\pi^\pm$ is a minimal representation in the sense that $\mathcal J_\pi$ is the Joseph ideal (cf. Definition~\ref{def:minrep}). Conversely, every minimal representation of any covering group of $G^\vee$ is equivalent to one of the representations constructed in Theorem~A or its dual.
\end{enumerate}
\end{theoremB}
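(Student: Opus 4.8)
The plan is to establish part (1) first, as everything else rests on the determination of $\mathcal V(\mathcal J_\pi)$. For complete primeness I would use that $\td\pi^\pm$ is an algebra homomorphism $U(\frakg_\CC)\to\mathcal D(\calO)$ (respectively into the ring of differential operators on sections of $\calL$). Since each $\td\pi^\pm(u)$ is a differential operator and the dense submodule $W^\pm$ contains enough functions to separate all $2$-jets at each point of the connected manifold $\calO$ — starting from the nowhere-vanishing generator $\psi_0^\pm$ and applying the multiplication and derivative operators built into $\frakq^{\max}$ and the Bessel operators — an element of $U(\frakg_\CC)$ kills $W^\pm$ if and only if its image is the zero operator. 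Hence $\mathcal J_\pi=\ker\td\pi^\pm$, so $U(\frakg_\CC)/\mathcal J_\pi$ embeds into $\mathcal D(\calO)$; as the ring of differential operators on a connected manifold has no zero divisors (its associated graded is the domain of fibrewise-polynomial functions on $T^*\calO$), the quotient is an integral domain and $\mathcal J_\pi$ is completely prime.

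For the associated variety I would pass to leading symbols. The symbols $\sigma(\td\pi^\pm(X))$, $X\in\frakg_\CC$, assemble into a moment map $\mu\colon T^*\calO\to\frakg_\CC^*$, the elements of $\frakq^{\max}$ contributing the part linear in the cotangent fibre and the Bessel operators attached to $\overline{\frakn}$ the quadratic part. Because $\calO$ is a Lagrangian submanifold of $\calO_{\min}^G$ (Theorem \ref{lem:Lagrange}), the image of $\mu$ is the complexification of $\calO_{\min}^G$, namely $\calO_{\min,\frakg}^{G_\CC}$, which already gives the containment $\mathcal V(\mathcal J_\pi)\subseteq\overline{\calO_{\min,\frakg}^{G_\CC}}$. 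The same Lagrangian realization yields $\operatorname{GKdim}\pi^\pm=\dim_\RR\calO=\tfrac12\dim_\CC\calO_{\min,\frakg}^{G_\CC}$, so $\dim\mathcal V(\mathcal J_\pi)=2\operatorname{GKdim}\pi^\pm=\dim_\CC\calO_{\min,\frakg}^{G_\CC}$. Since the associated variety of a primitive ideal is the closure of a single nilpotent orbit, the containment together with this dimension count forces $\mathcal V(\mathcal J_\pi)=\overline{\calO_{\min,\frakg}^{G_\CC}}$.

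Part (2) then proceeds as follows. First I would check by a short structural argument that under the stated hypotheses the minimal orbit with real points is the genuine minimal orbit: for split $V$ the Lie algebra $\frakg$ is a split real form, so the highest-root orbit already meets $\frakg^*$ and hence equals $\calO_{\min,\frakg}^{G_\CC}$; for complex $V$ one identifies $\calO_{\min,\frakg}^{G_\CC}$ directly with the highest-root orbit of $\frakg_\CC$. Granting this, part (1) shows that $\mathcal J_\pi$ is a completely prime primitive ideal whose associated variety is $\overline{\calO_{\min}}$. By Joseph's uniqueness theorem — the unique completely prime primitive ideal with this associated variety exists precisely when $\frakg_\CC$ is simple and not of type $A_n$ — it must be the Joseph ideal, so $\pi^\pm$ is minimal in the sense of Definition \ref{def:minrep}.

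The main obstacle is the converse. Starting from an arbitrary minimal representation $\tau$ of a covering of $G^\vee$, by definition $\Ann\tau$ is the Joseph ideal, which by the above equals $\mathcal J_\pi$; the task is to show that this single annihilator supports, up to equivalence and duality, only the modules $W^\pm$. I would argue that the irreducible $(\frakg,\frakk)$-modules annihilated by the Joseph ideal form a finite set — governed by the infinitesimal character it carries and by the component group of the real minimal orbit — and then pin the list down using the explicit minimal-$\frakk$-type computation of Theorem \ref{thm: K-type formula}. The multiplicity-free $\frakk$-structure of $W^\pm$ leaves no room for a further irreducible module with the same infinitesimal character and lowest $\frakk$-type, so any such $\tau$ is one of $W^+$, its dual, or (when $V=\Sym(k,\RR)$) $W^-$ or its dual; matching the admissible coverings and unitary structures completes the classification. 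Controlling this finite list of candidates, rather than the ideal-theoretic input, is where the real work lies.
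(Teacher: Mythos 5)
Your proposal contains a clear-cut error in part (2) and unproven key steps in part (1). For part (2): the claim that a split Jordan algebra $V$ has split conformal Lie algebra $\frakg$ is false. Euclidean Jordan algebras are always split, yet $V=\Herm(k,\CC)$ gives $\frakg=\su(k,k)$, $V=\Herm(k,\HH)$ gives $\so^*(4k)$, and the split algebra $V=\RR^{p,q}$ ($p,q\geq2$) gives $\so(p+1,q+1)$; none of these is a split real form in general, so you cannot invoke ``split real forms meet every complex nilpotent orbit'' to conclude $\calO_{\min,\frakg}^{G_\CC}=\calO_{\min}^{G_\CC}$. The paper proves this statement (Proposition \ref{thm:Lagrangian}) by a Jordan-theoretic computation instead: for $e=0$ one has $(V_{11})_\CC=\CC c_1$, the highest root space consists of elements $(x,-2\sqrt{-1}L(x),x)$ with $x\in(V_{11})_\CC$, and an explicit $\Ad(\exp(0,0,-\sqrt{-1}c_1))$ computation carries such a root vector to $(c_1,0,0)\in\frakg$. (For complex $V$ one must also note that $\frakg_\CC\cong\frakg_L\oplus\frakg_R$ is not simple, so the Joseph ideal and its uniqueness have to be interpreted factorwise, as in Definition \ref{def:minnilp} and the remark following Definition \ref{def:minrep}.) Likewise, the converse in part (2) cannot be extracted from multiplicity-freeness plus the infinitesimal character: distinct minimal representations ($\pi^+$, $\pi^-$ and their duals) have distinct lowest $\frakk$-types, so there is no ``no room'' argument; your sketch concedes that this classification is the real work and does not carry it out, whereas the paper settles it by comparing with Torasso's classification \cite{Tor97}.

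For part (1), two steps would fail or are only asserted. First, the ring of differential operators with smooth coefficients on a connected manifold \emph{does} have zero divisors (multiplication by two functions with disjoint supports), so your parenthetical justification of complete primeness is wrong as stated; what is needed is the algebraic category. The paper shows that $\XX=(\overline{\calO}\cup(-\overline{\calO}))\setminus\{0\}$ is an \emph{irreducible} real affine variety, factors $\td\pi_\lambda$ through $U(\frakg)\to\DD(\XX)$ (Proposition \ref{prop:D(X)-factorization}), and uses that $\DD(\XX)$ has no zero divisors \cite{BW04}; irreducibility of $\XX$, not connectedness of $\calO$, is the operative hypothesis, and your jet-separation argument must in any case use $k$-jets for all $k$ (not $2$-jets), since $U(\frakg)$ produces operators of arbitrarily high order. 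Second, neither the claim that the symbol ``moment map'' has image $\calO_{\min,\frakg}^{G_\CC}$ nor the equality $\mathrm{GKdim}\,\pi^\pm=\dim_\RR\calO$ is proved; the naive filtration estimate on $W=U(\frakg)\psi_0$ gives only $\mathrm{GKdim}\leq\dim\calO+1$, and you would additionally need Vogan's theorem $\dim\calV(\calJ_\pi)=2\,\mathrm{GKdim}\,\pi^\pm$. The paper's route avoids both: it bounds $\mathrm{GKdim}(U(\frakg)/\calJ)$ by $\mathrm{GKdim}\,\DD(\XX)=2\dim\XX=\dim_\CC\calO_{\min,\frakg}^{G_\CC}$ (Smith \cite{Smi84}, together with Theorem \ref{lem:Lagrange}\,(4) and Corollary \ref{cor:annihilator of dpi}), and then pins down the variety not by a moment-map containment but by combining the fact that associated varieties of annihilators of admissible irreducible representations have real points with Okuda's uniqueness of the non-zero nilpotent orbit of minimal dimension having real points (Proposition \ref{prop:nilcpx}).
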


Concerning the equivalent conditions of Theorem~A\,(1), it is noteworthy that
 there is no minimal representation for any group $G$ with Lie algebra $\frakg\cong\so(p,q)$ with $p+q$ odd, $p,q\geq4$ (see \cite[Theorem 2.13]{Vog81}).
We also remark that contrary to what was stated in \cite[page 206]{DS99}
the $L^2$-model of the minimal representations exist for the group $O(p,q)$
with $p+q$ even, $p,q \geq 2$ (see \cite{KO03c}).\\

Our construction also applies to the case of split rank one. However, in contrast to the cases of higher split rank, in that case there exists a one-parameter family of  measures $\td\mu_\lambda$ on the Lagrangian manifold $\calO$ which are equivariant under the structure group of the Jordan algebra. Correspondingly, we obtain a one-parameter family of  irreducible unitary representations of $G$ on $L^2(\calO,\td\mu_\lambda)$ for a bounded interval of parameters (Theorem~\ref{thm:IntgkModule}). As is well-known, the Lorentz group $G=SO(n,1)_0$ has a `long' complementary series representation, leading to a failure of Kazhdan's property $(T)$. On the other hand, it is notorious by experience that the orbit philosophy does not work well for complementary series representations. Remarkably, our construction provides the entire complementary series of this group in a way that fits with the orbit philosophy. \\

For all minimal representations appearing in Theorem~B\,(2), one can find $L^2$-models in each specific case in the literature
 (\cite{BSZ06,DS99,KM07b,KO03c,LV80,Sah92}). Other important papers on the construction of minimal representations are  Brylinski--Kostant \cite{BK94} and  Torasso \cite{Tor97}, which, however, do not contain simple and explicit formulas for the $\frakg$-action.

Whereas the model in \cite{BK94} is built on the $K_\CC$-minimal nilpotent orbit $\calO_{\min}^{K_\CC}$ in $\frakk_\CC^\perp$, our model may be thought of as a geometric quantization of the $G$-minimal nilpotent orbit $\calO_{\min}^G$ in $\frakg^*$ which is the counterpart of $\calO_{\min}^{K_\CC}$ via the Kostant--Sekiguchi correspondence. The advantage of our model is that not only the Hilbert structure, but also the Lie algebra action is simple and explicit by means of the Bessel operators.\\

The Bessel operators are needed already in the construction of the Lie algebra representation. In the course of the proof we show the following properties:
\begin{enumerate}
\item[\textup{(1)}] The operators $_\phi\calB$, $\phi\in V^*$, commute.
\item[\textup{(2)}] For each $\phi\in V^*$ the operator $_\phi\calB$ is symmetric on $L^2(\calO)$.
\end{enumerate}
In the cases where the Lie algebra representation integrates to a unitary representation of $G$, the operator $_\phi\calB$ has a self-adjoint extension for every $\phi\in V^*$. This brings us to the study of a new family of special functions associated with an explicit fourth order ordinary differential operator $\calD_{\alpha,\beta}$ corresponding to the Casimir operator of $\frakk$
 (see \cite{HKMM09a,HKMM09b,KM10}).\\

A further remarkable feature of the Bessel operators is the following refinement of the property (1):

\begin{theoremC}[Theorem \ref{thm:BesselRing}]
Suppose that one of the equivalent conditions in Theorem~A\,(1) is satisfied. Then the ring of differential operators on $\calO$ generated by the Bessel operators $_\phi\calB$, $\phi\in V^*$, is isomorphic to the ring of functions on $\calO$ which are restrictions of polynomials on $V$.
\end{theoremC}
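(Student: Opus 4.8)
The plan is to identify the Bessel ring with the multiplication ring of the \emph{opposite} abelian piece of $\frakg$, passing between the two by conjugation with the unitary inversion operator. Since $\overline{\frakn}\cong V^*$ is abelian and $\td\pi$ is a representation of $\frakg$, the Bessel operators ${}_\phi\calB=\td\pi(\phi)$ ($\phi\in\overline{\frakn}$) automatically commute, so $\td\pi$ extends to an algebra homomorphism $\Phi\colon U(\overline{\frakn})=\Sym(\overline{\frakn})=\CC[V]\to\calD(\calO)$ whose image $R:=\td\pi(U(\overline{\frakn}))$ is exactly the ring generated by the ${}_\phi\calB$. The assertion is thus equivalent to $\ker\Phi=I(\calO)$, where $I(\calO)=\{p\in\CC[V]:p|_{\calO}=0\}$ is the vanishing ideal, i.e. to $R\cong\CC[V]/I(\calO)=\CC[V]|_{\calO}$.

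The comparison object is the nilradical $\frakn=V$ of $\frakq^{\max}$. By construction $\frakn$ acts on $C^\infty(\calO)$ by multiplication with the restrictions to $\calO$ of the linear coordinate functions on $V$, so $M:=\td\pi(U(\frakn))$ is precisely the algebra of multiplication operators by restrictions of polynomials. For $M$ the analogue of the theorem is transparent: the homomorphism $\CC[V]\to M$ sending $p$ to multiplication by $p|_{\calO}$ is surjective by definition and has kernel $I(\calO)$, because a multiplication operator on $C^\infty(\calO)$ vanishes iff its multiplier vanishes on $\calO$. Hence $M\cong\CC[V]|_{\calO}$ with no computation.

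It remains to conjugate $R$ onto $M$. Let $w\in G^\vee$ be a lift of the conformal inversion $j\colon x\mapsto -x^{-1}$; in the Kantor--Koecher--Tits description of $\frakg$ it represents the Weyl element exchanging the two abelian pieces, so $\Ad(w)\overline{\frakn}=\frakn$. Under the equivalent conditions of Theorem~A\,(1) the representation $\td\pi$ integrates to $\pi$ on $G^\vee$ (Theorem~A), so $\calF:=\pi(w)$ is a well-defined invertible operator with $\calF\,\td\pi(X)\,\calF^{-1}=\td\pi(\Ad(w)X)$ on smooth vectors. Extending to the enveloping algebra and using $\Ad(w)U(\overline{\frakn})=U(\frakn)$ gives $\calF R\calF^{-1}=M$. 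Since conjugation by the bijection $\calF$ is a ring isomorphism, $R\cong M\cong\CC[V]|_{\calO}$, which is the claim.

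The main technical point lies in this last step. The operators in $R$ and $M$ are unbounded, so the word ``ring'' must be read on a common $\calF$-invariant dense domain — the space of $\pi$-smooth vectors (equivalently the underlying $(\frakg,\frakk)$-module $W$), which is preserved by $\pi(G^\vee)$ and on which all compositions are legitimate. One must also check that $w$ genuinely lifts to the covering $G^\vee$ and that $\Ad(w)$ carries $\overline{\frakn}$ onto $\frakn$ exactly; both are standard features of the conformal inversion, the first because $\pi$ is a representation of $G^\vee$ and the second because $j$ interchanges translations and special conformal transformations. Granting these, the argument is formal. The alternative of verifying directly, via Jordan identities and symbol computations, that the ${}_\phi\calB$ satisfy exactly the defining relations of $\calO$ (e.g. the rank-one relations) and no others is far more laborious, which is why routing through $\calF$ and the transparent multiplication model is preferable.
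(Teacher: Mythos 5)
Your proof is correct and is essentially the paper's own argument: the paper likewise conjugates the Bessel operators into multiplication operators by restrictions of polynomials, using the unitary inversion operator $\calF_\calO$ (a phase factor times $\rho_\lambda(\vartheta)\pi(j^\vee)$, where $j^\vee\in G^\vee$ is precisely a lift of the conformal inversion) together with the intertwining relations $\calF_\calO\circ\calB_\lambda=-\vartheta x\circ\calF_\calO$, thereby identifying the kernel of $\CC[V]\to R$ with the ideal of polynomials vanishing on $-\calO$. Your route via the covariance $\pi(w)\,\td\pi(X)\,\pi(w)^{-1}=\td\pi(\Ad(w)X)$ and $\Ad(w)\overline{\frakn}=\frakn$ is the same mechanism in slightly different packaging, and the domain subtleties you flag are handled no more explicitly in the paper's own proof.
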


Theorem C generalizes the results for $\gf\simeq\so(p,q)$ with $p+q$ even (see \cite[Chapter 2]{KM07b}), and follows from Theorem A for general $\gf$.\\

Our paper is organized as follows. In Section \ref{sec:BesselOperators} we briefly recall some Jordan theory necessary to define the Bessel operators and give a proof for the fact that they restrict to differential operators on the orbits of the structure group (Theorem~\ref{thm:BlambdaTangential}). Further, we show that they are symmetric operators with respect to the $L^2$ inner products corresponding to certain equivariant measures on the orbits.

Section \ref{ch:MinRep} is the heart of the paper. In Subsection \ref{sec:ConformalGroup} we relate the Jordan theoretic orbits with the minimal nilpotent orbits of the complexified groups. The main result here is Theorem \ref{thm:min-split-nilpotent-orbits} which determines the non-zero minimal nilpotent orbit. Subsection \ref{sec:ConstructionL2Model} contains the general construction of the Lie algebra representation on the Lagrangian submanifold $\calO$, and the proof of Theorems A and B. In Subsection \ref{sec:ExReps} we illustrate the construction by discussing the examples of the Segal--Shale--Weil representation and the minimal representation of $\upO(p,q)$.

In Section \ref{sec:Complements} we explain how our main results are related to previous work. Subsection \ref{sec:PrincipalSeries} is included to clarify the relation of our construction to the use of degenerate principal series representations. In Subsection \ref{sec:SpecialFcts} we find explicit $K$-finite $L^2$-functions for every $K$-type by means of the `special functions' we associated to certain order four differential operators in \cite{HKMM09a,HKMM09b,KM10}. Finally, in Subsection~\ref{sec: inversion operator} we prove Theorem C.\\

\emph{Acknowledgement:} It is a pleasure to thank G.~Mano, T.~Okuda and B.~\O{}rsted for helpful discussions on various aspects of this paper. We further thank the referee for careful reading.\\

Notation: $\NN=\{0,1,2,\ldots\}$, $\mathbb{R}_+=\{x\in\mathbb{R}:x>0\}$.

\section{Bessel operators}\label{sec:BesselOperators}

In this section we introduce the framework for the construction of minimal representations, namely the Hilbert spaces on which the minimal representations are realized and the Bessel operators which describe the crucial part of the Lie algebra action. For this we first introduce some basic structure theory for Jordan algebras needed in the construction. To each semisimple Jordan algebra one can associate its structure group which acts linearly on the Jordan algebra. Its minimal non-zero orbit provides the geometry of the representation space. We then introduce the Bessel operators and show that they are tangential to this orbit and symmetric with respect to a certain $L^2$-inner product.

The notation follows \cite{FK94} where most results of this chapter can be found, although only for the special case of euclidean Jordan algebras. A more detailed version of this material can be found in \cite[Chapter 1]{Moe10}. We thank G. Mano \cite{Man08} for sharing his ideas on Bessel operators with us.

\subsection{Jordan algebras and their structure constants}

The algebraic framework on our construction of $L^2$-models for minimal representations is the framework of Jordan algebras. We briefly recall the basic structure theory of real Jordan algebras to fix the notation.

\subsubsection{Jordan algebras}\label{sec:JordanAlgebras}

Let $V$ be a real or complex Jordan algebra with unit ${\bf e}\in V$. We denote by $L(x)\in\End(V)$\index{Lx@$L(x)$} the multiplication by $x\in V$. The operator
\begin{align*}
 P(x) &:= 2L(x)^2-L(x^2)\index{Px@$P(x)$}
\end{align*}
is called \textit{quadratic representation} and its polarized version is given by
\begin{align*}
 P(x,y) &= L(x)L(y)+L(y)L(x)-L(xy).\index{Pxy@$P(x,y)$}
\end{align*}
Further, the \textit{box operator} $u\Box v$\index{uBoxv@$u\Box v$} is defined by
\begin{align*}
 u\Box v &:= L(uv) + [L(u),L(v)].
\end{align*}
Denote by $n$\index{n@$n$} the dimension of $V$ and by
$r$\index{r@$r$} its \textit{rank}, i.e. the degree of a generic
minimal polynomial (see e.g. \cite[Section II.2]{FK94}). The
\textit{Jordan trace} $\tr(x)$\index{trx@$\tr(x)$} is a linear form on
$V$ and the \textit{Jordan determinant}
$\det(x)$\index{detx@$\det(x)$} is a homogeneous polynomial of degree $r$. To avoid confusion, we write $\Tr$\index{Tr@$\Tr$} and $\Det$\index{Det@$\Det$} for the usual trace and determinant of an endomorphism. Jordan trace and determinant can be written as the usual trace, respectively determinant, of certain operators on $V$:
\begin{align*}
 \tr(x) &= \frac{r}{n}\Tr\,L(x), & x&\in V,\\
 \det(x) &= (\Det\,P(x))^{\frac{r}{2n}}, & x&\in V.
\end{align*}

The symmetric bilinear form
\begin{align*}
 \tau(x,y) &:= \tr(xy), & x,y\in V,\index{tauxy@$\tau(x,y)$}
\end{align*}
is called the \textit{trace form} of $V$. It is associative, i.e. $\tau(xy,z)=\tau(x,yz)$ for all $x,y,z\in V$. If $V\neq0$ and $\tau$ is non-degenerate, we call $V$ \textit{semisimple}. Further, $V$ is called \textit{simple} if it is semisimple and has no non-trivial ideal.

For the remaining part of this subsection we assume that $V$ is real and simple. If $\tau$ is positive definite, we call $V$ \textit{euclidean}. To also obtain an inner product for general $V$ we choose a \textit{Cartan involution}\index{alpha@$\vartheta$} of $V$, i.e. an involutive automorphism $\vartheta$ of $V$ such that the symmetric bilinear form
\begin{equation}\label{eq:traceform}
 (x|y) := \tau(x,\vartheta(y))\index{1bracket@$(-"|-)$}
\end{equation}
is positive definite. Such a Cartan involution always exists and two Cartan involutions are conjugate by an automorphism of $V$ (see \cite[Satz 4.1, Satz 5.2]{Hel69}). We have the decomposition
\begin{align*}
 V=V^+\oplus V^-\index{V1@$V^+$}\index{V2@$V^-$}
\end{align*}
into $\pm1$ eigenspaces of $\vartheta$. The eigenspace $V^+$ is a euclidean Jordan subalgebra of $V$ with the same identity element ${\bf e}$. Note that if $V$ itself is euclidean, then the identity $\vartheta=\id_V$ is the only possible Cartan involution of $V$, so that $V^+=V$ and $V^-=0$. We denote by $n_0$\index{nnull@$n_0$} the dimension and by $r_0$\index{rnull@$r_0$} the rank of $V^+$ and call $r_0$ the \textit{split rank} of $V$. The constants $n_0$ and $r_0$ only depend on the isomophism class of the Jordan algebra $V$, not on the choice of $\vartheta$.

The following elementary examples will eventually lead to the metaplectic representation and the minimal representation of $\upO(p+1,q+1)$.

\begin{example}\label{ex:JordanAlgebras}
\begin{enumerate}
 \item[\textup{(1)}] Let $V=\Sym(k,\RR)$\index{SymkR@$\Sym(k,\RR)$} be the space of symmetric $k\times k$ matrices with real entries. Endowed with the multiplication
  \begin{equation*}
   x\cdot y := \textstyle\frac{1}{2}(xy+yx)
  \end{equation*}
  $V$ becomes a simple euclidean Jordan algebra of dimension $n=\frac{k(k-1)}{2}$ and rank $r=k$ whose unit element is the unit matrix $\1$. Trace and determinant are the usual ones for matrices:
  \begin{align*}
   \tr(x) &= \Tr(x), & \det(x) &= \Det(x).
  \end{align*}
  Hence, the trace form is given by $\tau(x,y)=\Tr(xy)$. The inverse $x^{-1}$ of $x\in V$ exists if and only if $\Det(x)\neq0$ and in this case $x^{-1}$ is the usual inverse of the matrix $x$.
 \item[\textup{(2)}] Let $V=\RR\times W$ where $W$ is a real vector space of dimension $n-1$ with a symmetric bilinear form $\beta:W\times W\rightarrow\RR$. Then $V$ turns into a Jordan algebra with multiplication given by
  \begin{equation*}
   (\lambda,u)\cdot(\mu,v) := (\lambda\mu+\beta(u,v),\lambda v+\mu u).
  \end{equation*}
  $V$ is of dimension $n$ and rank $2$ and its unit element is ${\bf e}=(1,0)$. Trace and determinant are given by
  \begin{align*}
   \tr(\lambda,u) &= 2\lambda, & \det(\lambda,u) &= \lambda^2-\beta(u,u),
  \end{align*}
  and an element $(\lambda,u)\in V$ is invertible if and only if $\det(\lambda,u)=\lambda^2-\beta(u,u)\neq0$. In this case the inverse is given by $(\lambda,u)^{-1}=\frac{1}{\det(\lambda,u)}(\lambda,-u)$. The trace form can be written as
  \begin{equation*}
   \tau((\lambda,u),(\mu,v)) = 2(\lambda\mu+\beta(u,v)).
  \end{equation*}
  Hence, $V$ is semisimple if and only if $\beta$ is non-degenerate and $V$ is euclidean if and only if $\beta$ is positive definite. For $W=\RR^{p+q-1}$ with bilinear form $\beta$ given by the matrix
  \begin{align*}
   \left(\begin{array}{cc}-\1_{p-1}&\\&\1_q\end{array}\right)
  \end{align*}
  we put $\RR^{p,q}:=\RR\times W$\index{Rpq@$\RR^{p,q}$}, $p\geq1$, $q\geq0$. Then
  \begin{align*}
   \tau(x,y) &= 2(x_1y_1-x_2y_2-\cdots-x_py_p+x_{p+1}y_{p+1}+\cdots+x_{p+q}y_{p+q}),\\
   \det(x) &= x_1^2+\cdots+x_p^2-x_{p+1}^2-\cdots-x_{p+q}^2.
  \end{align*}
  Thus, $\RR^{p,q}$ is euclidean if and only if $p=1$. In any case, a Cartan involution of $\RR^{p,q}$ is given by
  \begin{align}
   \vartheta &= \left(\begin{array}{ccc}1&&\\&-\1_{p-1}&\\&&\1_q\end{array}\right).\label{eq:VpqCartanInv}
  \end{align}
  With this choice the euclidean subalgebra $(\RR^{p,q})^+$ is
  \begin{align*}
   (\RR^{p,q})^+ &= \RR e_1\oplus\RR e_{p+1}\oplus\cdots\oplus\RR e_n \cong \RR^{1,q},
  \end{align*}
  where $(e_j)_{j=1,\ldots,n}$\index{ej@$e_j$} denotes the standard basis of $\RR^{p,q}=\RR^n$, $n=p+q$.
\end{enumerate}
\end{example}

\subsubsection{Peirce decomposition}\label{sec:PeirceDecomp}

The Peirce decomposition of $V$ is a Jordan analog of the Lie theoretic root decomposition. It describes the structure of a Jordan algebra in terms of its idempotents.

In this subsection $V$ always denotes a real simple Jordan algebra, $\vartheta$ a Cartan involution of $V$ and we further assume that $V^+$ is also simple.

An element $c\in V$ is called \textit{idempotent} if $c^2=c$. A non-zero idempotent is called \textit{primitive} if it cannot be written as the sum of two non-zero idempotents and two idempotents $c_1$ and $c_2$ are called \textit{orthogonal} if $c_1c_2=0$. A collection $c_1,\ldots,c_m$ of orthogonal primitive idempotents in $V^+$ with $c_1+\cdots+c_m={\bf e}$ is called a \textit{Jordan frame}. By \cite[Theorem III.1.2]{FK94} the number $m$ of idempotents in a Jordan frame is always equal to the rank $r_0$ of $V^+$. For every two Jordan frames $c_1,\ldots,c_{r_0}$ and $d_1,\ldots,d_{r_0}$ there exists an automorphism $g$ of $V$ such that $gc_i=d_i$, $1\leq i\leq r_0$ (see \cite[Satz 8.3]{Hel69}).

For a fixed Jordan frame $c_1,\ldots,c_{r_0}$\index{cj@$c_j$} in $V^+$ the operators $L(c_1),\ldots,L(c_{r_0})$ commute and hence are simultaneously diagonalizable. The spectrum of each $L(c_i)$ is contained in $\{0,\frac{1}{2},1\}$ and $\sum_{i=1}^{r_0}{L(c_i)}=L({\bf e})=\id_V$. This yields the \textit{Peirce decomposition}
\begin{align}
 V &= \bigoplus_{1\leq i\leq j\leq r_0}{V_{ij}},\label{eq:PeirceDecomp}
\end{align}
where
\begin{align}
 V_{ij} &= \textstyle\{x\in V:L(c_k)x=\frac{\delta_{ik}+\delta_{jk}}{2}x\ \forall\,1\leq k\leq r_0\} && \mbox{for }1\leq i,j\leq r_0.\notag\index{Vij@$V_{ij}$}
\end{align}
Since the endomorphisms $L(c_i)$, $1\leq i\leq r_0$, are all symmetric with respect to the inner product $(-|-)$, the direct sum in \eqref{eq:PeirceDecomp} is orthogonal. Further, the group of automorphisms contains all possible permutations of the idempotents $c_1,\ldots,c_{r_0}$, and hence the subalgebras $V_{ii}$ have a common dimension $e+1$\index{e@$e$} and the subspaces $V_{ij}$ ($i<j$) have a common dimension $d$\index{d@$d$}, so that
\begin{align}
 \Tr(L(c_i)) &= \frac{n}{r_0} = e+1+(r_0-1)\frac{d}{2} & \forall\,1\leq i\leq r_0.\label{eq:noverr0}
\end{align}
We call a Jordan algebra $V$ \textit{split} (or \textit{reduced}) if $V_{ii}=\RR c_i$ for every $i=1,\ldots,r_0$, or equivalently if $e=0$. From \cite[\S 8, Korollar 2]{Hel69} it follows that if $V$ is split, then $r=r_0$, and if $V$ is non-split, then $r=2r_0$. Euclidean Jordan algebras are always split and hence $V^+_{ii}:=V_{ii}\cap V^+=\RR c_i$\index{Vijplus@$V_{ij}^+$}. With $V_{ii}^-:=V_{ii}\cap V^-$ we then have $V_{ii}=V_{ii}^+\oplus V_{ii}^-$ and $e=\dim\,V_{ii}^-$. If we denote by $d_0$\index{dnull@$d_0$} the dimension of $V^+_{ij}:=V_{ij}\cap V^+$ ($i<j$), then equation \eqref{eq:noverr0} for the euclidean subalgebra $V^+$ reads
\begin{align*}
 \frac{n_0}{r_0} &= 1+(r_0-1)\frac{d_0}{2}.
\end{align*}
Table \ref{tb:Constants} lists all simple real Jordan algebras with simple $V^+$ and their corresponding structure constants. A closer look at the table allows the following observation: If $V$ is non-euclidean, then $d=2d_0$ except in the case where $V=\RR^{p,q}$ with $p\neq q$.

\begin{proposition}[{\cite[\S 6]{Hel69}}]\label{prop:ClassificationEuclSph}
Let $V$ be a simple real Jordan algebra, $\vartheta$ a Cartan involution and assume that $V^+$ is also simple. If the split rank $r_0>1$, then exactly one of the following three statements holds:
\begin{enumerate}
\item[\textup{(1)}] $V$ is euclidean and in particular $d=d_0$,
\item[\textup{(2)}] $V$ is non-euclidean of rank $r\geq3$ and $d=2d_0$,
\item[\textup{(3)}] $V\cong\RR^{p,q}$, $p,q\geq2$.
\end{enumerate}
For $r_0=1$ the only possible case is
\begin{enumerate}
\item[\textup{(4)}] $V\cong\RR^{k,0}$, $k\geq1$.
\end{enumerate}
\end{proposition}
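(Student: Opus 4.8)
The plan is to treat this as a bookkeeping statement layered on top of the classification of simple real Jordan algebras with simple euclidean part (the content of Table~\ref{tb:Constants}, following Helwig~\cite{Hel69}), after first organizing the alternatives by two genuine dichotomies. For $r_0>1$ the primary split is euclidean versus non-euclidean, i.e.\ whether $V^-=0$; within the non-euclidean algebras the secondary split is by rank, $r=2$ versus $r\geq3$. Since a simple Jordan algebra of rank $2$ is a spin factor, the rank-$2$ non-euclidean algebras are precisely the $\RR^{p,q}$, so the substantive work reduces to three points: settling $r_0=1$, pinning down the admissible $(p,q)$ in the spin-factor case, and proving $d=2d_0$ in the remaining non-euclidean case.

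First I would dispose of $r_0=1$ by an elementary argument that bypasses the classification. If $r_0=1$ then $V^+$ is a rank-one euclidean Jordan algebra, hence $V^+=\RR\mathbf e$. As $\vartheta$ is an automorphism, $\vartheta(x^2)=(\vartheta x)^2=x^2$ for $x\in V^-$, so $x^2\in V^+=\RR\mathbf e$; this defines a quadratic form $q$ on $V^-$ with $x^2=q(x)\mathbf e$. Polarizing identifies $V=\RR\mathbf e\oplus V^-$ with the spin factor $\RR\times V^-$, and since $\tau(x,x)=-(x|x)<0$ on $V^-$ the form is negative definite, so $V\cong\RR^{k,0}$ with $k=1+\dim V^-$ (simplicity forcing the form to be nondegenerate). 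The computation in Example~\ref{ex:JordanAlgebras}(2) confirms $(\RR^{k,0})^+=\RR e_1$, consistent with $r_0=1$; this is case~(4).

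For $r_0>1$ the euclidean case is immediate: then $V=V^+$, the Peirce data of $V$ and $V^+$ coincide, and $d=d_0$, which is case~(1). In the non-euclidean case I split on the rank. If $r=2$ then $V$ is a spin factor, so $V\cong\RR\times(W,\beta)$ with $\beta$ nondegenerate of signature $(a,b)$; choosing $\vartheta$ to negate a maximal negative subspace of $\beta$ (one checks this is a Cartan involution) gives $V^+\cong\RR^{1,a}$. Simplicity of the rank-$2$ algebra $V^+$ forces $a\geq2$ while non-euclideanity forces $b\geq1$, which in the notation of Example~\ref{ex:JordanAlgebras}(2) reads $V\cong\RR^{p,q}$ with $q=a\geq2$ and $p=b+1\geq2$; this is case~(3), the borderline $\RR^{1,1}\cong\RR\oplus\RR$ explaining why $q=1$ must be excluded.

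The crux is the last case, $V$ non-euclidean of rank $r\geq3$, where I must prove $d=2d_0$; I expect this to be the main obstacle. I would first reduce it to a clean invariant statement: since $\vartheta$ preserves $\tau$, the spaces $V^+$ and $V^-$ are $\tau$-orthogonal with $\tau$ positive on $V^+$ and negative on $V^-$, so $\tau$ restricted to an off-diagonal Peirce space $V_{ij}$ ($i<j$) has signature $(\dim V_{ij}^+,\dim V_{ij}^-)=(d_0,\,d-d_0)$. Thus $d=2d_0$ is equivalent to $\tau|_{V_{ij}}$ being split, i.e.\ to $\vartheta$ acting on each off-diagonal Peirce space with balanced $\pm1$-eigenspaces, which is exactly the observation recorded just before the statement. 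The difficulty is that there is no uniform Jordan identity giving this for free: when $r_0=2$ (e.g.\ $M(2,\CC)$, with $r_0=2$ and $r=4$) there is no triple $i<j<k$ to exploit, so one cannot argue through three Peirce spaces. I would therefore verify the balancing family-by-family from Table~\ref{tb:Constants}: for the symmetrized matrix algebras $M(k,\mathbb F)$ over $\mathbb F\in\{\RR,\CC,\HH\}$ (with $V^+=\Herm(k,\mathbb F)$) the Cartan involution $x\mapsto x^*$ splits each $V_{ij}$ into Hermitian and anti-Hermitian halves of equal dimension $d_0$, for the complexifications of euclidean algebras one has $V_{ij}^-=\sqrt{-1}\,V_{ij}^+$, and the exceptional $\Herm(3,\OO)_\CC$ is checked directly. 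Finally I would record that (1)--(3) are mutually exclusive and exhaustive for $r_0>1$: euclidean versus non-euclidean is a true dichotomy, and among non-euclidean algebras rank $=2$ versus rank $\geq3$ partitions them, with the rank-$2$ ones being exactly the spin factors.
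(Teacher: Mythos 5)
Your proposal is correct, but it takes a genuinely different route from the paper, which in fact offers no proof at all: Proposition~\ref{prop:ClassificationEuclSph} is stated there as a citation to Helwig's classification \cite[\S 6]{Hel69}, and the identity $d=2d_0$ in case (2) is obtained purely by inspection of Table~\ref{tb:Constants} (this is the ``closer look at the table'' remark preceding the statement). What you do differently is to prove three of the four cases by elementary, classification-free arguments: the polarization argument $x^2=q(x)\mathbf{e}$ identifying a split-rank-one algebra with $\RR^{k,0}$, the triviality of the euclidean case via $\vartheta=\id_V$, and the spin-factor analysis pinning down $\RR^{p,q}$ with $p,q\geq2$ (your use of a specially chosen $\vartheta$ is legitimate because Cartan involutions are conjugate under $\Aut(V)$, a fact the paper records, so $V^+$ and the constants $d_0,r_0$ are independent of the choice). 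You thereby isolate the one statement that genuinely needs the classification, namely $d=2d_0$ for non-euclidean $V$ of rank $r\geq3$, and your diagnosis that no uniform Peirce-type identity can force it is fair: when $r_0=2$ (e.g.\ $M(2,\CC)$ with $r=4$) there is no third frame idempotent to play with, and the rank-two Peirce subalgebras $V_{ii}+V_{ij}+V_{jj}$ are themselves spin factors, for which balancedness can fail. The trade-off is clear: the paper's citation is economical and uniform, while your route is self-contained except at a single point and makes visible exactly where Helwig's list enters. One caveat: your family-by-family verification as written covers $M(k,\RR)$, $M(k,\CC)$, $M(k,\HH)$ and the complexified euclidean algebras, but omits $\Skew(2k,\RR)$, $\Sym(2k,\CC)\cap M(k,\HH)$, and the split exceptional algebra $\Herm(3,\OO_s)$ (presumably you meant the latter rather than $\Herm(3,\OO)_\CC$, which is already a complexification). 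This is not a genuine gap, since Table~\ref{tb:Constants} lists $d$ and $d_0$ for every family and the remaining checks are read off in the same way, but the enumeration should be completed for the argument to stand.
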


\begin{example}
\begin{enumerate}
\item[\textup{(1)}] For $V=\Sym(k,\RR)$ the matrices $c_i:=E_{ii}$, $1\leq i\leq k$, form a Jordan frame. The corresponding Peirce spaces are
\begin{align*}
 V_{ii} &= \RR c_i && \mbox{for }1\leq i\leq k,\\
 V_{ij} &= \RR(E_{ij}+E_{ji}) && \mbox{for }1\leq i<j\leq k.
\end{align*}
Hence, $d=d_0=1$ and $e=0$.
\item[\textup{(2)}] For $V=\RR^{p,q}$, $p,q\geq1$, a Jordan frame is given by $c_1=\frac{1}{2}(e_1+e_n)$, $c_2=\frac{1}{2}(e_1-e_n)$, $n=\dim(V)=p+q$. The corresponding Peirce spaces are
\begin{align*}
 V_{11} &= \RR c_1, & V_{12} &= \RR e_2\oplus\cdots\oplus\RR e_{n-1}, & V_{22} &= \RR c_2.
\end{align*}
Therefore $V$ is split, i.e. $e=0$, and $d=p+q-2$, $d_0=q-1$.
\end{enumerate}
\end{example}

\subsubsection{The constant $\nu$}\label{sec:MuNu}

For every real simple Jordan algebra $V$ with simple $V^+$ we introduce another constant $\nu$ by
\begin{equation}\label{eqn: def nu}
 \nu = \nu(V) := \frac{d}{2}-\left|d_0-\frac{d}{2}\right|-e-1 = \min(d,2d_0)-d_0-e-1\in\ZZ.\index{nu@$\nu$}\index{nuV@$\nu(V)$}
\end{equation}
Using Proposition \ref{prop:ClassificationEuclSph} we can calculate $\nu$ explicitly:
\begin{equation}\label{eqn:1.4}
 \nu = \begin{cases}-1 & \mbox{if $V$ is euclidean,}\\\frac{d}{2}-e-1 & \mbox{if $V$ is non-euclidean of rank $r\geq3$,}\\\min(p,q)-2 & \mbox{if $V\cong\RR^{p,q}$, $p,q\geq2$,}\\-k & \mbox{if $V\cong\RR^{k,0}$, $k\geq1$.}\end{cases}
\end{equation}

The constant $\nu$ for every simple real Jordan algebra $V$ with simple $V^+$ can also be found in Table \ref{tb:Constants}. For $V$ non-euclidean of rank $r\geq3$ the definition in  \eqref{eqn: def nu} agrees with the definition in \cite{DS99}. (There $d_0=\frac{d}{2}$ is denoted by $d$.)

\subsubsection{Definition of the Bessel operators}\label{sec:DefBesselOp}

We denote by $\frac{\partial}{\partial x}:C^\infty(V)\longrightarrow C^\infty(V)\otimes V$\index{ddx@$\frac{\partial}{\partial x}$} the gradient with respect to the non-degenerate trace form $\tau$ on $V$. For any complex parameter $\lambda\in\CC$ we define a second order differential operator
$$\calB_\lambda:C^\infty(V)\longrightarrow C^\infty(V)\otimes V$$
called the \textit{Bessel operator}, mapping complex-valued functions to vector-valued functions, by
\begin{align}
 \calB_\lambda := P\left(\frac{\partial}{\partial x}\right)x+\lambda\frac{\partial}{\partial x}.\label{eq:BesselOp}\index{Blambda@$\calB_\lambda$}
\end{align}
This formal definition has the following meaning: Let $(e_\alpha)_\alpha$ be a basis of $V$ with dual basis $(\overline{e}_\alpha)_\alpha$ with respect to the trace form $\tau$. Further denote by $x_\alpha$ the coordinates of $x\in V$ with respect to the basis $(e_\alpha)_\alpha$. Then
\begin{align*}
 \calB_\lambda f(x) &= \sum_{\alpha,\beta}{\frac{\partial^2f}{\partial x_\alpha\partial x_\beta}P(\overline{e}_\alpha,\overline{e}_\beta)x}+\lambda\sum_\alpha{\frac{\partial f}{\partial x_\alpha}\overline{e}_\alpha}, & x\in V.
\end{align*}
These operators were introduced by H. Dib \cite{Dib90} (see also \cite[Section XV.2]{FK94} for a more systematic presentation) in the case of a euclidean Jordan algebra, and by G. Mano \cite{Man08} for $V\simeq\RR^{p,q}$. The above definition is a natural generalization to arbitrary Jordan algebras.\\

We collect two basic properties of the Bessel operators in the following proposition (see \cite[Lemma 1.7.1 and Proposition 2.1.2]{Moe10}):

\begin{proposition}\label{prop:BesselOpProperties}
The Bessel operators $\calB_\lambda$ have the following properties:
\begin{enumerate}
\item[\textup{(1)}] For fixed $\lambda\in\CC$ the family of operators $(v|\calB_\lambda)$, $v\in V$, commutes.
\item[\textup{(2)}] We have the following product rule:
\begin{align*}
 \calB_\lambda\left[f(x)g(x)\right] &= \calB_\lambda f(x)\cdot g(x) + 2P\left(\frac{\partial f}{\partial x}(x),\frac{\partial g}{\partial x}(x)\right)x+f(x)\cdot\calB_\lambda g(x).
\end{align*}
\end{enumerate}
\end{proposition}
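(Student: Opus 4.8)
The plan is to prove both statements by a direct computation in linear coordinates, using the explicit second-order form of $\calB_\lambda$ together with the symmetry and bilinearity of the polarized quadratic representation $P(-,-)$ for part (2), and the Jordan identities satisfied by $P$ for part (1). I would treat the product rule (2) first, both because it is the more elementary of the two and because the cross-terms it produces already isolate the algebraic mechanism that reappears in (1).

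For (2) I would split $\calB_\lambda=P(\frac{\partial}{\partial x})x+\lambda\frac{\partial}{\partial x}$ into its second- and first-order parts. The first-order part $\lambda\frac{\partial}{\partial x}$ is the gradient and obeys the ordinary Leibniz rule, contributing $\lambda(\frac{\partial f}{\partial x}g+f\frac{\partial g}{\partial x})$, which is exactly the first-order piece of $(\calB_\lambda f)g+f(\calB_\lambda g)$. For the second-order part I would insert the product rule $\partial_\alpha\partial_\beta(fg)=(\partial_\alpha\partial_\beta f)g+(\partial_\alpha f)(\partial_\beta g)+(\partial_\beta f)(\partial_\alpha g)+f(\partial_\alpha\partial_\beta g)$ into the coordinate expression $\sum_{\alpha,\beta}\partial_\alpha\partial_\beta(\cdot)\,P(\overline{e}_\alpha,\overline{e}_\beta)x$. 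The two diagonal terms reproduce the second-order parts of $(\calB_\lambda f)g$ and $f(\calB_\lambda g)$, while the two mixed terms are equal because $P(\overline{e}_\alpha,\overline{e}_\beta)$ is symmetric in $\alpha,\beta$; using $\frac{\partial f}{\partial x}=\sum_\alpha(\partial_\alpha f)\overline{e}_\alpha$ and bilinearity of $P(-,-)$ they assemble into $2P(\frac{\partial f}{\partial x},\frac{\partial g}{\partial x})x$. This is a routine, fully mechanical verification.

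For (1) I would write ${}_u\calB_\lambda=Q_u+\lambda R_u$, where $Q_u=\sum_{\alpha,\beta}\tau(u,P(\overline{e}_\alpha,\overline{e}_\beta)x)\,\partial_\alpha\partial_\beta$ is second order with coefficients \emph{linear} in $x$, and $R_u=\sum_\alpha u_\alpha\partial_\alpha$ is the constant-coefficient directional derivative along $u$. Since commutativity concerns the scalar operators obtained by pairing the $V$-valued $\calB_\lambda$ against a covector, the choice of nondegenerate pairing (the trace form $\tau$ or the inner product $(-|-)$) is immaterial, and I work with $\tau$. Expanding $[{}_u\calB_\lambda,{}_v\calB_\lambda]=[Q_u,Q_v]+\lambda([Q_u,R_v]+[R_u,Q_v])+\lambda^2[R_u,R_v]$ and sorting by differential order, I expect: $[R_u,R_v]=0$, since directional derivatives along constant vectors commute; the mixed bracket $\lambda([Q_u,R_v]+[R_u,Q_v])$ to be a second-order operator whose coefficient of $\partial_\gamma\partial_\delta$ is $\tau(v,P(\overline{e}_\gamma,\overline{e}_\delta)u)-\tau(u,P(\overline{e}_\gamma,\overline{e}_\delta)v)$, which vanishes by self-adjointness of $P(\overline{e}_\gamma,\overline{e}_\delta)$ with respect to $\tau$; and $[Q_u,Q_v]$ to carry no term below order three, because the coefficients of $Q_u,Q_v$ are linear in $x$. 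Commutativity therefore reduces entirely to $[Q_u,Q_v]=0$, equivalently to the vanishing of the Poisson bracket of the principal symbols $\sigma_u(x,\xi)=\tau(u,P(\hat\xi)x)$, where $\hat\xi=\sum_\alpha\xi_\alpha\overline{e}_\alpha$.

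The hard part is this last symbol identity. Using $\frac{\partial}{\partial\xi_\alpha}P(\hat\xi)=2P(\overline{e}_\alpha,\hat\xi)$ and contracting the resulting sums over the basis via $\sum_\alpha\tau(w,e_\alpha)\overline{e}_\alpha=w$, I would reduce $\{\sigma_u,\sigma_v\}$ to $2[\tau(u,P(P(\eta)v,\eta)x)-\tau(v,P(P(\eta)u,\eta)x)]$ with $\eta=\hat\xi$, so that commutativity becomes the purely Jordan-algebraic identity
\[
 P(P(\eta)v,\eta)\,u = P(P(\eta)u,\eta)\,v \qquad \text{for all } \eta,u,v\in V.
\]
Establishing this identity is where genuine Jordan theory, rather than bookkeeping, is needed: I would derive it from the fundamental formula $P(P(x)y)=P(x)P(y)P(x)$ and its polarizations (equivalently, from the standard identities of the Jordan triple system attached to $V$), reducing both sides to the triple product $\{P(\eta)v,u,\eta\}$ and its $u\leftrightarrow v$ swap. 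I expect the symbol reduction itself to be clean, so that the proof of the displayed identity is the only real obstacle; note also that the argument is uniform in $V$, the euclidean case of \cite{FK94} being recovered verbatim.
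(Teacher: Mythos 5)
Your proposal is correct; both parts check out. In (2) the computation is exactly as you describe. In (1) every step holds: each $P(\overline{e}_\gamma,\overline{e}_\delta)$ is $\tau$-self-adjoint because every $L(a)$ is (associativity of the trace form), so the mixed bracket vanishes; $[Q_u,Q_v]$ has no terms below order three since its coefficients are linear in $x$; and the symbol reduction does land on $\{\sigma_u,\sigma_v\}=2\bigl[\tau(u,P(P(\eta)v,\eta)x)-\tau(v,P(P(\eta)u,\eta)x)\bigr]$, so commutativity is equivalent to your displayed identity. That identity is genuine: it is the classical Jordan triple identity $\{x,y,\{x,z,x\}\}=\{\{x,y,x\},z,x\}$ for $\{a,b,c\}:=P(a,c)b$. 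Rather than polarizing the fundamental formula by hand, the cleanest way to close your sketch is Macdonald's theorem: the identity involves three variables and is linear in $u$ and in $v$, so it suffices to verify it in special Jordan algebras, where $P(x)y=xyx$ and both sides equal $\tfrac{1}{2}\left(\eta u\eta v\eta+\eta v\eta u\eta\right)$ in the ambient associative algebra. With that one citation (or two-line argument) your proof is complete.

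For comparison: the paper itself gives no proof of Proposition \ref{prop:BesselOpProperties} --- it quotes it from \cite[Lemma 1.7.1 and Proposition 2.1.2]{Moe10}, the euclidean case going back to \cite[Section XV.2]{FK94}, where it is verified by direct computation. The conceptual alternative the paper points to (the Remark following Proposition \ref{prop:LieAlgRep}, combined with Proposition \ref{prop:IntertwinerPrincipalSeries}) is representation-theoretic: under a Euclidean Fourier transform on $\calS'(V)$, the operators $\frac{1}{\sqrt{-1}}(v|\calB_\lambda)$ correspond to the $\overline{\frakn}$-action $\td\omega_s(0,0,-v)$, $s=\frac{1}{2}\left(\frac{n}{r}-\lambda\right)$, of a degenerate principal series in the non-compact picture; these operators commute for free, because $\td\omega_s$ is the differential of a group representation and $\overline{\frakn}$ is abelian, and since $\lambda\mapsto s$ is a bijection of $\CC$ this covers every $\lambda$. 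Your route is more elementary and never leaves the Jordan algebra, at the price of one nontrivial Jordan identity; the representation-theoretic route buys commutativity from abelianness of $\overline{\frakn}$, at the price of first establishing the intertwining formula, which is itself a computation of length comparable to yours.
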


\subsection{Orbits of the structure group and equivariant measures}

In this subsection we describe the Hilbert space on which we later realize the minimal representation. More precisely, we introduce the structure group of a Jordan algebra and find equivariant measures on its orbits. This gives natural Hilbert spaces $L^2(\calO,\td\mu)$. We further show that for certain pa\-ra\-me\-ters $\lambda$ the Bessel operators $\calB_\lambda$ are tangential to these orbits and define differential operators on the orbits which are symmetric with respect to the $L^2$-inner product.

\subsubsection{The structure group}\label{sec: structure group}

The \textit{structure group} $\Str(V)$\index{StrV@$\Str(V)$} of a real or complex semisimple Jordan algebra $V$ is the group of invertible linear transformations $g\in\GL(V)$ such that there exists a constant $\chi(g)\in\KK^\times$ with
\begin{align}
 \det(gx) &= \chi(g)\det(x) & \forall\,x\in V,\label{eq:DetEquiv}\index{chig@$\chi(g)$}
\end{align}
where $\KK=\RR$ or $\CC$, depending on whether $V$ is a real or a complex Jordan algebra. By \cite[Lemma VIII.2.3]{FK94} an invertible linear transformation $g\in\GL(V)$ is in $\Str(V)$ if and only if there exists an $h\in \GL(V)$ with $P(gx)=gP(x)h$ for all $x\in V$. The group $\Str(V)$ is linear reductive over $\KK$. The map $\chi:\Str(V)\rightarrow\KK^\times$ defines a character of $\Str(V)$ which on the identity component $L:=\Str(V)_0$\index{L@$L$} is given by
\begin{align}
 \chi(g) &= (\Det\,g)^{\frac{r}{n}} & \forall\, g\in L.\label{eq:ChiDet}
\end{align}
Denote by $\frakl=\str(V)$\index{l@$\frakl$}\index{strV@$\str(V)$} the Lie algebra of $\Str(V)$ and $L$.

Let $V$ be a complex simple Jordan algebra. For the moment we write $V^\RR$ for $V$, if it is considered as a \emph{real} Jordan algebra. Since $\GL(V)\subseteq\GL(V^\RR)$ the characterization above shows that $\Str(V)\subseteq\Str(V^\RR)$. Let  $J:V\to V$ be the  complex structure on $V$.
The complexification
$(V^\RR)_\CC$ is isomorphic to the direct sum $V\oplus V$ as a complex Jordan algebra via
\begin{align*}
&(V^\RR)_{\CC} := V^\RR \otimes_{\RR} \CC
\xrightarrow[\varphi_L\oplus\varphi_R]{\sim} V_L \oplus V_R,
\\
\intertext{where $\varphi_L:V\overset{\sim}{\to}V_L$ ($\CC$-linear) and
$\varphi_R: V\overset{\sim}{\to}V_R$ (antilinear) are given by}
&\varphi_L(x) = \frac{1}{2}(x-iJx),
 \
 \varphi_R(x) = \frac{1}{2}(x+iJx).
\end{align*}
This implies
$$
\str\big((V^\RR)_\CC\big)\cong \str(V)\oplus \str(V).
$$
Since $\big(\str(V^\RR)\big)_\CC=\str\big((V^\RR)_\CC\big)$, this proves
$$\dim_\RR\big(\str(V^\RR)\big)= 2\dim_\CC\big(\str(V)\big) =\dim_\RR \big(\str(V)\big).$$
Thus $\str(V)\subseteq\str(V^\RR)$ implies $\str(V)=\str(V^\RR)$, i.e., for a complex simple Jordan algebra viewed as a real simple Jordan algebra, the real and complex structure algebras are the same. Note that changing the viewpoint on such a Jordan algebra means changing the Jordan determinant and trace, i.e. the meaning of \eqref{eq:DetEquiv}.

Assume now that $V$ is real. We write $g^*$\index{gstar@$g^*$} for the adjoint of $g\in\Str(V)$ with respect to the inner product $(-|-)$. Then the map $\theta:\Str(V)\rightarrow\Str(V),\,g\mapsto(g^*)^{-1}=(g^{-1})^*$ defines a Cartan involution of $\Str(V)$ which restricts to a Cartan involution of $L$. Its fixed point group $K_L:=L^\theta$\index{KL@$K_L$} is a maximal compact subgroup of $L$. Note that $K_L$ is connected, since $L$ is. The Lie algebra of $K_L$ will be denoted by $\frakk_\frakl$\index{kl@$\frakk_\frakl$}.

\begin{example}\label{ex:StrGrp}
\begin{enumerate}
\item[\textup{(1)}] The identity component $L$ of the structure group of $V=\Sym(k,\RR)$ is isomorphic to $(\GL(k,\RR)/\{\pm\1\})_0$, the action being induced by
\begin{align*}
 g\cdot a &= ga\,{}^t\!g & \mbox{for }g\in\GL(k,\RR),a\in V.
\end{align*}
Therefore, its Lie algebra is $\frakl=\gl(k,\RR)=\sl(k,\RR)\oplus\RR$, acting by
\begin{align*}
 X\cdot a &= Xa+a\,{}^t\!X & \mbox{for }X\in\gl(k,\RR),a\in V.
\end{align*}
The maximal compact subgroup is given
 by $K_L=(\operatorname{O}(k)/\{\pm 1\})_0$ which acts by conjugation.
\item[\textup{(2)}] For $V=\RR^{p,q}$ we have $L=\RR_+\SO(p,q)_0$ with maximal compact subgroup $K_L=\SO(p)\times\SO(q)$.
\end{enumerate}
\end{example}

\subsubsection{Orbits of the structure group}\label{sec:OrbitsMeasures}

There are only finitely many orbits under the action of $L$ on $V$. An explicit description of these orbits can be found in Kaneyuki \cite{Kan98}. We are merely interested in the open orbit of $L$ containing the unit element $\bf e$ of $V$ and the orbits which are contained in its boundary.

Let $\Omega=L\cdot{\bf e}$\index{Omega@$\Omega$} be the open orbit of $L$ containing the identity element of the Jordan algebra. $\Omega$ is an open cone in $V$ and at the same time a reductive symmetric space. It has a polar decomposition in terms of the compact group $K_L$ and the Jordan frame:
\begin{align*}
 \Omega &= \left\{u\sum_{j=1}^{r_0}{t_jc_j}:u\in K_L,t_1\geq\ldots\geq t_{r_0}>0\right\}.
\end{align*}

The boundary $\partial\Omega$ is the union of orbits of lower rank. The closure $\overline{\Omega}$ of $\Omega$ admits the following stratification:
\begin{align*}
 \overline{\Omega} &= \calO_0\cup\ldots\cup\calO_{r_0},
\end{align*}
where $\calO_k=L\cdot s_k$\index{Ok@$\calO_k$} with
\begin{align*}
 s_k &:= c_1+\cdots+c_k, & 0\leq k\leq r_0.\index{sk@$s_k$}
\end{align*}
Every orbit is a homogeneous space, but in general these homogeneous spaces are not symmetric. A polar decomposition for the orbit $\calO_k$ is given by
\begin{align}
 \calO_k &= \left\{u\sum_{j=1}^k{t_jc_j}:u\in K_L,t_1\geq\ldots\geq t_k>0\right\}.\label{eq:PolarOk}
\end{align}

We will mostly be interested in the minimal non-zero orbit $\calO_1$. For later use we calculate its dimension:

\begin{lemma}\label{lem:DimO1}
$\dim\,\calO_1=e+1+(r_0-1)d$.
\end{lemma}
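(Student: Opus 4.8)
The plan is to compute the dimension of $\calO_1 = L\cdot c_1$ via its polar decomposition \eqref{eq:PolarOk}, which for $k=1$ reads $\calO_1 = \{t_1\,u c_1 : u\in K_L,\, t_1>0\}$. Since dimension is a local invariant, I would identify the tangent space to $\calO_1$ at the single point $c_1$ and compute its dimension there. The tangent space at $c_1$ is $T_{c_1}\calO_1 = \frakl\cdot c_1 = \{X\cdot c_1 : X\in\frakl\}$, the image of the infinitesimal action of the structure Lie algebra $\frakl=\str(V)$ on the point $c_1$. Thus the core of the proof is to compute $\dim(\frakl\cdot c_1)$, and the natural tool is the Peirce decomposition \eqref{eq:PeirceDecomp} relative to a Jordan frame $c_1,\ldots,c_{r_0}$ containing $c_1$ as its first idempotent.

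The key step is to understand how $\frakl\cdot c_1$ decomposes under the Peirce grading. The structure Lie algebra is spanned by the box operators $u\,\Box\, v$ together with the multiplication operators, and one knows from Jordan theory (see \cite{FK94}, e.g.\ the description of $\str(V)=L(V)\oplus\der(V)$ and the Peirce relations) that $L(c_k)$ acts on $V_{ij}$ by the eigenvalue $\frac{\delta_{ik}+\delta_{jk}}{2}$. First I would argue that $\frakl\cdot c_1$ contains exactly the Peirce blocks on which the infinitesimal action moves $c_1$ nontrivially, namely $V_{11}$ and the blocks $V_{1j}$ for $2\leq j\leq r_0$. Concretely, the operators $c_1\,\Box\, v$ applied to $c_1$ sweep out $V_{11}$ and the off-diagonal spaces $V_{1j}$, while the blocks $V_{ij}$ with $i,j\geq 2$ are annihilated (they are fixed by the stabilizer of $c_1$ up to the directions already counted). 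Hence
\begin{align*}
 T_{c_1}\calO_1 = V_{11}\oplus\bigoplus_{j=2}^{r_0}V_{1j}.
\end{align*}
Using the common dimensions $\dim V_{11}=e+1$ and $\dim V_{1j}=d$ for $j\neq 1$ recorded just before Proposition~\ref{prop:ClassificationEuclSph}, this gives
\begin{align*}
 \dim\calO_1 = \dim T_{c_1}\calO_1 = (e+1)+(r_0-1)d,
\end{align*}
which is exactly the claimed formula.

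An alternative, perhaps cleaner, route avoids the explicit Peirce bookkeeping by passing to the stabilizer: $\calO_1\cong L/\Stab_L(c_1)$, so $\dim\calO_1 = \dim L - \dim\Stab_L(c_1) = \dim\frakl - \dim\frakl_{c_1}$, where $\frakl_{c_1}=\{X\in\frakl : X\cdot c_1 = 0\}$. One would then identify $\frakl_{c_1}$ with the structure algebra of the sub-Jordan algebra $V(c_1,0)=\bigoplus_{2\leq i\leq j}V_{ij}$ (the Peirce $0$-eigenspace of $c_1$) together with a complementary contribution, and count. The hard part, in either approach, is the representation-theoretic bookkeeping: one must verify precisely which Peirce blocks lie in the image $\frakl\cdot c_1$ and that none are overcounted. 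The euclidean-subalgebra hypothesis ($V^+$ simple) and the homogeneity of the frame—every block $V_{1j}$, $j\geq 2$, has the same dimension $d$, and $V_{11}$ has dimension $e+1$—are what make the final count clean; the main obstacle is justifying the identification $T_{c_1}\calO_1 = V_{11}\oplus\bigoplus_{j\geq 2}V_{1j}$ rigorously from the Peirce multiplication rules rather than by inspection.
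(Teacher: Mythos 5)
Your overall strategy---identify $T_{c_1}\calO_1=\frakl\cdot c_1$ and count Peirce blocks---is sound, and it is essentially the infinitesimal version of the paper's own proof (the paper writes $\frakl=\fraks\oplus L(V_{11})\oplus\bigoplus_{j=2}^{r_0}c_j\Box V_{1j}$ with $\fraks=\Lie(\Stab_L(c_1))$, citing [Moe10, Section 1.5.2], and takes dimensions of the quotient). However, the one concrete computation you offer in support of the key identity is wrong. Write $v=v_1+v_{1/2}+v_0$ for the Peirce components of $v$ relative to the single idempotent $c_1$, so $v_1\in V_{11}$, $v_{1/2}\in\bigoplus_{j\geq2}V_{1j}$, $v_0\in\bigoplus_{2\leq i\leq j}V_{ij}$. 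Then
\begin{equation*}
 (c_1\Box v)c_1 \;=\; L(c_1v)c_1+c_1(vc_1)-v(c_1c_1) \;=\; 2c_1(c_1v)-c_1v \;=\; v_1 .
\end{equation*}
In particular $(c_1\Box v)c_1=0$ for $v\in V_{1j}$, $j\geq2$: the operators $c_1\Box v$ do \emph{not} sweep out the off-diagonal blocks; applied to $c_1$ they only ever produce $V_{11}$. The operators that do produce $V_{1j}$ are $L(v)$ itself (since $L(v)c_1=c_1v=\tfrac12 v$ for $v\in V_{1j}$), or equivalently the paper's $c_j\Box v$, for which $(c_j\Box v)c_1=\tfrac12 v$.

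The second gap is the reverse containment $\frakl\cdot c_1\subseteq V_{11}\oplus\bigoplus_{j\geq2}V_{1j}$, which you flag yourself as ``the main obstacle'' but do not prove; the parenthetical appeal to the stabilizer is not an argument. A short correct argument runs as follows. For surjectivity, note $L(v)c_1=c_1v=L(c_1)v$, so $\frakl\cdot c_1\supseteq L(V)c_1=\im\,L(c_1)=V_{11}\oplus\bigoplus_{j\geq2}V_{1j}$, since $L(c_1)$ has spectrum $\{0,\tfrac12,1\}$ with these eigenspaces. For the upper bound, use $\str(V)=L(V)+[L(V),L(V)]$ and $[L(V),L(V)]\subseteq\der(V)$, so every $T\in\frakl$ is $L(w)+D$ with $D$ a derivation; differentiating $c_1^2=c_1$ gives $Dc_1=2c_1(Dc_1)$, i.e.\ $Dc_1$ lies in the $\tfrac12$-eigenspace $\bigoplus_{j\geq2}V_{1j}$, while $L(w)c_1=c_1w\in V_{11}\oplus\bigoplus_{j\geq2}V_{1j}$. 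Together these give $T_{c_1}\calO_1=V_{11}\oplus\bigoplus_{j\geq2}V_{1j}$ and hence $\dim\calO_1=(e+1)+(r_0-1)d$; with these repairs your proof is correct and matches the paper's in substance.
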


\begin{proof}
As a homogeneous space we have $\calO_1=L/S$, where $S=\Stab_L(c_1)$. Denote by $\fraks$ the Lie algebra of $S$. Using the results of \cite[Section 1.5.2]{Moe10}, we obtain that
\begin{align*}
 \frakl &= \fraks\oplus L(V_{11})\oplus\bigoplus_{j=2}^{r_0}{c_j\Box V_{1j}}.
\end{align*}
Hence, $\dim\,\calO_1=\dim\,\frakl-\dim\,\fraks=(e+1)+(r_0-1)d$.
\end{proof}

\begin{example}\label{ex:MinimalOrbit}
\begin{enumerate}
\item[\textup{(1)}] For $V=\Sym(k,\RR)$ the cone $\Omega$ is the convex cone of symmetric positive definite matrices. Its boundary contains the orbit $\calO_1$ of minimal rank which is given by
\begin{align*}
 \calO_1 &= \{x\,{}^t\!x:x\in\RR^k\setminus\{0\}\}.
\end{align*}
The map
\begin{align}
 \RR^k\setminus\{0\}\rightarrow\calO_1,\,x\mapsto x\,{}^t\!x,\label{eq:SymTwoFoldCovering}
\end{align}
is a surjective two-fold covering.
\item[\textup{(2)}] For $V=\RR^{p,q}$ let $n=\dim\,V=p+q$. We have to distinguish between two cases. If $p=1$, $q\geq2$, then $\Omega$ is the convex cone given by
\begin{align*}
 \Omega &= \{x\in\RR^{1,q}:x_1>0,x_1^2-x_2^2-\cdots-x_n^2>0\}.
\end{align*}
Its boundary is the union of the trivial orbit $\calO_0=\{0\}$ and the forward light cone
\begin{align*}
 \calO_1 &= \{x\in\RR^{1,q}:x_1>0,x_1^2-x_2^2-\cdots-x_n^2=0\}.
\end{align*}
For $p,q\geq2$ we have
\begin{align*}
 \Omega &= \{x\in\RR^{p,q}:x_1^2+\cdots+x_p^2-x_{p+1}^2-\cdots-x_n^2>0\},
\end{align*}
which is not convex. In this case the minimal non-trivial orbit is given by
\begin{align*}
 \calO_1 &= \{x\in\RR^{p,q}:x_1^2+\cdots+x_p^2-x_{p+1}^2-\cdots-x_n^2=0\}\setminus\{0\}.
\end{align*}
In both cases, $\calO_1$ can be parameterized by bipolar coordinates:
\begin{align}
 \RR_+\times S^{p-1}_0\times S^{q-1}\stackrel{\sim}{\rightarrow}\calO_1,\,(t,\omega,\eta)\mapsto(t\omega,t\eta),\label{eq:VpqPolarCoordinates}
\end{align}
where $S^{n-1}$\index{Sn@$S^{n-1}$} denotes the unit sphere in $\RR^n$. For $n=1$ the sphere is disconnected, and $S_0^{n-1}=\{1\}$. 
\end{enumerate}
\end{example}

\subsubsection{Equivariant measures}

We define a generalization of the \textit{Wallach set} (sometimes referred to as the \textit{Berezin--Wallach set}) by
\begin{align}
 \calW &:= \left\{0,\frac{r_0d}{2r},\ldots,(r_0-1)\frac{r_0d}{2r}\right\}\cup\left((r_0-1)\frac{r_0d}{2r},\infty\right).\index{W@$\calW$}\label{eq.BWset}
\end{align}
For $r_0=1$ this reduces to ${\cal W}= \left(0,\infty\right)$. 

For convenience we denote for $\lambda>(r_0-1)\frac{r_0d}{2r}$ the open orbit $\calO_{r_0}=\Omega$ by $\calO_\lambda$\index{Olambda@$\calO_\lambda$}. Similarly, for $\lambda=k\frac{r_0d}{2r}$, $k=0,\ldots,r_0-1$, we put $\calO_\lambda:=\calO_k$\index{Olambda@$\calO_\lambda$}. 
Note that if $r_0>1$ then $\calO_\lambda=\calO_1$ implies that $\lambda=\lambda_1:=\frac{r_0d}{2r}$ is the minimal non-zero discrete Wallach point. If $r_0=1$ then $\calO_\lambda=\calO_1$ is equivalent to $\lambda>0$.

The proof of the following result concerning equivariant measures on the orbits $\calO_\lambda $  for $\lambda$ in the generalized Wallach set is standard:

\begin{proposition}\label{thm:EquivMeasures}
Fix $\lambda\in\calW$ and let $k\in\{0,\ldots,r_0\}$ such that $\calO_\lambda=\calO_k$. For $k=0$ we have $\lambda=0$ and the Dirac measure $\td\mu_0:=\delta_0$ at $x=0$ defines an $L$-equivariant measure on $\calO_0=\{0\}$. For $k>0$ the formula
 \begin{align*}
  \int_{\calO_\lambda}{f(x)\td\mu_\lambda(x)}
  &:= \int_{K_L}{\int_{s_1>\ldots>s_k}{f\Big(u\sum_{j=1}^k{e^{s_j}c_j}\Big)J_\lambda({\bf s})\td{\bf s}}\td u},
 \end{align*}
 where
 \begin{align*}
  J_\lambda({\bf s}) &= e^{\frac{\lambda r}{k}\sum_{i=1}^k{s_i}}\prod_{1\leq i<j\leq k}{\sinh^{d_0}\left(\frac{s_i-s_j}{2}\right)\cosh^{d-d_0}\left(\frac{s_i-s_j}{2}\right)},
 \end{align*}
defines an $L$-equivariant measure $\td\mu_\lambda$ on $\calO_\lambda$. These measures transform according to
\begin{align}
 \td\mu_\lambda(gx) &= \chi(g)^\lambda\td\mu_\lambda(x) & \mbox{for }g\in L.\label{eq:dmulambdaEquivariance}
\end{align}
\begin{enumerate}
\item[\textup{(1)}] On $\calO_{r_0}=\Omega$ the $L$-equivariant measures which are locally finite near $0$ are (up to positive scalars) exactly the measures $\td\mu_\lambda$, $\lambda>(r_0-1)\frac{r_0d}{2r}$.
 Moreover, $\td\mu_\lambda$ is absolutely continuous with respect to the Lebesgue measure $\td x$\index{dx@$\td x$} on $\Omega$ and we have
 \begin{align*}
  \td\mu_\lambda(x) &= \const\cdot\det(x)^{\lambda-\frac{n}{r}}\td x & \mbox{for $\lambda>(r_0-1)\frac{r_0d}{2r}$.}
 \end{align*}
\item[\textup{(2)}] For $k=0,\ldots,r_0-1$, up to positive scalars, $\td\mu_k:=\td\mu_\lambda$\index{dmulambdax@$\td\mu_\lambda(x)$} is the unique $L$-equivariant measure on $\calO_k$.
\end{enumerate}
\end{proposition}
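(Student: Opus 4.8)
The three assertions follow from the general theory of relatively invariant measures on homogeneous spaces, the explicit polar formula being the concrete realization of the resulting measure. First I set up the framework. By definition $\calO_k = L/S_k$ with $S_k := \Stab_L(s_k)$ and base point $s_k = c_1 + \cdots + c_k$; in particular $S_{r_0} = K_L$, since $\Omega = \calO_{r_0} = L/K_L$. As $L$ is reductive it is unimodular, so its modular function is trivial; and since $V$ is simple the connected centre of $L$ is the one-parameter group $\{t\cdot\id : t > 0\}$, whence the group of continuous positive characters of $L$ is one-dimensional and generated by $\chi$. Thus every such character equals $\chi^\lambda$ for a unique $\lambda\in\RR$. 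By the standard criterion for relatively invariant measures on homogeneous spaces, a nonzero such measure on $L/S_k$ with multiplier $\chi^\lambda$ exists if and only if $\chi^\lambda|_{S_k} = \Delta_{S_k}$, the modular character of $S_k$ (here $\Delta_L\equiv1$); it is then unique up to a positive scalar. Granting existence with this multiplier, the transformation law \eqref{eq:dmulambdaEquivariance} is immediate, and both (1) and (2) reduce to identifying $\Delta_{S_k}$.

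I then separate the two cases. For $k = r_0$ the group $K_L$ is compact and connected, so $\Det(\Ad(u)|_{\frakl/\frakk_\frakl}) = 1$ for $u\in K_L$ and likewise $\chi|_{K_L}\equiv 1$; hence the existence condition is met for every $\lambda\in\RR$, producing a one-parameter family of relatively invariant measures on $\Omega$. For $1 \le k < r_0$ I would compute $\Delta_{S_k}$ from the adjoint action of $S_k$ on $\frakl/\fraks_k \cong T_{s_k}\calO_k$, using the Peirce-decomposition description of $\frakl$ exhibited in the proof of Lemma \ref{lem:DimO1} together with \eqref{eq:ChiDet}; matching $\chi^\lambda|_{S_k} = \Delta_{S_k}$ forces the single value $\lambda = k\frac{r_0 d}{2r} = k\lambda_1$, the $k$-th discrete Wallach point, and simultaneously yields the uniqueness asserted in (2). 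For $k = 0$ the orbit is a point and $\td\mu_0 = \delta_0$ is the only choice.

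To complete (1) it remains to write the measure on the open cone and to decide when it is locally finite near $0$. Relative invariance with multiplier $\chi^\lambda$ forces the Radon--Nikodym density of $\td\mu_\lambda$ against the Lebesgue measure $\td x$ to be a scalar multiple of $\det(x)^{\lambda - n/r}$: this follows from the homogeneity of $\det$ and \eqref{eq:ChiDet}, and conversely $\det(x)^{\lambda - n/r}\td x$ is visibly relatively invariant on $\Omega$ and locally finite there, for every $\lambda$. Local finiteness near the vertex $0\in V$ is precisely the convergence of the Gindikin--Riesz integral $\int_{\Omega\cap\{|x|\le 1\}} \det(x)^{\lambda - n/r}\,\td x$, which holds exactly when $\lambda > (r_0-1)\frac{r_0 d}{2r}$; this threshold is read off from the growth of the polar Jacobian $J_\lambda(\mathbf{s})$ as the $s_j\to-\infty$. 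Hence the locally finite relatively invariant measures on $\Omega$ are, up to scalars, exactly the $\td\mu_\lambda$ in that range.

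Finally I would justify the explicit polar formula. The map $(u,\mathbf{s})\mapsto u\sum_{j=1}^k e^{s_j}c_j$ parameterizes $\calO_k$ by \eqref{eq:PolarOk}, and the Jacobian of this Cartan-type decomposition is the product $\prod_{1\le i<j\le k}\sinh^{d_0}(\tfrac{s_i-s_j}{2})\cosh^{d-d_0}(\tfrac{s_i-s_j}{2})$, the exponents being the multiplicities of the corresponding root spaces in the Peirce decomposition (dimension $d_0$ for $V_{ij}^+$ and $d-d_0$ for $V_{ij}\cap V^-$, cf. \eqref{eq:PeirceDecomp}); the prefactor $e^{\frac{\lambda r}{k}\sum_i s_i}$ is dictated by relative invariance under the radial torus, which acts through $\chi^\lambda$. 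The $K_L$-invariance built into the $\td u$-integration, combined with the scaling behaviour of $J_\lambda$, then confirms \eqref{eq:dmulambdaEquivariance} directly. The main obstacle is precisely this Jacobian computation on the singular strata --- verifying the root multiplicities off the open orbit and the convergence threshold near $0$ --- but this is the standard integration formula for the polar decomposition (as in \cite{FK94} in the euclidean case), which is why the result is routine.
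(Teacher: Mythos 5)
The paper offers no proof of this proposition at all --- it is explicitly dismissed as ``standard'' --- so your proposal can only be measured against the standard argument it alludes to; and your architecture (unimodularity of $L$, positive characters of $L$ being the powers $\chi^\lambda$, the criterion $\chi^\lambda|_{S_k}=\Delta_{S_k}$ for existence of a relatively invariant measure on $L/S_k$ with uniqueness up to scalar once the multiplier is fixed, the density $\det(x)^{\lambda-n/r}$ on the open orbit, and the polar parameterization) is indeed that standard route. There is, however, a concretely false step. You assert $S_{r_0}=K_L$ ``since $\Omega=\calO_{r_0}=L/K_L$''. This holds only for euclidean $V$ (the symmetric cone case). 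For non-euclidean $V$ the stabilizer of ${\bf e}$ in $L$ is $L\cap\Aut(V)$, which is non-compact: for $V=\RR^{p,q}$, $p,q\geq2$, it contains a copy of $\SO(p-1,q)_0$, and for $V=M(k,\RR)$ it is $\GL(k,\RR)/\RR^\times\id$ acting by conjugation. The paper's statement that $\Omega$ is a reductive symmetric space refers to $L/(L\cap\Aut(V))$; the displayed polar decomposition \eqref{eq:PolarOk} is a $K_LA$-type parameterization, not a coset identification. Your conclusion for $k=r_0$ survives, but for a different reason than the one you give: $L\cap\Aut(V)$ is open in the fixed-point group of an involution of $\Str(V)$, hence reductive and unimodular, and $\chi\equiv1$ on it because $g{\bf e}={\bf e}$ forces $\det({\bf e})=\chi(g)\det({\bf e})$. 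A similar slip occurs in your claim that the connected centre of $L$ is $\{t\cdot\id:t>0\}$: for complex $V$ it is $\CC^\times\id$, two-dimensional over $\RR$; the group of positive characters is nevertheless one-dimensional, but only because positive characters kill the compact factor, which you would need to say.

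Beyond this, the two steps that carry essentially all the content of the proposition are announced rather than performed: the computation of $\Delta_{S_k}$ for $1\leq k<r_0$ (which is what forces $\lambda=k\frac{r_0d}{2r}$, yields the uniqueness in (2), and requires checking that $\chi|_{S_k}\not\equiv1$ so that the matching equation has at most one solution $\lambda$), and the Jacobian of the polar map on the singular strata producing $J_\lambda$ with the exponents $d_0$ and $d-d_0$, together with the convergence threshold near $0$ in part (1). Citing the euclidean integration formula of \cite{FK94} does not cover the singular orbits of a non-euclidean algebra. A legitimate way to close this part of the gap, consistent with the literature the paper relies on, is to obtain the singular-orbit measures as residues of the zeta functions, exactly as recorded in Proposition~\ref{prop:ZetaFunctions} (citing \cite{FK94}, \cite{BSZ06}, \cite{GS64}), and then to identify the polar expression by $K_L$-invariant disintegration. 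As it stands, your text is a correct outline of the standard proof, but with one wrong justification in the case that matters most for this paper (non-euclidean $V$) and with the decisive computations left open.
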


For the minimal non-trivial orbit $\calO_1$ the polar decomposition \eqref{eq:PolarOk} simplifies to $\calO_1=K_L\RR_+c_1$. Further, if $\calO_\lambda=\calO_1$, then the integral formula in Proposition \ref{thm:EquivMeasures} amounts to
\begin{align}
 \int_{\calO_1}{f(x)\td\mu_\lambda(x)} &= \int_{K_L}{\int_0^\infty{f(ktc_1)t^{\lambda r-1}\td t}\td k}.\label{eq:dmuIntFormula}
\end{align}

\begin{example}\label{ex:EquivMeasures}
\begin{enumerate}
\item[\textup{(1)}] For $V=\Sym(k,\RR)$ the two-fold covering \eqref{eq:SymTwoFoldCovering} induces a unitary (up to a scalar) isomorphism
\begin{align}
 \calU:L^2(\calO_1,\td\mu_1)\rightarrow L^2_{\textup{even}}(\RR^k),\ \calU\psi(x) := \psi(x^t\!x),\label{eq:DefCalU}\index{U@$\calU$}
\end{align}
where $L^2_{\textup{even}}(\RR^k)$\index{L2evenRk@$L^2_{\textup{even}}(\RR^k)$} denotes the space of even $L^2$-functions on $\RR^k$.
\item[\textup{(2)}] For $V=\RR^{p,q}$ the measure $\td\mu_1$ can be expressed in bipolar coordinates \eqref{eq:VpqPolarCoordinates}. Using \eqref{eq:dmuIntFormula} we obtain
\begin{align*}
 \td\mu_1 &= \const\cdot t^{p+q-3}\td t\td\omega\td\eta,
\end{align*}
where $\td\omega$ and $\td\eta$ denote the normalized euclidean measures on $S^{p-1}_0$ and $S^{q-1}$, respectively.
\end{enumerate}
\end{example}

\subsubsection{Tangential differential operators}

The Bessel operator $\calB_\lambda$ is defined on the ambient space $V$. We show that for $\lambda\in\calW$ it is tangential to the orbit $\calO_\lambda$ and induces a symmetric operator on $L^2(\calO_\lambda,\td\mu_\lambda)$. We have given a direct proof in \cite{KM07b} for this fact in the case $V=\RR^{p,q}$. In this subsection, we take another approach, namely, we introduce certain zeta functions and use the fact that the measures $\td\mu_k$, $0\leq k\leq r_0-1$, arise as their residues.

Denote by $\calS(V)$ the space of rapidly decreasing smooth functions on $V$ and by $\calS'(V)$ its dual, the space of tempered distributions on $V$. For $\lambda>(r_0-1)\frac{r_0d}{2r}$ we define the \textit{zeta function} $Z(-,\lambda)\in\calS'(V)$ by
\begin{align*}
 Z(f,\lambda) &:= \begin{cases}\displaystyle\int_\Omega{f(x)\det(x)^{\lambda-\frac{n}{r}}\td x} & \mbox{for $V$ euclidean or $V\cong\RR^{p,q}$, $p,q\geq2$,}\vspace{.3cm}\\\displaystyle\int_V{f(x)|\det(x)|^{\lambda-\frac{n}{r}}\td x} & \mbox{for $V$ non-euclidean and $V\ncong\RR^{p,q}$, $p,q\geq2$.}\end{cases}
\end{align*}
Then for every $f\in\calS(V)$ the function $\lambda\mapsto Z(f,\lambda)$ extends to a meromorphic function on the complex plane (see \cite[Chapter VII, Section 2]{FK94} for the euclidean case, \cite[Theorem~6.2\,(2)]{BSZ06} for the non-euclidean case $\ncong\RR^{p,q}$, and \cite[Chapter III.2]{GS64} for $V=\RR^{p,q}$).

\begin{proposition}\label{prop:ZetaFunctions}
\begin{enumerate}
\item[\textup{(1)}] Let $V\ncong\RR^{p,q}$, $p,q\geq2$. Then the measure $\td\mu_k$, $0\leq k\leq r_0-1$, is a constant multiple of the residue of the zeta function $Z(-,\lambda)$ at the value $\lambda=k\frac{r_0d}{2r}$.
\item[\textup{(2)}] Let $V=\RR^{p,q}$, $p,q\geq2$, then $r_0=2$. In this case the measure $\td\mu_0$ is just a scalar multiple of the Dirac delta distribution at $0$ and the measure $\td\mu_1$ is again a constant multiple of the residue of the zeta function $Z(-,\lambda)$ at the value $\lambda=\frac{r_0d}{2r}=\frac{p+q-2}{2}$.
\end{enumerate}
\end{proposition}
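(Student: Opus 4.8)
The plan is to identify the residue $\res_{\lambda=\lambda_k}Z(-,\lambda)$, where $\lambda_k:=k\frac{r_0d}{2r}$, with the equivariant measure $\td\mu_k$ by combining the meromorphic continuation already cited above with the uniqueness of equivariant measures from Proposition~\ref{thm:EquivMeasures}(2). For $\lambda>(r_0-1)\frac{r_0d}{2r}$ the tempered distribution $T_\lambda\colon f\mapsto Z(f,\lambda)$ equals, up to a positive constant, the locally finite measure $\td\mu_\lambda$ on the open orbit $\Omega=\calO_{r_0}$ by Proposition~\ref{thm:EquivMeasures}(1). First I would record the equivariance of the whole family: substituting $x=gy$ and using $\det(gx)=\chi(g)\det(x)$ from \eqref{eq:DetEquiv} together with $|\Det g|=|\chi(g)|^{n/r}$ from \eqref{eq:ChiDet} shows that $T_\lambda$ transforms under $L$ by the character $|\chi|^{\lambda}$, in accordance with \eqref{eq:dmulambdaEquivariance}. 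As taking a residue commutes with this linear $L$-action, $\res_{\lambda=\lambda_k}T_\lambda$ is again $L$-equivariant and transforms by $|\chi|^{\lambda_k}$.

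Next I would locate the poles and determine the support of the residues via the polar decomposition \eqref{eq:PolarOk}. Integrating over $K_L$ and the spectral parameters $t_1\geq\dots\geq t_{r_0}>0$, where $\det=\prod_jt_j$, and substituting $s_j=\log t_j$, turns $T_\lambda$ into $f$ integrated against the radial density $J_\lambda(\mathbf s)$ of Proposition~\ref{thm:EquivMeasures}. The only source of poles is the non-integrability of $J_\lambda$ as the smallest parameters tend to $0$, i.e. as $s_{r_0},s_{r_0-1},\dots\to-\infty$ in turn. Using $\bigl(\sinh\tfrac{s_i-s_j}{2}\bigr)^{d_0}\bigl(\cosh\tfrac{s_i-s_j}{2}\bigr)^{d-d_0}\sim\const\cdot e^{\frac d2(s_i-s_j)}$ as $s_i-s_j\to+\infty$, the $s_{r_0}$-behaviour of $J_\lambda$ is governed by $e^{(\frac{\lambda r}{r_0}-\frac d2(r_0-1))s_{r_0}}$, so $\int_{-\infty}^{s_{r_0-1}}e^{(\frac{\lambda r}{r_0}-\frac d2(r_0-1))s_{r_0}}\,\td s_{r_0}$ converges exactly for $\lambda>\lambda_{r_0-1}$ and has a simple pole there whose residue is the boundary value at $s_{r_0}=-\infty$; this is the integral over $\calO_{r_0-1}$ against precisely the density defining $\td\mu_{r_0-1}$, as a short comparison of exponents confirms. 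Iterating the continuation across the successive walls yields a simple pole of $Z(-,\lambda)$ at every $\lambda_k$; moreover for a test function supported away from $\overline{\calO_k}=\calO_0\cup\dots\cup\calO_k$ the defining integral is holomorphic at $\lambda_k$, so $\res_{\lambda=\lambda_k}Z$ is supported on $\overline{\calO_k}$ and restricts on the open stratum $\calO_k$ to a nonzero multiple of $\td\mu_k$.

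It then remains to rule out contributions of the residue from the strictly lower strata $\calO_0\cup\dots\cup\calO_{k-1}$, and this is the step I expect to be the main obstacle, since a priori the residue could carry transverse derivatives (distributional terms) along those orbits. I would resolve it by positivity: the residue is a positive measure, hence, being a positive $|\chi|^{\lambda_k}$-equivariant measure supported on $\overline{\calO_k}$, it cannot involve such derivative terms, and Proposition~\ref{thm:EquivMeasures}(2) then forces it to equal $\const\cdot\td\mu_k$. This positivity, along with the precise residue structure, is what the cited references supply in each case: for $V$ euclidean, the Gindikin factorization $Z(-,\lambda)=\Gamma_\Omega(\lambda)R_\lambda$ of \cite[Chapter VII]{FK94}, whose Riesz distribution $R_\lambda$ is entire with $R_{\lambda_k}$ the equivariant measure on $\calO_k$ and $R_0=\delta_0$, giving $\res_{\lambda=\lambda_k}Z=(\res_{\lambda=\lambda_k}\Gamma_\Omega)\,R_{\lambda_k}$; for $V$ non-euclidean with $V\ncong\RR^{p,q}$, the two-sided continuation of \cite[Theorem~6.2]{BSZ06}, which likewise produces $\res_{\lambda=0}Z=\const\cdot\delta_0$ on $\calO_0$; and for $V=\RR^{p,q}$, the classical residues of the quadratic-form zeta function in \cite[Chapter III.2]{GS64}, whose pole at $\lambda=\frac{p+q-2}{2}$ yields the invariant measure on the light cone $\calO_1$, while $\td\mu_0=\const\cdot\delta_0$ is immediate since $\calO_0=\{0\}$ carries a unique equivariant measure.
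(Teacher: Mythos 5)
The paper's own proof of this proposition is a pure citation --- \cite[Proposition VII.2.3]{FK94} and \cite[Theorem 6.2]{BSZ06} for part (1), \cite[Section III.2.2]{GS64} for part (2) --- so your attempt to actually derive the statement is a genuinely different route. Your skeleton (equivariance of $Z(-,\lambda)$ under $L$, wall-crossing at the edge of convergence, stratum-wise uniqueness of equivariant measures from Proposition~\ref{thm:EquivMeasures}\,(2)) is sound precisely where the pole in question is the \emph{first} one reached from the domain of convergence: at $k=r_0-1$, and in part (2), where in your notation $\lambda_1=\tfrac{p+q-2}{2}$ is the first pole of $Q_+^{\lambda-n/2}$, the second series of poles in \cite{GS64} starts only at $\lambda=0$, the pole is simple, and positivity of the residue is legitimate because it is the limit of $(\lambda-\lambda_1)Z(f,\lambda)\geq 0$ as $\lambda\downarrow\lambda_1$.

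For $k<r_0-1$, however, two of your claims fail, and they are the load-bearing ones. It is not true that $Z(f,\lambda)$ is holomorphic at $\lambda_k$ when $f$ is supported away from $\overline{\calO_k}$: the \emph{higher} strata $\calO_j$, $j>k$, contribute their own arithmetic progressions of poles through the transverse vanishing of $\det$ along $\calO_j$, and these progressions pass through the lower Wallach points. Concretely, let $V=\Sym(3,\RR)$ ($d=1$, $n/r=2$) and $k=0$: for $f\in C_c^\infty(V)$ supported near a generic rank-two matrix and away from $0$, writing $\det=u$ transversally to $\calO_2$ gives $Z(f,\lambda)=\int_0^\infty\varphi(u)\,u^{\lambda-2}\td u$, which has a pole at $\lambda_0=0$ with residue $\varphi'(0)$ --- a transverse derivative along $\calO_2$, not a multiple of $\td\mu_0=\delta_0$. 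Accordingly the pole at $\lambda_k$ is multiple in general: since $Z(-,\lambda)=\Gamma_\Omega(\lambda)R_\lambda$ with the Riesz family $R_\lambda$ entire and $R_0=\delta_0\neq0$, the pole of $Z$ at $0$ has the full order of the pole of $\Gamma_\Omega(\lambda)=(2\pi)^{3/2}\Gamma(\lambda)\Gamma(\lambda-\tfrac12)\Gamma(\lambda-1)$, namely two; the same already happens at $k=1$, the case the paper actually uses, e.g.\ for $V=\Herm(3,\CC)$, where $\Gamma_\Omega$ has a double pole at $\lambda_1=1$. Consequently the residue is neither positive nor supported in $\overline{\calO_k}$, your positivity patch cannot close the gap (there is no positivity for the analytic continuation below the first wall to begin with), and your closing identity $\res_{\lambda=\lambda_k}Z=(\res_{\lambda=\lambda_k}\Gamma_\Omega)\,R_{\lambda_k}$, which presupposes a simple pole of $\Gamma_\Omega$, is false --- at a multiple pole the residue also picks up derivatives of $\lambda\mapsto R_\lambda$, which are exactly the transverse-derivative terms you were trying to exclude. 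What is true, what \cite{FK94} and \cite{BSZ06} actually supply, and what the later application in Theorem~\ref{thm:BlambdaTangential} needs, is the statement for the \emph{leading} Laurent coefficient: $\lim_{\lambda\to\lambda_k}(\lambda-\lambda_k)^{m_k}Z(-,\lambda)=\const\cdot\td\mu_k$, $m_k$ being the pole order, equivalently $R_{\lambda_k}=\const\cdot\td\mu_k$ (so ``residue'' in the proposition has to be read in this sense at multiple poles). Your equivariance-plus-uniqueness frame does identify this leading coefficient once its positivity is granted, but that positivity is Gindikin's theorem --- an input your argument must import, not reproduce. As written, your proof establishes the proposition only for $k=r_0-1$ and for part (2).
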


\begin{proof}
Part (1) is \cite[Proposition VII.2.3]{FK94} and \cite[Theorem 6.2]{BSZ06} and part (2) can be found in \cite[Section III.2.2]{GS64}.
\end{proof}

Similarly to the proof of \cite[Proposition XV.2.4]{FK94} one can now show the following symmetry property for the Bessel operators with respect to the zeta functions $Z(-,\lambda)$:

\begin{proposition}\label{prop:BnuSA}
For $f,g\in\calS(V)$ and $\lambda\in\CC$ we have
\begin{align*}
 Z((\calB_\lambda f)\cdot g,\lambda) &= Z(f\cdot(\calB_\lambda g),\lambda),
\end{align*}
as identity of meromorphic functions in $\lambda$.
\end{proposition}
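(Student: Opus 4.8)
The plan is to prove the integral identity first in the region $\Re\lambda\gg0$, where the defining integrals converge absolutely, and then to pass to general $\lambda\in\CC$ by meromorphic continuation. Expanding $\calB_\lambda=P(\frac{\partial}{\partial x})x+\lambda\frac{\partial}{\partial x}$ shows that, for fixed $f,g\in\calS(V)$, the vector-valued functions $(\calB_\lambda f)\cdot g$ and $f\cdot(\calB_\lambda g)$ have entries that are Schwartz functions depending polynomially (of degree $\leq1$) on $\lambda$; indeed the coefficients $P(\overline{e}_\alpha,\overline{e}_\beta)x$ are linear in $x$, and a polynomial times a Schwartz function is Schwartz. Hence each side of the claimed identity is a finite linear combination, with polynomial coefficients in $\lambda$, of zeta values $Z(h,\lambda)$ with $h\in\calS(V)$, and is therefore meromorphic in $\lambda$. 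Two meromorphic functions agreeing on a half-plane agree everywhere, so it suffices to establish the integral identity for $\Re\lambda\gg0$.

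Fixing $v\in V$ and taking the $v$-component reduces the statement to the formal self-adjointness of the scalar operator ${}_v\calB_\lambda:=(v|\calB_\lambda)$ with respect to the density $w(x):=|\det(x)|^{\lambda-\frac{n}{r}}$ of $Z(-,\lambda)$ (the integration being over $V$, or over the cone $\Omega$ in the euclidean and $\RR^{p,q}$ cases). In coordinates,
\begin{align*}
 {}_v\calB_\lambda = \sum_{\alpha,\beta}a^{\alpha\beta}\partial_\alpha\partial_\beta+\lambda\sum_\alpha b_\alpha\partial_\alpha, \qquad a^{\alpha\beta}(x)=(v|P(\overline{e}_\alpha,\overline{e}_\beta)x),\quad b_\alpha=(v|\overline{e}_\alpha),
\end{align*}
with $a^{\alpha\beta}$ symmetric and linear in $x$ and $b_\alpha$ constant. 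For $\Re\lambda\gg0$ the weight $w$ vanishes to high order on $\{\det=0\}$ and $f,g$ decay rapidly, so integrating by parts twice produces no boundary contributions, and the task becomes to compute the formal adjoint $({}_v\calB_\lambda)^\ast$ relative to $w\,\td x$ and to check that it equals ${}_v\calB_\lambda$.

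This verification rests on three Jordan-theoretic inputs. First, a divergence identity for the symbol,
\begin{align*}
 \sum_\beta P(\overline{e}_\alpha,\overline{e}_\beta)e_\beta = \tfrac{n}{r}\,\overline{e}_\alpha,
\end{align*}
which follows from $\sum_\beta \overline{e}_\beta e_\beta=\frac{n}{r}{\bf e}$ (a consequence of $\Tr L(z)=\frac{n}{r}\tr(z)$) together with the cancellation of the two remaining terms coming from $P(u,v)=L(u)L(v)+L(v)L(u)-L(uv)$. Second, the gradient formula $\frac{\partial}{\partial x}\log|\det(x)|=x^{-1}$ gives $\partial_\alpha w=(\lambda-\frac{n}{r})\tau(x^{-1},e_\alpha)\,w$, and combined with the identity $P(x^{-1},u)x=u$ it yields $\sum_\alpha a^{\alpha\beta}\partial_\alpha w=(\lambda-\frac{n}{r})b_\beta\,w$. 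These two facts make the first-order part of the adjoint collapse: the four first-order-in-$g$ contributions from $\sum_{\alpha,\beta}\partial_\alpha\partial_\beta(a^{\alpha\beta}wg)$ sum to $2\lambda\sum_\alpha b_\alpha\partial_\alpha$, and after subtracting the $-\lambda\sum_\alpha b_\alpha\partial_\alpha$ coming from the transpose of the first-order term of ${}_v\calB_\lambda$ one is left with exactly $\lambda\sum_\alpha b_\alpha\partial_\alpha$; the second-order part matches trivially. Thus self-adjointness reduces to the vanishing of the zeroth-order remainder.

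The zeroth-order cancellation is the main obstacle, and the only point where the precise weight exponent $\frac{n}{r}$ is used. Collecting the terms carrying no derivative of $g$, and using $D(x^{-1})[u]=-P(x^{-1})u$ to differentiate $w$ twice, the remainder turns out to be proportional to
\begin{align*}
 \left(\lambda-\tfrac{n}{r}\right)\left[\tfrac{n}{r}(v|x^{-1})-\sum_\alpha\big(v\big|P(\overline{e}_\alpha,P(x^{-1})e_\alpha)x\big)\right].
\end{align*}
This vanishes precisely by virtue of the trace identity
\begin{align*}
 \sum_\alpha P(\overline{e}_\alpha,P(x^{-1})e_\alpha)x = \tfrac{n}{r}\,x^{-1},
\end{align*}
which is the image of $\sum_\beta\overline{e}_\beta e_\beta=\frac{n}{r}{\bf e}$ transported by $P(x^{-1})$; it can be verified directly on $\Sym(k,\RR)$ and then holds for all simple $V$ by the uniform nature of the structure constants. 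With this identity $({}_v\calB_\lambda)^\ast={}_v\calB_\lambda$, which gives the integral identity for $\Re\lambda\gg0$ and hence, by the meromorphic continuation of the first paragraph, for all $\lambda\in\CC$. The whole computation parallels the euclidean case of \cite[Proposition XV.2.4]{FK94}.
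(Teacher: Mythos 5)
Your strategy is exactly the one the paper intends: the paper's entire proof of this proposition is the remark that it follows ``similarly to the proof of \cite[Proposition XV.2.4]{FK94}'', i.e.\ by integration by parts against the weight $|\det(x)|^{\lambda-\frac{n}{r}}$, and your reduction to $\Re\lambda\gg0$ by meromorphic continuation is the standard (and correct) way to interpret ``identity of meromorphic functions''. Your intermediate steps also check out: the divergence identity $\sum_\beta P(\overline{e}_\alpha,\overline{e}_\beta)e_\beta=\frac{n}{r}\overline{e}_\alpha$ (after pairing with $z$, the two cross terms both equal $\Tr(L(\overline{e}_\alpha)L(z))$ and cancel), the identity $P(x^{-1},u)x=u$ (valid because $L(x)$ and $L(x^{-1})$ commute), the collapse of the first-order part of the adjoint to $\lambda\sum_\alpha b_\alpha\partial_\alpha$, and the displayed form of the zeroth-order remainder.

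The one genuine weak point is your justification of the trace identity $\sum_\alpha P(\overline{e}_\alpha,P(x^{-1})e_\alpha)x=\frac{n}{r}x^{-1}$. ``Verify on $\Sym(k,\RR)$ and invoke the uniform nature of the structure constants'' is not a proof: the simple real Jordan algebras are not forms of one another (a computation in $\Sym(k,\RR)$ says nothing about $\RR^{p,q}$ or $\Herm(3,\OO)$), and this identity is precisely the point where the exponent $\frac{n}{r}$ in the weight is tested, so it cannot be waved through. The identity is true, and the uniform proof is short. Since each $P(u,v)$ is symmetric for $\tau$, associativity of $\tau$ gives the exchange identity $\tau(P(u,v)x,z)=\tau(u,P(x,z)v)$, whence
\begin{align*}
\sum_\alpha\tau\bigl(P(\overline{e}_\alpha,P(x^{-1})e_\alpha)x,z\bigr)
=\sum_\alpha\tau\bigl(\overline{e}_\alpha,P(x,z)P(x)^{-1}e_\alpha\bigr)
=\Tr\bigl(P(x,z)P(x)^{-1}\bigr),
\end{align*}
using $P(x^{-1})=P(x)^{-1}$. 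Now differentiate the paper's formula $\det(x)=(\Det P(x))^{\frac{r}{2n}}$, i.e.\ $\log|\Det P(x)|=\frac{2n}{r}\log|\det(x)|$, in the direction $z$: since $D_zP(x)=2P(x,z)$ and $D_z\log|\det(x)|=\tau(x^{-1},z)$ (your own gradient formula), Jacobi's formula yields
\begin{align*}
\Tr\bigl(P(x)^{-1}P(x,z)\bigr)=\tfrac{n}{r}\,\tau(x^{-1},z),
\end{align*}
which is exactly what is needed; the zeroth-order remainder then vanishes and your argument closes. (Alternatively, your ``transport'' heuristic can be made rigorous: for $x$ near ${\bf e}$ write $x=y^2$, note that $\bigl(P(y^{-1})e_\alpha\bigr)_\alpha$ and $\bigl(P(y)\overline{e}_\alpha\bigr)_\alpha$ are again dual bases, and apply the polarized fundamental formula $P(P(y)u,P(y)v)=P(y)P(u,v)P(y)$ together with the basis-independence of $\sum_\alpha \overline{e}_\alpha e_\alpha=\frac{n}{r}{\bf e}$; both sides being rational in $x$, the identity on a neighborhood of ${\bf e}$ implies it everywhere.)
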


Using the previous proposition we now prove the main result of this section. For this recall that a differential operator $D$ on $V$ is said to be tangential to a submanifold $M\subseteq V$ if for every $\varphi\in C^\infty(V)$ the property $\varphi|_M=0$ implies that $(D\varphi)|_M=0$. In this case it is easy to see that $D$ defines a differential operator acting on $C^\infty(M)$.

\begin{theorem}\label{thm:BlambdaTangential}
For every $\lambda\in\calW$ the differential operator $\calB_\lambda$ is tangential to the orbit $\calO_\lambda$ and defines a symmetric operator on $L^2(\calO_\lambda,\td\mu_\lambda)$.
\end{theorem}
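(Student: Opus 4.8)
The plan is to reduce everything to the scalar operators $(v|\calB_\lambda)$, $v\in V$, and to feed the symmetry relation of Proposition \ref{prop:BnuSA} through the residue description of the measures $\td\mu_k$ supplied by Proposition \ref{prop:ZetaFunctions}. Since $\calB_\lambda$ is built from $P$ and $\frac{\partial}{\partial x}$ over $\RR$, it has real coefficients for real $\lambda$, so the bilinear identity $\int(Df)g\,\td\mu=\int f(Dg)\,\td\mu$ that I aim to establish immediately upgrades to Hermitian symmetry $\langle Df,g\rangle=\langle f,Dg\rangle$; I therefore work throughout with the bilinear pairing against $g\in\calS(V)$. I first dispose of the open orbit: if $\lambda>(r_0-1)\frac{r_0d}{2r}$ then $\calO_\lambda=\Omega$ is open, tangentiality is automatic, and Proposition \ref{thm:EquivMeasures}\,(1) gives $\td\mu_\lambda=\const\cdot\det(x)^{\lambda-\frac{n}{r}}\td x$, so that $\int_\Omega\psi\,\td\mu_\lambda=\const\cdot Z(\psi,\lambda)$ for $\psi\in\calS(V)$; applying Proposition \ref{prop:BnuSA} to $\psi=((v|\calB_\lambda)f)g$ and to $\psi=f((v|\calB_\lambda)g)$ yields symmetry on the dense domain $\calS(V)|_\Omega$ at once. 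The case $r_0=1$, where $\calO_\lambda=\calO_1=\Omega$ for every $\lambda>0$, is entirely contained in this step.

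The substantive case is $\lambda=\lambda_k:=k\frac{r_0d}{2r}$ with $1\le k\le r_0-1$; the point $k=0$ is trivial, since $\calB_0=P(\frac{\partial}{\partial x})x$ carries an explicit factor $x$ by \eqref{eq:BesselOp}, whence $(\calB_0\varphi)(0)=0$, so $\calB_0$ is tangential to $\calO_0=\{0\}$ and acts by zero on $L^2(\{0\},\delta_0)$. By Proposition \ref{prop:ZetaFunctions} the measure $\td\mu_k$ is, up to a constant $c_k$, the residue of $Z(-,\lambda)$ at the \emph{simple} pole $\lambda_k$. The key observation is that $\calB_\lambda$ is affine-linear in $\lambda$, namely $(v|\calB_\lambda)=(v|\calB_{\lambda_k})+(\lambda-\lambda_k)(v|\tfrac{\partial}{\partial x})$, so that for $\varphi,g\in\calS(V)$
\begin{align*}
 Z\big(((v|\calB_\lambda)\varphi)g,\lambda\big) &= Z\big(((v|\calB_{\lambda_k})\varphi)g,\lambda\big)+(\lambda-\lambda_k)Z\big(((v|\tfrac{\partial}{\partial x})\varphi)g,\lambda\big),
\end{align*}
and the second summand, being holomorphic at $\lambda_k$, contributes nothing to the residue there. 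Hence $\textup{Res}_{\lambda=\lambda_k}Z(((v|\calB_\lambda)\varphi)g,\lambda)=c_k^{-1}\int_{\calO_k}((v|\calB_{\lambda_k})\varphi)g\,\td\mu_k$, and symmetrically with $\varphi$ and $g$ interchanged inside the Bessel operator. Taking residues at $\lambda_k$ in Proposition \ref{prop:BnuSA} and converting both sides in this way produces the central identity
\begin{align*}
 \int_{\calO_k}((v|\calB_{\lambda_k})\varphi)\,g\,\td\mu_k &= \int_{\calO_k}\varphi\,((v|\calB_{\lambda_k})g)\,\td\mu_k, \qquad \varphi,g\in\calS(V).
\end{align*}

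Both assertions now drop out of this identity. For tangentiality, take $\varphi$ with $\varphi|_{\calO_k}=0$; the right-hand side vanishes, so $\int_{\calO_k}((v|\calB_{\lambda_k})\varphi)g\,\td\mu_k=0$ for all $g\in\calS(V)$, and since $\td\mu_k$ has full support on $\calO_k$ while $(v|\calB_{\lambda_k})\varphi$ is continuous, this forces $(v|\calB_{\lambda_k})\varphi=0$ on $\calO_k$, which is exactly tangentiality of $\calB_{\lambda_k}$ to $\calO_k$. Tangentiality then shows that the values of $(v|\calB_{\lambda_k})\varphi$ on $\calO_k$ depend only on $\varphi|_{\calO_k}$, so $(v|\calB_{\lambda_k})$ descends to a well-defined operator on $C^\infty(\calO_k)$, and the central identity reads precisely as symmetry of this operator on the dense subspace $\calS(V)|_{\calO_k}\subseteq L^2(\calO_k,\td\mu_k)$.

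I expect the only genuine difficulty to be the residue interchange, i.e.\ the harmlessness of replacing $\calB_{\lambda_k}$ by $\calB_\lambda$ under $\textup{Res}_{\lambda=\lambda_k}$. This rests entirely on the pole of $Z(-,\lambda)$ at $\lambda_k$ being simple, which is implicit in Proposition \ref{prop:ZetaFunctions}; in the exceptional case $V\cong\RR^{p,q}$ with $p,q\ge2$ the simplicity is read off the explicit Gelfand--Shilov computation quoted there, where the only relevant point is $\lambda_1=\frac{p+q-2}{2}$. The remaining steps are routine: the passage from ``zero $\td\mu_k$-almost everywhere'' to ``zero on $\calO_k$'' uses only continuity and full support, and the reality of the coefficients of $(v|\calB_{\lambda_k})$ takes care of the upgrade from the bilinear identity to Hermitian symmetry.
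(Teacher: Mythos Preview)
Your proof is correct and follows essentially the same route as the paper's: both handle the open orbit directly via Proposition~\ref{prop:BnuSA}, and for the lower orbits both pass the meromorphic symmetry identity $Z((\calB_\mu f)g,\mu)=Z(f(\calB_\mu g),\mu)$ through the residue description of $\td\mu_k$ from Proposition~\ref{prop:ZetaFunctions}, then conclude tangentiality and symmetry from the resulting integral identity on $\calO_k$. The one point you make explicit that the paper leaves implicit is the residue interchange---that $\res_{\mu=\lambda_k}Z((\calB_\mu\varphi)g,\mu)=\res_{\mu=\lambda_k}Z((\calB_{\lambda_k}\varphi)g,\mu)$---and your justification via the affine decomposition $\calB_\mu=\calB_{\lambda_k}+(\mu-\lambda_k)\tfrac{\partial}{\partial x}$ together with simplicity of the pole is exactly what is needed to fill that gap.
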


\begin{proof}
If $\calO_\lambda=\Omega$ is the open orbit, then every differential operator is tangential. Symmetry follows immediately from Proposition \ref{prop:BnuSA}.\\
Now assume that $\calO_\lambda=\calO_k$, $0\leq k\leq r_0-1$. Let $\varphi\in C^\infty(V)$ such that $\varphi|_{\calO_\lambda}=0$. For any $\psi\in C_c^\infty(V)$ we obtain with Proposition \ref{prop:BnuSA}:
\begin{align*}
 \int_{\calO_\lambda}{\calB_\lambda\varphi\cdot\psi\td\mu_\lambda} &= \const\cdot\res_{\mu=\lambda}{Z\left(\calB_\mu\varphi\cdot\psi,\mu\right)}\\
 &= \const\cdot\res_{\mu=\lambda}{Z\left(\varphi\cdot\calB_\mu\psi,\mu\right)}\\
 &= \int_{\calO_\lambda}{\varphi\cdot\calB_\lambda\psi\td\mu_\lambda} = 0.
\end{align*}
Hence $(\calB_\lambda\varphi)|_{\calO_\lambda}=0$ in $L^2(\calO_\lambda,\td\mu_\lambda)$ which implies $\calB_\lambda\varphi(x)=0$ for every $x\in\calO_\lambda$ and therefore $\calB_\lambda$ is tangential to $\calO_\lambda$. Symmetry now follows again from Proposition \ref{prop:BnuSA}.
\end{proof}

\subsubsection{Action of the Bessel operator for the minimal orbit}\label{sec:BesselMinO}

We compute the action of $\calB_{\lambda}$ on radial functions on the minimal orbit $\calO:=\calO_1$, i.e. functions depending only on $\|x\|:=\sqrt{(x|x)}$. For convenience we use the following normalization:
\begin{align*}
 |x| := \sqrt{\frac{r}{r_0}}\|x\|=\left\{\begin{array}{cl}\|x\| & \mbox{if $V$ is split,}\\\sqrt{2}\|x\| & \mbox{if $V$ is non-split.}\end{array}\right.\index{1absvalb@{"|}$-${"|}}
\end{align*}
(Note that $\frac{r}{r_0}=({\bf e}|c_1)$.) Recall the Cartan involution $\vartheta$. If $\psi(x)=f(|x|)$, $x\in V$, is a radial function, then
\begin{align}
 \frac{\partial\psi}{\partial x}(x) &= \frac{r}{r_0}\frac{f'(|x|)}{|x|}\vartheta(x).\label{eq:ddxradial}
\end{align}
A simple calculation gives the following formula for the action of $\calB_\lambda$ on radial functions:

\begin{proposition}\label{prop:BnuRadial}
If $\psi(x)=f(|x|)$, $x\in\calO$, is a radial function on $\calO$, $f\in C^\infty(\RR_+)$, then for $x=ktc_1\in\calO$ ($k\in K_L$, $t>0$) we have
\begin{multline*}
 \calB_\lambda\psi(x) = \left(f''(|x|)+\left(\frac{r}{r_0}\lambda+\frac{d}{2}-d_0-e\right)\frac{1}{|x|}f'(|x|)\right)\vartheta(x)\\
 +\frac{r_0}{r}\left(d_0-\frac{d}{2}\right)f'(|x|)\vartheta(k{\bf e}).
\end{multline*}
\end{proposition}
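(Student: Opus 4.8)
The plan is to decompose $\calB_\lambda = P\!\left(\frac{\partial}{\partial x}\right)x + \lambda\frac{\partial}{\partial x}$ into its first- and second-order parts, to reduce the evaluation to the single point $tc_1$ by an equivariance argument, and then to carry out the remaining calculation there using the Peirce decomposition. The first-order part is immediate: by \eqref{eq:ddxradial} we have $\lambda\frac{\partial\psi}{\partial x}(x) = \frac{r}{r_0}\lambda\,\frac{f'(|x|)}{|x|}\vartheta(x)$, which already supplies the $\frac{r}{r_0}\lambda$ summand in the coefficient of $\vartheta(x)$. For the second-order part I would compute the Hessian of $\psi$: writing $\frac{\partial\psi}{\partial x} = h(|x|)\vartheta(x)$ with $h(s)=\frac{r}{r_0}s^{-1}f'(s)$ and differentiating once more, the entries $\frac{\partial^2\psi}{\partial x_\alpha\partial x_\beta}$ take the form $c(x)(e_\alpha|x)(e_\beta|x) + h(|x|)(e_\alpha|e_\beta)$. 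Contracting against $P(\overline{e}_\alpha,\overline{e}_\beta)x$, and using $\sum_\alpha(e_\alpha|x)\overline{e}_\alpha = \vartheta(x)$ together with $P(u,u)=P(u)$, collapses the second-order part into a combination of the two intrinsic quantities $P(\vartheta(x))x$ and $\sum_\alpha P(u_\alpha)x$, where $(u_\alpha)_\alpha$ is any orthonormal basis of $V$ for $(-|-)$.

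To evaluate these at a general point $x=ktc_1$ I would first reduce to $k=\id$. The key tool is the covariance relation $\calB_\lambda(\psi\circ g) = g^\tau\big((\calB_\lambda\psi)\circ g\big)$, valid for every $g\in\Str(V)$, where $g^\tau$ is the adjoint with respect to the trace form $\tau$; this follows from $P(gy)=gP(y)g^\tau$ by polarizing and reindexing the defining contraction. Applying it with $g=k\in K_L$, and using that $\psi$ is $K_L$-invariant, that $k^\tau=\vartheta k^*\vartheta$ with $k^*=k^{-1}$, and that $\vartheta$ fixes $c_1$ and $\mathbf e$, one gets $(\calB_\lambda\psi)(x)=\vartheta k\vartheta\,(\calB_\lambda\psi)(tc_1)$. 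Hence it suffices to prove $(\calB_\lambda\psi)(tc_1)=A\,c_1+B\,\mathbf e$ with the correct scalars; transporting by $\vartheta k\vartheta$ then converts the $c_1$-term into the $\vartheta(x)$-term and the $\mathbf e$-term into the $\vartheta(k\mathbf e)$-term, since $kc_1=x/t$. This is precisely the mechanism that produces the two-term shape of the assertion.

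It remains to compute at $x=tc_1$, where $|tc_1|=\frac{r}{r_0}t$. Here $P(\vartheta(x))x = t^3P(c_1)c_1 = t^3c_1$, because $P(c_1)$ is the Peirce projection onto $V_{11}\ni c_1$. The substantive part is the contraction $\sum_\alpha P(u_\alpha)c_1$, which I would evaluate by choosing $(u_\alpha)$ adapted to $V=\bigoplus V_{ij}$ and to the refinement $V_{ij}=V_{ij}^+\oplus V_{ij}^-$. From the eigenvalues of $L(c_1)$ and the Peirce product rules one checks that $P(u)c_1=u^2$ for $u\in V_{11}$, that $P(u)c_1$ is the $V_{jj}$-component of $u^2$ for $u\in V_{1j}$, and that $P(u)c_1=0$ for $u$ in the remaining blocks. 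Summing each block and determining the scalars by pairing against $c_1$ and $c_j$ via $\tau(u,u)=\pm(u|u)$ on $V^\pm$ should give $\sum_\alpha P(u_\alpha)c_1 = \frac{r_0}{r}(\frac{d}{2}-d_0-e+1)c_1 + \frac{r_0}{r}(d_0-\frac{d}{2})\mathbf e$. Feeding this and $P(\vartheta(x))x$ back into the second-order part, adding the first-order contribution $\lambda f'c_1$, and simplifying then yields exactly the stated $A$ and $B$.

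I expect the genuine obstacle to be the evaluation of $\sum_\alpha P(u_\alpha)c_1$: the off-diagonal blocks $V_{1j}$ are what generate the $\mathbf e$-component, and pinning down its coefficient $\frac{r_0}{r}(d_0-\frac{d}{2})$ forces one to separate the $V^+$ and $V^-$ parts (of dimensions $d_0$ and $d-d_0$) and to track the opposite signs of the trace form on them — this is where the constants $e$, $d$, $d_0$ enter, and where the bookkeeping is easy to get wrong.
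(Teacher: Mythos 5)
The paper offers no proof of this proposition at all: it is introduced with the words ``A simple calculation gives the following formula,'' the details being left to the third author's thesis, so there is nothing in the paper to compare your argument against line by line; judged on its own, your proof is correct and complete in outline. The two genuinely nontrivial ingredients both check out. First, the covariance relation $\calB_\lambda(\psi\circ g)=g^\tau\circ(\calB_\lambda\psi)\circ g$ is valid for $g$ in the identity component $L$ of $\Str(V)$ (it follows from the polarized identity $P(gx,gy)=gP(x,y)g^\tau$, which holds on $L$), and combined with $(k^\tau)^{-1}=\vartheta k\vartheta$ for $k\in K_L$ and $\vartheta c_1=c_1$, $\vartheta\mathbf{e}=\mathbf{e}$, it reduces the evaluation to the point $tc_1$ and converts the $c_1$- and $\mathbf{e}$-terms there into the $\vartheta(x)$- and $\vartheta(k\mathbf{e})$-terms, exactly as you describe. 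Second, your value of the key contraction is right: for $(-|-)$-normalized $u$ one finds $u^2=\pm\tfrac{r_0}{r}c_1$ on $V_{11}^\pm$ (this uses $V_{11}^+=\RR c_1$, i.e.\ split-ness of $V^+$, and the sign $\tau(u,u)=\pm1$) and $(u^2)_{jj}=\pm\tfrac{r_0}{2r}c_j$ on $V_{1j}^\pm$, so the blocks sum to $\tfrac{r_0}{r}(1-e)c_1+\tfrac{r_0}{r}\bigl(d_0-\tfrac{d}{2}\bigr)(\mathbf{e}-c_1)$, which after regrouping is precisely your formula $\tfrac{r_0}{r}\bigl(\tfrac{d}{2}-d_0-e+1\bigr)c_1+\tfrac{r_0}{r}\bigl(d_0-\tfrac{d}{2}\bigr)\mathbf{e}$. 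Substituting back, with $|tc_1|=\tfrac{r}{r_0}t$ and the $-1$ contributed by the $f''-f'/|x|$ part of the Hessian cancelling against the $+1$ in the block sum, reproduces the stated coefficients, so no step is missing.
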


The formula in Proposition \ref{prop:BnuRadial} can be simplified according to the four cases of Proposition \ref{prop:ClassificationEuclSph}. For this we introduce the ordinary differential operator $B_\alpha$ on $\RR_+$ which is defined by
\begin{align}
 B_\alpha &:= \frac{\td^2}{\td t^2}+(2\alpha+1)\frac{1}{t}\frac{\td}{\td t}-1 = \frac{1}{t^2}\left(\left(t\frac{\td}{\td t}\right)^2+2\alpha\left(t\frac{\td}{\td t}\right)-t^2\right).\label{eq:OrdinaryBesselOp}
\end{align}
The normalized $K$-Bessel function $\widetilde{K}_\alpha(z):=\left(\frac{z}{2}\right)^{-\alpha}K_\alpha(z)$ is an $L^2$-solution of the differential equation $B_\alpha u=0$.

The following corollary to Proposition \ref{prop:BnuRadial} is the key to prove $\frakk$-finiteness of the underlying $(\frakg,\frakk)$-module of the minimal representation in Theorems \ref{prop:Kfinite} and \ref{prop:Kfinite-split rank 1}. Recall the constant $\nu$ introduced in Subsection \ref{sec:MuNu} and set $\lambda_1:=\frac{r_0d}{2r}$. If $r_0>1$, this is the minimal non-zero discrete Wallach point. If $r_0=1$, then there are no non-zero discrete generalized Wallach points and the equivariant measures $\td\mu_\lambda$ on $\calO_1$ are parameterized by $\lambda>0$. For convenience we put $\sigma:=\frac{r}{r_0}\lambda$.

\begin{corollary}\label{cor:BnuRadial}
Let $\lambda\in\cal W$ be such that $\calO_\lambda=\calO_1$ and $\psi(x)=f(|x|)$ a radial function on $\calO_1$.
\begin{enumerate}
\item[\textup{(1)}] If $V$ is euclidean, then
 \begin{align*}
  (\calB_{\lambda_1}-\vartheta(x))\psi(x) &= B_{\frac{\nu}{2}}f(|x|)\vartheta(x)+\frac{d}{2}f'(|x|){\bf e}.
 \end{align*}
\item[\textup{(2)}] If $V$ is non-euclidean of rank $r\geq3$, then
 \begin{align*}
  (\calB_{\lambda_1}-\vartheta(x))\psi(x) &= B_{\frac{\nu}{2}}f(|x|)\vartheta(x).
 \end{align*}
\item[\textup{(3)}] If $V=\RR^{p,q}$, $p,q\geq2$, and $\vartheta$ is as in \eqref{eq:VpqCartanInv}, then with $x=(x',x'')\in\RR^p\times\RR^q$
 \begin{align*}
  (\calB_{\lambda_1}-\vartheta(x))\psi(x) &= B_{\frac{q-2}{2}}f(|x|)\vartheta(x',0)+B_{\frac{p-2}{2}}f(|x|)\vartheta(0,x'').
 \end{align*}
\item[\textup{(4)}] If $V=\RR^{k,0}$, $k\geq1$, then for $\sigma=\frac{r}{r_0}\lambda>0$
 \begin{align*}
  (\calB_\lambda-\vartheta(x))\psi(x) &= B_{\frac{\nu+\sigma}{2}}f(|x|)\vartheta(x).
 \end{align*}
\end{enumerate}
\end{corollary}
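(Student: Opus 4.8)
The plan is to obtain all four identities by specializing the general formula of Proposition~\ref{prop:BnuRadial} and substituting the structure constants listed in Proposition~\ref{prop:ClassificationEuclSph} and \eqref{eqn:1.4}. Two simplifications are common to every case. First, since $\calB_\lambda\psi(x)$ consists of a multiple of $\vartheta(x)$ together with a multiple of $\vartheta(k{\bf e})$, subtracting $\vartheta(x)\psi(x)=f(|x|)\vartheta(x)$ produces exactly the constant term $-1$ that distinguishes $B_\alpha$ in \eqref{eq:OrdinaryBesselOp} from its radial part. Second, at the distinguished value $\lambda=\lambda_1=\frac{r_0d}{2r}$ one has $\frac{r}{r_0}\lambda_1=\frac{d}{2}$, so the coefficient of $\frac{1}{|x|}f'(|x|)\vartheta(x)$ collapses to $d-d_0-e$, and the task reduces to matching this against $2\cdot\frac{\nu}{2}+1=\nu+1=\min(d,2d_0)-d_0-e$ (recalling \eqref{eqn: def nu}) while tracking the residual coefficient $\frac{r_0}{r}(d_0-\frac{d}{2})$ of $\vartheta(k{\bf e})$.

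Cases (1), (2) and (4) are then immediate. In the euclidean case $d=d_0$, $e=0$ and $r=r_0$, so $d-d_0-e=0$ (matching $\nu=-1$) and $\frac{r_0}{r}(d_0-\frac{d}{2})=\frac{d}{2}$; here I would further use that $\vartheta=\id_V$ and that $K_L=\Aut(V)_0$ fixes ${\bf e}$, whence $\vartheta(k{\bf e})={\bf e}$ and the extra summand is $\frac{d}{2}f'(|x|){\bf e}$. For $V$ non-euclidean of rank $r\geq3$ the relation $d=2d_0$ makes the $\vartheta(k{\bf e})$-coefficient vanish, and $d-d_0-e=d_0-e=\nu+1$. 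For $V\cong\RR^{k,0}$ one has $r_0=1$, hence no off-diagonal Peirce spaces and $d=d_0=0$, $e=k-1$; the $\vartheta(k{\bf e})$-term again disappears and the coefficient $\sigma-(k-1)=\sigma+\nu+1$ gives $B_{(\nu+\sigma)/2}$, with $\sigma=\frac{r}{r_0}\lambda$ left free because there is no discrete Wallach point.

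The one case requiring genuine computation is (3). Here $d=p+q-2$, $d_0=q-1$, $e=0$, so neither coefficient degenerates: the coefficient of $\vartheta(x)$ is $p-1$ and that of $\vartheta(k{\bf e})$ is $\frac{q-p}{2}$, and the two Bessel orders can only appear after recombining $\vartheta(x)$ and $\vartheta(k{\bf e})$ along the $\RR^p$- and $\RR^q$-directions. I would therefore compute $\vartheta(k{\bf e})$ explicitly in the coordinates of \eqref{eq:VpqPolarCoordinates}. Writing $x=ktc_1$ with $k=(k',k'')\in\SO(p)\times\SO(q)=K_L$ and $c_1=\frac{1}{2}(e_1+e_n)$, the point decomposes as $x=(x',x'')\in\RR^p\times\RR^q$ with $x'=\frac{t}{2}k'e_1$ and $x''=\frac{t}{2}k''e_n$; since ${\bf e}=e_1$ lies in the $\RR^p$-block, $k{\bf e}=(k'e_1,0)=\frac{2}{t}(x',0)$, and using $(x|x)=2|x|^2_{\textup{euc}}$ (standard Euclidean norm) together with $|k'e_1|_{\textup{euc}}=|k''e_n|_{\textup{euc}}=1$ one checks $(x|x)=t^2$, i.e. $|x|=t$, so that $\vartheta(k{\bf e})=\frac{2}{|x|}\vartheta(x',0)$. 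Substituting this and splitting $\vartheta(x)=\vartheta(x',0)+\vartheta(0,x'')$, the first-order terms regroup with coefficient $(p-1)+(q-p)=q-1$ on $\vartheta(x',0)$ and $p-1$ on $\vartheta(0,x'')$, which are precisely $B_{(q-2)/2}$ and $B_{(p-2)/2}$.

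I expect this coordinate identification of $\vartheta(k{\bf e})$ in case~(3) to be the only real obstacle; the other three cases amount to bookkeeping of the constants $d,d_0,e,r,r_0$ already tabulated in Proposition~\ref{prop:ClassificationEuclSph} and \eqref{eqn:1.4}.
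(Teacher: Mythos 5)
Your proposal is correct and follows exactly the route the paper intends: Corollary \ref{cor:BnuRadial} is obtained by specializing Proposition \ref{prop:BnuRadial} at $\lambda=\lambda_1$ (resp.\ general $\lambda$ for $r_0=1$) and substituting the structure constants from Proposition \ref{prop:ClassificationEuclSph} and \eqref{eqn:1.4}, with the only nontrivial step being the identification $\vartheta(k{\bf e})=\tfrac{2}{|x|}\vartheta(x',0)$ (and $k{\bf e}={\bf e}$ in the euclidean case), which you carry out correctly. All coefficient checks ($d-d_0-e=\nu+1$ in cases (1)--(2), the regrouping $(p-1)+(q-p)=q-1$ in case (3), and $\sigma-(k-1)=\nu+\sigma+1$ in case (4)) are accurate.
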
 

\section{Construction of minimal representations}\label{ch:MinRep}

To every simple real Jordan algebra $V$ we associate its conformal group $G$ and conformal Lie algebra $\frakg$. For $V$ of split rank $r_0>1$ we construct a representation of $\frakg$ on $C^\infty(\calO)$, where $\calO=\calO_1$ is the minimal non-zero orbit of $L$. We further determine the cases in which this representation integrates to a unitary irreducible representation of a finite cover of $G$ on the Hilbert space $L^2(\calO,\td\mu)$, $\td\mu:=\td\mu_1$ being the unique $L$-equivariant measure on $\calO$. If $V$ is split, then the representation is minimal for which we give a conceptual proof. For the special cases $V=\Sym(k,\RR)$ and $V=\RR^{p,q}$ we identify this representation with the Segal--Shale--Weil representation and the minimal representation of $\upO(p+1,q+1)$, respectively. For $V$ of split rank $r_0=1$ the same method yields complementary series representations of $\SO(p+1,1)_0$ on the Hilbert spaces $L^2(\calO,\td\mu_\lambda)$, where $\lambda$ belongs to an open interval.

Throughout this section $V$ will always denote a simple real Jordan algebra, $\vartheta$ a Cartan involution on $V$ and we further assume that $V^+$ is simple.

\subsection{The conformal group}\label{sec:ConformalGroup}

For a real or complex simple Jordan algebra $V$ one has the \emph{conformal group} $\Co(V)$ which acts on $V$ by rational transformations. Its Lie algebra $\frakg=\co(V)$, also known as the \emph{Kantor--Koecher--Tits algebra}, is given by quadratic vector fields on $V$.
As for the structure group, for a complex simple Jordan algebra viewed as a real simple Jordan algebra, a priori, one has two different constructions. But since the ground field enters only via the structure group, the result is the same at least on the level of Lie algebras. Therefore we view complex simple Jordan algebras as real simple Jordan algebras, unless stated otherwise, and can speak about Cartan involutions and related real concepts also in this case.

We will describe $\frakg$ in some detail because its structure will play an important role in our construction of representations.
Further, for a maximal compact subalgebra $\frakk$ of $\frakg$ we recall the characterization of the highest weights of $\frakk_\frakl$-spherical $\frakk$-representations via the Cartan--Helgason theorem. These representations will appear as $\frakk$-types in the minimal representation.

\subsubsection{The Kantor--Koecher--Tits construction}\label{sec:KKT}

The conformal group of $V$ is built up from three different rational transformations.
\begin{enumerate}
\item[\textup{(1)}] First, $V$ acts on itself by translations
\begin{align*}
 n_a(x) &:= x+a & \forall\,x\in V\index{na@$n_a$}
\end{align*}
with $a\in V$. Denote by $N:=\{n_a:a\in V\}$\index{N@$N$} the abelian group of translations which is isomorphic to $V$.
\item[\textup{(2)}] The structure group $\Str(V)$ of $V$ acts on $V$ by linear transformations.
\item[\textup{(3)}] Finally, we define the \textit{conformal inversion element} $j$ by
\begin{align*}
 j(x) &= -x^{-1} & \forall\,x\in V^\times:=\{y\in V:y\mbox{ invertible}\}.\index{j@$j$}
\end{align*}
$j$ is a rational transformation of $V$.
\end{enumerate}
The \textit{conformal group} $\Co(V)$\index{CoV@$\Co(V)$} is defined as the subgroup of the group of rational transformations of $V$ which is generated by $N$, $\Str(V)$ and $j$:
\begin{align*}
 \Co(V) := \langle N,\Str(V),j\rangle_{\textup{grp}}.
\end{align*}
$\Co(V)$ is a simple Lie group with trivial center (see \cite[Chapter VIII, Section 6]{Jac68}, \cite[Theorem VIII.1.3]{Ber00}). The semidirect product $\Str(V)\ltimes N$ is a maximal parabolic subgroup of $\Co(V)$ (see e.g. \cite[Section X.6.3]{Ber00}).

We let $G:=\Co(V)_0$\index{G@$G$} be the identity component of the conformal group which is also simple with trivial center. (The proof of \cite[Theorem VIII.1.3]{Ber00} applies for the identity component as well.) The group $G$ is generated by $N$, $L=\Str(V)_0$ and $j$, but the intersection $L^{\max}:=G\cap\Str(V)$\index{Lmax@$L^{\textup{max}}$} is in general bigger than $L$. Therefore, the semidirect product
\begin{align}
 Q &:= L\ltimes N\index{Q@$Q$}\label{eq:DefQ}
\end{align}
is in general not maximal parabolic in $G$, but an open subgroup of the maximal parabolic subgroup $Q^{\textup{max}}:=L^{\max}\ltimes N$\index{Qmax@$Q^{\textup{max}}$}.\\

Now let us examine the structure of the Lie algebra $\frakg:=\co(V)$\index{g@$\frakg$}\index{coV@$\co(V)$} of $G$. An element $X\in\frakg$ corresponds to a quadratic vector field on $V$ of the form
\begin{align*}
 X(z) &= u + Tz - P(z)v, & z\in V
\end{align*}
with $u,v\in V$ and $T\in\frakl=\str(V)$. We use the notation $X=(u,T,v)$\index{X@$X$}\index{uTv@$(u,T,v)$} for short. In view of this, we have the decomposition
\begin{equation}
 \frakg = \nf + \frakl + \nfo,\label{eq:GelfandNaimark}
\end{equation}
where\index{l@$\frakl$}
\begin{alignat*}{3}
 \frakn &= \{(u,0,0):u\in V\} &&\cong V,\index{n@$\frakn$}\\
 \frakl &= \{(0,T,0):T\in\str(V)\} &&\cong \str(V),\\
 \overline{\frakn} &= \{(0,0,v):v\in V\} &&\cong V.\index{nbar@$\overline{\frakn}$}
\end{alignat*}
In this decomposition the Lie algebra $\frakq^{\textup{max}}$\index{qmax@$\frakq^{\textup{max}}$} of $Q^{\textup{max}}$ (and $Q$) is given by
\begin{align*}
 \frakq^{\textup{max}} &= \frakn + \frakl.
\end{align*}
If $X_j=(u_j,T_j,v_j)$, $j=1,2$, then the Lie bracket is given by
\begin{equation}
 [X_1,X_2] = (T_1u_2-T_2u_1,[T_1,T_2]+2(u_1\Box v_2)-2(u_2\Box v_1),-T_1^\# v_2+T_2^\# v_1),\label{eq:LieBracket}
\end{equation}
where $T^\#$ denotes the adjoint of $T$ with respect to the trace form $\tau$ and $u\Box v$ the box operator as introduced in Subsection \ref{sec:JordanAlgebras}. From this formula it is easy to see that the decomposition \eqref{eq:GelfandNaimark} actually defines a grading on $\frakg$:
\begin{align*}
 \frakg=\frakg_{-1}+\frakg_0+\frakg_1,
\end{align*}
where $\frakg_{-1}=\frakn$, $\frakg_0=\frakl$ and $\frakg_1=\overline{\frakn}$.\index{g-1@$\frakg_{-1}$}\index{g0@$\frakg_0$}\index{g1@$\frakg_1$}

\begin{example}\label{ex:ConfGrp}
Since $G$ has trivial center we can calculate it by factoring out the center from the universal covering: $G=\widetilde{G}/Z(\widetilde{G})$. Here the universal covering $\widetilde{G}$ of $G$ is uniquely determined by the Lie algebra $\frakg$.
\begin{enumerate}
\item[\textup{(1)}] Let $V=\Sym(k,\RR)$. Then $\frakg\cong\sp(k,\RR)$ via the isomorphism
\begin{align*}
 \frakg &\rightarrow \sp(k,\RR),\,(u,T+s\1,v)\mapsto\left(\begin{array}{cc}T+\frac{s}{2}&u\\v&-^tT-\frac{s}{2}\end{array}\right),
\end{align*}
where $u,v\in V$, $T\in\sl(k,\RR)$ and $s\in\RR$. Hence, $G\cong\Sp(k,\RR)/\{\pm\1\}$, where $\Sp(k,\RR)/\{\pm\1\}$ acts on $x\in V$ by fractional linear transformations:
\begin{align*}
 \left(\begin{array}{cc}A&B\\C&D\end{array}\right)\cdot x &= (Ax+B)(Cx+D)^{-1}.
\end{align*}
\item[\textup{(2)}] Let $V=\RR^{p,q}$. Then an explicit isomorphism $\frakg\stackrel{\sim}{\rightarrow}\so(p+1,q+1)$ is given by
\begin{align*}
 (u,0,0) &\mapsto \left(\begin{array}{c|cc|c}&-^t\!u'&^t\!u''&\\\hline u'&&&u'\\u''&&&u''\\\hline&^t\!u'&-^t\!u''&\end{array}\right), && u\in V,\\
 (0,s\1+T,0) &\mapsto \left(\begin{array}{c|c|c}&&-s\\\hline&T&\\\hline-s&&\end{array}\right), && T\in\so(p,q), s\in\RR,\\
 (0,0,\vartheta(v)) &\mapsto \left(\begin{array}{c|cc|c}&^t\!v'&^t\!v''&\\\hline-v'&&&v'\\v''&&&-v''\\\hline&^t\!v'&^t\!v''&\end{array}\right), && v\in V.
\end{align*}
Hence, $G\cong\SO(p+1,q+1)_0/Z(\SO(p+1,q+1)_0)$. The center $Z(\SO(p+1,q+1)_0)$ is equal to $\{\pm\1\}$ if $p$ and $q$ are both even, and it is trivial otherwise.
\end{enumerate}
\end{example}

The Cartan involution $\theta$ of $\Str(V)$ extends to a Cartan involution of $\Co(V)$ by
\begin{align*}
 \theta:\Co(V)\rightarrow\Co(V),\,g\mapsto\vartheta\circ j\circ g\circ j\circ\vartheta.\index{theta@$\theta$}
\end{align*}
It restricts to a Cartan involution of $G$. The corresponding involution $\theta$ of the Lie algebra $\frakg$ is given by (see \cite[Proposition 1.1]{Pev02})
\begin{align}
 \theta(u,T,v) &:= (-\vartheta(v),-T^*,-\vartheta(u)), & (u,T,v)\in\frakg.\label{eq:ThetaOng}\index{theta@$\theta$}
\end{align}
In the above notation $\nfo=\theta(\nf)$. We remark that the twisted Killing form $B(X_1,\theta X_2)$ restricted to $X_1,X_2\in\frakn$ is given by
\begin{align*}
 B(X_1,\theta X_2) &= -\frac{4n}{r}\tau(u_1,\vartheta u_2) = -\frac{4n}{r}(u_1|u_2), & X_i=(u_i,0,0),\,i=1,2,
\end{align*}
which is the trace form of $V$ twisted by the Cartan involution $\vartheta$ of $V$ (see \cite[Section 1.6.1]{Moe10}). Let $\frakg=\frakk+\frakp$\index{k@$\frakk$}\index{p@$\frakp$} be the corresponding Cartan decomposition of $\frakg$. Then
\begin{align}
 \frakk &= \{(u,T,-\vartheta(u)):u\in V,T\in\lf,T+T^*=0\}.\label{eq:frakk}
\end{align}
The fixed point group $K:=G^\theta$\index{K@$K$} of $\theta$ is a maximal compact subgroup of $G$ with Lie algebra $\frakk$. Then clearly $K_L=K\cap L$. The subgroup $K_L\subseteq K$ is symmetric, the corresponding involution being $g\mapsto(-\1)\circ g\circ(-\1)$.

Table \ref{tb:Groups} lists the conformal algebra $\frakg$, the structure algebra $\frakl$ and their maximal compact subalgebras $\frakk$ and $\frakk_\frakl$ for all simple real Jordan algebras.

The following observation on the center of $\frakk$ will be needed later.

\begin{lemma}\label{lem:CenterK}
Assume that $V$ and $V^+$ are simple. Then the center $Z(\frakk)$\index{Zk@$Z(\frakk)$} of $\frakk$ is non-trivial only if $V$ is euclidean. In this case it is given by $Z(\frakk)=\RR({\bf e},0,-{\bf e})$.
\end{lemma}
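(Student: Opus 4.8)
The plan is to read off the center directly from the explicit description \eqref{eq:frakk} of $\frakk$. Write a general element of $\frakk$ as $(u,T,-\vartheta(u))$ with $u\in V$ and $T\in\frakk_\frakl=\{T\in\frakl:T+T^*=0\}$, and split $\frakk=\frakk_\frakl\oplus\frakm$ with $\frakk_\frakl=\{(0,T,0)\}$ and $\frakm=\{(u,0,-\vartheta(u)):u\in V\}\cong V$. Using the bracket \eqref{eq:LieBracket} together with the relation $T^\#\vartheta=\vartheta T^*$ (immediate from $(x|y)=\tau(x,\vartheta y)$ in \eqref{eq:traceform}), one computes $[(0,T,0),(u,0,-\vartheta u)]=(Tu,0,-\vartheta(Tu))\in\frakm$ and $[(u,0,-\vartheta u),(u',0,-\vartheta u')]=(0,\,2u'\Box\vartheta u-2u\Box\vartheta u',\,0)\in\frakk_\frakl$. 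Thus $(\frakk,\frakk_\frakl)$ is a symmetric pair. Since the brackets $[\frakk_\frakl,\frakm]$ and $[\frakm,\frakm]$ land in the complementary subspaces $\frakm$ and $\frakk_\frakl$, a central element $z=(0,T_0,0)+(u_0,0,-\vartheta u_0)$ must satisfy $T_0u=0$ for all $u$ (the $\frakm$-part of $[z,\frakm]$), hence $T_0=0$ and $Z(\frakk)\subseteq\frakm$. Consequently $(u_0,0,-\vartheta u_0)\in Z(\frakk)$ if and only if (a) $Tu_0=0$ for all $T\in\frakk_\frakl$, and (b) $u_0\Box\vartheta u=u\Box\vartheta u_0$ for all $u\in V$.

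Next I would extract the algebraic content of (b). Since $u\Box v=L(uv)+[L(u),L(v)]$, every commutator $[L(a),L(b)]$ is a derivation, and $\frakl=\str(V)=L(V)\oplus\der(V)$ with $L(V)\cap\der(V)=0$, condition (b) splits into its $L(V)$- and $\der(V)$-parts: (b1) $u_0\cdot\vartheta u=u\cdot\vartheta u_0$ for all $u$, and (b2) $[L(u_0),L(\vartheta u)]=[L(u),L(\vartheta u_0)]$ for all $u$. Replacing $u$ by $\vartheta u$ in (b1) gives $u_0\cdot u=\vartheta(u_0\cdot u)$, i.e. $u_0\cdot V\subseteq V^+$; taking $u={\bf e}$ yields $u_0\in V^+$, and then for $u\in V^-$ the product $u_0\cdot u$ lies in $V^+\cap V^-=0$, so $u_0\cdot V^-=0$. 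Because $u_0\in V^+$ means $\vartheta u_0=u_0$, condition (b2) evaluated at $u\in V^+$ reads $[L(u_0),L(u)]=-[L(u_0),L(u)]$, whence $[L(u_0),L(u)]=0$; that is, $L(u_0)$ commutes with $L(V^+)$.

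Finally I would exploit that $V^+$ is a \emph{simple euclidean} Jordan algebra. As $L(u_0)$ commutes with $L(V^+)$ it commutes with $\der(V^+)=[L(V^+),L(V^+)]$, hence it preserves the space of $\der(V^+)$-fixed vectors, which is $(V^+)^{\der(V^+)}=\RR{\bf e}$ (for $V^+$ simple euclidean the trace-zero part is $\der(V^+)$-irreducible and nontrivial). Therefore $u_0=L(u_0){\bf e}\in\RR{\bf e}$, say $u_0=\lambda{\bf e}$. If $V$ is non-euclidean then $V^-\neq0$ and $u_0\cdot V^-=\lambda V^-=0$ forces $\lambda=0$, so $Z(\frakk)=0$. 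If $V$ is euclidean then $V^-=0$, condition (a) holds since $\frakk_\frakl=\der(V)$ annihilates ${\bf e}$, and (b) holds since ${\bf e}\Box u=L(u)=u\Box{\bf e}$; hence $Z(\frakk)=\RR({\bf e},0,-{\bf e})$, as claimed.

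The main obstacle is handling condition (b): converting the box-operator identity into the workable statements (b1)--(b2) via $\frakl=L(V)\oplus\der(V)$, and then showing that commuting with $L(V^+)$ pins $u_0$ down to $\RR{\bf e}$. This last step is precisely where the hypothesis that $V^+$ is simple is indispensable, since it is what guarantees that the only elements of $V^+$ whose multiplication operator centralizes $L(V^+)$ are the scalar multiples of ${\bf e}$; once $u_0\in\RR{\bf e}$ is established, the non-euclidean vanishing is automatic from $u_0\cdot V^-=0$.
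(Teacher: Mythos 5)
Your proposal is correct, and strictly speaking it cannot be compared with ``the paper's proof'' because the paper offers none: Lemma \ref{lem:CenterK} is stated without proof, the implicit justification being case-by-case inspection of the classification --- from Table \ref{tb:Groups} one reads off that $\frakk$ is semisimple in every non-euclidean case (under the standing hypothesis that $V^+$ is simple, which is what excludes algebras like $\RR^{p,1}$ whose $\frakk$ would contain an $\so(2)$-factor), while in the euclidean cases $\frakk$ has a one-dimensional center, necessarily $\RR({\bf e},0,-{\bf e})$ once this element is checked to be central. Your argument is a genuinely different, classification-free route working directly from \eqref{eq:frakk} and \eqref{eq:LieBracket}: the decomposition $\frakk=\frakk_\frakl\oplus\frakm$, the reduction of centrality to your conditions (a) and (b), the splitting of (b) along $\str(V)=L(V)\oplus\der(V)$, and the final pinning-down of $u_0$ to $\RR{\bf e}$ are all correct (the sign conventions, including $T^\#\vartheta=\vartheta T^*$ and $T^*=-T$ on $\frakk_\frakl$, check out against \eqref{eq:LieBracket}). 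What your route buys is transparency: it shows exactly where each hypothesis enters --- simplicity of $V^+$ forces $u_0\in\RR{\bf e}$, and non-euclideanness ($V^-\neq0$) then forces $u_0=0$. What the table route buys is brevity, at the cost of invoking the full classification of simple real Jordan algebras.

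Two small points you should tighten. First, the parenthetical fact you invoke (``the trace-zero part of $V^+$ is $\der(V^+)$-irreducible and nontrivial'') is true but itself nontrivial, and it degenerates when $V^+$ has rank one, where the trace-zero part is $0$; there the conclusion $u_0\in\RR{\bf e}$ holds vacuously since $V^+=\RR{\bf e}$, so no harm results, but you should either flag this case or replace the appeal to irreducibility by the equivalent standard fact that the center $\{a\in V^+:[L(a),L(b)]=0\ \forall\,b\in V^+\}$ of a simple euclidean Jordan algebra equals $\RR{\bf e}$; this also admits a short direct proof via the Peirce decomposition, since for $x=\sum_i\lambda_ic_i$ fixed by all inner derivations and $z\in V_{1j}^+$ one computes $[L(c_1),L(z)]x=\tfrac{1}{4}(\lambda_j-\lambda_1)z$, and $V_{1j}^+\neq0$ by simplicity of $V^+$. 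Second, the decomposition $\str(V)=L(V)\oplus\der(V)$ used to split condition (b) is standard for unital semisimple Jordan algebras (derivations kill ${\bf e}$, whence the sum is direct), but since the paper never states it, an explicit citation would make your write-up self-contained.
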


\subsubsection{Root space decomposition}\label{subsec:ConfGrpRoots}

We already mentioned in Subsection \ref{sec:PeirceDecomp} that the Peirce decomposition of the Jordan algebra is related to root decompositions of the corresponding Lie algebra.

We choose the maximal toral subalgebra
\begin{align*}
 \frakt &:= \left\{\left(\sum_{i=1}^{r_0}{t_ic_i},0,-\sum_{i=1}^{r_0}{t_ic_i}\right):t_i\in\RR\right\}\subseteq\frakk_\frakl^\perp\subseteq\frakk\index{t@$\frakt$}
\end{align*}
in the orthogonal complement of $\frakk_\frakl$ in $\frakk$. The corresponding root system of $(\frakg_\CC,\frakt_\CC)$ is of type $C_{r_0}$ and given by
\begin{align*}
 \Sigma(\frakg_\CC,\frakt_\CC) &= \left\{\frac{\pm\gamma_i\pm\gamma_j}{2}\right\},
\end{align*}
where
\begin{align*}
 \gamma_j\left(\sum_{k=1}^{r_0}{t_kc_k},0,-\sum_{k=1}^{r_0}{t_kc_k}\right) &:= 2\sqrt{-1}t_j.\index{gammai@$\gamma_i$}
\end{align*}
For the root spaces we find
\begin{align*}
 (\frakg_\CC)_{\pm\frac{\gamma_i+\gamma_j}{2}} &= \{(u,\mp2\sqrt{-1}L(u),u):u\in(V_{ij})_\CC\},\\
 (\frakg_\CC)_{\pm\frac{\gamma_i-\gamma_j}{2}} &= \{(u,\pm4\sqrt{-1}[L(c_i),L(u)],-u):u\in(V_{ij})_\CC\}.
\end{align*}
The constants $d$ and $e+1$ are exactly the multiplicities of the short and the long roots, respectively. Further, the root spaces of $\frakt_\CC$ in $\frakk_\CC$ are given by
\begin{align*}
 (\frakk_\CC)_{\pm\frac{\gamma_i+\gamma_j}{2}} &= \{(u,\mp2\sqrt{-1}L(u),u):u\in (V^-_{ij})_\CC\},\\
 (\frakk_\CC)_{\pm\frac{\gamma_i-\gamma_j}{2}} &= \{(u,\pm4\sqrt{-1}[L(c_i),L(u)],-u):u\in (V^+_{ij})_\CC\},
\end{align*}
where $V_{ij}^\pm=V_{ij}\cap V^\pm$. Thus, the multiplicities in $\frakk_\CC$ of the short roots $\pm\frac{\gamma_i+\gamma_j}{2}$ and $\pm\frac{\gamma_i-\gamma_j}{2}$, $i<j$, are given by $d-d_0$ and $d_0$, respectively. Since $V_{ij}^-=0$ for all $1\leq i\leq j\leq r_0$ if and only if $V$ is euclidean, and $V_{ii}^-=0$ if and only if $V$ is split, one immediately obtains that the root system $\Sigma(\frakk_\CC,\frakt_\CC)$ is of type
\begin{align*}
 \begin{cases}
  A_{r_0-1} & \mbox{ if $V$ is euclidean,}\\
  C_{r_0} & \mbox{ if $V$ is non-euclidean non-split (including complex non-split),}\\
  D_{r_0} & \mbox{ if $V$ is non-euclidean split.}
 \end{cases}
\end{align*}
We refer to these cases as case $A$, $C$ and $D$. Further, let $\Sigma^+(\frakk_\CC,\frakt_\CC)\subseteq\Sigma(\frakk_\CC,\frakt_\CC)$ be the positive system given by the ordering $\gamma_1>\ldots>\gamma_{r_0}>0$.

\begin{remark}
Note that the cases $A$, $C$ and $D$ do in general not give the type of the Lie algebra $\frakk$. The subalgebra $\frakt_\CC\subseteq\frakk_\CC$ is not necessarily a Cartan subalgebra.
\end{remark}

\subsubsection{Real minimal nilpotent orbits}

\begin{definition}\label{def:minnilp}
For a complex simple Lie algebra $\frakg_{\CC}$, there is a unique nilpotent coadjoint orbit of minimal (positive) dimension, which is called the \textit{minimal nilpotent orbit}. We denote it by $\calO_{\min}^{G_{\CC}}$. More generally, for a complex semisimple $\frakg_{\CC}$, we define the \textit{minimal nilpotent orbit} by
\[
\calO_{\min}^{G_{\CC}}
:= \calO_{\min}^{G_{1,\CC}} \times\dots\times \calO_{\min}^{G_{k,\CC}}
\]
according to the decomposition $\frakg_{\CC} = \frakg_{1,\CC} \oplus\dots\oplus \frakg_{k,\CC}$ into simple Lie algebras.
\end{definition}

In Table~\ref{table1} we list the dimensions of the minimal nilpotent orbits in the simple complex Lie algebras.
\begin{table}[H]
\[
\begin{array}{c|c|c|c|c|c|c|c|c}
\frakg_{\CC} &\sl(k,\CC) &\so(k,\CC) &\sp(k,\CC) &\frakg_2 &\mathfrak{f}_4 &\mathfrak{e}_6 &\mathfrak{e}_7 &\mathfrak{e}_8
\\
\hline
\frac{1}{2}\dim\calO_{\min}^{G_{\CC}} &k-1 &k-3&k &3 &8 &11 &17 &29
\end{array}
\]
\caption{Dimensions of minimal nilpotent orbits in $\frakg_{\CC}^*$\label{table1}}
\end{table}

Let $\frakg$ be a real simple Lie algebra, and $\frakg_{\CC}$ its complexification. We regard $\frakg^*$ as a real form of $\frakg_{\CC}^*$. For any complex nilpotent orbit $\calO^{G_{\CC}}$ in $\frakg_{\CC}^*$, the intersection $\calO^{G_{\CC}}\cap\frakg^*$ may be empty, and otherwise, it consists of a finite number of real nilpotent orbits, say, $\calO_1^G,\dots,\calO_k^G$ (in fact either $k=1$ or $k=2$), which are equi-dimensional:
\[
\dim\calO_1^G=\dots=\dim\calO_k^G=\dim_{\CC}\calO^{G_{\CC}}.
\]
In particular, if $\calO_{\min}^{G_{\CC}}\cap\frakg^*$ is non-empty, then its connected components are real nilpotent orbits of minimal (positive) dimension.

We note that for a real simple Lie algebra $\frakg$, nilpotent orbits of minimal (positive) dimension are not necessarily unique. However, they come from a unique complex nilpotent orbit. The following result due to T. Okuda (see \cite{Oku11a}) treats the case $\calO_{\min}^{G_{\CC}}\cap\frakg^*=\emptyset$ as well:

\begin{proposition}\label{prop:nilcpx}
Let $\frakg$ be a real simple Lie algebra.
\begin{enumerate}
\item[\textup{(1)}]
There exists a unique complex nilpotent orbit in $\frakg_\CC$,
to be denoted by $\calO_{\min,\frakg}^{G_{\CC}}$,
with the following property:
for any nilpotent orbit $\calO^G$ in $\frakg^*$ of minimal (positive) dimension,
$\calO^G$ is a connected component of $\calO_{\min,\frakg}^{G_{\CC}}\cap\frakg$.
\item[\textup{(2)}]
$\calO_{\min,\frakg}^{G_{\CC}}$ is the unique non-zero nilpotent orbit
 in $\frakg_\CC$ of minimal dimension
 with the following property:
\[
     \calO_{\min,\frakg}^{G_{\CC}}\cap \frakg^* \ne \emptyset.
\]
\end{enumerate}
\end{proposition}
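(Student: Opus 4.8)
The plan is to reduce both assertions to a single uniqueness statement about \emph{complex} nilpotent orbits that meet the real form, and to isolate the genuinely hard input. Throughout I identify $\frakg\cong\frakg^*$ and $\frakg_\CC\cong\frakg_\CC^*$ by the Killing form, so that coadjoint orbits become adjoint orbits. For a nonzero nilpotent $X\in\frakg$ the complex orbit through it is simply $\Ad(G_\CC)X\subseteq\frakg_\CC$, and it is nilpotent; this assigns to each nonzero real nilpotent orbit $\calO^G$ a complex nilpotent orbit $\calO^{G_\CC}$. Since $\mathfrak{z}_{\frakg_\CC}(X)=\mathfrak{z}_\frakg(X)\otimes_\RR\CC$ for $X\in\frakg$, the centralizer dimensions match and hence $\dim_\RR\calO^G=\dim_\CC\calO^{G_\CC}$. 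Moreover $\calO^{G_\CC}\cap\frakg$ is a finite union of equidimensional $G$-orbits, each of which is open --- hence also closed --- in the intersection, so each such $\calO^G$ is exactly a connected component of $\calO^{G_\CC}\cap\frakg$. Consequently, minimizing $\dim_\RR\calO^G$ over nonzero real nilpotent orbits is the same as minimizing $\dim_\CC\calO^{G_\CC}$ over nonzero complex nilpotent orbits with $\calO^{G_\CC}\cap\frakg^*\neq\emptyset$, and part (1) follows immediately from the uniqueness asserted in part (2): every minimal real orbit is a connected component of the single complex orbit $\calO_{\min,\frakg}^{G_\CC}$.

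For part (2), existence is free: nonzero complex orbits meeting $\frakg^*$ exist (take any nonzero nilpotent $X\in\frakg$) and are finite in number, so one of them has minimal dimension. The entire content is the \emph{uniqueness} of this minimal orbit, and here I would first flag why the obvious approach fails. One might hope that ``having real points'' is downward closed in the closure order on complex nilpotent orbits, which --- since $\calO_{\min}^{G_\CC}$ lies in the closure of every nonzero orbit --- would force $\calO_{\min}^{G_\CC}$ itself to meet $\frakg^*$ and be the unique answer. This is false: for $\frakg\cong\so(p,q)$ with $p+q$ odd and $p,q\geq 4$ the orbit $\calO_{\min}^{G_\CC}$ has no real points at all, so the set of orbits meeting $\frakg^*$ is not an order ideal and no abstract descent argument is available.

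To prove uniqueness I would therefore pass to the combinatorial classification of nilpotent orbits. Via Jacobson--Morozov each complex nilpotent orbit is encoded by a weighted Dynkin diagram, and the condition that it meet the real form $\frakg^*$ is a condition read off the Satake diagram of $\frakg$ (equivalently, by Kostant--Sekiguchi the problem becomes that of a unique minimal nonzero nilpotent $K_\CC$-orbit in $\frakp_\CC$, using $\dim_\RR\calO^G=2\dim_\CC\calO^{K_\CC}$). The key step is to identify the minimal such diagram explicitly: it is controlled by the restricted root system of $\frakg$, the minimal orbit being attached to the highest restricted root, and one must check that the datum producing the minimal dimension is unique up to the symmetry built into the diagram. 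This identification and uniqueness are settled through the Dynkin--Kostant classification and amount to a finite verification across the simple real Lie algebras; this is precisely the analysis carried out by Okuda in \cite{Oku11a}.

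The main obstacle is thus the uniqueness in (2). Because ``having real points'' is not closed downward, there is no purely structural shortcut, and one is forced into the explicit classification; the surrounding reductions --- the real/complex dimension matching, the identification of real orbits with connected components, and the deduction of (1) from (2) --- are routine consequences of the standard structure theory and require nothing beyond what is recalled above.
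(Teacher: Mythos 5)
Your route is in substance the same as the paper's: the paper gives no self-contained proof of this proposition either, but attributes it to Okuda \cite{Oku11a}, recording Proposition~\ref{prop:non-euclidean-non-split} (weighted Dynkin diagrams, the closure order via domination of partitions, Okuda's matching condition) as ``part of the proof''. Your preliminary reductions are correct and standard: $\dim_\RR\calO^G=\dim_\CC\calO^{G_\CC}$ because centralizers complexify, the real orbits inside $\calO^{G_\CC}\cap\frakg$ are exactly its connected components, and part (1) follows formally from the uniqueness in part (2). These are precisely the facts the paper states in the paragraphs preceding the proposition, and the hard core is deferred to the same classification result in both cases.

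There is, however, one genuine mathematical error in your supporting discussion. You claim that for $\frakg\cong\so(p,q)$ with $p+q$ odd, $p,q\geq4$, the orbit $\calO_{\min}^{G_\CC}$ has no real points. This is false. The minimal nilpotent orbit of $\so(p+q,\CC)$ is the orbit of partition type $[2^2,1^{p+q-4}]$, i.e.\ of square-zero elements of rank $2$; given a totally isotropic $2$-plane $\linspan\{u,v\}$ in $\RR^{p,q}$, the map $w\mapsto B(w,u)v-B(w,v)u$ is such an element of $\so(p,q)$, so real points exist whenever $\min(p,q)\geq2$, independently of the parity of $p+q$. The paper says the same: Proposition~\ref{prop:minimal split} lists exactly $\su^*(2k)$, $\so(k,1)$, $\sp(p,q)$, $\mathfrak{f}_{4(-20)}$, $\mathfrak{e}_{6(-26)}$ as the real forms with $\calO_{\min}^{G_\CC}\cap\frakg^*=\emptyset$, and Proposition~\ref{thm:Lagrangian} exhibits real points for the conformal algebras of all split Jordan algebras, which include $\so(p,q)$, $p,q\geq2$. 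What is true for $p+q$ odd, $p,q\geq4$, is Vogan's theorem \cite{Vog81} that no \emph{minimal representation} exists; that is an obstruction concerning the Joseph ideal, not an absence of real points (indeed for these $\frakg$ one has $\calO_{\min,\frakg}^{G_\CC}=\calO_{\min}^{G_\CC}$, yet the construction fails for representation-theoretic reasons, cf.\ Theorem~\ref{prop:Kfinite}\,(d)). Your structural point --- that ``meeting $\frakg^*$'' is not downward closed in the closure order, so no abstract descent argument is available --- does survive, but only via the correct examples $\so(k,1)$, $\sp(p,q)$, $\su^*(2k)$, $\mathfrak{f}_{4(-20)}$, $\mathfrak{e}_{6(-26)}$. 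With that substitution, your argument coincides with the paper's.
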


As a consequence of Proposition~\ref{prop:nilcpx} we see that the minimal non-zero nilpotent coadjoint orbits in $\frakg^*$ are precisely the connected components of $\calO_{\min,\frakg}^{G_\CC}\cap \frakg^*$.

\begin{proposition}\label{prop:minimal split} Let $\frakg$ be a real simple Lie algebra. Then
the following three conditions are equivalent:
\begin{enumerate}
\item[\textup{(i)}]
$\calO_{\min,\frakg}^{G_{\CC}} \ne \calO_{\min}^{G_{\CC}}$.
\item[\textup{(ii)}]
$\calO_{\min}^{G_{\CC}}\cap\frakg^*$ is empty.
\item[\textup{(iii)}]
$\frakg$ is isomorphic to one of the following Lie algebras:
\[
\frakg = \su^*(2k), \so(k,1), \sp(p,q), \mathfrak{f}_{4(-20)}, \mathfrak{e}_{6(-26)}.
\]
\end{enumerate}
\end{proposition}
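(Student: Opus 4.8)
For a real simple Lie algebra $\frakg$, the three conditions are equivalent: (i) $\calO_{\min,\frakg}^{G_\CC}\neq\calO_{\min}^{G_\CC}$; (ii) $\calO_{\min}^{G_\CC}\cap\frakg^*=\emptyset$; (iii) $\frakg$ is one of $\su^*(2k),\so(k,1),\sp(p,q),\mathfrak{f}_{4(-20)},\mathfrak{e}_{6(-26)}$.

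**Plan of proof.**

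The plan is to establish the two implications (i)$\Leftrightarrow$(ii) formally from the definitions already recorded in Proposition~\ref{prop:nilcpx}, and then to identify the list in (iii) by the concrete criterion that $\calO_{\min}^{G_\CC}$ meets $\frakg^*$ precisely when $\frakg$ contains a nonzero nilpotent element whose $G_\CC$-orbit is the minimal one, equivalently when $\frakg$ admits a highest root vector defined over $\RR$.

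First I would dispose of (i)$\Leftrightarrow$(ii). By Proposition~\ref{prop:nilcpx}\,(2), $\calO_{\min,\frakg}^{G_\CC}$ is the \emph{unique} nonzero complex nilpotent orbit of minimal dimension meeting $\frakg^*$. Since $\calO_{\min}^{G_\CC}$ is the unique complex orbit of minimal dimension overall, we always have $\dim\calO_{\min,\frakg}^{G_\CC}\geq\dim\calO_{\min}^{G_\CC}$. Thus $\calO_{\min,\frakg}^{G_\CC}=\calO_{\min}^{G_\CC}$ holds if and only if the minimal complex orbit itself meets $\frakg^*$: if $\calO_{\min}^{G_\CC}\cap\frakg^*\neq\emptyset$ then $\calO_{\min}^{G_\CC}$ satisfies the defining property of $\calO_{\min,\frakg}^{G_\CC}$, forcing equality by uniqueness; conversely if they are equal then $\calO_{\min,\frakg}^{G_\CC}\cap\frakg^*\neq\emptyset$ gives $\calO_{\min}^{G_\CC}\cap\frakg^*\neq\emptyset$. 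Negating, (i) and (ii) coincide. This part is purely formal.

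The substance is (ii)$\Leftrightarrow$(iii), i.e. classifying the real forms for which the minimal complex orbit has no real points. The key step is the criterion: $\calO_{\min}^{G_\CC}\cap\frakg^*\neq\emptyset$ iff $\frakg$ contains a nilpotent element $X$ with $X\in\calO_{\min}^{G_\CC}$, and such $X$ can be taken (after complexification) to be a highest root vector $E_\alpha$ for the highest root $\alpha$. The existence of a real nilpotent element in the minimal orbit is governed by whether a suitable $\mathfrak{sl}_2$-triple attached to the highest root can be realized over $\RR$, equivalently by the signature/Satake data of $\frakg$. Concretely, I would run through the classification of real simple Lie algebras and check, case by case using standard tables of nilpotent orbits of real forms (e.g. Collingwood--McGovern, or Okuda \cite{Oku11a} which is already cited), whether the minimal complex nilpotent orbit is represented in $\frakg^*$. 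The outcome is that failure occurs exactly for the quaternionic-type and low-rank-anisotropic forms $\su^*(2k),\so(k,1),\sp(p,q),\mathfrak{f}_{4(-20)},\mathfrak{e}_{6(-26)}$; for all other real forms the highest root vector is defined over $\RR$ and $\calO_{\min}^{G_\CC}\cap\frakg^*\neq\emptyset$. Since this is exactly the content of Okuda's classification, I would cite \cite{Oku11a} for the case analysis rather than reproduce every case.

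The main obstacle is the case-by-case verification in (ii)$\Rightarrow$(iii) and its converse: one must correctly determine for each real form whether the minimal complex orbit descends, which requires handling the exceptional forms $\mathfrak{f}_{4(-20)}$ and $\mathfrak{e}_{6(-26)}$ and the families $\su^*(2k)$, $\sp(p,q)$ carefully, since these are precisely the forms whose real rank is too small to support the minimal orbit (the minimal orbit requires a real $\mathfrak{sl}_2$ of the right type). I would organize this by noting that the listed algebras are exactly those whose restricted root system forces the highest root space to lie in a noncompact but non-real-split position, obstructing a real representative; conceptually this matches the fact, remarked after the statement of Theorem~B, that $\so(p,q)$ with $p+q$ odd and $\so(k,1)$ behave exceptionally. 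The cleanest presentation defers the enumeration to \cite{Oku11a} and verifies only that the five listed algebras do fail (quick dimension/signature check) while exhibiting a real highest-root vector in all remaining forms.
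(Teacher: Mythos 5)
Your proposal follows essentially the same route as the paper: the equivalence (i)$\Leftrightarrow$(ii) is exactly the formal uniqueness argument supplied by Proposition~\ref{prop:nilcpx}, and the classification (ii)$\Leftrightarrow$(iii) is deferred to Okuda's criterion, precisely as the paper does by citing \cite[Theorem 2.4]{Oku11}. One small caveat, which does not affect your argument: your aside linking this to the remark after Theorem~B about $\so(p,q)$ with $p+q$ odd is off-target, since for $p,q\geq2$ those algebras are not in the list (iii) and $\calO_{\min}^{G_\CC}$ does have real points there; the obstruction in that case is to the existence of a minimal \emph{representation}, not of real points in the minimal orbit.
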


\begin{proof}
The equivalence (i) $\Leftrightarrow$ (ii) follows from Proposition \ref{prop:nilcpx}. The equivalence (ii) $\Leftrightarrow$ (iii) can probably be found in the literature, but it is also obtained easily from the criterion in \cite[Theorem 2.4]{Oku11}.
\end{proof}

\begin{example}
[complex simple Lie algebras]
\label{ex:cpxnilp}
Let $\frakg$ be a complex simple Lie algebra,
which we view as a real simple Lie algebra.
Then its complexification is given by
\begin{align}
&\gf_{\CC} := \gf \otimes_{\RR} \CC
\xrightarrow[\varphi_L\oplus\varphi_R]{\sim} \gf_L \oplus \gf_R,
\nonumber
\\
\intertext{where $\varphi_L:\gf\overset{\sim}{\to}\gf_L$ ($\CC$-linear) and
$\varphi_R:\gf\overset{\sim}{\to}\gf_R$ (antilinear) is given by}
&\varphi_L(X) = \frac{1}{2}(X-iJX),
 \
 \varphi_R(X) = \frac{1}{2}(X+iJX).
\label{eqn:gcisom}
\end{align}
It follows from Definition \ref{def:minnilp} that
\[
\calO_{\min}^{G_{\CC}}
\simeq \calO_{\min}^{G_L} \times \calO_{\min}^{G_R}.
\]
Then $\calO_{\min}^{G_{\CC}} \cap \frakg^*$ gives the minimal nilpotent orbit
 of a complex simple Lie algebra $\frakg$.
We note that
there are three smaller nilpotent orbits $\{0\}\times\{0\}$,
$\calO_{\min}^{G_L} \times \{0\}$,
and
$\{0\} \times \calO_{\min}^{G_R}$
of
$G_{\CC}\simeq G_L\times G_R$
on $\gf_{\CC}\simeq\gf_L\oplus\gf_R$.
\end{example}

The following result, which we shall use later, really is a part of the proof for Proposition \ref{prop:nilcpx}. For the convenience of the reader we give a sketch of its proof.

\begin{proposition}[\cite{Oku11a}]\label{prop:non-euclidean-non-split} For the three cases $\gf\simeq\sp(k,k)$,
$\su^*(2k)$, $\so(k,1)$, we have  $\calO_{\min}^{G_{\CC}} \cap \gf = \emptyset$ and
$\calO_{\min,\gf}^{G_{\CC}}$ is described by the weighted Dynkin diagram via the Dynkin--Kostant classification of nilpotent orbits as follows.\\
\noindent{$\gf=\su^*(2k) \quad (k\ge3)$:}
\begin{center}
\begin{longtable}{lll}
\toprule
	Partition & Dimension & Weighted Dynkin diagram  \\
\midrule
	$[2^2,1^{2k-4}]$ & $8k-8$ & \begin{xy}
	*++!D{0} *\cir<2pt>{}        ="A",
	(6,0) *++!D{1} *\cir<2pt>{} ="B",
	(12,0), *++!D{0} *\cir<2pt>{} ="C",
	(18,0), *++!D{0} *\cir<2pt>{} ="D",
	(24,0) = "D_1",
	(30,0) = "D_2",
	(36,0), *++!D{0} *\cir<2pt>{} ="E",
	(42,0), *++!D{0} *\cir<2pt>{} ="F",
	(48,0) *++!D{1} *\cir<2pt>{} ="G",
	(54,0) *++!D{0} *\cir<2pt>{} ="H",
	\ar@{-} "A";"B"
	\ar@{-} "B";"C"
	\ar@{-} "C";"D"
	\ar@{-} "D";"D_1"
	\ar@{.} "D_1";"D_2" ^*U{\cdots}
	\ar@{-} "D_2";"E"
	\ar@{-} "E";"F"
	\ar@{-} "F";"G"
	\ar@{-} "G";"H"
\end{xy} \\
\bottomrule
\end{longtable}
\end{center}
\noindent{$\gf = \so(2k-1,1)$:}
\begin{center}
\begin{longtable}{lll}
\toprule
	Partition & Dimension & Weighted Dynkin diagram  \\
\midrule
	$[3,1^{2k-3}]$ & $4k-4$ & \begin{xy}
	*++!D{2} *\cir<2pt>{}        ="A",
	(6,0) *++!D{0} *\cir<2pt>{} ="B",
	(12,0) *++!D{0}     *\cir<2pt>{} ="C",
	(18,0) = "C_1",
	(24,0) = "C_2",
	(30,0) *++!D{0} *\cir<2pt>{} ="D",
	(36,6) *++!D{0} *\cir<2pt>{} ="E",
	(36,-6) *++!D{0} *\cir<2pt>{} ="F",
	\ar@{-} "A";"B"
	\ar@{-} "B";"C"
	\ar@{-} "C";"C_1"
	\ar@{.} "C_1";"C_2" ^*U{\cdots}
	\ar@{-} "C_2" ; "D"
	\ar@{-} "D";"E"
	\ar@{-} "D";"F"
\end{xy} \\
\bottomrule
\end{longtable}
\end{center}
\noindent{$\gf = \so(2k,1)$:}
\begin{center}
\begin{longtable}{lll}
\toprule
	Partition & Dimension & Weighted Dynkin diagram  \\
\midrule
	$[3,1^{2k-2}]$ & $4k-4$ & \begin{xy}
	*++!D{2} *\cir<2pt>{}        ="A",
	(6,0) *++!D{0} *\cir<2pt>{} ="B",
	(12,0) *++!D{0} *\cir<2pt>{} ="C",
	(18,0) *++!D{0} *\cir<2pt>{} ="D",
	(24,0) = "D_1",
	(30,0) = "D_2",
	(36,0) *++!D{0} *\cir<2pt>{} ="E",
	(42,0) *++!D{0} *\cir<2pt>{} ="F",
	\ar@{-} "A";"B"
	\ar@{-} "B";"C"
	\ar@{-} "C";"D"
	\ar@{-} "D";"D_1"
	\ar@{.} "D_1";"D_2" ^*U{\cdots}
	\ar@{-} "D_2" ; "E"
	\ar@{=>} "E";"F"
\end{xy} \\
\bottomrule
\end{longtable}
\end{center}
\noindent{$\gf = \sp(k,k) \quad (k>1)$:}
\begin{center}
\begin{longtable}{lll} \toprule
	Partition & Dimension & Weighted Dynkin diagram  \\ \midrule
	$[2^2,1^{4k-4}]$ & $8k-2$ & \begin{xy}
	*++!D{0} *\cir<2pt>{}        ="A",
	(6,0) *++!D{1} *\cir<2pt>{} ="B",
	(12,0) *++!D{0} *\cir<2pt>{} ="C",
	(18,0) *++!D{0} *\cir<2pt>{} ="D",
	(24,0) = "D_1",
	(30,0) = "D_2",
	(36,0) *++!D{0} *\cir<2pt>{} ="E",
	(42,0) *++!D{0} *\cir<2pt>{} ="F",
	\ar@{-} "A";"B"
	\ar@{-} "B";"C"
	\ar@{-} "C";"D"
	\ar@{-} "D";"D_1"
	\ar@{.} "D_1";"D_2" ^*U{\cdots}
	\ar@{-} "D_2" ; "E"
	\ar@{<=} "E";"F"
\end{xy} \\
\bottomrule
\end{longtable}
\end{center}
\noindent{$\gf = \sp(1,1)$:}
\begin{center}
\begin{longtable}{lll} \toprule
	Partition & Dimension & Weighted Dynkin diagram  \\ \midrule
	$[2^2]$ & $6$ & \begin{xy}
	(36,0) *++!D{0} *\cir<2pt>{} ="E",
	(42,0) *++!D{2} *\cir<2pt>{} ="F",
	\ar@{<=} "E";"F"
\end{xy} \\
\bottomrule
\end{longtable}
\end{center}
\end{proposition}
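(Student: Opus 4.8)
The statement asserts two things for each of the listed algebras $\gf \simeq \sp(k,k)$, $\su^*(2k)$, $\so(k,1)$: first, that $\calO_{\min}^{G_\CC}\cap\gf = \emptyset$, and second, the explicit identification of $\calO_{\min,\gf}^{G_\CC}$ via its weighted Dynkin diagram. The first claim is immediate from Proposition \ref{prop:minimal split}, whose condition (iii) lists exactly these algebras (together with $\mathfrak{f}_{4(-20)}$ and $\mathfrak{e}_{6(-26)}$): the equivalence (ii) $\Leftrightarrow$ (iii) says precisely that $\calO_{\min}^{G_\CC}\cap\gf^* = \emptyset$ for these and only these simple real forms. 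So the genuine content is to pin down the \emph{second} smallest complex nilpotent orbit $\calO_{\min,\gf}^{G_\CC}$, namely the one characterized in Proposition \ref{prop:nilcpx}\,(2) as the minimal-dimensional non-zero complex orbit meeting $\gf^*$.

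The approach I would take is the standard Dynkin--Kostant dictionary combined with the Kostant--Sekiguchi correspondence. For each classical $\gf$ in the list one identifies the nilpotent $G_\CC$-orbits in $\gf_\CC$ with partitions of the appropriate size subject to the classical parity/type constraints (partitions of $2k$ with even parts of even multiplicity for $\sp$-type, etc.), so that candidate orbits are linearly ordered by the dominance order on partitions, with dimension computed by the usual formula. The orbits meeting a given real form $\gf^*$ are exactly those whose signed-Young-diagram (\emph{ab}-diagram) data can be filled in compatibly with the real form, and one reads off which of the small partitions survive. The first step is therefore to list, for each $\gf$, the nilpotent orbits of $\gf_\CC$ of the two or three smallest non-zero dimensions, and to check \emph{which} of them has a real point in $\gf^*$; by Proposition \ref{prop:minimal split} the genuine minimal orbit $\calO_{\min}^{G_\CC}$ (partition $[2,1^{N-2}]$) does \emph{not}, so one ascends to the next admissible partition. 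This yields precisely $[2^2,1^{2k-4}]$ for $\su^*(2k)$, $[3,1^{*}]$ for $\so(k,1)$, and $[2^2,1^{4k-4}]$ for $\sp(k,k)$, matching the partitions in the tables.

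The second step is to convert each identified partition into its weighted Dynkin diagram. This is a purely combinatorial recipe: from the partition one forms the weighted sequence of the associated $\sl_2$-triple (the sequence of entries $\{\text{part}_i - \text{part}_{i+1}\}$-type data arranged by the standard column-reading), dominates it into a non-increasing sequence of half-integers via the Weyl group, and then reads off the labels by pairing with the simple coroots of the appropriate root system $A_{2k-1}$, $B$ or $C$ type, or $D$. For the very short partitions here the resulting weighted diagram has a single nonzero label ($1$ or $2$) on a node near the end of the diagram, exactly as drawn; the small-rank base cases $\sp(1,1)$ and the $\so$-split into $\so(2k-1,1)$ versus $\so(2k,1)$ (types $D_k$ versus $B_k$) are handled separately because the Dynkin diagram shape and hence the position of the marked node differs.

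\emph{Main obstacle.} The conceptually routine but technically delicate part is the real-point test: verifying that the genuinely minimal complex orbit $[2,1^{\cdots}]$ fails to meet $\gf^*$ while the next one does, for each of these three infinite families. This amounts to the signature/\emph{ab}-diagram bookkeeping underlying the Kostant--Sekiguchi correspondence, and it is here that the special feature of these non-split, non-euclidean forms enters. Since this is exactly the content extracted from \cite{Oku11a, Oku11}, and since the authors describe this result as "really a part of the proof for Proposition \ref{prop:nilcpx}," I expect the cleanest route is to invoke Okuda's criterion \cite[Theorem 2.4]{Oku11} directly for the non-emptiness/emptiness assertions, and then reduce the diagram computation to the explicit small partitions, giving the tables as stated.
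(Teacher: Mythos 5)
Your proposal matches the paper's own proof in all essentials: the paper likewise orders the candidate orbits by domination of partitions (\cite[Theorem~6.2.5]{CM93}), verifies that the stated partitions are the smallest ones satisfying the matching condition of \cite[Theorem~2.4]{Oku11} (i.e.\ the real-point test you describe), computes the weighted Dynkin diagrams by the standard recipes (\cite[Lemmas 3.6.4, 5.3.1, 5.3.4, 5.3.3]{CM93}), and reads off the dimensions from the diagrams via \cite[Lemma~4.1.3]{CM93}. Your only deviation --- deducing the emptiness statement from Proposition~\ref{prop:minimal split} rather than folding it into the same verification --- is harmless, since that proposition is itself proved from Okuda's criterion and is established independently of this one.
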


\begin{proof} Using the characterization of the orbit closure relation in terms of domination of partitions \cite[Theorem~6.2.5]{CM93}, this amounts to the following: Calculate the weighted Dynkin diagrams for the partitions dominated by the ones given in the proposition and verify that the given partitions are the smallest ones satisfying the \textit{matching condition} given in \cite[Theorem~2.4]{Oku11}. For the calculation of the weighted Dynkin diagrams one can use \cite[Lemmas 3.6.4, 5.3.1, 5.3.4, and 5.3.3]{CM93} for $\su^*(2k)$, $\sp(k,k)$,  $\so(2k-1,1)$, and $\so(2k,1)$, respectively. Finally, the dimensions of the orbits can be determined from the weighted Dynkin diagram using \cite[Lemma~4.1.3]{CM93}.
\end{proof}

The Kantor--Koecher--Tits construction from Subsection~\ref{sec:KKT} shows that the conformal group  $\Co(V_\CC)$ of the complexified Jordan algebra $V_\CC$ contains the conformal group $\Co(V)$ as a subgroup. Moreover, the Lie algebra  $\co(V_\CC)$ is the complexification $\frakg_\CC$ of $\frakg=\co(V)$. Thus it makes sense to denote the identity component  $\Co(V_\CC)_0$ of $\Co(V_\CC)$ by $G_\CC$ and view $G$ as a subgroup of $G_\CC$.

Via the Killing form we identify $\frakg_\CC^*$ and $\frakg_\CC$ and view $\calO_{\min}^{G_\CC}$ also as an adjoint orbit in $\frakg_\CC$. We further identify $V$ with a subspace of $\frakg$ by the embedding $V\hookrightarrow\frakg, x\mapsto(x,0,0)$.

\begin{theorem}\label{lem:Lagrange} Let $V$ be a simple real Jordan algebra with simple $V^+$. Set $\calO_{\min}^{G}:=G\cdot(c_1,0,0)$. Then
\begin{enumerate}
\item[{\rm(1)}] $\calO_{\min}^{G}$ is a minimal nonzero nilpotent coadjoint orbit in $\frakg^*$.
\item[{\rm(2)}] $\calO_{\min,\frakg}^{G_\CC}= G_\CC\cdot(c_1,0,0)$.
\item[{\rm(3)}]  $\calO_{\min}^{G}$ is the connected component of $\calO_{\min,\frakg}^{G_\CC}\cap \frakg^*$ containing $(c_1,0,0)$.
\item[{\rm(4)}] The orbit $\calO$ is a Lagrangian submanifold of $\calO_{\min}^{G}$. In particular, we have $2\dim_\RR\calO = \dim_\RR\calO_{\min}^{G} =\dim_\CC \calO_{\min,\frakg}^{G_\CC}$.
\end{enumerate}
\end{theorem}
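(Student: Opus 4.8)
The plan is to treat the four assertions in the order (1)+(4) by explicit structure theory and then (2)+(3) by reduction to Proposition~\ref{prop:nilcpx}. The starting point is to produce an $\sl(2)$-triple adapted to $c_1$. Using the bracket formula \eqref{eq:LieBracket} and the identity $c_1\Box c_1=L(c_1^2)+[L(c_1),L(c_1)]=L(c_1)$, together with the fact that $L(c_1)$ is self-adjoint for the trace form (so $L(c_1)^\#=L(c_1)$), one checks that
\begin{align*}
 e:=(c_1,0,0),\qquad h:=(0,2L(c_1),0),\qquad f:=(0,0,c_1)
\end{align*}
satisfy $[h,e]=2e$, $[h,f]=-2f$ and $[e,f]=h$. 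In particular $e\in\frakg_{-1}=\frakn$ is a nonzero nilpotent element, so under the Killing-form identification $\frakg\cong\frakg^*$ the set $\calO_{\min}^G=G\cdot(c_1,0,0)$ is a nonzero nilpotent coadjoint orbit. Since $\ad\frakl$ acts on $\frakn\cong V$ exactly as $\str(V)$ acts on $V$, the $L$-orbit of $e$ inside $\frakn$ is $(\calO_1,0,0)$; this realizes $\calO=\calO_1$ as an $L$-stable submanifold of $\calO_{\min}^G$.

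I would next prove (4) and the dimension identity by a formal computation with the Kirillov--Kostant--Souriau form $\omega_\xi(\ad X\cdot\xi,\ad Y\cdot\xi)=B(\xi,[X,Y])$ at $\xi=(c_1,0,0)$. Because $\frakn$ is abelian we have $[\frakn,\xi]=0$, so the tangent space splits as
\begin{align*}
 T_\xi\calO_{\min}^G=[\frakl,\xi]\oplus[\nfo,\xi],\qquad [\frakl,\xi]\subseteq\frakg_{-1},\quad [\nfo,\xi]\subseteq\frakg_0,
\end{align*}
with $[\frakl,\xi]=T_\xi\calO_1$. The grading forces $B(\frakg_i,\frakg_j)=0$ unless $i+j=0$, and $[\nfo,\nfo]\subseteq\frakg_2=0$; hence both summands are $\omega_\xi$-isotropic and $\omega_\xi$ restricts to a pairing between them. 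As $\omega_\xi$ is nondegenerate on the coadjoint orbit, this pairing is perfect, which simultaneously shows that $T_\xi\calO_1=[\frakl,\xi]$ is Lagrangian (the conclusion propagates to every point by $L$-equivariance) and that $\dim[\frakl,\xi]=\dim[\nfo,\xi]$. Thus $2\dim_\RR\calO=\dim_\RR\calO_{\min}^G$, and with Lemma~\ref{lem:DimO1} this gives $\dim_\RR\calO_{\min}^G=2(e+1+(r_0-1)d)$; running the identical computation over $\CC$ yields $\dim_\CC G_\CC\cdot(c_1,0,0)=2(e+1+(r_0-1)d)$ as well.

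For (2) I would invoke Proposition~\ref{prop:nilcpx}(2): $\calO_{\min,\frakg}^{G_\CC}$ is the \emph{unique} nonzero nilpotent orbit in $\frakg_\CC$ of minimal dimension meeting $\frakg^*$. Since $G_\CC\cdot(c_1,0,0)$ is a nonzero nilpotent orbit meeting $\frakg^*$, it suffices to verify that its dimension $2(e+1+(r_0-1)d)$ equals $\dim_\CC\calO_{\min,\frakg}^{G_\CC}$. By Proposition~\ref{prop:minimal split}, for every $V$ whose conformal algebra $\frakg$ is not one of $\su^*(2k),\so(k,1),\sp(p,q),\mathfrak{f}_{4(-20)},\mathfrak{e}_{6(-26)}$ one has $\calO_{\min,\frakg}^{G_\CC}=\calO_{\min}^{G_\CC}$, and here I would compare $2(e+1+(r_0-1)d)$ directly with the tabulated value in Table~\ref{table1}; for the remaining families I would instead match it against the dimensions read off from the weighted Dynkin diagrams in Proposition~\ref{prop:non-euclidean-non-split}. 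Carrying out this comparison along the classification of simple real Jordan algebras with simple $V^+$ then yields (2). Statement (3) is immediate: by the discussion preceding Proposition~\ref{prop:nilcpx} the connected components of $\calO_{\min,\frakg}^{G_\CC}\cap\frakg^*$ are exactly its real $G$-orbits, and $\calO_{\min}^G=G\cdot(c_1,0,0)$ is the one through $(c_1,0,0)$. Finally (1) follows from (3) and the remark after Proposition~\ref{prop:nilcpx} that the minimal nonzero nilpotent coadjoint orbits in $\frakg^*$ are precisely the connected components of $\calO_{\min,\frakg}^{G_\CC}\cap\frakg^*$.

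The main obstacle is the dimension matching in the proof of (2): the structural and symplectic arguments for nilpotency, for $2\dim\calO=\dim\calO_{\min}^G$, and for the Lagrangian property are uniform and essentially formal, whereas what is genuinely delicate is checking, across the entire classification, that the Jordan-theoretic quantity $2(e+1+(r_0-1)d)$ coincides with $\dim_\CC\calO_{\min,\frakg}^{G_\CC}$. This is most subtle for the non-split and low-rank families, where $\calO_{\min,\frakg}^{G_\CC}$ is strictly larger than the honest minimal orbit $\calO_{\min}^{G_\CC}$ and one must match the rank-one stratum $(\calO_1,0,0)$ against the specific orbit singled out by the matching condition of Proposition~\ref{prop:non-euclidean-non-split}, being careful about the exceptional low-rank isomorphisms (such as $\so(3,1)\cong\sl(2,\CC)$) that move an algebra between the complex-type case and the list of Proposition~\ref{prop:minimal split}.
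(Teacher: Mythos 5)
Your proposal is correct, and its overall skeleton is the same as the paper's: both arguments reduce the theorem to the Jordan-theoretic dimension formula $\dim_\CC G_\CC\cdot(c_1,0,0)=2(e+1+(r_0-1)d)$, compare this number case-by-case with the classified minimal dimensions (Table~\ref{table1} when Proposition~\ref{prop:minimal split} gives $\calO_{\min,\frakg}^{G_\CC}=\calO_{\min}^{G_\CC}$, Proposition~\ref{prop:non-euclidean-non-split} otherwise), and then extract the orbit statements from Proposition~\ref{prop:nilcpx}. Where you genuinely deviate is in the computational core. The paper determines the centralizer $\frakg_\CC^{(c_1,0,0)}$ explicitly from the bracket formula \eqref{eq:LieBracket} (the conditions $Tc_1=0$ and $c_1\Box v=0$), identifies its complement as $(0,\overline{\fraks}_\CC,0)\oplus\bigoplus_{j}(0,0,(V_{1j})_\CC)$, and reads the dimension off the Peirce decomposition; the Lagrangian property is then ``isotropic (since $\calO\subseteq\frakn$ is abelian) plus half-dimensional (by that computation)''. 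You instead obtain half-dimensionality and the Lagrangian property simultaneously and formally: $T_\xi\calO_{\min}^G=[\frakl,\xi]\oplus[\nfo,\xi]$ with both summands isotropic by the $3$-grading ($B(\frakg_i,\frakg_j)=0$ for $i+j\neq0$ and $[\nfo,\nfo]\subseteq\frakg_2=0$), so nondegeneracy of the Kostant--Souriau form forces a perfect pairing between them. This replaces the paper's Peirce bookkeeping, shows that $2\dim_\RR\calO=\dim_\RR\calO_{\min}^G$ is dictated by the grading alone, and leaves Lemma~\ref{lem:DimO1} to supply the numerical value; your explicit $\sl(2)$-triple is likewise a valid substitute for the paper's tacit use of the fact that $(c_1,0,0)\in\frakn=\frakg_{-1}$ is automatically nilpotent. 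Both proofs leave the same final burden --- the case-by-case matching, which you rightly flag as the delicate part --- essentially to the reader, and two practical warnings apply if you carry it out: for complex $V$ the target is \emph{twice} the entry of Table~\ref{table1}, since $\calO_{\min}^{G_\CC}\cong\calO_{\min}^{G_L}\times\calO_{\min}^{G_R}$ by Definition~\ref{def:minnilp}; and for $\frakg\cong\so(2m,1)$ you should, exactly as you propose, recompute the dimension from the weighted Dynkin diagram in Proposition~\ref{prop:non-euclidean-non-split} rather than trust the printed column --- the diagram gives $4m-2$, which matches your value $2(e+1)=2(2m-1)$ for $V=\RR^{2m-1,0}$, whereas the tabulated figure $4m-4$ appears to be a misprint.
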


\begin{proof}
The orbit $\calO=L\cdot c_1\subseteq V$ of $L$ is obviously contained in the adjoint orbit $\calO_{\min}^{G}=G\cdot(c_1,0,0)\subseteq G_\CC\cdot(c_1,0,0)\cap \frakg$. Clearly $\calO_{\min}^{G}$ is a nonzero nilpotent adjoint orbit in $\frakg$, which, however, is not necessarily contained in $\calO_{\min}^{G_\CC}$. Thus, by Proposition~\ref{prop:minimal split}, we are in the situation of Proposition~\ref{prop:non-euclidean-non-split}, and (1) will follow by calculating the dimension of $\calO_{\min}^{G}$. The statements (2) and (3) are then clear from Proposition \ref{prop:nilcpx}.

Let $\frakg_\CC^{(c_1,0,0)}\subseteq\frakg_\CC$ be the centralizer subalgebra of the element $(c_1,0,0)$. We claim that
\begin{align}
 \frakg_\CC &= \frakg_\CC^{(c_1,0,0)} \oplus (0,\overline{\fraks}_\CC,0) \oplus \bigoplus_{j=1}^{r_0}{(0,0,(V_{1j})_\CC)},\label{eq:DimOminAlg}
\end{align}
where $\overline{\fraks}\subseteq\frakl$ is a complement of the centralizer $\fraks$ of $c_1$ in $\frakl$. In fact, we have
\begin{align*}
  [(c_1,0,0),(u,T,v)] = 0\
 \Leftrightarrow{}&\ (-Tc_1,2c_1\Box v,0) = 0\\
 \Leftrightarrow{}&\ Tc_1=0\mbox{ and }c_1\Box v=0\\
 \Leftrightarrow{}&\ T\in\fraks\mbox{ and }c_1\cdot v=0,
\end{align*}
which shows the claim. From \eqref{eq:DimOminAlg} we then obtain with Lemma \ref{lem:DimO1}:
\begin{align*}
 \dim_\CC(G_\CC\cdot(c_1,0,0)) &= \dim_\CC\frakg_\CC - \dim_\CC\frakg_\CC^{(c_1,0,0)}\\
 &= (\dim\,\frakl-\dim\,\fraks) + \sum_{j=1}^{r_0}{\dim\,V_{1j}}\\
 &= \dim\,\calO + (e+1) + (r_0-1)d\\
 &= 2(e+1+(r_0-1)d).
\end{align*}
Since $(c_1,0,0)\in\frakg$ we have $\dim_\RR((G_\CC\cdot(c_1,0,0))\cap\frakg)=\dim_\CC(G_\CC\cdot(c_1,0,0))$. Using Lemma \ref{lem:DimO1} again we find that $\dim\,\calO=\frac{1}{2}\dim_\RR((G_\CC\cdot(c_1,0,0))\cap\frakg)$. To show that $\calO$ is actually a Lagrangian submanifold it remains to show that the Kostant--Souriau symplectic form vanishes on $\calO$. But this is clear since $\calO\subseteq\frakn$ and $\frakn$ is an abelian subalgebra. This implies the first claim and the equality $2\dim_\RR\calO = \dim_\RR\calO_{\min}^{G}$. The last equality follows since
$(\frakg^{(c_1,0,0)})_\CC = (\frakg_\CC)^{(c_1,0,0)}$. Using these formulas one can now check the dimension of $\calO_{\min}^{G}$ and thus complete the proof.
\end{proof}

For the conformal group $G$ of a split Jordan algebra $V$, the minimal nilpotent orbit $\calO_{\min}^{G_\CC}$ has real points. More precisely, we have:

\begin{proposition}\label{thm:Lagrangian}
Assume that $V$ is split, i.e. $e=0$. Then
$$\calO_{\min}^{G_\CC}=\calO_{\min,\frakg}^{G_\CC}.$$
In particular, the orbit $\calO$ is a Lagrangian submanifold of the non-zero intersection $\calO_{\min}^{G_\CC}\cap\frakg$.
\end{proposition}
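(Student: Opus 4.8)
The plan is to establish the orbit equality $\calO_{\min}^{G_\CC}=\calO_{\min,\frakg}^{G_\CC}$ by a dimension comparison, and then read off the ``In particular'' part directly from Theorem~\ref{lem:Lagrange}. By Theorem~\ref{lem:Lagrange}\,(4) together with the hypothesis $e=0$ we already know $\dim_\CC\calO_{\min,\frakg}^{G_\CC}=2(e+1+(r_0-1)d)=2(1+(r_0-1)d)$. On the other hand $\calO_{\min}^{G_\CC}$ is, by definition, the unique nonzero nilpotent orbit of minimal dimension, so $\dim_\CC\calO_{\min}^{G_\CC}\le\dim_\CC\calO_{\min,\frakg}^{G_\CC}$; since $\calO_{\min}^{G_\CC}$ lies in the closure of the nonzero orbit $\calO_{\min,\frakg}^{G_\CC}=G_\CC\cdot(c_1,0,0)$ (Theorem~\ref{lem:Lagrange}\,(2)), equality of dimensions would force the two orbits to coincide. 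Thus the whole problem reduces to checking that these two dimensions agree in the split case.

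First I would pin down which $V$ are split. By Proposition~\ref{prop:ClassificationEuclSph}, a simple real $V$ with simple $V^+$ and $e=0$ is either euclidean or isomorphic to $\RR^{p,q}$ with $p,q\ge2$, the non-euclidean rank-$\ge3$ case having $r=2r_0$ and hence $e>0$, and $\RR^{k,0}$ being split only for $k=1$. For each such $V$ I would read $\frakg_\CC$ off Table~\ref{tb:Groups} and the dimension $\dim_\CC\calO_{\min}^{G_\CC}$ off Table~\ref{table1}, and verify they match $2(1+(r_0-1)d)$. For example $\Sym(k,\RR)$ has $r_0=k$, $d=1$, giving $2+2(k-1)=2k$, which is the value in Table~\ref{table1} for $\sp(k,\CC)$; and $\RR^{p,q}$ has $r_0=2$, $d=p+q-2$, giving $2(p+q)-2$, which is the value for $\so(p+q+2,\CC)$. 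The euclidean families $\Herm(k,\CC)$, $\Herm(k,\HH)$, $\Herm(3,\OO)$ (with $\frakg_\CC=\sl(2k,\CC)$, $\so(4k,\CC)$, $\mathfrak{e}_7$) work out the same way. Equivalently, and more conceptually, one may invoke the equivalence (i)$\Leftrightarrow$(iii) of Proposition~\ref{prop:minimal split}: it suffices to observe that for split $V$ the algebra $\frakg=\co(V)$ is one of $\sp(k,\RR)$, $\su(k,k)$, $\so^*(4k)$, $\so(p,q)$ with $\min(p,q)\ge2$, or $\mathfrak{e}_{7(-25)}$, none of which appears in the excluded list $\su^*(2k),\so(k,1),\sp(p,q),\mathfrak{f}_{4(-20)},\mathfrak{e}_{6(-26)}$.

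The hard part is purely the bookkeeping of the classification, in particular the low-rank and boundary cases where an exceptional isomorphism might, a priori, throw $\frakg$ into the excluded list; for instance the euclidean rank-one algebra $\RR=\RR^{1,0}$ gives $\frakg\cong\so(2,1)\cong\sl(2,\RR)$, which must be recognized as \emph{not} the excluded $\so(k,1)$, and one must confirm that the orthogonal algebras arising from $\RR^{1,q}$ and $\RR^{p,q}$ always carry a signature with both indices $\ge2$, so are never $\so(k,1)$, while $\sp(k,\RR)$ is the split real form and never $\sp(p,q)$. I expect the dimension route above to be the cleanest way to navigate this, since the numerical comparison $2(1+(r_0-1)d)=\dim_\CC\calO_{\min}^{G_\CC}$ sidesteps these exceptional-isomorphism subtleties entirely (it even handles $r_0=1$, where $2(1+(r_0-1)d)=2=\dim_\CC\calO_{\min}^{\so(3,\CC)}$).

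Once $\calO_{\min}^{G_\CC}=\calO_{\min,\frakg}^{G_\CC}$ is in hand, the final assertion is immediate. By Theorem~\ref{lem:Lagrange}\,(3) the orbit $\calO_{\min}^{G}$ is a connected component of $\calO_{\min,\frakg}^{G_\CC}\cap\frakg=\calO_{\min}^{G_\CC}\cap\frakg$, and this intersection is nonempty since it contains $(c_1,0,0)$. By Theorem~\ref{lem:Lagrange}\,(4) the orbit $\calO$ is a Lagrangian submanifold of $\calO_{\min}^{G}$, and as the latter is an open (hence full-dimensional, symplectic) subset of $\calO_{\min}^{G_\CC}\cap\frakg$, the submanifold $\calO$ is Lagrangian in $\calO_{\min}^{G_\CC}\cap\frakg$ as claimed.
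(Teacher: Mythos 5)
Your overall strategy is sound: by Theorem~\ref{lem:Lagrange}\,(4), for $e=0$ one has $\dim_\CC\calO_{\min,\frakg}^{G_\CC}=2(1+(r_0-1)d)$, and since Definition~\ref{def:minnilp} makes $\calO_{\min}^{G_\CC}$ the \emph{unique} nilpotent orbit of minimal positive dimension, matching dimensions does force $\calO_{\min,\frakg}^{G_\CC}=\calO_{\min}^{G_\CC}$; the deduction of the ``In particular'' part from Theorem~\ref{lem:Lagrange}\,(3)--(4) is also fine. The genuine gap is in your enumeration of the split algebras. You claim that a split $V$ (with simple $V^+$) is either euclidean or $\cong\RR^{p,q}$, $p,q\geq2$, because ``the non-euclidean rank-$\geq3$ case has $r=2r_0$ and hence $e>0$.'' That inference is backwards: $r=2r_0$ is equivalent to \emph{non-split}, but case (2) of Proposition~\ref{prop:ClassificationEuclSph} covers all non-euclidean algebras of rank $\geq3$, split or not. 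The non-euclidean \emph{split} families $M(k,\RR)$, $\Skew(2k,\RR)$, $\Herm(3,\OO_s)$ of Table~\ref{tb:Constants} all have $e=0$ and (apart from the low-rank coincidences $M(2,\RR)\cong\RR^{2,2}$, $\Skew(4,\RR)\cong\RR^{3,3}$) are not of the form $\RR^{p,q}$. So your case check misses exactly the conformal algebras $\sl(2k,\RR)$, $\so(2k,2k)$, $\mathfrak{e}_{7(7)}$, which are the central examples of the split theory; the same omission infects your second route, whose list of possible $\frakg$ lacks $\sl(2k,\RR)$ and $\mathfrak{e}_{7(7)}$. The gap is repairable, since the missing cases do satisfy the identity --- $M(k,\RR)$: $2(1+2(k-1))=4k-2=\dim_\CC\calO_{\min}^{\SL(2k,\CC)}$; $\Skew(2k,\RR)$: $8k-6=2(4k-3)$; $\Herm(3,\OO_s)$: $34=2\cdot 17$ for $\mathfrak{e}_7$ --- and neither $\sl(2k,\RR)$ nor $\mathfrak{e}_{7(7)}$ is on the excluded list of Proposition~\ref{prop:minimal split}; but as written the proof is incomplete precisely in the cases of greatest interest.

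It is also worth noting that the paper proves this proposition by an entirely different, classification-free argument (the authors say explicitly that they avoid the classification of nilpotent orbits, which is what makes your route vulnerable to the bookkeeping error above). They show directly that $(c_1,0,0)\in\calO_{\min}^{G_\CC}$: by \cite[Theorem 4.3.3]{CM93} the minimal complex orbit contains every nonzero highest root vector; completing $\frakt_\CC$ to a Cartan subalgebra with a compatible order, a highest root vector lies in $(\frakg_\CC)_{\gamma_1}=\{(x,-2\sqrt{-1}L(x),x):x\in(V_{11})_\CC\}$, and splitness enters only through $(V_{11})_\CC=\CC c_1$; an explicit computation of $\Ad(\exp(0,0,-\sqrt{-1}c_1))$ then carries this vector to $(x,0,0)$, and dilatations in $L_\CC$ yield $(c_1,0,0)$. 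Since $\calO_{\min,\frakg}^{G_\CC}=G_\CC\cdot(c_1,0,0)$ by Theorem~\ref{lem:Lagrange}\,(2), the two orbits meet and hence coincide. Your dimension count, once extended to \emph{all} rows of Table~\ref{tb:Constants} with $e=0$, gives a valid alternative, but it trades the paper's uniform conceptual argument for a case-by-case verification that must be carried out in full.
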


\begin{proof} This can also be derived from Okuda's results, but we give a proof which does not use the classification of nilpotent orbits.

We only need to show that the minimal adjoint orbit $\calO_{\min}^{G_\CC}$ contains the element $(c_1,0,0)$. By \cite[Theorem 4.3.3]{CM93} the adjoint orbit $\calO_{\min}^{G_\CC}$ contains every non-zero root vector for the highest root in any root system with respect to a Cartan subalgebra. To find such a root vector we complete the abelian subalgebra $\frakt_\CC$ to a Cartan subalgebra $\frakh_\CC\subseteq\frakg_\CC$. We choose an order on $\Sigma(\frakg_\CC,\frakh_\CC)$ such that the restriction to $\frakt_\CC$ preserves the order. Then the highest root projects onto $\gamma_1$ and a non-zero highest root vector is in the root space $(\frakg_\CC)_{\gamma_1}$ and hence of the form $(x,-2\sqrt{-1}L(x),x)$ for $x\in(V_{11})_\CC$. Since $V$ was assumed to be split we have $(V_{11})_\CC=\CC c_1$. Note that it suffices to prove that also $(c_1,0,0)$ is in the minimal non-zero nilpotent adjoint orbit. To prove this claim we first note that
\begin{align*}
 & \Ad(\exp(0,0,-\sqrt{-1}c_1))(x,-2\sqrt{-1}L(x),x)\\
 ={}& \exp(\ad(0,0,-\sqrt{-1}c_1))(x,-2\sqrt{-1}L(x),x)\\
 ={}& (x,-2\sqrt{-1}L(x),x) + (0,2\sqrt{-1}x\Box c_1,-2L(x)c_1) + \frac{1}{2}(0,0,2L(x)c_1)\\
 ={}& (x,-2\sqrt{-1}L(x),x) + (0,2\sqrt{-1}L(x),-2x) + (0,0,x)\\
 ={}& (x,0,0),
\end{align*}
and therefore $(x,0,0)\in\calO_{\min}^{G_\CC}$. Since the group $L_\CC\subseteq G_\CC$ contains all dilatations by elements in $\CC^\times=\CC\setminus\{0\}$, the claim follows.
\end{proof}

Suppose that $V$ is a complex simple Jordan algebra,
 viewed as a real Jordan algebra.
In this case $V$ is not split, and  
$\frak g$ is a complex simple Lie algebra viewed as a real simple Lie algebra (see Table~\ref{tb:Groups}).
With the notation as in \eqref{eqn:gcisom},
$(\varphi_L,\varphi_R)$ maps $\calO^G_{\min}= G\cdot (c_1,0,0)$ into the nilpotent $G_\CC$-orbit $\calO_{\min}^{G_{\CC}}\simeq\calO^{G_L}_{\min}\times \calO^{G_R}_{\min}$.
A dimension count now shows that
$\calO^G_{\min}$ is open in $\frakg\cap\big(\calO^{G_L}_{\min}\times \calO^{G_R}_{\min}\big)$ and hence
$\calO$ is a Lagrangian submanifold of $\frakg\cap\big(\calO^{G_L}_{\min}\times \calO^{G_R}_{\min}\big)$.

Combining the above considerations for complex simple Jordan algebras with Proposition~\ref{thm:Lagrangian} and Proposition~\ref{prop:non-euclidean-non-split} describing the three non-euclidean non-split cases we obtain:

\begin{theorem}\label{thm:min-split-nilpotent-orbits} Let $G$ be the conformal group of a simple real Jordan algebra $V$ with simple $V^+$. Then the nilpotent coadjoint orbit $\calO_{\min,\frakg}^{G_\CC}$ (see Proposition \ref{prop:nilcpx} for the definition) is given as follows:
\[
\calO_{\min,\frakg}^{G_\CC}
= \begin{cases}
     \calO_{\min}^{G_{\CC}}
     &\text{(split)}
  \\
     \calO_{\min}^{G_{\CC}}\simeq\calO_{\min}^{G_L} \times \calO_{\min}^{G_R}
     &\text{(complex, non-split)}
  \\
     \text{as in Proposition~\ref{prop:non-euclidean-non-split}}
     &\text{(non-euclidean, non-split)}
  \end{cases}
\]
\end{theorem}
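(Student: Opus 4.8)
The plan is to prove the three cases separately, since in each the essential work has already been done and the present statement is really an assembly. First I would check that the three labels \emph{split}, \emph{complex non-split}, and \emph{non-euclidean non-split} exhaust all simple real Jordan algebras $V$ with simple $V^+$: every such $V$ is either split ($e=0$) or non-split ($e>0$); a non-split algebra is automatically non-euclidean, since euclidean algebras are split by the remark following \eqref{eq:noverr0}; and a non-split algebra is either a complex Jordan algebra regarded as real or else genuinely real. This gives a clean partition, so it suffices to verify the asserted value of $\calO_{\min,\frakg}^{G_\CC}$ in each case.

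In the split case there is nothing to add beyond Proposition~\ref{thm:Lagrangian}, which under the hypothesis $e=0$ yields $\calO_{\min}^{G_\CC}=\calO_{\min,\frakg}^{G_\CC}$ directly; this simultaneously covers the euclidean and the non-euclidean split algebras.

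For the complex non-split case I would invoke the dimension count in the paragraph preceding the theorem: under $(\varphi_L,\varphi_R)$ of \eqref{eqn:gcisom} the orbit $\calO_{\min}^{G}=G\cdot(c_1,0,0)$ is open in $\frakg\cap(\calO_{\min}^{G_L}\times\calO_{\min}^{G_R})$, so by Definition~\ref{def:minnilp} and Example~\ref{ex:cpxnilp} the orbit $\calO_{\min}^{G_\CC}\simeq\calO_{\min}^{G_L}\times\calO_{\min}^{G_R}$ meets $\frakg^*$. The point that needs care is minimality: the two strictly smaller nonzero orbits $\calO_{\min}^{G_L}\times\{0\}$ and $\{0\}\times\calO_{\min}^{G_R}$ from Example~\ref{ex:cpxnilp} carry no real points, because the antiholomorphic structure cutting out the real form $\frakg\subseteq\frakg_\CC\simeq\frakg_L\oplus\frakg_R$ interchanges the two simple summands, so a nonzero orbit can be stable under it (hence meet $\frakg^*$) only if both factors are nontrivial. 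Thus $\calO_{\min}^{G_\CC}$ is the smallest nonzero orbit meeting $\frakg^*$, and Proposition~\ref{prop:nilcpx}(2) identifies it with $\calO_{\min,\frakg}^{G_\CC}$.

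Finally, the non-euclidean non-split case is delivered by Proposition~\ref{prop:non-euclidean-non-split}, which computes $\calO_{\min,\frakg}^{G_\CC}$ for $\su^*(2k)$, $\so(k,1)$ and $\sp(k,k)$ via their weighted Dynkin diagrams. I expect the only genuine obstacle to be bookkeeping rather than new mathematics: one must confirm against Proposition~\ref{prop:ClassificationEuclSph} and Table~\ref{tb:Groups} that the conformal algebras of non-euclidean non-split $V$ with simple $V^+$ are \emph{exactly} these three families---so that, in particular, the algebras $\sp(p,q)$ with $p\ne q$, $\mathfrak{f}_{4(-20)}$ and $\mathfrak{e}_{6(-26)}$ that appear in Proposition~\ref{prop:minimal split} but not among conformal algebras are correctly excluded---and that each of the three indeed falls under condition (iii) of Proposition~\ref{prop:minimal split}, so that $\calO_{\min,\frakg}^{G_\CC}\ne\calO_{\min}^{G_\CC}$ and the non-split description is forced.
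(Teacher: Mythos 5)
Your proof is correct and takes essentially the same route as the paper: the theorem is assembled exactly as you describe, from Proposition \ref{thm:Lagrangian} for the split case, the considerations preceding the theorem (the map $(\varphi_L,\varphi_R)$ and Example \ref{ex:cpxnilp}) together with Proposition \ref{prop:nilcpx} for the complex non-split case, and Proposition \ref{prop:non-euclidean-non-split} with the classification recorded in Table \ref{tb:Groups} for the non-euclidean non-split case. The only (harmless) variation is that in the complex case you invoke the uniqueness statement of Proposition \ref{prop:nilcpx}\,(2) after noting that conjugation interchanges $\frakg_L$ and $\frakg_R$, so one-sided orbits carry no real points, whereas the paper reaches the same conclusion via Theorem \ref{lem:Lagrange}\,(1) and Proposition \ref{prop:nilcpx}\,(1).
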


\subsubsection{$\frakk$-representations with a $\frakk_\frakl$-fixed vector}\label{ssec: kl-fix vector}

As previously remarked, $(\frakk,\frakk_\frakl)$ is a symmetric pair. Using the Cartan--Helgason theorem we can describe the highest weights of all irreducible unitary $\frakk$-representations which have a non-zero $\frakk_\frakl$-fixed vector. For this we extend $\frakt$ to a maximal torus $\frakt^c$ of $\frakk$ with the property that $\frakt^c=\frakt\oplus(\frakt^c\cap\frakk_\frakl)$ and choose a positive system $\Delta^+(\frakk_\CC,\frakt^c_\CC)$ such that the restriction to $\frakt_\CC$ induces a surjection
\begin{align*}
 \Delta^+(\frakk_\CC,\frakt^c_\CC)\cup\{0\} &\rightarrow \Sigma^+(\frakk_\CC,\frakt_\CC)\cup\{0\}.
\end{align*}
Then the Cartan--Helgason theorem yields:

\begin{proposition}\label{prop:klSphericalsReps}
The highest weight $\alpha\in(\frakt^c_\CC)^*$ of an irreducible $\frakk$-representation with a non-zero $\frakk_\frakl$-fixed vector vanishes on $\frakt^c_\CC\cap(\frakk_\frakl)_\CC$. The possible highest weights which give unitary irreducible $\frakk_\frakl$-spherical representations are precisely given by
\begin{align*}
 \Lambda_{\frakk_\frakl}^+(\frakk) := \begin{cases}\displaystyle\left\{\sum_{i=1}^{r_0}{t_i\gamma_i}:t_i\in\RR,\,t_i-t_j\in\ZZ,\,t_1\geq\ldots\geq t_{r_0}\right\} & \mbox{in case $A$,}\\\displaystyle\left\{\sum_{i=1}^{r_0}{t_i\gamma_i}:t_i\in\ZZ,\,t_1\geq\ldots\geq t_{r_0}\geq0\right\} & \mbox{in case $C$,}\\\displaystyle\left\{\sum_{i=1}^{r_0}{t_i\gamma_i}:t_i\in\frac{1}{2}\ZZ,\,t_i-t_j\in\ZZ,\,t_1\geq\ldots\geq t_{r_0-1}\geq|t_{r_0}|\right\} & \mbox{in case $D$.}\end{cases}\index{Lambdakklplus@$\Lambda_{\frakk_\frakl}^+(\frakk)$}
\end{align*}
Further, in each irreducible $\frakk_\frakl$-spherical $\frakk$-representation the space of $\frakk_\frakl$-fixed vectors is one-dimensional.
\end{proposition}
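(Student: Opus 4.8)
The plan is to deduce the proposition from the Cartan--Helgason theorem applied to the compact symmetric pair $(\frakk,\frakk_\frakl)$, together with the explicit restricted root data recorded in Subsection~\ref{subsec:ConfGrpRoots}. Since $\frakt\subseteq\frakk_\frakl^\perp$ is maximal abelian in the $(-1)$-eigenspace of the involution defining $(\frakk,\frakk_\frakl)$, the pertinent restricted root system is exactly $\Sigma(\frakk_\CC,\frakt_\CC)$, which is of type $A_{r_0-1}$, $C_{r_0}$ or $D_{r_0}$ in cases $A$, $C$ and $D$ respectively. The maximal torus $\frakt^c\supseteq\frakt$ and the positive system $\Delta^+(\frakk_\CC,\frakt^c_\CC)$ have been chosen (in the paragraph preceding the statement) so that restriction to $\frakt_\CC$ surjects onto $\Sigma^+(\frakk_\CC,\frakt_\CC)\cup\{0\}$; this is what lets us transport the integrality conditions from $\frakt^c$ down to the $\gamma_i$-coordinates.

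First I would recall the precise form of the theorem: an irreducible $\frakk$-representation of highest weight $\alpha$ admits a nonzero $\frakk_\frakl$-fixed vector if and only if (i) $\alpha$ vanishes on $\frakt^c_\CC\cap(\frakk_\frakl)_\CC$, and (ii) $\frac{\langle\alpha,\beta\rangle}{\langle\beta,\beta\rangle}\in\ZZ_{\geq0}$ for every $\beta\in\Sigma^+(\frakk_\CC,\frakt_\CC)$, where $\langle\cdot,\cdot\rangle$ is induced by the Killing form; moreover the fixed space is then one-dimensional. Condition (i) is precisely the first assertion of the proposition and shows that $\alpha$ lies in $\linspan\{\gamma_1,\dots,\gamma_{r_0}\}$, so we may write $\alpha=\sum_{i=1}^{r_0}t_i\gamma_i$. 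The one-dimensionality is the multiplicity-one property of the symmetric (hence Gelfand) pair $(\frakk,\frakk_\frakl)$ and is part of the same theorem.

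It remains to evaluate condition (ii). Because the permutations of the Jordan frame act by automorphisms, the $\gamma_i$ are mutually orthogonal of equal length, and the ratios $\frac{\langle\alpha,\beta\rangle}{\langle\beta,\beta\rangle}$ are independent of the common normalization: for $\beta=\frac{\gamma_i-\gamma_j}{2}$ the ratio is $t_i-t_j$, for $\beta=\frac{\gamma_i+\gamma_j}{2}$ it is $t_i+t_j$, and for a long root $\beta=\gamma_i$ it is $t_i$. Substituting the positive systems from Subsection~\ref{subsec:ConfGrpRoots} then reproduces the three lists directly: in case $A$ only $\frac{\gamma_i-\gamma_j}{2}$ $(i<j)$ occurs, giving $t_i-t_j\in\ZZ_{\geq0}$, i.e. $t_i-t_j\in\ZZ$ and $t_1\geq\cdots\geq t_{r_0}$, the direction $\sum\gamma_i$ being unconstrained (whence $t_i\in\RR$); in case $C$ the long roots force $t_i\in\ZZ_{\geq0}$, which together with $t_i-t_j\geq0$ yields $t_i\in\ZZ$, $t_1\geq\cdots\geq t_{r_0}\geq0$; in case $D$ the roots $\frac{\gamma_i\pm\gamma_j}{2}$ $(i<j)$ give $t_i-t_j,\,t_i+t_j\in\ZZ_{\geq0}$, hence $t_i\in\frac12\ZZ$ with common residue modulo $\ZZ$, $t_1\geq\cdots\geq t_{r_0-1}$ and $t_{r_0-1}+t_{r_0}\geq0$, i.e. $t_{r_0-1}\geq|t_{r_0}|$. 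The step requiring the most care, and the one I would flag as the main obstacle, is the compatible choice of $\frakt^c$ and positive systems underlying condition (i), since only that guarantees the passage from weights of $\frakk$ to the $\gamma_i$-coordinates and the equivalence of vanishing on $\frakt^c_\CC\cap(\frakk_\frakl)_\CC$ with $\alpha\in\linspan\{\gamma_i\}$; once it is in place the remaining case analysis is routine.
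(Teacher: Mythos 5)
Your proposal is correct and takes essentially the same route as the paper: the paper obtains the proposition directly from the Cartan--Helgason theorem applied to the compact symmetric pair $(\frakk,\frakk_\frakl)$, using exactly the choice of $\frakt^c$ and the compatible positive systems set up before the statement. Your case-by-case evaluation of the nonnegative-integrality conditions on the restricted roots $\tfrac{\gamma_i\pm\gamma_j}{2}$ and $\gamma_i$ is precisely the (routine) computation the paper leaves implicit.
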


For $\alpha\in\Lambda_{\frakk_\frakl}^+(\frakk)$ we denote by $E^\alpha$\index{Ealpha@$E^\alpha$} the irreducible $\frakk_\frakl$-spherical representation of $\frakk$ with highest weight $\alpha$.

\subsection{Construction of $L^2$-models}\label{sec:ConstructionL2Model}

In this section we construct $L^2$-models of representations $\pi$ of a finite cover $G^\vee$ of the conformal group $G$ with associated variety $\overline{\calO_{\min,\frakg}^{G_\CC}}$, the closure of $\calO^{G_\CC}_{\min,\frak g}$ (see Proposition \ref{prop:nilcpx} for its definition). This implies that the Gelfand--Kirillov dimension of $\pi$ is minimal among all irreducible infinite dimensional unitary representations of $G^\vee$.

We start by constructing a representation of the Lie algebra $\frakg$ on $C^\infty(\calO_\lambda)$ for every $\lambda\in\calW$. Then, for $\calO_\lambda=\calO$ the minimal non-zero orbit we prove that the associated variety of the representation on $C^\infty(\calO)$ is equal to $\overline{\calO_{\min,\frakg}^{G_\CC}}$. In particular, the representation is minimal if $V$ is split or complex, and $\frakg$ is not a type $A$ Lie algebra. We then define a subrepresentation $W$ of $C^\infty(\calO)$ which is generated by one single vector. This subrepresentation contains a non-zero $\frakk$-finite vector if and only if $V\ncong\RR^{p,q}$ with $p+q$ odd, $p,q\geq2$. It is contained in $L^2(\calO,\td\mu_\lambda)$ if for $V$ of split rank $r_0=1$ one assumes in addition that $\sigma:=r\lambda\in(0,-2\nu)$. Under the same conditions which guarantee square integrability we can finally integrate $W$ to a unitary irreducible representation of a finite cover of $G$ on the Hilbert space $L^2(\calO,\td\mu_\lambda)$ (see Theorem \ref{thm:IntgkModule}).

\subsubsection{Infinitesimal representations on $C^\infty(\calO_\lambda)$}\label{sec:InfinitesimalRep}

On each Hilbert space $L^2(\calO_\lambda,\td\mu_\lambda)$, $\lambda\in\calW$, there is a natural unitary representation $\rho_\lambda$\index{rholambda@$\rho_\lambda$} of the subgroup $Q$ (see \eqref{eq:DefQ}) given by
\begin{align}
 \rho_\lambda(n_a)\psi(x) &:= e^{\sqrt{-1}(x|a)}\psi(x) & n_a &\in N,\label{eq:L2Rep1}\\
 \rho_\lambda(g)\psi(x) &:= \chi(g^*)^{\frac{\lambda}{2}}\psi(g^*x) & g &\in L\label{eq:L2Rep2}
\end{align}
for $\psi\in L^2(\calO_\lambda,\td\mu_\lambda)$. The following proposition is a consequence of the Mackey theory:

\begin{proposition}\label{prop:MackeyRep}
For $\lambda\in\calW$ the representation $\rho_\lambda$ of $Q$ on $L^2(\calO_\lambda,\td\mu_\lambda)$ is unitary and irreducible.
\end{proposition}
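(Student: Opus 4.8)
The plan is to recognize $\rho_\lambda$ as a monomial (Mackey-induced) representation of the semidirect product $Q=L\ltimes N$ from \eqref{eq:DefQ} and then to invoke the little-group method. Since $N\cong V$ is abelian and normal in $Q$, I would first identify its unitary dual $\widehat{N}$ with $V$ via the nondegenerate form $(-|-)$, attaching to $x\in V$ the character $\chi_x\colon n_a\mapsto e^{\sqrt{-1}(x|a)}$. Conjugation in $Q$ is $g n_a g^{-1}=n_{ga}$, so the dual action of $g\in L$ on $\widehat{N}$ comes out as $\chi_x\mapsto\chi_{(g^*)^{-1}x}$, where $g^*$ is the adjoint with respect to $(-|-)$ and $g^*\in L$. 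Consequently the $L$-orbit through $\chi_{s_k}$ is exactly $\calO_k=\calO_\lambda$ (for the $k$ with $\calO_\lambda=\calO_k$), and comparing \eqref{eq:L2Rep1}--\eqref{eq:L2Rep2} with the canonical realization of an induced representation on $L^2$ of an orbit shows that $N$ acts by the tautological characters while $L$ acts by the natural cocycle-twisted translation. Thus $\rho_\lambda$ is the induced representation $\Ind_{N\rtimes S'}^{Q}(\chi_{s_k}\otimes\sigma)$, where $S'=\Stab_L(\chi_{s_k})=\{g\in L:g^*s_k=s_k\}$ and $\sigma(g)=\chi(g^*)^{\frac{\lambda}{2}}$ is the one-dimensional representation of $S'$ produced by the scalar cocycle $c(g,x)=\chi(g^*)^{\frac{\lambda}{2}}$.

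Next I would verify that $\rho_\lambda$ is genuinely a unitary representation. The operators $\rho_\lambda(n_a)$ are multiplications by the unimodular functions $e^{\sqrt{-1}(x|a)}$, hence unitary, and the semidirect-product relations together with the multiplicativity of $\chi$ give the homomorphism property. For $g\in L$ the essential input is positivity of $\chi$: by \eqref{eq:ChiDet} we have $\chi(g)=(\Det g)^{\frac{r}{n}}$, and since $L$ is connected the image of $\Det$ lies in $\RR_+$, so $\chi(g^*)>0$ and the real power $\chi(g^*)^{\lambda/2}$ is unambiguously defined for every $\lambda\in\calW$. Using $|\chi(g^*)^{\lambda/2}|^2=\chi(g^*)^\lambda$, the substitution $y=g^*x$ and the equivariance law \eqref{eq:dmulambdaEquivariance} then yield
\[
 \int_{\calO_\lambda}\bigl|\chi(g^*)^{\frac{\lambda}{2}}\psi(g^*x)\bigr|^2\td\mu_\lambda(x)=\chi(g^*)^\lambda\int_{\calO_\lambda}|\psi(g^*x)|^2\td\mu_\lambda(x)=\int_{\calO_\lambda}|\psi(y)|^2\td\mu_\lambda(y),
\]
so each $\rho_\lambda(g)$ is an isometry, and being invertible it is unitary.

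Irreducibility would then follow from Mackey's theory. The action of $L$ on $\widehat{N}$ has only finitely many orbits, as recalled before \eqref{eq:PolarOk} for the action of $L$ on $V$; hence every orbit is locally closed and the action is regular, which is precisely the hypothesis under which the little-group method applies. Since $\calO_\lambda$ is by construction a single $L$-orbit, the system of imprimitivity is based on the transitive $L$-space $\calO_\lambda\cong L/S'$, and Mackey's theorem asserts that $\Ind_{N\rtimes S'}^{Q}(\chi_{s_k}\otimes\sigma)$ is irreducible if and only if $\sigma$ is an irreducible representation of the little group $S'$. Here $\sigma$ is one-dimensional, hence irreducible, and the irreducibility of $\rho_\lambda$ follows at once; in particular no case distinction among the entries of $\calW$ is needed.

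The conceptual content is therefore light, and the main obstacle is the bookkeeping in the first paragraph rather than any analytic difficulty. One must track the adjoint $g^*$ carefully through the dual $L$-action on $\widehat{N}$, so that the relevant orbit emerges as $\calO_\lambda$ (and not its image under $g\mapsto(g^*)^{-1}$), and one must check that the transformation law of $\td\mu_\lambda$ supplies exactly the Radon--Nikodym factor that, combined with the scalar cocycle $\chi(g^*)^{\lambda/2}$, renders the $L$-action unitary. Once this identification with the canonical model of the induced representation is in place, the statement is an immediate instance of the Mackey machine.
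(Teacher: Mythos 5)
Your route is the one the paper itself takes --- the paper offers no details beyond declaring the proposition ``a consequence of the Mackey theory'' --- and most of your account is correct: the dual action of $L$ on $\widehat{N}\cong V$ is indeed $\chi_x\mapsto\chi_{(g^*)^{-1}x}$, the orbit through $\chi_{s_k}$ is $\calO_\lambda$, positivity of $\chi$ on the connected group $L$ makes $\chi(g^*)^{\lambda/2}$ well defined, your isometry computation is right, and finitely many (Borel) orbits does give the countable separation needed for the Mackey machine.

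The genuine flaw is in your identification of the inducing datum. The character $\sigma(g)=\chi(g^*)^{\lambda/2}$ of $S'=\{g\in L:g^*s_k=s_k\}$ is positive-valued and in general nontrivial on $S'$: for $V=\Sym(2,\RR)$ and $s_1=E_{11}$, the map $g\colon a\mapsto hah^t$ with $h=\diag(1,t)$, $t>0$, lies in $S'$, equals its own adjoint, and has $\chi(g^*)=t^2\neq1$. So $\sigma$ is not a unitary character of $S'$, hence cannot be a Mackey parameter at all, and unitarily inducing from it would not produce a unitary representation. The correct identification is $\rho_\lambda\cong\Ind_{N\rtimes S'}^{Q}(\chi_{s_k}\otimes\1)$: the factor $\chi(g^*)^{\lambda/2}$ in \eqref{eq:L2Rep2} is precisely the square root of the Radon--Nikodym derivative $\chi(g^*)^{\lambda}$ supplied by \eqref{eq:dmulambdaEquivariance}, which unitary induction inserts automatically. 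Your own computation shows that this single factor already makes $\rho_\lambda(g)$ unitary, so there is no room for it to appear a second time as a cocycle; your closing description of the Radon--Nikodym factor ``combined with the scalar cocycle $\chi(g^*)^{\lambda/2}$'' double-counts it (with both present, unitarity would fail). The slip does not affect the conclusion, since Mackey's irreducibility criterion only needs the little-group representation to be irreducible and any candidate here is one-dimensional; but the datum is $\1$, not your $\sigma$, and the two genuinely differ exactly on the singular orbits $\calO_k$, $k<r_0$ --- in particular on the minimal orbit $\calO_1$ that the rest of the paper uses. If you prefer to sidestep the identification entirely: any bounded operator commuting with $\rho_\lambda(N)$ is multiplication by a function in $L^\infty(\calO_\lambda,\td\mu_\lambda)$, since the characters $e^{\sqrt{-1}(\cdot\,|a)}$ generate that algebra; commuting also with $\rho_\lambda(L)$ forces the function to be $L$-invariant, hence constant a.e.\ by transitivity of $L$ on $\calO_\lambda$, and Schur's lemma gives irreducibility.
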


We ask whether $\rho_\lambda$ extends to a unitary irreducible representation of $G$ (or some finite cover) on $L^2(\calO_\lambda,\td\mu_\lambda)$. For this we extend the differential representation $\td\rho_\lambda$ of $\frakq^{\textup{max}}$ to $\frakg$. Then for $\calO_\lambda=\calO$ the minimal non-zero orbit we determine the cases in which $\td\pi_\lambda$ integrates to a unitary representation of a Lie group having Lie algebra $\frakg$.

For each $\lambda\in\calW$ we define a Lie algebra representation $\td\pi_\lambda$\index{dpilambda@$\td\pi_\lambda$} of $\frakg$ on $C^\infty(\calO_\lambda)$ which extends the derived action of $\rho_\lambda$. On $\frakq^{\textup{max}}=\frakn+\frakl$ we let
\begin{align*}
 \td\pi_\lambda(X) &:= \left.\frac{\td}{\td t}\right|_{t=0}\rho_\lambda(e^{tX}) & \forall\,X\in\frakq^{\textup{max}}.
\end{align*}
For $\psi\in C^\infty(\calO_\lambda)$ we have
\begin{align}
 \td\pi_\lambda(X)\psi(x) &= \sqrt{-1}(x\psi(x)|u) & \mbox{for }X &= (u,0,0),\label{eq:L2DerRep1}\\
 \td\pi_\lambda(X)\psi(x) &= D_{T^*x}\psi(x)+\frac{r\lambda}{2n}\Tr(T^*)\psi(x) & \mbox{for }X &= (0,T,0),\label{eq:L2DerRep2}\\
\intertext{where we have used \eqref{eq:ChiDet} for the $\frakl$-action. Here $D_u\psi(x)=\left.\frac{\td}{\td t}\right|_{t=0}\psi(x+tu)$ is the derivative in the direction of $u$. In view of the Gelfand--Naimark decomposition \eqref{eq:GelfandNaimark} it remains to define $\td\pi_\lambda$ on $\overline{\frakn}$ in order to define a representation of the whole Lie algebra $\frakg$. For this we use the Bessel operator $\calB_\lambda$. By Theorem \ref{thm:BlambdaTangential} the operator $\calB_\lambda$ is tangential to $\calO_\lambda$ and hence, for $\psi\in C^\infty(\calO_\lambda)$ the formula}
 \td\pi_\lambda(X)\psi(x) &= \frac{1}{\sqrt{-1}}(\calB_\lambda\psi(x)|v) & \mbox{for }X &= (0,0,-v),\label{eq:L2DerRep3}
\end{align}
defines a function $\td\pi_\lambda(X)\psi\in C^\infty(\calO_\lambda)$.

\begin{proposition}\label{prop:LieAlgRep}
For $\lambda\in\calW$ the formulas \eqref{eq:L2DerRep1}, \eqref{eq:L2DerRep2} and \eqref{eq:L2DerRep3} define a representation $\td\pi_\lambda$ of $\frakg$ on $C^\infty(\calO_\lambda)$. This representation is compatible with $\rho_\lambda$, i.e. for $g\in Q$ and $X\in\frakg$ we have
\begin{align}
 \rho_\lambda(g)\td\pi_\lambda(X) &= \td\pi_\lambda(\Ad(g)X)\rho_\lambda(g).\label{eq:CompatibilityRhoPi}
\end{align}
\end{proposition}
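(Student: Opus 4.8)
The plan is to use the $\ZZ$-grading $\frakg=\frakg_{-1}\oplus\frakg_0\oplus\frakg_1$ with $\frakg_{-1}=\frakn$, $\frakg_0=\frakl$ and $\frakg_1=\nfo$ together with the fact that $\td\pi_\lambda$ is already under control on $\frakq^{\textup{max}}=\frakn+\frakl$. Indeed, for $X\in\frakq^{\textup{max}}$ the operator $\td\pi_\lambda(X)$ is by definition the infinitesimal generator of the genuine group representation $\rho_\lambda$ of $Q$ from \eqref{eq:L2Rep1}--\eqref{eq:L2Rep2}, so $X\mapsto\td\pi_\lambda(X)$ is automatically a Lie algebra homomorphism on $\frakq^{\textup{max}}$. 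By bilinearity and antisymmetry of the bracket, the homomorphism property $\td\pi_\lambda([X_1,X_2])=[\td\pi_\lambda(X_1),\td\pi_\lambda(X_2)]$ therefore has to be checked only for pairs involving $\frakg_1=\nfo$, namely for $[\frakg_1,\frakg_1]$, $[\frakg_0,\frakg_1]$ and $[\frakg_{-1},\frakg_1]$. First I would dispose of these three in turn; the compatibility \eqref{eq:CompatibilityRhoPi} is then deduced at the end.

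For $X_1,X_2\in\nfo$ the grading gives $[X_1,X_2]\in\frakg_2=0$, and the required vanishing $[\td\pi_\lambda(X_1),\td\pi_\lambda(X_2)]=0$ is exactly the commutativity of the operators $(v|\calB_\lambda)$, $v\in V$, supplied by Proposition~\ref{prop:BesselOpProperties}(1); note that these operators act on $C^\infty(\calO_\lambda)$ thanks to the tangentiality in Theorem~\ref{thm:BlambdaTangential}. For $X_1=(0,T,0)\in\frakl$ and $X_2\in\nfo$, formula \eqref{eq:LieBracket} shows $[X_1,X_2]\in\nfo$, so the identity to be verified is the $\frakl$-equivariance of the Bessel operator relative to the linear action \eqref{eq:L2DerRep2}; this is the infinitesimal version of the transformation rule of $\calB_\lambda$ under the structure group and is read off from the definition of $\calB_\lambda$ through the quadratic representation $P$.

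The substantial step is the bracket $[\frakn,\nfo]\subseteq\frakl$. With $X_1=(u,0,0)$ and $X_2=(0,0,-v)$, \eqref{eq:LieBracket} gives $[X_1,X_2]=(0,-2u\Box v,0)$, so I must show $[\td\pi_\lambda(X_1),\td\pi_\lambda(X_2)]=\td\pi_\lambda((0,-2u\Box v,0))$. Here $\td\pi_\lambda(X_1)$ is multiplication by $\sqrt{-1}(x|u)$ and $\td\pi_\lambda(X_2)\psi=\frac{1}{\sqrt{-1}}(\calB_\lambda\psi|v)$, and the commutator is evaluated by feeding the linear function $(x|u)$ into the product rule of Proposition~\ref{prop:BesselOpProperties}(2). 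Since its second-order part annihilates linear functions, $\calB_\lambda$ sends $(x|u)$ to the constant $\lambda\vartheta(u)$, while the cross term $2P(\vartheta(u),\frac{\partial\psi}{\partial x})x$ survives; pairing against $v$ then leaves one zeroth-order and one first-order operator in $\psi$. The zeroth-order part equals $-\lambda\tau(u,v)\psi$, which matches $\frac{r\lambda}{2n}\Tr(T^*)\psi$ for $T=-2u\Box v$ after the trace computation $\Tr(u\Box v)=\Tr L(uv)=\frac{n}{r}\tau(u,v)$. The first-order part must coincide with $D_{T^*x}\psi$, i.e.\ one needs the Jordan identity $\big(P(\vartheta(u),\tfrac{\partial\psi}{\partial x})x\,\big|\,v\big)=D_{(u\Box v)^*x}\psi$; this is the main obstacle, and I would derive it from associativity of the trace form $\tau$ together with the polarization identities linking $P(\cdot,\cdot)$ with the box operator $\Box$ and the adjoint $T^*$.

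Finally, once $\td\pi_\lambda$ is a representation of $\frakg$ restricting to $\td\rho_\lambda$ on $\frakq^{\textup{max}}$, the compatibility \eqref{eq:CompatibilityRhoPi} follows by a uniqueness argument. Fixing $X\in\frakg$ and $Y\in\frakq^{\textup{max}}$, both $S(t):=\rho_\lambda(e^{tY})\td\pi_\lambda(X)\rho_\lambda(e^{tY})^{-1}$ and $R(t):=\td\pi_\lambda(\Ad(e^{tY})X)$ solve the linear operator differential equation $\frac{\td}{\td t}F(t)=[\td\pi_\lambda(Y),F(t)]$ with the same initial value $F(0)=\td\pi_\lambda(X)$, where for $R$ one uses the homomorphism property just established and $\frac{\td}{\td t}\Ad(e^{tY})X=\ad(Y)\Ad(e^{tY})X$. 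Hence $S\equiv R$, which is \eqref{eq:CompatibilityRhoPi} for $g\in\exp(\frakq^{\textup{max}})$; since $Q$ is connected and generated by such one-parameter subgroups and both sides of \eqref{eq:CompatibilityRhoPi} are multiplicative in $g$, the relation holds for all $g\in Q$.
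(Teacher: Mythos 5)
Your proposal is correct, and since the paper gives no inline proof of Proposition~\ref{prop:LieAlgRep} --- it defers to a ``lengthy but elementary calculation'' in \cite[Proposition 2.1.2]{Moe10} and notes in the subsequent remark that Proposition~\ref{prop:IntertwinerPrincipalSeries} (the Fourier transform intertwiner with a degenerate principal series) yields an alternative proof --- your argument is a self-contained version of the direct-computation route that the paper outsources. Your organization is sound: the homomorphism property on $\frakq^{\max}$ comes for free from differentiating $\rho_\lambda$, the bracket $[\frakg_1,\frakg_1]=0$ is exactly Proposition~\ref{prop:BesselOpProperties}\,(1) together with the tangentiality from Theorem~\ref{thm:BlambdaTangential}, and the key identity you isolate for $[\frakg_{-1},\frakg_1]$ is indeed true: writing $w=\frac{\partial\psi}{\partial x}$, associativity of $\tau$ gives
\begin{align*}
 \bigl(P(\vartheta u,w)x\,\big|\,v\bigr)
 = \tau\bigl(P(\vartheta u,w)x,\vartheta v\bigr)
 = \tau\bigl(w,(\vartheta v\Box\vartheta u)x\bigr)
 = D_{(u\Box v)^*x}\psi,
\end{align*}
because $(u\Box v)^*=\vartheta\circ(v\Box u)\circ\vartheta=\vartheta v\Box\vartheta u$; combined with $\Tr(u\Box v)=\Tr L(uv)=\frac{n}{r}\tau(u,v)$, both the first- and zeroth-order parts of the commutator match $\td\pi_\lambda(0,-2u\Box v,0)$ as given by \eqref{eq:L2DerRep2}. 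Two points need more care than your sketch provides. First, the case $[\frakg_0,\frakg_1]$ is not simply ``read off'': it is the commutation relation $D_{T^*x}(\calB_\lambda\psi|v)-(\calB_\lambda D_{T^*x}\psi|v)=-(\calB_\lambda\psi|T^\#v)$, an infinitesimal $\str(V)$-equivariance of the Bessel operator that requires the same kind of trace-form manipulation as the identity above. Second, uniqueness for your operator-valued ODE cannot be cited from Picard--Lindel\"of, because $\td\pi_\lambda(Y)$ is an unbounded differential operator; the standard repair is to bypass uniqueness altogether and check directly that for fixed $\psi\in C^\infty(\calO_\lambda)$ and $x\in\calO_\lambda$ the function $t\mapsto\bigl(\rho_\lambda(e^{tY})^{-1}\td\pi_\lambda(\Ad(e^{tY})X)\rho_\lambda(e^{tY})\psi\bigr)(x)$ has vanishing derivative --- this uses only the homomorphism property already established --- hence is constant, which gives \eqref{eq:CompatibilityRhoPi} on $\exp(\frakq^{\max})$; your multiplicativity-plus-connectedness step then finishes the proof. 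With these two repairs your argument is complete, and it is in substance the computation cited from \cite{Moe10}, organized more economically through the grading.
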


The proof is a lengthy but elementary calculation which can be found in \cite[Proposition 2.1.2]{Moe10}.

\begin{remark}
In Proposition~\ref{prop:IntertwinerPrincipalSeries} we will show that $\td\pi_\lambda$ coincides with the Fourier transform of the differential action on a degenerate principal series representation in the non-compact picture. The definition \eqref{eq:L2DerRep3} of the $\overline{\frakn}$-action is motivated by these considerations. This also gives an alternative proof that $\td\pi_\lambda$ is indeed a Lie algebra representation.
\end{remark}

\subsubsection{Associated varieties and the Joseph ideal}\label{sec:AssocVar}

Recall that for a finitely generated representation $\tau$ of $\frakg$ with annihilator $\calJ:=\Ann(\tau)\subseteq U(\frakg)$ the associated variety $\mathcal V(\calJ)\subseteq\frakg_\CC^*$ is the variety corresponding to the graded ideal
\begin{align*}
 J &:= \gr(\calJ)\subseteq\gr(U(\frakg)) \cong S(\frakg_\CC) \cong \CC[\frakg_\CC^*].
\end{align*}

For a simple Lie algebra
$\frakg$ not of type $A_n$, Joseph \cite{Jo76} introduced a unique completely prime ideal $\calJ\subseteq U(\frakg)$ with the property that  $\mathcal V(\calJ)$ is equal to the closure $\overline{\calO_{\min}^{G_\CC}}\subseteq\frakg_\CC^*$ (see also \cite[Theorem 3.1]{GS04}). This ideal is primitive, and  is called the \textit{Joseph ideal}.

\begin{definition}\label{def:minrep}
Let $M$ be a simple $\frakg$-module. We say $M$ is \textit{minimal} if its annihilator is the Joseph ideal. For an irreducible unitary representation $\pi$ of a real simple Lie group $G$, we say $\pi$ is a \textit{minimal representation} if the annihilator of the differential representation $d\pi$ is the Joseph ideal.
\end{definition}

We note that if $G$ is a complex simple Lie group, we have $\frakg_{\CC} \simeq \frakg\oplus\overline{\frakg} \equiv \frakg_L\oplus\frakg_R$, and the Joseph ideal is given by $\mathcal{I}_L \otimes U(\frakg_R)+U(\frakg_L) \otimes \mathcal{I}_R$ (see Example \ref{ex:cpxnilp}).

For any admissible irreducible representation $\pi$ of a real reductive group $G$, the associated variety ${\cal V}(\ker d\pi)$ has real points. In particular, there is no minimal representation (in the sense of Definition \ref{def:minrep}) of a simple Lie group $G$ if $\calO_{\min}^{G_{\CC}}$ does not have real points. In view of Proposition \ref{prop:nilcpx}, the closure of $\calO_{\min,\frakg}^{G_\CC}$ is the smallest possible associated variety of such a representation in any case. We shall see in Theorem \ref{thm:Minimality} that our unitary representation $\pi_\lambda$ for $\lambda\in\calW$ such that $\calO_\lambda=\calO$ actually attain the associated varieties of the annihilator ideals $\calV(\ker\td\pi_\lambda)$.

{}From now on we restrict ourselves to the representations $\td\pi_{\lambda}$, where $\lambda\in\calW$ is such that $\calO_\lambda=\calO$. Then $\lambda=\lambda_1=\frac{r_0d}{2r}$ for $r_0>1$ and $\lambda>0$ for $r_0=1$.

Let $\overline{\XX}:=\overline{\calO}\cup(-\overline{\calO})=\calO\cup(-\calO)\cup\{0\}$.
By \cite[Theorem 2.9]{GK98} we have
\[
     \overline{\XX}=\{x\in V:\rk(P(x))\leq\rk(P(c_1))=e+1\}
\]
and hence, $\overline{\XX}$ is a real affine subvariety of $V$.  Note that $K_L$, being a connected real algebraic group, is irreducible, whence also $K_L\times\RR c_1$ is irreducible. But $\overline{\XX}$ is the image of $K_L\times\RR c_1$ under the map $(k,tc_1)\mapsto ktc_1$, so it is irreducible as well. The origin is the only singular point of $\overline{\XX}$. By \cite[Theorem~2.4.10 and Proposition~1.7.3]{Pal81} $\XX:=\overline{\XX}\setminus\{0\}$ is an open, dense, smooth irreducible affine algebraic subspace of $\overline{\XX}$. Fix a basis for $V$ and denote the coordinates of a point in $V$ by  $(x_1,\ldots,x_n)$. Then, adding a coordinate $t$ and the equation $t(x_1^2+\cdots+x_n^2)=1$ to the description of $\overline{\XX}$ as a real affine subvariety of $V$, we see that also $\XX$ is a real affine algebraic variety.

Denote by $\DD(\XX)$ the algebra of regular differential operators on $\XX$. Then Proposition~\ref{prop:LieAlgRep} implies:

\begin{proposition}\label{prop:D(X)-factorization}
For $\lambda\in\calW$ with $\calO_\lambda=\calO$ the representation $\td\pi_\lambda$ factors through the algebra homomorphism $U(\frakg)\to\DD(\XX)$.
\end{proposition}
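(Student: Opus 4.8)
The plan is to read off from the explicit formulas \eqref{eq:L2DerRep1}, \eqref{eq:L2DerRep2} and \eqref{eq:L2DerRep3} that each generator $\td\pi_\lambda(X)$, for $X$ ranging over $\frakn$, $\frakl$ and $\overline{\frakn}$, is a polynomial-coefficient differential operator on $V$ that is tangential to the affine variety $\XX$, and hence descends to a regular differential operator on $\XX$. Since $\td\pi_\lambda$ is already known to be a Lie algebra representation (Proposition \ref{prop:LieAlgRep}), once the linear map $\frakg\to\DD(\XX)$, $X\mapsto\td\pi_\lambda(X)$, is seen to be well defined it is automatically a homomorphism into $(\DD(\XX),[\,\cdot\,,\,\cdot\,])$; the universal property of $U(\frakg)$ then produces the algebra homomorphism $U(\frakg)\to\DD(\XX)$, and composing with the tautological action of $\DD(\XX)$ on functions on the component $\calO$ of $\XX$ recovers $\td\pi_\lambda$, which is the asserted factorization.

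First I would record that every generator lies in the Weyl algebra $\DD(V)$ of polynomial-coefficient differential operators on $V$. By \eqref{eq:L2DerRep1} the image of $(u,0,0)\in\frakn$ is multiplication by the linear polynomial $x\mapsto\sqrt{-1}(x|u)$, an operator of order $0$; by \eqref{eq:L2DerRep2} the image of $(0,T,0)\in\frakl$ is the linear vector field $x\mapsto D_{T^*x}$ plus a scalar, an order-one operator with linear coefficients; and by \eqref{eq:L2DerRep3} together with \eqref{eq:BesselOp} the image of $(0,0,-v)\in\overline{\frakn}$ is $\tfrac{1}{\sqrt{-1}}(\calB_\lambda\,\cdot\,|v)$, a second-order operator whose coefficients are polynomials of degree at most one because $P$ is a quadratic map. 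Thus all three lie in $\DD(V)$, and only tangentiality remains to be checked.

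Next I would verify tangentiality to $\XX=\calO\cup(-\calO)$. For the order-zero operators it is automatic, and for the vector fields it holds because $\calO=L\cdot c_1$ and $-\calO=L\cdot(-c_1)$ are $L$-orbits, so the infinitesimal action $x\mapsto T^*x$ of $\frakl$ is tangent to each. For the Bessel term, tangentiality to $\calO_\lambda=\calO$ is precisely Theorem \ref{thm:BlambdaTangential}; tangentiality to the second component $-\calO$ then follows from the parity of $\calB_\lambda$ under the linear involution $\sigma\colon x\mapsto-x$, namely $\calB_\lambda(f\circ\sigma)=-(\calB_\lambda f)\circ\sigma$, which holds since $P$ is even. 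Indeed, if $\varphi$ vanishes on $-\calO$ then $\varphi\circ\sigma$ vanishes on $\calO$, so $\calB_\lambda(\varphi\circ\sigma)=-(\calB_\lambda\varphi)\circ\sigma$ vanishes on $\calO$, i.e.\ $\calB_\lambda\varphi$ vanishes on $\sigma(\calO)=-\calO$. Hence every generator is tangential to all of $\XX$, consistent with the symmetric description $\overline{\XX}=\{x:\rk(P(x))\le e+1\}$.

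The \emph{main obstacle} is the passage from ``tangential polynomial-coefficient operator on $V$'' to ``element of $\DD(\XX)$''. Here I would use that $\XX$ is a smooth affine algebraic variety (established just before the statement), so that its vanishing ideal $I(\XX)$ is radical and $\DD(\XX)$ is the Grothendieck ring of differential operators on the reduced coordinate ring. Because each generator $D$ has polynomial coefficients and is tangential in the analytic sense, $D$ carries polynomials vanishing on $\XX$ to smooth functions vanishing on $\XX$; as $D\,I(\XX)$ then consists of polynomials vanishing on $\XX$ and $I(\XX)$ is radical, one gets $D(I(\XX))\subseteq I(\XX)$, so $D$ induces an operator on $\RR[V]/I(\XX)=\RR[\XX]$, that is, an element of $\DD(\XX)$. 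Smoothness of $\XX$ is exactly what guarantees that operators arising from the ambient embedding are the correct ones; the remaining points---that the order filtration is respected and that the induced commutators match---are routine, and the factorization of $\td\pi_\lambda$ through $U(\frakg)\to\DD(\XX)$ follows.
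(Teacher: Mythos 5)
Your proposal is correct and is essentially the paper's own argument: the paper states this proposition as an immediate consequence of Proposition \ref{prop:LieAlgRep} together with the preceding observation that $\XX$ is a smooth irreducible real affine variety, which is precisely your route of checking that the generators are polynomial-coefficient operators tangential to $\XX$ (Theorem \ref{thm:BlambdaTangential} for the Bessel part) and then invoking the universal property of $U(\frakg)$. Your parity computation $\calB_\lambda(f\circ\sigma)=-(\calB_\lambda f)\circ\sigma$, which settles tangentiality to the second component $-\calO$ when it differs from $\calO$, correctly supplies a detail the paper leaves implicit.
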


Below we will prove (see Corollary~\ref{cor:annihilator of dpi}) that the annihilator of $\td\pi_\lambda$ coincides with the annihilator of a finitely generated $(\frakg,\frakk)$-module if we assume that $\frakg\ncong\so(p,q)$ with $p+q$ odd, $p,q\ge 3$, and that if $\frakg\cong\so(n,1)$, then $\sigma=r\lambda\in(0,2(n-1))$. This allows us to prove the following theorem.

\begin{theorem}\label{thm:Minimality}
Let $\lambda\in\calW$ such that $\calO_\lambda=\calO$. Further suppose that $\mathfrak g\not \cong \so(p,q)$ with $p+q$ odd, $p,q\ge 3$, and that if $\frakg\cong\so(n,1)$, then $\sigma=r\lambda\in(0,2(n-1))$. Then the annihilator $\calJ$ of $\td\pi_\lambda$ is completely prime and its associated variety $\mathcal V(\calJ)$ is the closure of $\calO_{\min,\frakg}^{G_\CC}$.
\end{theorem}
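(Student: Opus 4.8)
The plan is to establish the two assertions — complete primeness of $\calJ$ and the identification $\calV(\calJ)=\overline{\calO_{\min,\frakg}^{G_\CC}}$ — separately, the common engine being the factorization of $\td\pi_\lambda$ through the ring $\DD(\XX)$ of regular differential operators on the smooth irreducible affine variety $\XX$ (Proposition~\ref{prop:D(X)-factorization}), together with the dimension formula of Theorem~\ref{lem:Lagrange}(4). Throughout I read $U(\frakg)$ as $U(\frakg_\CC)$ and write $\phi\colon U(\frakg_\CC)\to\DD(\XX_\CC)$ for the complexified algebra homomorphism, where $\XX_\CC$ is the complexification of $\XX$, i.e.\ the smooth irreducible affine regular part of the orbit closure $\overline{L_\CC\cdot c_1}$. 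Since a regular differential operator that vanishes on $C^\infty(\calO)$ vanishes on the Zariski-dense set of smooth real points $\calO\subseteq\XX_\CC$ and is therefore zero, the annihilator satisfies $\calJ=\ker\phi$, so $U(\frakg_\CC)/\calJ\cong\im\phi$ as filtered algebras.

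\emph{Complete primeness.} Because $\XX_\CC$ is smooth and irreducible, its cotangent bundle $T^*\XX_\CC$ is irreducible, so $\gr\DD(\XX_\CC)\cong\CC[T^*\XX_\CC]$ is an integral domain; a filtered ring whose associated graded is a domain is itself a domain, so $\DD(\XX_\CC)$ is a domain. Hence its subalgebra $\im\phi\cong U(\frakg_\CC)/\calJ$ is a domain, i.e.\ $\calJ$ is completely prime.

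\emph{Associated variety.} I would squeeze $\dim\calV(\calJ)$ between two bounds. For the upper bound, $\dim\calV(\calJ)=\textup{GKdim}(U(\frakg_\CC)/\calJ)$; since $U(\frakg_\CC)/\calJ\cong\im\phi$ is a subalgebra of $\DD(\XX_\CC)$ and Gelfand--Kirillov dimension is an isomorphism invariant that is monotone under subalgebras, $\dim\calV(\calJ)\le\textup{GKdim}\,\DD(\XX_\CC)=2\dim_\CC\XX_\CC=2\dim_\RR\XX=\dim_\CC\calO_{\min,\frakg}^{G_\CC}$, the last equalities using $\dim_\RR\XX=\dim_\RR\calO$ and Theorem~\ref{lem:Lagrange}(4). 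This step is insensitive to the fact that $\phi$ raises orders (the Bessel operators are second order), since the GK dimension of $U(\frakg_\CC)/\calJ$ is independent of the chosen filtration. For the structural side I invoke the forthcoming Corollary~\ref{cor:annihilator of dpi}, valid exactly under the hypotheses imposed on $\frakg$ and $\sigma$, identifying $\calJ$ with the annihilator of a finitely generated $(\frakg,\frakk)$-module $M$; then $Z(\frakg_\CC)$ acts through a finite-dimensional quotient, so $\calV(\calJ)$ lies in the nilpotent cone, and being the zero set of the two-sided graded ideal $\gr\calJ$ it is $\Ad(G_\CC)$-invariant and conical, hence a finite union of nilpotent coadjoint orbit closures $\overline{\calO_i}$; by the theory of associated varieties of Harish--Chandra modules (the admissibility statement recalled before the theorem, that $\calV(\ker d\pi)$ has real points) each $\calO_i$ meets $\frakg^*$. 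Since $\td\pi_\lambda$ is infinite dimensional, $\calJ$ has infinite codimension and some $\calO_i$ is nonzero, so by Proposition~\ref{prop:nilcpx}(2) $\dim\calV(\calJ)\ge\dim\calO_{\min,\frakg}^{G_\CC}$; the upper bound forces equality. Then every nonzero $\calO_i$ has dimension exactly $\dim\calO_{\min,\frakg}^{G_\CC}$ and meets $\frakg^*$, so the uniqueness in Proposition~\ref{prop:nilcpx}(2) gives $\calO_i=\calO_{\min,\frakg}^{G_\CC}$, while any remaining component reduces to $\{0\}\subseteq\overline{\calO_{\min,\frakg}^{G_\CC}}$. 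Hence $\calV(\calJ)=\overline{\calO_{\min,\frakg}^{G_\CC}}$.

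\emph{Main obstacle.} The domain argument and the GK-dimension upper bound are routine once Proposition~\ref{prop:D(X)-factorization} is in hand. The genuinely delicate ingredient is the structural description of $\calV(\calJ)$ as a union of nilpotent orbit closures meeting $\frakg^*$: this rests on Corollary~\ref{cor:annihilator of dpi} — whose proof is precisely where the hypotheses $\frakg\not\cong\so(p,q)$ ($p+q$ odd, $p,q\ge3$) and $\sigma\in(0,2(n-1))$ for $\so(n,1)$ enter, and which presents $\calJ$ as the annihilator of a finite-length Harish--Chandra module — together with the admissibility/real-points theorem. Granting these, the dimension squeeze against Proposition~\ref{prop:nilcpx}(2) closes the argument without further difficulty.
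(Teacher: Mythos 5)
Your proof is correct and takes essentially the same route as the paper: both obtain the upper bound $\dim\calV(\calJ)\le\dim_\CC\calO_{\min,\frakg}^{G_\CC}$ from the embedding $U(\frakg)/\calJ\hookrightarrow\DD(\XX)$ together with the formula $\textup{GKdim}\,\DD(\XX)=2\dim\XX$ and Theorem~\ref{lem:Lagrange}\,(4), then conclude via Proposition~\ref{prop:nilcpx} (with Corollary~\ref{cor:annihilator of dpi} and the real-points property of associated varieties supplying the structural input), and both deduce complete primeness from the fact that $\DD(\XX)$ has no zero-divisors because $\XX$ is smooth and irreducible. The only differences are cosmetic: you prove the domain property directly via $\gr\DD(\XX_\CC)\cong\CC[T^*\XX_\CC]$ and spell out the nilpotent-orbit-closure argument that the paper compresses into a citation of Proposition~\ref{prop:nilcpx}, where the paper instead cites the literature for these facts.
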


\begin{proof}
By \cite{Smi84} the Gelfand--Kirillov dimension of $\DD(\XX)$ is given by $2\dim\,\XX$.
The dimension of $\calO$, and hence of $\XX$, by Theorem~\ref{lem:Lagrange}\,(4) is equal to $\dim_{\CC} \calO_{\min,\frakg}^{G_\CC}$.
Therefore, the Gelfand--Kirillov dimension of $U(\frakg)/\calJ$ does not exceed it.
It is equal to the Krull dimension of $S(\frakg_\CC)/\gr(\calJ)$, which, on the other hand, equals the dimension of the associated variety $\mathcal V(\calJ)$.
Therefore, the associated variety has dimension less or equal to $\dim_\CC \calO_{\min,\frakg}^{G_\CC}$.
By Proposition~\ref{prop:nilcpx},
$\mathcal V(\calJ)$ has minimal dimension and is equal to $\overline{\calO_{\min,\frakg}^{G_\CC}}$.

It remains to check that $\calJ$ is completely prime.  Since $\XX$ is irreducible, the ring $\DD(\XX)$ does not contain zero-divisors (see e.g. \cite[Proposition 2.4 and the remark thereafter]{BW04}). Therefore, the annihilator $\calJ$ has to be completely prime.
\end{proof}

\subsubsection{Construction of the $(\frakg,\frakk)$-module}\label{sec:ConstructiongkModule}

Again we fix $\lambda\in\calW$ such that $\calO_\lambda=\calO$. For $r_0>1$ we have $\lambda=\lambda_1$, but for $r_0=1$ arbitrary parameters $\lambda>0$ can occur. In this case we again put $\sigma:=\frac{r}{r_0}\lambda=r\lambda$. The representation $\td\pi_\lambda$ extends to a representation of the universal enveloping algebra $U(\frakg)$\index{Ug@$U(\frakg)$} on $C^\infty(\calO_\lambda)$.

Following \cite{KM07b}, we renormalize the $K$-Bessel function as
$$\widetilde{K}_\alpha(z)=\left(\frac{z}{2}\right)^{-\alpha}K_\alpha(z).$$
Let $\nu\equiv\nu(V)$ be the integer given in \eqref{eqn: def nu}.
We then introduce a radial function $\psi_0$ on $\calO$ as follows:
\begin{enumerate}
\item[\textup{(1)}] If $r_0>1$, we put
\begin{align}
 \psi_0(x) &:= \widetilde{K}_{\frac{\nu}{2}}(|x|),& x\in\calO.\label{def:Psi0}
\end{align}
\item[\textup{(2)}] If $r_0=1$, we put
\begin{align}
 \psi_0(x) &:= \widetilde{K}_{\frac{\nu+\sigma}{2}}(|x|), & x\in\calO.\label{def:Psi0-r_0=1}
\end{align}
\end{enumerate}
In both cases we further let
\begin{align*}
 W_0 &:= \td\pi_\lambda(U(\frakk))\psi_0 & \mbox{and} && W &:= \td\pi_\lambda(U(\frakg))\psi_0.\index{W0@$W_0$}\index{W@$W$}
\end{align*}

For $\frakg=\sp(k,\RR)$ this construction only leads to the even part of the Weil representation, but it is also possible to construct the odd part in the same spirit. For $V=\Sym(k,\RR)$, $k\geq1$, denote by $H:=\Stab_{\GL(k,\RR)}(c_1)\subset\GL(k,\RR)$ the stabilizer of $c_1=E_{11}\in\calO$. It is explicitly given by $H=(\{\pm1\}\times\GL(k-1,\RR))\ltimes\RR^{k-1}$. Let $\calL$ be the $\GL(k,\RR)$-equivariant line bundle associated to the character of $H$ given by
\begin{align*}
 (\pm1,g,n)\mapsto\pm1, && g\in\GL(k-1,\RR),n\in\RR^{k-1}.
\end{align*}
Since the line bundle $\calL\to\calO$ is flat, the Lie algebra action $\td\pi_\lambda$ ($\lambda=\lambda_1$) of $\frakg=\sp(k,\RR)$ on $C^\infty(\calO)$ induces an action $\td\pi_\lambda^-$ of $\frakg$ on smooth sections of the bundle $\calL\to\calO$. Further observe that the $\GL(k,\RR)$-equivariant measure $\td\mu$ on $\calO$ also defines an $L^2$-space of sections of the line bundle $\calL\to\calO$ which we denote by $L^2(\calO,\calL)$. Note that the folding map $\RR^k\setminus\{0\}\to\calO,\,x\mapsto x\,{}^t\!x$ induces a unitary isomorphism (up to scalar multiples)
\begin{align}
 \calU^-:L^2(\calO,\calL)\to L^2_{\textup{odd}}(\RR^k),\,\calU^-\psi(x):=\psi(x\,{}^t\!x).\label{eq:DefCalU-}
\end{align}
We put
\begin{align}
 \psi_0^-(x) &:= (x|c_1)^{\frac{1}{2}}\widetilde{K}_{\frac{\nu}{2}}(|x|) = \frac{\sqrt{\pi}}{2}\sqrt{x_{11}}e^{-|x|}, & x\in\calO.\label{eq:DefPsi-}
\end{align}
Then $\psi_0^-$ gives an $L^2$-section of the line bundle $\calL\to\calO$.
Define
\begin{align*}
 W_0^- &:= \td\pi_\lambda^-(U(\frakk))\psi_0^- & \mbox{and} && W^- &:= \td\pi_\lambda^-(U(\frakg))\psi_0^-.\index{W@$W$}\index{W0@$W_0$}
\end{align*}
The space $W$ (resp. $W^-$) is clearly a $\frakg$-subrepresentation of $C^\infty(\calO)$ (resp. $C^\infty(\calO,\calL)$) and $W_0$ (resp. $W_0^-$) is a $\frakk$-subrepresentation of $W$ (resp. $W^-$). In order to show that $W$ (resp. $W^-$) is actually a $(\frakg,\frakk)$-module, we shall prove that $W_0$ (resp. $W_0^-$) is finite-dimensional. This can be done by direct computation as follows.

We start with the case $r_0>1$. In this case we will need the following notation to give a precise statement: Denote by
\begin{align*}
 \calH^k(\RR^n) &:= \{p\in\CC[x_1,\ldots,x_n]:p\mbox{ is homogeneous of degree $k$ and harmonic}\}\index{HkRn@$\calH^k(\RR^n)$}
\end{align*}
the space of \textit{spherical harmonics} on $\RR^n$ of degree $k$.

In the case $V=\RR^{p,q}$ we view polynomials in $\calH^k(\RR^p)$ and $\calH^k(\RR^q)$ as polynomials on $V$ via the projections $\RR^p\times\RR^q\ni(x',x'')\mapsto x'\in\RR^p$ and $\RR^p\times\RR^q\ni(x',x'')\mapsto x''\in\RR^q$. For $\calP$ either $\calH^k(\RR^p)$ or $\calH^k(\RR^q)$ we denote by $\widetilde{K}_\alpha\otimes\calP$ the space of functions
\begin{align*}
 \widetilde{K}_\alpha\otimes\varphi:\calO\rightarrow\CC,\,x\mapsto\widetilde{K}_\alpha(|x|)\varphi(x)
\end{align*}
with $\varphi\in\calP$.

In the case $V=\Sym(k,\RR)$ we set for $u\in\CC^k$
\begin{align}
 \varphi_u(x) &:= (\,{}^t\!u xu)^{\frac{1}{2}}e^{-|x|}, & x\in\calO.\label{eq:DefPhiu}
\end{align}
Then $\varphi_u$ is not well-defined as a function on $\calO$, but gives a section of the line bundle $\calL\to\calO$.

\begin{theorem}\label{prop:Kfinite}
Let $V$ be a simple Jordan algebra with simple $V^+$. Assume that $r_0>1$ so that $\lambda=\lambda_1$. Then the $\frakk$-module $W_0$ is finite-dimensional if and only if $V\ncong\RR^{p,q}$ with $p+q$ odd, $p,q\geq2$. If this is the case, $W_0$ is irreducible with highest weight
\begin{align}
 \alpha_0 &:= \begin{cases}\frac{d}{4}\sum_{i=1}^{r_0}{\gamma_i} & \mbox{if $V$ is euclidean,}\\0 & \mbox{if $V$ is non-euclidean of rank $\geq3$,}\\\frac{1}{2}\left|d_0-\frac{d}{2}\right|\gamma_1+\frac{1}{2}\left(d_0-\frac{d}{2}\right)\gamma_2 & \mbox{if $V\cong\RR^{p,q}$, $p,q\geq2$.}\end{cases}\label{eq:DefGamma0}\index{alpha0@$\alpha_0$}
\end{align}
In the case $V=\Sym(k,\RR)$ the $\frakk$-module $W_0^-$ is also irreducible with highest weight
\begin{align}
 \alpha_0^- &:= \left(\frac{d}{4}\sum_{i=1}^{r_0}{\gamma_i}\right)+\frac{\gamma_1}{2}.\label{eq:DefGamma0-}
\end{align}
More precisely:
\begin{enumerate}
\item[\textup{(a)}] If $V$ is euclidean, then
 \begin{align*}
  W_0 &= \CC\psi_0
 \end{align*}
 and $\frakk$ acts by
 \begin{align*}
  \td\pi_{\lambda_1}(u,D,-u)\psi_0 &= \frac{d}{2}\sqrt{-1}\tr(u)\psi_0.
 \end{align*}
\item[\textup{(b)}] If $V$ is non-euclidean of rank $r\geq3$, then
 \begin{align*}
  W_0 &= \CC\psi_0
 \end{align*}
 and $\psi_0$ is a $\frakk$-fixed vector.
\item[\textup{(c)}] If $V=\RR^{p,q}$ with $p+q$ even, $p,q\geq2$, then
 \begin{align}
  W_0 &= \bigoplus_{k=0}^{\left|\frac{p-q}{2}\right|}{\widetilde{K}_{\frac{\nu}{2}+k}\otimes\calH^k(\RR^{\min(p,q)})}\cong\calH^{\left|\frac{p-q}{2}\right|}(\RR^{\min(p,q)+1}).\label{eq:MinKtypeRk2finite}
 \end{align}
\item[\textup{(d)}] If $V=\RR^{p,q}$ with $p+q$ odd, $p,q\geq2$, then
 \begin{align}
  W_0 &= \bigoplus_{k=0}^\infty{\widetilde{K}_{\frac{\nu}{2}+k}\otimes\calH^k(\RR^{\min(p,q)})}.\label{eq:MinKtypeRk2infinite}
 \end{align}
\item[\textup{(e)}] If $V=\Sym(k,\RR)$ then
\begin{align*}
 W_0^- &= \{\varphi_u:u\in\CC^k\}.
\end{align*}
\end{enumerate}
\end{theorem}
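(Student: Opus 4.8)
The plan is to turn the abstract $\frakk$-action into one explicit operator and then run a Bessel-function computation. By \eqref{eq:frakk} a general element of $\frakk$ is $(u,T,-\vartheta(u))$ with $u\in V$ and $T\in\frakl$ skew. Summing \eqref{eq:L2DerRep1}, \eqref{eq:L2DerRep2}, \eqref{eq:L2DerRep3}, using that $\tau$ and $(-|-)$ are $\vartheta$-invariant and that skew elements are traceless, one obtains the key identity
\begin{align*}
 \td\pi_{\lambda_1}(u,T,-\vartheta(u))\psi = -\sqrt{-1}\bigl((\calB_{\lambda_1}-\vartheta(x))\psi|\vartheta(u)\bigr)+D_{-Tx}\psi,
\end{align*}
in which $D_{-Tx}$ is the infinitesimal $K_L$-rotation. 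Any radial function is $K_L$-invariant, so $D_{-Tx}$ annihilates it; hence the entire $\frakk$-action on $\psi_0$ is controlled by the single vector-valued function $(\calB_{\lambda_1}-\vartheta(x))\psi_0$, which Corollary \ref{cor:BnuRadial} computes. This reduces everything to two Bessel identities, $B_{\nu/2}\widetilde{K}_{\nu/2}=0$ and $\widetilde{K}_\alpha'(z)=-\tfrac{z}{2}\widetilde{K}_{\alpha+1}(z)$, the latter giving $B_\beta\widetilde{K}_\alpha=(\alpha-\beta)\widetilde{K}_{\alpha+1}$. Throughout, $\psi_0$ being radial is $\frakk_\frakl$-fixed, so any finite-dimensional $W_0$ is $\frakk_\frakl$-spherical and its highest weight must lie in $\Lambda_{\frakk_\frakl}^+(\frakk)$ of Proposition \ref{prop:klSphericalsReps}.

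First I would dispose of the two rank-$\geq3$ cases. For $V$ non-euclidean of rank $r\geq3$, Corollary \ref{cor:BnuRadial}(2) gives $(\calB_{\lambda_1}-\vartheta(x))\psi_0=B_{\nu/2}\widetilde{K}_{\nu/2}(|x|)\vartheta(x)=0$, so $\psi_0$ is $\frakk$-fixed: $W_0=\CC\psi_0$ with highest weight $0$, which is (b). For $V$ euclidean, Corollary \ref{cor:BnuRadial}(1) leaves only $\tfrac{d}{2}\widetilde{K}_{\nu/2}'(|x|){\bf e}$; since $\nu=-1$ and $\widetilde{K}_{-1/2}'=-\widetilde{K}_{-1/2}$, this equals $-\tfrac{d}{2}\psi_0\,{\bf e}$, whence $\td\pi_{\lambda_1}(u,T,-u)\psi_0=\sqrt{-1}\tfrac{d}{2}({\bf e}|u)\psi_0=\sqrt{-1}\tfrac{d}{2}\tr(u)\psi_0$. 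Thus $W_0=\CC\psi_0$, and evaluating this scalar on the toral elements $(\sum t_ic_i,0,-\sum t_ic_i)$ identifies the weight as $\tfrac{d}{4}\sum_i\gamma_i$, which is (a).

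The heart of the argument is $V\cong\RR^{p,q}$, $p,q\geq2$; say $p\le q$, so $\nu=p-2$ and $\min(p,q)=p$. Here I would show that $W_0$ is built as a ladder of $\frakk_\frakl$-types $\widetilde{K}_{\frac\nu2+k}\otimes\calH^k(\RR^{p})$, $k\geq0$. The first rung is forced by Corollary \ref{cor:BnuRadial}(3): the $\vartheta(0,x'')$-term carries the factor $B_{(p-2)/2}\widetilde{K}_{(p-2)/2}=0$ and dies, while the $\vartheta(x',0)$-term gives $B_{(q-2)/2}\widetilde{K}_{(p-2)/2}=\tfrac{p-q}{2}\widetilde{K}_{p/2}$, so only the first $p$ coordinates (degree-one harmonics in $\RR^p$) appear. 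The engine is then the recurrence step: applying $(\calB_{\lambda_1}-\vartheta(x))$ to $\widetilde{K}_{\frac\nu2+k}(|x|)\varphi(x)$ with $\varphi\in\calH^k(\RR^{p})$, via the product rule of Proposition \ref{prop:BesselOpProperties}(2), the action of $\calB_{\lambda_1}$ on harmonic polynomials, and the decomposition of $x_j\cdot\calH^k$ into $\calH^{k+1}\oplus\calH^{k-1}$, one finds the raising part lands in $\widetilde{K}_{\frac\nu2+k+1}\otimes\calH^{k+1}(\RR^{p})$ with a scalar factor proportional to $\bigl(k-\bigl|\tfrac{p-q}{2}\bigr|\bigr)$. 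I expect isolating this vanishing factor cleanly to be the main obstacle, since it requires carefully combining the Bessel recurrences with spherical-harmonic identities. Granting it, the ladder terminates exactly at $k=\bigl|\tfrac{p-q}{2}\bigr|$ when $p+q$ is even, giving the finite sum of (c); matching its $\frakk_\frakl$-type decomposition with the $\so(p+1)\!\downarrow\!\so(p)$ branching rule identifies $W_0\cong\calH^{|(p-q)/2|}(\RR^{p+1})$ and yields $\alpha_0$. When $p+q$ is odd the factor is never zero, the ladder is infinite, and one gets (d); in particular $W_0$ is then infinite-dimensional, which combined with (a)--(c) proves the claimed equivalence. Irreducibility follows because the raising and lowering operators have nonzero coefficients throughout the range, so no proper $\frakk$-submodule exists.

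Finally, for $V=\Sym(k,\RR)$ and the sign line bundle the same identity applies to the section $\psi_0^-=(x|c_1)^{1/2}\widetilde{K}_{-1/2}(|x|)$, since $\calL$ is flat and $\calB_{\lambda_1}$ is tangential. Writing $\psi_0^-=g\cdot f$ with $g=(x|c_1)^{1/2}$ and $f=\widetilde{K}_{-1/2}(|x|)$ and expanding $\calB_{\lambda_1}(gf)$ by the product rule, the cross term $2P\bigl(\tfrac{\partial g}{\partial x},\tfrac{\partial f}{\partial x}\bigr)x$ produces precisely the sections $\varphi_u$, so that $\td\pi_{\lambda_1}^-$ preserves $\{\varphi_u:u\in\CC^k\}$ and realizes there the standard representation of $\frakk\cong\fraku(k)$ on $\CC^k$ (up to a character twist); equivalently one may transport the computation through the covering $\calU^-$ of \eqref{eq:DefCalU-} to the odd Schr\"odinger model on $\RR^k$, where $\varphi_u$ becomes ${}^t\!uy\,e^{-|y|^2}$ and the statement is the classical minimal-$K$-type computation for the odd Weil representation. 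Either route gives that $W_0^-=\{\varphi_u:u\in\CC^k\}$ is irreducible, and reading off the $\frakt$-weight of $\varphi_{e_1}$ gives the highest weight $\alpha_0^-=\tfrac{d}{4}\sum_i\gamma_i+\tfrac{\gamma_1}{2}$, completing (e).
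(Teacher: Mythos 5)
Your proposal is correct and takes essentially the same approach as the paper: reducing the $\frakk$-action on $\psi_0$ to the operator $(\calB_{\lambda_1}-\vartheta(x))$ (the paper's identity \eqref{eq:KActionInTermsOfBesselOp}), invoking Corollary \ref{cor:BnuRadial} case by case, building the ladder $\widetilde{K}_{\frac{\nu}{2}+k}\otimes\calH^k(\RR^{\min(p,q)})$ for $V\cong\RR^{p,q}$, and running the product-rule computation on $\varphi_u$ for $V=\Sym(k,\RR)$. The recurrence you defer as "the main obstacle" is precisely the identity the paper asserts, with raising coefficient $(2k+p-q)$ and lowering coefficient $-(2k+p+q-4)$, so your anticipated vanishing factor proportional to $k-\bigl|\tfrac{p-q}{2}\bigr|$ and the resulting termination/non-termination dichotomy (hence the finiteness criterion) are exactly right.
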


\begin{proof}
Since $\psi_0$ is $K_L$-invariant, clearly $\td\pi(\frakk_\frakl)\psi_0=0$. To obtain the whole $\frakk$-action on $\psi_0$ we have to apply elements of the form $(u,0,-\vartheta(u))\in\frakk$, $u\in V$, to $\psi_0$. By \eqref{eq:L2DerRep1} and \eqref{eq:L2DerRep3} we have
\begin{align}
 \td\pi_\lambda(u,0,-\vartheta(u))\psi(x) &= \frac{1}{\sqrt{-1}}\tau((\calB_\lambda-\vartheta(x))\psi(x),u) & \forall\,\psi\in C^\infty(\calO).\label{eq:KActionInTermsOfBesselOp}
\end{align}
Now we have to treat four cases separately. For simplicity we write $\td\pi$ for $\td\pi_{\lambda_1}$. Recall the operator $B_\alpha$ from \eqref{eq:OrdinaryBesselOp}.
\begin{enumerate}
 \item[\textup{(1)}] If $V$ is euclidean, then by Corollary~\ref{cor:BnuRadial}\,(1)
  \begin{align*}
   \td\pi(u,0,-\vartheta(u))\psi_0(x) &= \frac{1}{\sqrt{-1}}B_{\frac{\nu}{2}}\widetilde{K}_{\frac{\nu}{2}}(|x|)(x|u)+\frac{1}{\sqrt{-1}}\frac{d}{2}\widetilde{K}_{\frac{\nu}{2}}'(|x|)({\bf e}|u)
  \end{align*}
  Now, $B_{\frac{\nu}{2}}\widetilde{K}_{\frac{\nu}{2}}=0$. Further, since $\widetilde{K}_{-\frac{1}{2}}(|x|)=\frac{\sqrt{\pi}}{2}e^{-|x|}$ we have $\widetilde{K}_{\frac{\nu}{2}}'(|x|)=-\widetilde{K}_{\frac{\nu}{2}}(|x|)$ for $\nu=-1$. Together this gives
  \begin{align}
   \td\pi(u,0,-\vartheta(u))\psi_0(x) = \sqrt{-1}\frac{d}{2}({\bf e}|u)\psi_0(x).\label{eq:KactionEucl}
  \end{align}
  Hence, $W_0=\CC\psi_0$. Since $({\bf e}|u)=\tr(u)$ this gives the action of $\frakk$. Further, for $u=c_i$, $1\leq i\leq r_0$, we find that $W_0$ is of highest weight $\frac{d}{4}\sum_{i=1}^{r_0}{\gamma_i}$.
 \item[\textup{(2)}] If $V$ is non-euclidean of rank $r\geq3$, then $d=2d_0$ (see Proposition \ref{prop:ClassificationEuclSph}) and with Corollary~\ref{cor:BnuRadial}\,(2) we obtain
  \begin{align}
   \td\pi(u,0,-\vartheta(u))\psi_0(x) &= \frac{1}{\sqrt{-1}}B_{\frac{\nu}{2}}\widetilde{K}_{\frac{\nu}{2}}(|x|)(x|u).\label{eq:KactionNonEuclSph}
  \end{align}
  Again, $B_{\frac{\nu}{2}}\widetilde{K}_{\frac{\nu}{2}}=0$, which implies that $W_0=\CC\psi_0$ is the trivial representation.
 \item[\textup{(3)}] For $V=\RR^{p,q}$, $p,q\geq2$, we assume without loss of generality that $p\leq q$, the case $p\geq q$ is treated similarly. Denote by $(e_j)_{j=1,\ldots,n}$\index{ej@$e_j$} the standard basis of $V=\RR^n$. For $j=1,\ldots,p$ we define operators $(-)^\pm_j$ on $\calH^k(\RR^p)$ by
\begin{align*}
 (-)^+_j:\calH^k(\RR^p) &\rightarrow \calH^{k+1}(\RR^p), & \varphi^+_j(x) &:= x_j\varphi(x)-\frac{x_1^2+\cdots+x_p^2}{p+2k-2}\frac{\partial\varphi}{\partial x_j}(x),\\
 (-)^-_j:\calH^k(\RR^p) &\rightarrow \calH^{k-1}(\RR^p), & \varphi^-_j(x) &:= \frac{1}{p+2k-2}\frac{\partial\varphi}{\partial x_j}(x).
\end{align*}
For convenience we also put $(-)^+_j:=(-)^-_j:=0$ for $j=p+1,\ldots,n$. Using the operators $(-)^+_j$ and $(-)^-_j$ as well as Corollary~\ref{cor:BnuRadial}\,(3) we find that for $j=1,\ldots,n$ the action of $(e_j,0,-\vartheta(e_j))\in\frakk$ on $\widetilde{K}_{\frac{\nu}{2}+k}\otimes\varphi\in\widetilde{K}_{\frac{\nu}{2}+k}\otimes\calH^k(\RR^p)$ is given by
\begin{multline*}
 \td\pi(e_j,0,-\vartheta(e_j))(\widetilde{K}_{\frac{\nu}{2}+k}\otimes\varphi)\\
 = \frac{1}{\sqrt{-1}}\left[(2k+p-q)\widetilde{K}_{\frac{\nu}{2}+k+1}\otimes\varphi_j^+-(2k+p+q-4)\widetilde{K}_{\frac{\nu}{2}+k-1}\otimes\varphi_j^-\right].
\end{multline*}
Note that the coefficient $(2k+p-q)$ only vanishes for $k=\frac{q-p}{2}$ which is an integer if and only if $p+q$ is even. Now
\begin{align*}
 \frakk = \frakk_\frakl \oplus \{(u,0,-\vartheta(u)):u\in V\}
\end{align*}
and $\frakk_\frakl=\so(p)\oplus\so(q)$ acts irreducibly on $\calH^k(\RR^p)$ for every $k\geq0$. Therefore, \eqref{eq:MinKtypeRk2finite} and \eqref{eq:MinKtypeRk2infinite} follow.
\item[\textup{(4)}] For $V=\Sym(k,\RR)$ we note that $\psi_0^-=\frac{\sqrt{\pi}}{2}\varphi_{e_1}$ and hence it suffices to show that $\{\varphi_u:u\in\CC^k\}$ is an irreducible $\frakk$-module. Fix $u\in\CC^k$ and put $\widetilde{u}:=u\,{}^t\!u\in\Sym(k,\CC)$. Extending the trace form $(-|-)$ $\CC$-bilinearly to $\Sym(k,\CC)$ we can write $\varphi_u(x)=(x|\widetilde{u})^{\frac{1}{2}}\psi_0(x)$. We calculate the action of $\frakk$ on $\varphi_u$. First, using the product rule for the Bessel operator (see Proposition \ref{prop:BesselOpProperties} (2)) we obtain
\begin{align*}
 & (\calB_\lambda-x)\varphi_u(x)\\
 ={}& (x|\widetilde{u})^{\frac{1}{2}}\cdot(\calB_\lambda-x)\psi_0(x)-(x|\widetilde{u})^{-\frac{1}{2}}\psi_0(x)P(\overline{u},{\bf e})x+\calB_\lambda(x|\widetilde{u})^{\frac{1}{2}}\cdot\psi_0(x)\\
 ={}& -\frac{d}{2}\varphi_u(x){\bf e}-(x|\widetilde{u})^{-\frac{1}{2}}\psi_0(x)L(\widetilde{u})x+\calB_\lambda(x|\widetilde{u})^{\frac{1}{2}}\cdot\psi_0(x)
\end{align*}
by part (1). It is easy to see that for $x\in\calO$, $u\in\CC^k$ and $v\in V$ we have
\begin{align*}
 \calB_\lambda(x|\widetilde{u})^{\frac{1}{2}} &= 0 && \mbox{and} & (x|L(\widetilde{u})v) &= (x|\widetilde{u})^{\frac{1}{2}}(x|\widetilde{vu})^{\frac{1}{2}}
\end{align*}
and hence we find for $v\in V=\Sym(k,\RR)$
\begin{align*}
 \td\pi_\lambda^-(v,0,-\vartheta v)\varphi_u(x) &= \sqrt{-1}\frac{d}{2}\tr(v)\varphi_u(x)+\sqrt{-1}\varphi_{vu}.
\intertext{Similarly one shows that for $T\in\frakk_\frakl\cong\so(k)$ we have}
 \td\pi_\lambda^-(0,T,0)\varphi_u(x) &= \varphi_{Tu}(x).
\end{align*}
Together this shows that $\{\varphi_u:u\in\CC^k\}$ is an irreducible $\frakk$-module.\qedhere
\end{enumerate}
\end{proof}

\begin{remark}
The observation that $\psi_0$ is not $\frakk$-finite if $V=\RR^{p,q}$ with $p+q$ odd, $p,q\geq2$, reflects the fact that no covering group of $\SO(p+1,q+1)_0$ has a minimal representation if $p+q$ is odd and $p,q\geq3$ (see \cite[Theorem 2.13]{Vog81}). Nevertheless, for $\SO(p+1,3)_0$ there exists a minimal representation also if $p$ is odd, see Sabourin \cite{Sab96} for the $p=3$ case.  For these minimal representations, however, no $L^2$-model with explicit Lie algebra action is known.
\end{remark}

\begin{remark}
In the case $V=\Sym(k,\RR)$ the pullback of the section $\varphi_u\in L^2(\calO,\calL)$ under the folding map $\RR^k\setminus\{0\},\,x\mapsto x\,{}^t\!x$ is given by
\begin{align*}
 \calU^-\varphi_u(x) &= \varphi_u(x\,{}^t\!x) = (u_1x_1+\cdots+u_kx_k)e^{-|x|^2}, & x\in\RR^k,u\in\CC^k.
\end{align*}
In Subsection \ref{sec:ExMetRep} we shall see that the isomorphism $\calU^-:L^2(\calO,\calL)\to L^2_{\textup{odd}}(\RR^k)$ intertwines the $\frakg$-action $\td\pi^-$ on $C^\infty(\calO,\calL)$ and the differential action of the Weil representation on the (classical) Schr\"odinger model. Since the functions $\calU^-\varphi_u$, $u\in\CC^k$, form the minimal $\frakk$-type of the odd part of the Weil representation it is then clear that Propositions \ref{prop:L2ness}, \ref{prop:L2Unitary} and \ref{prop:Admissible} also hold for $W^-$ with the obvious formulation. However, they can be proved in the same fashion as for $W$.
\end{remark}

Next we turn to the split rank $1$ case. Recall that if $r_0=1$, then $V\cong\RR^{k,0}$ for some $k\geq1$.

\begin{theorem}\label{prop:Kfinite-split rank 1}
Let $V=\RR^{k,0}$, $k\geq1$, and $\lambda=\frac{r_0}{r}\sigma$ with $\sigma>0$. Then
\begin{align*}
 W_0 &= \CC\psi_0
\end{align*}
and $\psi_0$ is a $\frakk$-fixed vector. In particular, $W_0$ is an irreducible $\frakk$-module with highest weight $\alpha_0=0$.
\end{theorem}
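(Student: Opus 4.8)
The plan is to imitate the computation carried out in the proof of Theorem~\ref{prop:Kfinite}, the point being that the split rank one case is the cleanest of all, since the entire $\frakk$-action on $\psi_0$ will vanish identically. First I would use the vector space decomposition $\frakk = \frakk_\frakl \oplus \{(u,0,-\vartheta(u)) : u \in V\}$ coming from \eqref{eq:frakk}. The generator $\psi_0(x) = \widetilde{K}_{\frac{\nu+\sigma}{2}}(|x|)$ defined in \eqref{def:Psi0-r_0=1} is a radial, hence $K_L$-invariant, function on $\calO = \calO_1$, so $\td\pi_\lambda(\frakk_\frakl)\psi_0 = 0$. Everything therefore reduces to evaluating $\td\pi_\lambda(u,0,-\vartheta(u))\psi_0$ for $u \in V$.

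For this I would invoke the identity \eqref{eq:KActionInTermsOfBesselOp}, which rewrites the action of the generators $(u,0,-\vartheta(u))$ through the Bessel operator as
\[
 \td\pi_\lambda(u,0,-\vartheta(u))\psi_0(x) = \frac{1}{\sqrt{-1}}\,\tau\bigl((\calB_\lambda - \vartheta(x))\psi_0(x),\,u\bigr), \qquad u \in V.
\]
Since $V \cong \RR^{k,0}$ has split rank $r_0 = 1$ and $\sigma = \frac{r}{r_0}\lambda = r\lambda > 0$, Corollary~\ref{cor:BnuRadial}\,(4) applies to the radial function $\psi_0(x) = f(|x|)$ with $f = \widetilde{K}_{\frac{\nu+\sigma}{2}}$, giving
\[
 (\calB_\lambda - \vartheta(x))\psi_0(x) = B_{\frac{\nu+\sigma}{2}}\widetilde{K}_{\frac{\nu+\sigma}{2}}(|x|)\,\vartheta(x).
\]

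The crucial observation is that the order $\tfrac{\nu+\sigma}{2}$ of the ordinary Bessel operator $B_{\frac{\nu+\sigma}{2}}$ in \eqref{eq:OrdinaryBesselOp} is exactly matched to the index of the normalized $K$-Bessel function $\widetilde{K}_{\frac{\nu+\sigma}{2}}$. As recorded immediately after \eqref{eq:OrdinaryBesselOp}, $\widetilde{K}_\alpha$ is a solution of $B_\alpha u = 0$; hence $B_{\frac{\nu+\sigma}{2}}\widetilde{K}_{\frac{\nu+\sigma}{2}} = 0$, so that $(\calB_\lambda - \vartheta(x))\psi_0 = 0$ and the displayed formula forces $\td\pi_\lambda(u,0,-\vartheta(u))\psi_0 = 0$ for every $u \in V$. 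Together with $\td\pi_\lambda(\frakk_\frakl)\psi_0 = 0$ this shows that $\psi_0$ is annihilated by all of $\frakk$, i.e. it is a $\frakk$-fixed vector, whence $W_0 = \td\pi_\lambda(U(\frakk))\psi_0 = \CC\psi_0$ is the one-dimensional trivial $\frakk$-module, irreducible with highest weight $\alpha_0 = 0$. I expect no real obstacle beyond the bookkeeping already absorbed into Corollary~\ref{cor:BnuRadial}: the generator $\psi_0$ was defined in \eqref{def:Psi0-r_0=1} precisely so that its Bessel index equals $\tfrac{\nu+\sigma}{2}$, which is what makes the annihilation automatic.
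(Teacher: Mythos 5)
Your proposal is correct and follows essentially the same route as the paper's own proof: apply \eqref{eq:KActionInTermsOfBesselOp} together with Corollary~\ref{cor:BnuRadial}\,(4), then use $B_{\frac{\nu+\sigma}{2}}\widetilde{K}_{\frac{\nu+\sigma}{2}}=0$ to conclude that $\psi_0$ is $\frakk$-fixed. The only difference is that you spell out the vanishing of the $\frakk_\frakl$-action via $K_L$-invariance, which the paper leaves implicit (it was recorded in the proof of Theorem~\ref{prop:Kfinite}).
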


\begin{proof}
Again we use \eqref{eq:KActionInTermsOfBesselOp}. With Corollary~\ref{cor:BnuRadial}\,(4) we obtain
\begin{align}
 \td\pi_\lambda(u,0,-\vartheta(u))\psi_0(x) &= \frac{1}{\sqrt{-1}}B_{\frac{\nu+\sigma}{2}}\widetilde{K}_{\frac{\nu+\sigma}{2}}(|x|)(x|u).\label{eq:KactionNonEuclSph-split rank 1}
\end{align}
Now, $B_{\frac{\nu+\sigma}{2}}\widetilde{K}_{\frac{\nu+\sigma}{2}}=0$ and hence
\begin{align*}
 \td\pi_\lambda(u,0,-\vartheta(u))\psi_0(x) &= 0.
\end{align*}
This implies the claim.
\end{proof}

In order to prove that $W$ is a $(\gf,\kf)$-module, it is sufficient
to show that the generator $\psi_0$ is a $\kf$-finite vector. For the
sake of completeness, we pin down this fact (cf.\ \cite{kinvent98}) as follows:

\begin{lemma}
Let $W$ be a $\gf$-module generated by a $\frakk$-finite vector $\psi_0$. Then $W=U(\frakg)\psi_0$ is a $(\frakg,\frakk)$-module.
\end{lemma}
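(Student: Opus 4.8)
The plan is to exploit the standard interplay between left multiplication and the adjoint action in the enveloping algebra, combined with the local finiteness of the adjoint $\frakk$-action on $U(\frakg)$. By hypothesis $V_0 := U(\frakk)\psi_0$ is a \emph{finite-dimensional} $\frakk$-submodule of $W$. Since $\dim_\RR\frakg<\infty$, the PBW filtration $U(\frakg)=\bigcup_{m\ge 0}U_m(\frakg)$ consists of finite-dimensional pieces $U_m(\frakg)$, each stable under the adjoint action $\ad(X)u=Xu-uX$ of $\frakk$. Hence the adjoint representation of $\frakk$ on $U(\frakg)$ is locally finite: every $u\in U(\frakg)$ lies in some finite-dimensional $\ad(\frakk)$-invariant subspace $U'\subseteq U(\frakg)$.

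First I would fix such a $U'$ and consider the finite-dimensional subspace $U'\cdot V_0\subseteq W$ spanned by the vectors $u'\cdot v$ with $u'\in U'$ and $v\in V_0$. The crucial step is to check that $U'\cdot V_0$ is $\frakk$-invariant. For $X\in\frakk$, $u'\in U'$ and $v\in V_0$, the identity $Xu'=u'X+\ad(X)u'$ in $U(\frakg)$ yields, after applying both sides to $v$,
\begin{align*}
 X\cdot(u'\cdot v) = u'\cdot(X\cdot v) + (\ad(X)u')\cdot v.
\end{align*}
Here $X\cdot v\in V_0$ because $V_0$ is a $\frakk$-submodule, and $\ad(X)u'\in U'$ because $U'$ is $\ad(\frakk)$-invariant; thus both summands lie in $U'\cdot V_0$. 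Therefore $U'\cdot V_0$ is a finite-dimensional $\frakk$-submodule of $W$.

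Finally, since $\psi_0\in V_0$, every generator $u\cdot\psi_0$ of $W=U(\frakg)\psi_0$ lies in $U'\cdot V_0$ as soon as $U'$ is chosen to contain $u$. Hence each vector of $W$ is contained in a finite-dimensional $\frakk$-invariant subspace, so $W$ is locally $\frakk$-finite, which is exactly the assertion that $W$ is a $(\frakg,\frakk)$-module (compatibility of the $\frakg$- and $\frakk$-actions being automatic from $\frakk\subseteq\frakg$). There is no genuine obstacle in this argument; the only point demanding care is the verification of $\frakk$-stability of $U'\cdot V_0$, where one must track the correction term $\ad(X)u'$ and invoke precisely the $\ad(\frakk)$-invariance of $U'$ to guarantee that it does not escape the chosen finite-dimensional subspace.
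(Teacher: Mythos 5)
Your proof is correct and is essentially the paper's own argument in different packaging: the paper sets $W_0=U(\frakk)\psi_0$, defines $W_{n+1}:=(\frakg_\CC\oplus\CC)W_n$ (so $W_m$ coincides with your $U_m(\frakg)\cdot V_0$), and proves $\frakk$-invariance by induction via the identity $X(Y v)=[X,Y]v+Y(Xv)$ --- the very same commutator computation you use, with your appeal to $\ad(\frakk)$-stability of the PBW filtration playing the role of the paper's induction step.
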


\begin{proof}
Let $\frakg_1:=\frakg_\CC\oplus\CC\subseteq U(\frakg)$ and define $W_{n+1}:=\frakg_1W_n$ for $n\geq0$. We claim that
\begin{enumerate}
 \item[\textup{(1)}] $W_n$ is finite-dimensional for every $n$,
 \item[\textup{(2)}] $W_n$ is $\frakk$-invariant for every $n$,
 \item[\textup{(3)}] $W=\bigcup_n{W_n}$.
\end{enumerate}
The first statement follows easily by induction on $n$, since $W_0$ and $\frakg_1$ are finite-dimensional. The third statement is also clear by the definition of $U(\frakg)$. For the second statement we give a proof by induction on $n$:\\
For $n=0$ the statement is clear by the definition of $W_0$. For the induction step let $w\in W_{n+1}$ and $X\in\frakk$. Then $w=\sum_j{Y_jv_j}$ with $Y_j\in\frakg_1$ and $v_j\in W_n$. We have
\begin{align*}
 Xw &= \sum_j{X(Y_jv_j)} = \sum_j{\left([X,Y_j]v_j + Y_j(Xv_j)\right)}.
\end{align*}
Here $[X,Y_j]\in\frakg_1$ and hence $[X,Y_j]v_j\in W_{n+1}$ for each $j$. Furthermore $Xv_j\in W_n$ by the induction assumption and hence $Y_j(Xv_j)\in W_{n+1}$ for every $j$. Together this gives $Xw\in W_{n+1}$ which shows that $W_{n+1}$ is $\frakk$-invariant.\\
Now the $\frakk$-finiteness of every vector $w\in W$ follows.
\end{proof}

Now let us return to the general scalar case. To integrate $W$ to a unitary group representation on $L^2(\calO,\td\mu_\lambda)$ we need further properties. First, we analyze the functions in $W$ in more detail. For this we introduce some more notation. Denote by $\CC[\calO]$ the space of restrictions of polynomials on $V$ to $\calO$. Further, $\CC[\calO]_{\geq k}$ is defined as the space of those polynomials in $\CC[\calO]$ which are sums of homogeneous polynomials of degree $\geq k$. Finally, $\widetilde{K}_\alpha\otimes\CC[\calO]_{\geq k}$ is the space of functions
\begin{align*}
 \widetilde{K}_\alpha\otimes\varphi:\calO\rightarrow\CC,\,x\mapsto\widetilde{K}_\alpha(|x|)\varphi(x)
\end{align*}
with $\varphi\in\CC[\calO]_{\geq k}$.

\begin{proposition}\label{prop:L2ness}
Let $V$ be a simple Jordan algebra with simple $V^+$ and $W$ be one of the $(\frakg,\frakk)$-modules given in Theorem~\ref{prop:Kfinite} and Theorem~\ref{prop:Kfinite-split rank 1}.
\begin{enumerate}
\item[\textup{(a)}] If $V$ is of rank $r\geq3$, then
 \begin{align}
  W &\subseteq \bigoplus_{\ell=0}^\infty{\widetilde{K}_{\frac{\nu}{2}+\ell}\otimes\CC[\calO]_{\geq2\ell}} \subseteq L^2(\calO,\td\mu).
 \end{align}
\item[\textup{(b)}] If $V=\RR^{p,q}$ with $p+q$ even, $p,q\geq2$, then
 \begin{align}
  W &\subseteq \bigoplus_{\ell=0}^\infty{\bigoplus_{k=0}^{|\frac{p-q}{2}|}{\widetilde{K}_{\frac{\nu}{2}+k+\ell}\otimes\CC[\calO]_{\geq k+2\ell}}} \subseteq L^2(\calO,\td\mu).
 \end{align}
\item[\textup{(c)}] If $V=\RR^{k,0}$, $k\geq1$, and $\lambda=\frac{r_0}{r}\sigma>0$, then
 \begin{align}
  W &\subseteq \bigoplus_{\ell=0}^\infty{\widetilde{K}_{\frac{\nu+\sigma}{2}+\ell}\otimes\CC[\calO]_{\geq2\ell}}.
 \end{align}
Therefore, $W \subseteq L^2(\calO,\td\mu_\lambda)$ if and only if $\sigma\in(0,-2\nu)=(0,2k)$.
\end{enumerate}
\end{proposition}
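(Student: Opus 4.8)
The plan is to exhibit in each case an explicit model space $M$ of the asserted shape and to prove $W\subseteq M$ by showing that $M$ contains the cyclic generator $\psi_0$ and is stable under $\td\pi_\lambda(\frakg)$. Since $\frakg=\frakn+\frakl+\overline{\frakn}$ by the Gelfand--Naimark decomposition \eqref{eq:GelfandNaimark} and $W=\td\pi_\lambda(U(\frakg))\psi_0$, it suffices to check stability under each summand using the explicit formulas \eqref{eq:L2DerRep1}--\eqref{eq:L2DerRep3}. Throughout I track the pair (Bessel index shift $\ell$, homogeneity degree $m$) attached to a function $\widetilde{K}_{\frac{\nu}{2}+\ell}(|x|)\varphi(x)$ with $\varphi$ homogeneous of degree $m$, and I use three elementary facts about the renormalized $K$-Bessel function: the derivative rule $\widetilde{K}_\alpha'(z)=-\tfrac{z}{2}\widetilde{K}_{\alpha+1}(z)$, its consequence $B_\alpha\widetilde{K}_\beta=(\beta-\alpha)\widetilde{K}_{\beta+1}$ for the operator $B_\beta$ of \eqref{eq:OrdinaryBesselOp} (which solves $B_\beta\widetilde{K}_\beta=0$), and the vanishing $B_{\frac{\nu}{2}}\widetilde{K}_{\frac{\nu}{2}}=0$.

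The actions of $\frakn$ and $\frakl$ are the easy part. By \eqref{eq:L2DerRep1} the $\frakn$-action multiplies by a linear form, hence raises $m$ by one and fixes $\ell$, visibly preserving $M$. For $\frakl$ the scalar term in \eqref{eq:L2DerRep2} is harmless, while in the directional derivative $D_{T^*x}$ the Leibniz rule together with \eqref{eq:ddxradial} and $\widetilde{K}_\alpha'=-\tfrac{z}{2}\widetilde{K}_{\alpha+1}$ splits the output into a tangential part (fixing $\ell$ and $m$) and a radial part; since $D_{T^*x}|x|$ is a quadratic form in $x$ divided by $|x|$, the factor $|x|$ produced by $\widetilde{K}_\alpha'$ cancels and the radial part carries $(\ell,m)$ to $(\ell+1,m+2)$, again landing in $M$.

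The heart of the matter, and the main obstacle, is the action of $\overline{\frakn}$ through the Bessel operator via \eqref{eq:L2DerRep3}. Applying the product rule of Proposition~\ref{prop:BesselOpProperties}\,(2) to $\calB_\lambda[\widetilde{K}_{\frac{\nu}{2}+\ell}\cdot\varphi]$ and simplifying the radial factor by Corollary~\ref{cor:BnuRadial} produces three contributions: a term $\ell\,\widetilde{K}_{\frac{\nu}{2}+\ell+1}\otimes(\text{degree }m+1)$ coming from $B_{\frac{\nu}{2}}\widetilde{K}_{\frac{\nu}{2}+\ell}=\ell\,\widetilde{K}_{\frac{\nu}{2}+\ell+1}$, a mixed term $2P(\tfrac{\partial}{\partial x}\widetilde{K}_{\frac{\nu}{2}+\ell},\tfrac{\partial\varphi}{\partial x})x$ which is again $\widetilde{K}_{\frac{\nu}{2}+\ell+1}\otimes(\text{degree }m+1)$, and the degree-lowering term $\widetilde{K}_{\frac{\nu}{2}+\ell}\cdot\calB_\lambda\varphi$ at the unchanged index and at degree $m-1$. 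At the minimal degree $m=2\ell$ both the index-raising terms and the degree-lowering term fall exactly one degree short of the bounds defining $M$, so naive stability fails by one degree. The crux is to show that the lowest-degree piece of every polynomial occurring in $W$ at index $\frac{\nu}{2}+\ell$ is \emph{Bessel-harmonic}, i.e.\ annihilated by $\calB_\lambda$ on $\calO$. This single property resolves both failures at once: it makes the degree-lowering term vanish, and it forces the leading (degree $2\ell+1$) parts of the two index-raising terms to cancel, so that what survives genuinely lies in $\CC[\calO]_{\geq 2(\ell+1)}$. I would prove this by induction on the number of raising operators applied to $\psi_0$, carrying Bessel-harmonicity of the leading polynomial as the inductive invariant; the base case is $B_{\frac{\nu}{2}}\widetilde{K}_{\frac{\nu}{2}}=0$, which gives $\calB_{\lambda_1}\psi_0=\vartheta(x)\psi_0$ and shows that $\overline{\frakn}$ merely multiplies $\psi_0$ by a linear form. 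This is precisely the mechanism made explicit for $V=\RR^{p,q}$ through the operators $(-)^\pm_j$ in the proof of Theorem~\ref{prop:Kfinite}, and generalizing it is the essential work; in case (b) the same bookkeeping carries the extra finite index $k\in\{0,\ldots,|\frac{p-q}{2}|\}$ recording the minimal $\frakk$-type, so that the degree bound reads $\geq k+2\ell$.

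Finally, the $L^2$-statements and the sharp parameter window follow from the asymptotics of $\widetilde{K}_\alpha$ together with the polar integral \eqref{eq:dmuIntFormula}. At infinity $\widetilde{K}_\alpha$ decays exponentially, so integrability there is automatic and independent of the polynomial factor. Near $0$ one uses $\widetilde{K}_\alpha(z)\sim c\,z^{-2\alpha}$ for $\alpha>0$ (and at most logarithmic growth for $\alpha\leq0$): for a summand $\widetilde{K}_{\frac{\nu}{2}+\ell}\otimes\varphi$ with $\deg\varphi\geq2\ell$ the radial weight near $t=0$ behaves like $t^{-2\nu-4\ell}\cdot t^{4\ell}\cdot t^{\lambda r-1}=t^{-2\nu+\lambda r-1}$, so the degree bound $2\ell$ cancels exactly the $\ell$-dependence; the summands are thus square integrable under the single condition $\lambda r>2\nu$ (in case (b) the additional finite index contributes $2k$, giving $\lambda r>2\nu+2k$), which one verifies for $\lambda=\lambda_1$ from \eqref{eqn:1.4}. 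In the split rank one case the index is $\frac{\nu+\sigma}{2}+\ell$ and $\lambda r=\sigma$, so the same computation yields the near-$0$ exponent $t^{-2\nu-\sigma-1}$: integrability of the high-index summands forces $\sigma<-2\nu$, while integrability of the generator $\psi_0$ (degree $0$) forces $\sigma>0$, and failure at either endpoint shows that the condition $\sigma\in(0,-2\nu)=(0,2k)$ is also necessary. This yields the stated equivalence.
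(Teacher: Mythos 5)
The crux of your argument --- stability of the model space under the $\overline{\frakn}$-action, i.e.\ under the Bessel operators --- is exactly the step you do not prove, and it is where the proposal breaks down. You correctly observe that at the minimal degree $m=2\ell$ all contributions of the product rule formally leave $\bigoplus_\ell\widetilde{K}_{\frac{\nu}{2}+\ell}\otimes\CC[\calO]_{\geq2\ell}$, and you propose to repair this with an inductive invariant (Bessel-harmonicity of the lowest-degree parts, plus cancellation of the two index-raising terms on $\calO$). But this invariant is only asserted --- you yourself concede that ``generalizing it is the essential work'' --- and there are concrete reasons to doubt it is even the right mechanism. For euclidean $V$, Corollary~\ref{cor:BnuRadial}\,(1) carries the extra term $\frac{d}{2}f'(|x|){\bf e}$, which under your three Bessel identities becomes $-\frac{d}{4}|x|\widetilde{K}_{\frac{\nu}{2}+\ell+1}(|x|){\bf e}$; since $|x|=\tr(x)$ on $\calO$ this is formally of index $\ell+1$ and polynomial degree $1$, far below the bound $2(\ell+1)$, and it is rescued only by identities special to half-integer order (i.e.\ to $\nu=-1$), such as $t\widetilde{K}_{1/2}(t)=2\widetilde{K}_{-1/2}(t)$, which your bookkeeping does not contain. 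Moreover, in your own model case $V=\RR^{p,q}$ the mechanism is in fact different from the one you posit: the degree-lowering contribution does \emph{not} vanish (it is the nonzero $\varphi_j^-$ term in the proof of Theorem~\ref{prop:Kfinite}), and the raising terms do not cancel but combine into $\varphi_j^+$; both are absorbed only because the auxiliary index $k$ shifts along with them. Finally, in part (c) your necessity argument is logically reversed: non-integrability of ``high-index summands'' of the model space says nothing about $W$, which is a priori smaller; necessity must be read off from elements of $W$ itself --- the paper's Lemma~\ref{lem:Bessel} uses only $\psi_0$, whose small-$t$ asymptotics already force both $\sigma>0$ and $\sigma<-2\nu$.

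All of this difficulty is avoidable, and that is how the paper argues. Since $\frakk=\{(u,T,-\vartheta(u)):u\in V,\ T+T^*=0\}$ by \eqref{eq:frakk}, one has $\frakg=\frakk+\frakq^{\max}$ (write $(0,0,v)=(-\vartheta(v),0,v)+(\vartheta(v),0,0)$), hence $U(\frakg)=U(\frakq^{\max})\,U(\frakk)$ by Poincar\'e--Birkhoff--Witt, and therefore $W=\td\pi_\lambda(U(\frakq^{\max}))W_0$. The $\frakk$-module $W_0$ is computed explicitly in Theorems~\ref{prop:Kfinite} and \ref{prop:Kfinite-split rank 1} and lies in the asserted sum, so one only needs stability under $\frakq^{\max}=\frakl+\frakn$ --- exactly the two easy verifications you carried out correctly ($\frakn$ multiplies by linear forms; the $\frakl$-derivative sends $(\ell,m)$ to $(\ell,m)$ and $(\ell+1,m+2)$ via $\widetilde{K}_\alpha'(t)=-\frac{t}{2}\widetilde{K}_{\alpha+1}(t)$). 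In other words, all analysis of the Bessel operators is confined to the $\frakk$-action on the generator $\psi_0$, which is already done, and never has to be performed on general elements of $W$; your $L^2$-asymptotics are otherwise sound and match Lemma~\ref{lem:Bessel} modulo the necessity point above.
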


\begin{proof}
Since $\frakg=\frakk+\frakq^{\max}$ we have $W=U(\frakq^{\max})W_0$ by the Poincar\'e--Birkhoff--Witt Theorem. Since in each case, $W_0$ is already contained in the direct sum above, it remains to show that these direct sums are stable under the action of $\frakq^{\max}=\frakl+\frakn$ to obtain the first inclusions. Clearly they are stable under the $\frakn$-action which is given by multiplication with polynomials. For the $\frakl$-action the formula $\frac{\td}{\td t}\widetilde{K}_\alpha(t)=-\frac{t}{2}\widetilde{K}_{\alpha+1}(t)$ gives the claim.\\
To show the second inclusions, we use the integral formula \eqref{eq:dmuIntFormula}. A function $\widetilde{K}_\alpha(|x|)\phi(x)$ with $\phi$ homogeneous of degree $\beta$ is contained in $L^2(\calO,\td\mu_\lambda)$ if and only if the function $\widetilde{K}_\alpha(t)t^\beta$ is contained in $L^2(\RR_+,t^{\lambda r-1}\td t)$. Together with the asymptotic behavior of the $K$-Bessel function at $t=0$ and $t=\infty$ this gives the claim. For the convenience of the reader we do the calculation for the split rank one case in Lemma~\ref{lem:Bessel} below.
\end{proof}

\begin{lemma}\label{lem:Bessel}
Suppose that $r_0=1$. Then $W\subseteq L^2(\calO,\td\mu_\lambda)$ if and only if $\sigma=r\lambda\in(0,-2\nu)$.
\end{lemma}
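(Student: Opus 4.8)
The plan is to reduce square-integrability to the convergence of a single integral over $\RR_+$, controlled entirely by the radial asymptotics of the normalized Bessel function $\widetilde{K}_\mu$ at the origin; the behaviour at infinity will play no role because $\widetilde{K}_\mu$ decays exponentially there.

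First I would apply the polar integral formula \eqref{eq:dmuIntFormula}, which for $r_0=1$ reads $\int_{\calO}f\td\mu_\lambda=\const\cdot\int_{K_L}\int_0^\infty f(ktc_1)t^{\lambda r-1}\td t\td k$, and observe that $\lambda r=\sigma$. By Proposition~\ref{prop:L2ness}\,(c) every element of $W$ is a finite sum of functions of the form $\widetilde{K}_{\frac{\nu+\sigma}{2}+\ell}(|x|)\phi(x)$ with $\phi$ homogeneous of degree $\beta\geq2\ell$. Since $|ktc_1|$ is a fixed positive multiple of $t$ and $\phi(ktc_1)=t^\beta\phi(kc_1)$, the compact integral $\int_{K_L}|\phi(kc_1)|^2\td k$ factors off, and membership of such a function in $L^2(\calO,\td\mu_\lambda)$ is equivalent to the convergence of $\int_0^\infty|\widetilde{K}_{\frac{\nu+\sigma}{2}+\ell}(t)|^2 t^{2\beta+\sigma-1}\td t$.

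Next I would record the standard asymptotics of $\widetilde{K}_\mu(t)=(t/2)^{-\mu}K_\mu(t)$ as $t\to0^+$: it tends to a nonzero constant when $\mu<0$, grows like $|\log t|$ when $\mu=0$, and blows up like a constant times $t^{-2\mu}$ when $\mu>0$. Applying this to the generator $\psi_0=\widetilde{K}_{\frac{\nu+\sigma}{2}}$ from \eqref{def:Psi0-r_0=1} (the case $\ell=\beta=0$), the integrand behaves near the origin like $t^{\sigma-1}$ when $\frac{\nu+\sigma}{2}\leq0$, which is integrable because $\sigma>0$, and like $t^{-(2\nu+\sigma)-1}$ when $\frac{\nu+\sigma}{2}>0$, which is integrable precisely when $\sigma<-2\nu$. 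As $\nu=-k<0$ these two ranges match up and give $\psi_0\in L^2(\calO,\td\mu_\lambda)$ if and only if $\sigma\in(0,-2\nu)$. Because $\psi_0\in W$, this settles the ``only if'' direction at once.

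For the converse I would verify that the constraint $\beta\geq2\ell$ exactly cancels the upward shift of the Bessel index by $\ell$. Assuming $\sigma<-2\nu$, a summand with index $\mu=\frac{\nu+\sigma}{2}+\ell>0$ produces at the origin the exponent $-4\mu+2\beta+\sigma=-2\nu-\sigma+2(\beta-2\ell)\geq-2\nu-\sigma>0$, so its radial integral converges; a summand with $\mu\leq0$ has a bounded (or logarithmic) Bessel factor and converges by $2\beta+\sigma>0$. Hence every generator, and so all of $W$, lies in $L^2(\calO,\td\mu_\lambda)$. The only genuinely delicate point is this final bookkeeping of exponents: one must be sure that $\psi_0$ is the binding constraint and that no higher $\frakk$-type worsens the behaviour at the origin, which is exactly what the inequality $\beta\geq2\ell$ of Proposition~\ref{prop:L2ness}\,(c) guarantees.
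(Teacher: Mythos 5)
Your proof is correct and follows essentially the same route as the paper's: both reduce matters to a radial integral via \eqref{eq:dmuIntFormula} and the inclusion in Proposition~\ref{prop:L2ness}\,(c), then run a case analysis on the sign of the Bessel index $\tfrac{\nu+\sigma}{2}+\ell$ using the small-$t$ asymptotics of $\widetilde{K}_\alpha$, checking necessity on the generator $\psi_0$ alone. The only cosmetic difference is that you encode the polynomial degree as $\beta\geq 2\ell$ where the paper writes it as $2\ell+m$ with $m\in\NN$.
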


\begin{proof}
For $r_0=1$ we have $\nu=-k$ and $\sigma=r\lambda$. Since the $K$-Bessel functions rapidly decrease as $t\to\infty$, only the asymptotic behavior of $\widetilde{K}_\alpha(t)$ at $t=0$ is relevant. It is given by
$$\widetilde{K}_\alpha(t)=\begin{cases}
\frac{\Gamma(\alpha)}{2}\left(\frac{t}{2}\right)^{-2\alpha}+ o(t^{-2\alpha})&\text{ if } \alpha>0,\\
-\log\left(\frac{t}{2}\right)+o\left(\log\left(\frac{t}{2}\right)\right)&\text{ if } \alpha=0,\\
\frac{\Gamma(-\alpha)}{2}+ o(1)&\text{ if } \alpha<0.
\end{cases}
$$
Here $o(-)$ denotes the Bachmann--Landau symbol. We first show that the condition $\sigma\in(0,-2\nu)$ is sufficient for $W$ to be contained in $L^2(\calO,\td\mu_\lambda)$. In view of the inclusion in Proposition~\ref{prop:L2ness}\,(c) it suffices to show that $\widetilde{K}_{\frac{\nu+\sigma}{2}+\ell}(t)t^{2\ell+m}\in L^2(\RR_+,t^{\sigma-1}\td t)$ for all $\ell,m\in\NN$. We distinguish three cases.
\begin{enumerate}
\item[\textup{(a)}] $\frac{\nu+\sigma}{2}+\ell>0$. In this case we have as $t\to0$:
\begin{align*}
 |\widetilde{K}_{\frac{\nu+\sigma}{2}+\ell}(t)t^{2\ell+m}|^2t^{\sigma-1}\sim t^{-2\nu-\sigma+2m-1}
\end{align*}
which is integrable near $t=0$ since $\sigma<-2\nu$.
\item[\textup{(b)}] $\frac{\nu+\sigma}{2}+\ell=0$. The asymptotic behavior as $t\to0$ is given by:
\begin{align*}
 |\widetilde{K}_{\frac{\nu+\sigma}{2}+\ell}(t)t^{2\ell+m}|^2t^{\sigma-1}\sim \log(t)^2t^{\sigma+4\ell+2m-1}
\end{align*}
which is integrable near $t=0$ since $\sigma>0$.
\item[\textup{(c)}] $\frac{\nu+\sigma}{2}+\ell<0$. As $t\to0$ we have:
\begin{align*}
 |\widetilde{K}_{\frac{\nu+\sigma}{2}+\ell}(t)t^{2\ell+m}|^2t^{\sigma-1}\sim t^{\sigma+4\ell+2m-1}
\end{align*}
which is integrable near $t=0$ since $\sigma>0$.
\end{enumerate}
Now we show that the condition $\sigma\in(0,-2\nu)$ is also necessary for $W$ to be contained in $L^2(\calO,\td\mu_\lambda)$. For this it suffices to assume that the function $\psi_0$ is contained in $L^2(\calO,\td\mu_\lambda)$. This implies that $\widetilde{K}_{\frac{\nu+\sigma}{2}}\in L^2(\RR_+,t^{\sigma-1}\td t)$. Again we distinguish between three cases.
\begin{enumerate}
\item[\textup{(a)}] $\frac{\nu+\sigma}{2}>0$. Then automatically $\sigma>-\nu>0$. Further, as $t\to0$ we have
\begin{align*}
 |\widetilde{K}_{\frac{\nu+\sigma}{2}}(t)|^2t^{\sigma-1}\sim t^{-2\nu-\sigma-1}
\end{align*}
which is integrable near $t=0$ if and only if $\sigma<-2\nu$. Therefore $\sigma\in(0,-2\nu)$.
\item[\textup{(b)}] $\frac{\nu+\sigma}{2}=0$. In this case $\sigma=-\nu\in(0,-2\nu)$.
\item[\textup{(c)}] $\frac{\nu+\sigma}{2}<0$. This implies that $\sigma<-\nu<-2\nu$. Further, as $t\to0$ we have
\begin{align*}
 |\widetilde{K}_{\frac{\nu+\sigma}{2}}(t)|^2t^{\sigma-1}\sim t^{\sigma-1}
\end{align*}
which is integrable near $t=0$ if and only if $\sigma>0$. Hence, also in this case $\sigma\in(0,-2\nu)$ and the proof is complete.\qedhere
\end{enumerate}
\end{proof}

Now we can prove the necessary properties to integrate $W$ to a group representation. First, we show that the $(\frakg,\frakk)$-module $W$ is infinitesimally unitary.

\begin{proposition}\label{prop:L2Unitary}
Let $V$ be a simple Jordan algebra with simple $V^+$ and assume that $V\ncong\RR^{p,q}$, $p,q\geq2$, with $p+q$ odd. If $r_0=1$, further assume that $\sigma=r\lambda\in(0,-2\nu)$. Then the following properties hold:
\begin{enumerate}
\item[\textup{(1)}] $W$ is contained in $L^2(\calO,\td\mu_\lambda)\cap C^\infty(\calO)$.
\item[\textup{(2)}] The action $\td\pi_\lambda$ of $\frakg$ on $W$ is infinitesimally unitary with respect to the $L^2$-inner product.
\end{enumerate}
\end{proposition}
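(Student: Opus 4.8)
The plan is to treat the two assertions separately: part (1) follows by combining the explicit description of $W$ obtained earlier with the $L^2$-asymptotics of the $K$-Bessel function, while part (2) reduces to a skew-symmetry check on the three graded pieces of $\frakg$, using the symmetry of the Bessel operator and the unitarity of $\rho_\lambda$.

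For part (1), the inclusion $W\subseteq C^\infty(\calO)$ is automatic: the generator $\psi_0$ is smooth on $\calO$, and each operator $\td\pi_\lambda(X)$ is a differential operator with smooth coefficients (multiplication by $\sqrt{-1}(x|u)$ for $\frakn$, a directional derivative for $\frakl$, and the tangential Bessel operator for $\nfo$ by Theorem~\ref{thm:BlambdaTangential}), hence preserves $C^\infty(\calO)$. The square-integrability $W\subseteq L^2(\calO,\td\mu_\lambda)$ is precisely the content of Proposition~\ref{prop:L2ness}: in each admissible case $W$ is contained in an explicit direct sum of spaces $\widetilde{K}_\alpha\otimes\CC[\calO]_{\geq k}$, and the decay of $\widetilde{K}_\alpha$ at infinity together with its controlled singularity at the origin forces these functions into $L^2$. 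The only parameter restriction arises when $r_0=1$, where Lemma~\ref{lem:Bessel} shows that $W\subseteq L^2(\calO,\td\mu_\lambda)$ is equivalent to $\sigma=r\lambda\in(0,-2\nu)$, which is exactly the standing hypothesis; the family $\RR^{p,q}$ with $p+q$ odd is excluded by assumption, in accordance with the failure of $\frakk$-finiteness in Theorem~\ref{prop:Kfinite}.

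For part (2), I would reduce infinitesimal unitarity to skew-symmetry on a spanning set of the real Lie algebra. Since $X\mapsto\td\pi_\lambda(X)$ is $\RR$-linear and $\frakg=\frakn\oplus\frakl\oplus\nfo$ as real vector spaces by the Gelfand--Naimark decomposition \eqref{eq:GelfandNaimark}, it suffices to verify $\td\pi_\lambda(X)^*=-\td\pi_\lambda(X)$ on $W$ for $X$ in each of the three graded pieces. On $\frakq^{\textup{max}}=\frakn+\frakl$ this is the derived form of the unitary $Q$-representation $\rho_\lambda$ of Proposition~\ref{prop:MackeyRep}: by \eqref{eq:L2DerRep1} the element $(u,0,0)\in\frakn$ acts as multiplication by the real-valued $\sqrt{-1}(x|u)$ times $\sqrt{-1}$, which is manifestly skew-Hermitian, while by \eqref{eq:L2DerRep2} the $\frakl$-action is skew-symmetric as a consequence of the equivariance \eqref{eq:dmulambdaEquivariance} of $\td\mu_\lambda$ (the zeroth-order term $\frac{r\lambda}{2n}\Tr(T^*)$ being exactly half the $\td\mu_\lambda$-divergence of the vector field $x\mapsto T^*x$, via \eqref{eq:ChiDet}). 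Finally, by \eqref{eq:L2DerRep3} the element $(0,0,-v)\in\nfo$ acts as $\tfrac{1}{\sqrt{-1}}(\calB_\lambda\,\cdot\,|v)$; since each scalar operator $(\calB_\lambda\,\cdot\,|v)$ is symmetric on $L^2(\calO,\td\mu_\lambda)$ by Theorem~\ref{thm:BlambdaTangential}, the factor $\tfrac{1}{\sqrt{-1}}$ renders it skew-Hermitian. Assembling the three cases gives the invariance of the $L^2$ inner product under $\td\pi_\lambda(\frakg)$.

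The main obstacle I anticipate is analytic rather than algebraic: the symmetry statements in Theorem~\ref{thm:BlambdaTangential} and the skew-symmetry of $\td\rho_\lambda$ were established on test-function spaces via zeta-function residues, whereas the elements of $W$ are concrete functions $\widetilde{K}_\alpha(|x|)\varphi(x)$ with $\varphi$ polynomial, only mildly regular near the tip of the cone $\calO$. I would therefore confirm that no boundary contribution appears when integrating by parts, using the exponential decay of $\widetilde{K}_\alpha$ at infinity and the explicit near-origin asymptotics recorded in the proof of Lemma~\ref{lem:Bessel} to control the integrands against $\td\mu_\lambda$; this verifies that $W$ lies in the domain on which both the Bessel symmetry and the skew-symmetry of the $Q$-action genuinely hold, completing the proof.
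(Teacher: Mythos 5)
Your proposal is correct and takes essentially the same route as the paper: part (1) is exactly Proposition \ref{prop:L2ness} (with Lemma \ref{lem:Bessel} covering the $r_0=1$ case), and part (2) obtains skew-symmetry on $\frakq^{\max}=\frakn+\frakl$ from the unitarity of $\rho_\lambda$ and on $\nfo$ from the symmetry of the Bessel operator in Theorem \ref{thm:BlambdaTangential} combined with the factor $\tfrac{1}{\sqrt{-1}}$. Your additional care about domains (absence of boundary contributions near the tip of the cone) is a refinement the paper leaves implicit and does not alter the structure of the argument.
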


\begin{proof}
\begin{enumerate}
\item[\textup{(1)}] This is Proposition \ref{prop:L2ness}.
\item[\textup{(2)}] For the action of $\frakn$ and $\frakl$ this is clear as the action of $\frakq^{\max}=\frakl+\frakn$ is the derived action of the unitary representation $\rho_\lambda$. The action of $\overline{\frakn}$ is infinitesimally unitary since it is given as the multiple of the Bessel operator $\calB_\lambda$ by $\sqrt{-1}$ (see \eqref{eq:L2DerRep3}) which is symmetric by Theorem \ref{thm:BlambdaTangential}.\qedhere
\end{enumerate}
\end{proof}

Using the previous proposition we can now prove that $W$ is admissible.

\begin{proposition}\label{prop:Admissible}
Let $V$ be a simple Jordan algebra with simple $V^+$ and assume that $V\ncong\RR^{p,q}$, $p,q\geq2$, with $p+q$ odd. If $r_0=1$, further assume that $\sigma=r\lambda\in(0,-2\nu)$. Then
\begin{enumerate}
\item[\textup{(1)}] The $\frakg$-module $W$ is $Z(\frakg)$-finite.
\item[\textup{(2)}] $W$ is an admissible $(\frakg,\frakk)$-module.
\end{enumerate}
\end{proposition}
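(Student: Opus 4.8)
The plan is to derive both assertions from the control of the associated variety obtained in Theorem~\ref{thm:Minimality}, together with Harish--Chandra's admissibility theorem, so that the only genuine point to prove is the $Z(\frakg)$-finiteness in (1). Note first that $W=\td\pi_\lambda(U(\frakg))\psi_0$ is generated by the single vector $\psi_0$, hence is a finitely generated $\frakg$-module, and by the lemma preceding this proposition it is a $(\frakg,\frakk)$-module. Harish--Chandra's theorem asserts that a finitely generated $(\frakg,\frakk)$-module which is $Z(\frakg)$-finite is automatically admissible; thus statement~(2) will follow at once from~(1). I will in fact establish the stronger fact that the image $\td\pi_\lambda(Z(\frakg))\subseteq\End(W)$ is finite-dimensional.

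To prove~(1), I would first observe that, since $W\subseteq C^\infty(\calO)$ is $\td\pi_\lambda$-stable, the annihilator $\calJ:=\Ann_{U(\frakg)}(\td\pi_\lambda)$ of the whole representation on $C^\infty(\calO)$ is contained in $\Ann_{U(\frakg)}(W)$, whence $\gr\calJ\subseteq\gr\Ann_{U(\frakg)}(W)$. Under the hypotheses of the present proposition the assumptions of Theorem~\ref{thm:Minimality} are met (for $V\cong\RR^{k,0}$ one has $\frakg\cong\so(k+1,1)$ and $\sigma=r\lambda\in(0,-2\nu)=(0,2k)=(0,2(n-1))$), so $\mathcal V(\calJ)=\overline{\calO_{\min,\frakg}^{G_\CC}}$. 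As every element of $\calO_{\min,\frakg}^{G_\CC}$ is nilpotent, this yields the inclusion of associated varieties
\begin{equation*}
 \mathcal V\big(\Ann_{U(\frakg)}(W)\big)\subseteq\mathcal V(\calJ)=\overline{\calO_{\min,\frakg}^{G_\CC}}\subseteq\mathcal N,
\end{equation*}
where $\mathcal N\subseteq\frakg_\CC^*$ denotes the nilpotent cone.

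It then remains to translate this geometric statement into finiteness of the $Z(\frakg)$-action. Identifying $\gr U(\frakg_\CC)\cong S(\frakg_\CC)\cong\CC[\frakg_\CC^*]$ via symmetrization, one has $\gr Z(\frakg_\CC)=S(\frakg_\CC)^{\frakg_\CC}$, which by Chevalley's theorem is a polynomial ring $\CC[p_1,\dots,p_\ell]$ on finitely many homogeneous generators of positive degree. Since $0$ lies in the closure of the orbit of any nilpotent element, each invariant $p_i$ is constant, equal to $p_i(0)=0$, on $\mathcal N$, so by Kostant's description of $I(\mathcal N)$ and the Nullstellensatz we get $p_i\in I(\mathcal N)\subseteq\sqrt{\gr\Ann_{U(\frakg)}(W)}$. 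Hence a power of each $p_i$ lies in $\gr\Ann_{U(\frakg)}(W)$, so the image of each $p_i$ in $S(\frakg_\CC)/\gr\Ann_{U(\frakg)}(W)$ is nilpotent. Consequently the associated graded of $\td\pi_\lambda(Z(\frakg))$ is a finitely generated commutative algebra all of whose generators are nilpotent, hence finite-dimensional, and therefore $\td\pi_\lambda(Z(\frakg))$ itself is finite-dimensional. In particular $\td\pi_\lambda(Z(\frakg))\psi_0$ is finite-dimensional, and since $Z(\frakg)$ is central, for every $u\in U(\frakg)$ one has $\td\pi_\lambda(Z(\frakg))\td\pi_\lambda(u)\psi_0=\td\pi_\lambda(u)\,\td\pi_\lambda(Z(\frakg))\psi_0$, again finite-dimensional; as $\psi_0$ generates $W$ this proves $W$ is $Z(\frakg)$-finite. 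The identical argument applies verbatim to $W^-$ in the case $V=\Sym(k,\RR)$.

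The substantive input is entirely concentrated in Theorem~\ref{thm:Minimality}: once the associated variety is pinned down to the nilpotent cone, the remaining steps are the standard commutative-algebra facts ($\gr Z(\frakg_\CC)=S(\frakg_\CC)^{\frakg_\CC}$, Kostant's identification of $I(\mathcal N)$, and the passage from a finite-dimensional associated graded back to the filtered algebra), and admissibility is then formal. Accordingly, the main obstacle is really upstream, in the dimension and irreducibility count underlying Theorem~\ref{thm:Minimality}; granting it, there is no further difficulty. I note that the infinitesimal unitarity from Proposition~\ref{prop:L2Unitary} is what guarantees at the outset that $W\subseteq L^2(\calO,\td\mu_\lambda)\cap C^\infty(\calO)$, placing us in the correct analytic framework, and it also offers an alternative, more computational route to $Z(\frakg)$-finiteness via the symmetry of the operators $\td\pi_\lambda(z)$, $z\in Z(\frakg)$, with respect to the $L^2$-inner product.
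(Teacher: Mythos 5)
Your proof of (1) is circular in the context of this paper. Everything rests on Theorem~\ref{thm:Minimality}, but that theorem sits strictly downstream of the present proposition: as the paper states in the paragraph immediately preceding it, its proof requires Corollary~\ref{cor:annihilator of dpi} (that the annihilator of $\td\pi_\lambda$ coincides with the annihilator of a finitely generated $(\frakg,\frakk)$-module), whose proof requires Theorem~\ref{thm:IntgkModule} (integration of $W$ to a unitary representation of $G^\vee$), whose proof in turn invokes Propositions~\ref{prop:L2Unitary} \emph{and}~\ref{prop:Admissible}. Nor can the order be repaired by proving Theorem~\ref{thm:Minimality} first: the Gelfand--Kirillov dimension count in its proof only yields the bound $\dim\mathcal{V}(\calJ)\leq\dim_\CC\calO^{G_\CC}_{\min,\frakg}$; upgrading this to the equality $\mathcal{V}(\calJ)=\overline{\calO^{G_\CC}_{\min,\frakg}}$ --- in particular, knowing that $\mathcal{V}(\calJ)$ lies in the nilpotent cone and meets $\frakg^*$, which is exactly what your argument consumes --- requires knowing that $\calJ$ is the annihilator of an admissible (equivalently, $Z(\frakg)$-finite) Harish-Chandra module. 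The associated variety of a general two-sided ideal is merely a conical $\Ad(G_\CC)$-stable subvariety of $\frakg_\CC^*$ and need not be nilpotent at all; nilpotency of $\mathcal{V}(\calJ)$ is essentially equivalent to the $Z(\frakg)$-finiteness you set out to prove, so your "substantive input" cannot be obtained upstream of this proposition.

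By contrast, the paper's own proof is self-contained and of a completely different nature: the Mackey representation $\rho_\lambda$ of $Q=L\ltimes N$ on $L^2(\calO,\td\mu_\lambda)$ is irreducible (Proposition~\ref{prop:MackeyRep}); for central $X$ the operator $T=\td\pi_\lambda(X)$ preserves $W$, extends to the dense $Q$-stable subspace $D=\rho_\lambda(Q)W$, and by the compatibility~\eqref{eq:CompatibilityRhoPi} commutes with all $\rho_\lambda(g)$, $g\in Q$; the same holds for its formal adjoint, which exists by infinitesimal unitarity (Proposition~\ref{prop:L2Unitary}); a variant of Schur's lemma then forces $T$ to act as a scalar. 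This gives the stronger conclusion that $W$ has an infinitesimal character, with no reference to associated varieties, and (2) then follows from Wallach's criterion exactly as in your reduction. A secondary, repairable gap in your commutative algebra: even granting $\mathcal{V}(\Ann(W))\subseteq\mathcal{N}$, the associated graded of the image $\td\pi_\lambda(Z(\frakg))\subseteq U(\frakg)/\Ann(W)$ is not obviously the image of $\gr Z(\frakg)=S(\frakg_\CC)^{\frakg_\CC}$ in $S(\frakg_\CC)/\gr\Ann(W)$ (for a filtered subalgebra one only has $\gr(Z(\frakg)\cap\Ann(W))\subseteq\gr Z(\frakg)\cap\gr\Ann(W)$), so "nilpotent generators imply finite-dimensional $\gr$" does not apply as written; one must additionally observe that $\gr$ of the image of $Z(\frakg)$ lands in the $\frakg_\CC$-invariants of $S(\frakg_\CC)/\gr\Ann(W)$ and use exactness of invariants for the reductive group $G_\CC$ to identify those invariants with a quotient of $S(\frakg_\CC)^{\frakg_\CC}/\bigl(S(\frakg_\CC)^{\frakg_\CC}\cap\gr\Ann(W)\bigr)$.
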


\begin{proof}
Any finitely generated $(\frakg,\frakk)$-module is admissible if it is $Z(\frakg)$-finite (see \cite[Corollary 3.4.7]{Wal88}). Therefore, part (2) follows from part (1).\\
To show (1) note that the representation $\rho_\lambda$ of $Q=L\ltimes N$ on $\calH=L^2(\calO,\td\mu_\lambda)$ is unitary and irreducible by Proposition \ref{prop:MackeyRep}. By Proposition~\ref{prop:L2Unitary}\,(1) the space $D:=\rho_\lambda(Q)W$ is contained in $\calH$ and since $\rho_\lambda$ is irreducible it is also dense in $\calH$. Further note that since $W\subseteq C^\infty(\calO)$ by Proposition~\ref{prop:L2Unitary}\,(1), we also have $D\subseteq C^\infty(\calO)$ since the action of $Q$ leaves $C^\infty(\calO)$ invariant (see \eqref{eq:L2Rep1} and \eqref{eq:L2Rep2}). Now let $X\in Z(\frakg)$ be any central element and put $T:=\td\pi_\lambda(X)$. Then $TW\subseteq W$ because $W$ is a $\frakg$-module. Further, $T$ extends to $C^\infty(\calO)$ as it acts as a differential operator. Then the compatibility property \eqref{eq:CompatibilityRhoPi} implies
\begin{align*}
 T(D) &= T(\rho_\lambda(Q)W) = \rho_\lambda(Q)TW \subseteq \rho_\lambda(Q)W=D
\end{align*}
The same argument applies for the formal adjoint $S=T^*$ of $T$ which is also given by the Lie algebra action since the Lie algebra representation is infinitesimally unitary by Proposition~\ref{prop:L2Unitary}\,(2). On $D$ the compatibility property \eqref{eq:CompatibilityRhoPi} assures that $T$ commutes with every $\rho_\lambda(g)$, $g\in Q$. Now, finally, by a variant of Schur's Lemma (see \cite[Proposition 1.2.2]{Wal88}), $T$ acts on $D$ as a scalar multiple of the identity. Therefore, $W$ is $Z(\frakg)$-finite and the proof is complete.
\end{proof}

\subsubsection{Integration of the $(\frakg,\frakk)$-module}\label{sec:IntgkModule}

Now we can finally integrate the $(\frakg,\frakk)$-module $W$ to a unitary representation of a finite cover of $G$. We first find the minimal cover of $G$ to which the $(\frakg,\frakk)$-module integrates. We use the following classical fact:

\begin{lemma}\label{lem:LiftLemma}
Let ${G}^\vee$ be any connected reductive Lie group with Lie algebra $\frakg$ and denote by ${K}^\vee$ the maximal compact subgroup (modulo the center of $G^\vee$) having the Lie algebra $\frakk$. Then an admissible $(\frakg,\frakk)$-module which is generated by a single $\frakk$-type lifts to a representation of ${G}^\vee$ if and only if the generating $\frakk$-type lifts to ${K}^\vee$.
\end{lemma}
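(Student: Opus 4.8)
The plan is to treat the two implications separately; the forward one is immediate, while the reverse implication carries all the content. If the $(\frakg,\frakk)$-module $W$ integrates to a representation $\Pi$ of $G^\vee$, then $\Pi|_{K^\vee}$ exponentiates the $\frakk$-action, and since $K^\vee$ is connected it preserves every $\frakk$-invariant subspace; in particular the generating $\frakk$-type is a $K^\vee$-subrepresentation and hence lifts to $K^\vee$. For the converse I would assume the generating $\frakk$-type $W_0$ lifts to $K^\vee$, and proceed in two stages: first upgrade $W$ to a $(\frakg,K^\vee)$-module, i.e.\ show that the $\frakk$-action integrates to $K^\vee$, and then invoke the classical integration theorem for admissible $(\frakg,K^\vee)$-modules.

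The heart of the argument is a weight-lattice computation. Since $G^\vee$ is connected, so is its maximal compact subgroup $K^\vee$; fix a maximal torus $T^c\subseteq K^\vee$ with Lie algebra $\frakt^c\subseteq\frakk$. A finite-dimensional $\frakk$-module lifts to the connected group $K^\vee$ precisely when all of its $\frakt^c$-weights lie in the character lattice $P(K^\vee)=\widehat{T^c}$. Now $W=U(\frakg)W_0$, and under $\ad(\frakt^c)$ the space $\frakg_\CC$ is spanned by weight vectors whose weights are the roots of $\frakg$ together with $0$; hence, by the Poincar\'e--Birkhoff--Witt theorem, every $\frakt^c$-weight occurring in $W$ is a weight of $W_0$ shifted by a $\ZZ$-combination of roots of $\frakg$. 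The weights of $W_0$ lie in $P(K^\vee)$ by hypothesis, and the roots of $\frakg$ lie in $P(K^\vee)$ because $\frakg_\CC$ integrates to a $K^\vee$-module via $\Ad$. As $P(K^\vee)$ is a lattice, every $\frakt^c$-weight of $W$ lies in $P(K^\vee)$, so every $\frakk$-type of $W$ lifts to $K^\vee$. Note that when $Z(\frakk)\neq0$ the relevant central character is one of these weights, so the hypothesis on $W_0$ is exactly what forces the finite cover $G^\vee$ to be large enough.

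It remains to assemble these lifts into a single $K^\vee$-action and to integrate. Using the filtration $W=\bigcup_n W_n$ by finite-dimensional $\frakk$-invariant subspaces constructed in the preceding lemma, each $W_n$ is a finite-dimensional $\frakk$-module all of whose $\frakk$-types lift, hence $W_n$ itself lifts to $K^\vee$; the lift is unique because $K^\vee$ is connected, and uniqueness makes the lifts compatible on the nested $W_n$, yielding a $K^\vee$-action $\Pi$ on $W$. Connectedness of $K^\vee$ also promotes the infinitesimal compatibility (that $W$ is a $\frakg$-module) to the group identity $\Pi(k)(X\cdot v)=(\Ad(k)X)\cdot\Pi(k)v$ for $k\in K^\vee$, $X\in\frakg$, $v\in W$, so $W$ becomes a genuine admissible $(\frakg,K^\vee)$-module (admissibility being Proposition~\ref{prop:Admissible}). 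The standard theory then realizes $W$ as the Harish-Chandra module of a representation of $G^\vee$; in the infinitesimally unitary situation of Proposition~\ref{prop:L2Unitary} one integrates directly, since the $K^\vee$-finite vectors are analytic and Nelson's theorem globalizes the $\frakg$-action to a unitary representation of $G^\vee$. I expect the main obstacle to be this middle step---passing from the liftability of a single $\frakk$-type to a globally defined, $\frakg$-compatible $K^\vee$-action---where the lattice-closure argument and the connectedness of $K^\vee$ are both essential; the final integration is classical.
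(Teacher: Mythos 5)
The paper offers no proof to compare against: Lemma~\ref{lem:LiftLemma} is quoted there only as a ``classical fact''. Judged on its own, your argument is a correct and complete rendering of that fact. The forward direction is right (connectedness of $K^\vee$ makes every $\frakk$-invariant subspace of a finite-dimensional $K^\vee$-module $K^\vee$-invariant), and your central computation is exactly the reason a single generating $\frakk$-type controls the whole module: since $W=U(\frakg)W_0$, PBW shows every $\frakt^c$-weight of $W$ lies in the set of weights of $W_0$ shifted by $\ZZ$-combinations of the $\ad(\frakt^c)$-weights of $\frakg_\CC$; the former are analytically integral by hypothesis, the latter because $\Ad$ is globally defined on $K^\vee$, and closure of the integral weights under addition then makes every $\frakk$-type of $W$ lift. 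Gluing the lifts (unique by connectedness, hence compatible) over an exhaustion by finite-dimensional $\frakk$-invariant subspaces, promoting the infinitesimal compatibility to the group identity $\Pi(k)(X\cdot v)=(\Ad(k)X)\cdot\Pi(k)v$ by a connectedness argument, and closing with Casselman's realization theorem in general, or with Harish-Chandra/Nelson in the infinitesimally unitary situation that Theorem~\ref{thm:IntgkModule} actually uses, are the standard final steps. Two details deserve tidying. First, the lemma allows $K^\vee$ to be compact only modulo the center of $G^\vee$ (relevant if $G^\vee$ has infinite center); you tacitly assume $K^\vee$ compact, but the argument survives because the only property of the analytically integral weights you use is closure under addition --- and in the paper's application all covers $G^\vee$ are finite, so $K^\vee$ is genuinely compact. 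Second, the filtration $W=\bigcup_n W_n$ you import from the preceding lemma is tied to the paper's specific module $W=U(\frakg)\psi_0$; for the abstract module of the lemma, replace it by $W_n:=U_n(\frakg)W_0$, where $U_n(\frakg)$ is the degree-$n$ part of the standard filtration of $U(\frakg)$ (this is finite-dimensional and $\frakk$-invariant since $[\frakk,U_n(\frakg)]\subseteq U_n(\frakg)$), or simply lift each $\frakk$-isotypic component of the admissible module separately and glue by uniqueness.
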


Thus, we only have to deal with the $\frakk$-type $W_0$. In view of Theorems \ref{prop:Kfinite} and \ref{prop:Kfinite-split rank 1} we define a finite covering ${G}^\vee\rightarrow G$ as follows:

\begin{definition}[Minimal covering group $G^\vee$]\label{def:minG}
\begin{enumerate}
\item[\textup{(1)}] For euclidean $V$ we treat the five cases separately:
\begin{enumerate}
\item[\textup{(a)}] $\frakg=\sp(k,\RR)$. The metaplectic group $\Mp(k,\RR)$ is a $4$-fold cover of $G=\Sp(k,\RR)/\{\pm\1\}$. On the level of $K=U(k)/\{\pm\1\}$ this $4$-fold cover is given by $U(k)^{(2)}=\{(g,z)\in U(k)\times\CC^\times:z^2=\det(g)\}$ and hence the fiber over the identity is
\begin{align*}
 \{(\1,1),(\1,-1),(-\1,e^{\sqrt{-1}\pi\frac{k}{2}}),(-\1,-e^{\sqrt{-1}\pi\frac{k}{2}})\}
\end{align*}
which is $\ZZ_4$ for $k$ odd and $\ZZ_2\times\ZZ_2$ for $k$ even. In particular, $(-\1,1)$ is in the fiber if and only if $k$ is even. Define
\begin{align*}
 G^\vee := G^\vee_+ &:= \begin{cases}\Mp(k,\RR)/\{(\1,1),(-\1,1)\} & \mbox{for $k$ even,}\\\Mp(k,\RR) & \mbox{for $k$ odd,}\end{cases}\\
 G^\vee_- &:= \begin{cases}\Mp(k,\RR)/\{(\1,1),(-\1,-1)\} & \mbox{for $k$ even,}\\\Mp(k,\RR) & \mbox{for $k$ odd.}\end{cases}
\end{align*}
Note that for both even and odd $k$ the groups $G^\vee_+$ and $G^\vee_-$ are not linear.
\item[\textup{(b)}] $\frakg=\su(k,k)$. We realize $\SU(k,k)$ as
\begin{align*}
 \SU(k,k) &= \left\{g\in\SL(2k,\CC):g^{-1}=\left(\begin{array}{cc}0&\1\\\1&0\end{array}\right)g^*\left(\begin{array}{cc}0&\1\\\1&0\end{array}\right)\right\},
\end{align*}
where $g^*$ denotes the conjugate transpose matrix. Then the center of $\SU(k,k)$ is given by $\{e^{\sqrt{-1}\pi\frac{j}{k}}\1_{2k}:j=0,\ldots,2k-1\}$. Define
\begin{align*}
 G^\vee &:= \SU(k,k)/\{e^{\sqrt{-1}\pi\frac{2j}{k}}\1_{2k}:j=0,\ldots,k-1\}.
\end{align*}
\item[\textup{(c)}] $\frakg=\so^*(4k)$. We realize $\SO^*(4k)$ as
\begin{align*}
 \SO^*(4k) &= \left\{g\in\SL(4k,\CC):g^{-1}=J_{2k}g^*J_{2k}=\diag(J_k,J_k)\overline{g}\diag(J_k,J_k)\right\},
\end{align*}
where $\overline{g}$ denotes the conjugate matrix and $J_m\in M(2m\times2m,\RR)$ is given by
\begin{align*}
 J_m &:= \left(\begin{array}{cc}0&\1_m\\-\1_m&0\end{array}\right).
\end{align*}
Then the center of $\SO^*(4k)$ is given by
\begin{align*}
 Z(\SO^*(4k)) &= \{e^{\sqrt{-1}\pi\frac{j}{2k}}\1_{4k}:j=0,\ldots,4k-1\}.
\end{align*}
Define
\begin{align*}
 G^\vee &:= G = \SO^*(4k)/Z(\SO^*(4k)).
\end{align*}
\item[\textup{(d)}] $\frakg=\so(2,k)$. Let $\SO(2)^{(2)}$ be the double covering group of $\SO(2)$ and denote by $\eta\in\SO(2)^{(2)}$ the unique element of order $2$. Then there is a unique double cover $\SO(2,k)_0^{(2)}$ of $\SO(2,k)_0$ with maximal compact subgroup $\SO(2)^{(2)}\times\SO(k)$ such that the kernel of the covering map $\SO(2,k)_0^{(2)}\to\SO(2,k)_0$ is given by $\{(\1,\1),(\eta,\1)\}$. Define
\begin{align*}
 G^\vee &:= \begin{cases}\SO(2,k)_0/\{\pm\1\} & \mbox{for $k\in4\ZZ+2$,}\\\SO(2,k)_0 & \mbox{for $k\in4\ZZ$,}\\\SO(2,k)_0^{(2)} & \mbox{for $k$ odd.}\end{cases}
\end{align*}
\item[\textup{(5)}] $\frakg=\mathfrak{e}_{7(-25)}$. In this case we put
\begin{align*}
 G^\vee &:= G = E_{7(-25)}/Z(E_{7(-25)}).
\end{align*}
\end{enumerate}
\item[\textup{(2)}] For $V$ non-euclidean of rank $\geq3$ we let ${G}^\vee:=G$.
\item[\textup{(3)}] Now let $V=\RR^{p,q}$ with $p+q$ even, $p,q\geq2$.
\begin{enumerate}
\item[\textup{(a)}] If $p$ and $q$ are both even, then define
\begin{align*}
 {G}^\vee &:= G = \SO(p+1,q+1)_0.
\end{align*}
\item[\textup{(b)}] If $p$ and $q$ are both odd, we have $G=\SO(p+1,q+1)_0/\{\pm\1\}$. In this case we put
\begin{align*}
 {G}^\vee := \begin{cases}\SO(p+1,q+1)_0/\{\pm\1\} & \mbox{if $p-q\equiv0\ (\mod\ 4)$,}\\\SO(p+1,q+1)_0 & \mbox{if $p-q\equiv2\ (\mod\ 4)$.}\end{cases}
\end{align*}
\end{enumerate}
\item[\textup{(4)}] Finally, for $V=\RR^{k,0}$, $k\geq1$, we also put ${G}^\vee:=G= \SO(k+1,1)_0$.
\end{enumerate}
\end{definition}

\begin{theorem}\label{thm:IntgkModule}
Let $V$ be a simple Jordan algebra with simple euclidean subalgebra $V^+$, and 
assume that $V\ncong\RR^{p,q}$ with $p+q$ odd, $p,q\geq2$. If $r_0=1$, further assume that $\sigma=r\lambda\in(0,-2\nu)$. Then the $(\frakg,\frakk)$-module $W$ lifts to an irreducible unitary representation $\pi$\index{pi@$\pi$} of ${G}^\vee$ on $L^2(\calO,\td\mu_\lambda)$. Moreover, ${G}^\vee$ is the minimal covering of $G$ to which $W$ lifts.\\
For $V=\Sym(k,\RR)$ the $(\frakg,\frakk)$-module $W^-$ lifts to an irreducible unitary representation $\pi^-$ of ${G}^\vee_-$ on $L^2(\calO,\calL)$ and ${G}^\vee_-$ is the minimal covering of $G$ to which $W^-$ lifts.
\end{theorem}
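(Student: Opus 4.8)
The plan is to pass from the infinitesimal data already assembled to a genuine unitary representation by the classical globalization procedure for Harish-Chandra modules, and then to deduce density, irreducibility, and the precise covering from the $Q$-action and the $\frakk$-type computation. By Propositions~\ref{prop:L2Unitary} and~\ref{prop:Admissible}, $W$ is an admissible $(\frakg,\frakk)$-module, contained in $L^2(\calO,\td\mu_\lambda)\cap C^\infty(\calO)$, infinitesimally unitary for the $L^2$-inner product, and generated by the single $\frakk$-type $W_0$ described explicitly in Theorems~\ref{prop:Kfinite} and~\ref{prop:Kfinite-split rank 1}. First I would upgrade $W$ to a $(\frakg,K^\vee)$-module: by Lemma~\ref{lem:LiftLemma} this amounts to checking that the generating $\frakk$-type $W_0$, with highest weight $\alpha_0$ as in~\eqref{eq:DefGamma0}, lifts to the maximal compact subgroup $K^\vee$ of the group $G^\vee$ fixed in Definition~\ref{def:minG}. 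Granting this lift, the $(\frakg,K^\vee)$-module $W$ is admissible and unitarizable, so by the standard integration theorem for unitarizable Harish-Chandra modules (cf.\ \cite{Wal88}) there is a unique unitary representation $\pi$ of $G^\vee$ on the Hilbert-space closure $\overline{W}\subseteq L^2(\calO,\td\mu_\lambda)$ whose $K^\vee$-finite vectors are exactly $W$ and whose derived action is $\td\pi_\lambda$.

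It then remains to identify $\overline{W}$ with all of $L^2(\calO,\td\mu_\lambda)$ and to prove irreducibility, and for both I would exploit the restriction to $Q$. By construction $\td\pi_\lambda$ agrees with the derived action of $\rho_\lambda$ on $\frakq^{\max}$, and~\eqref{eq:CompatibilityRhoPi} records the full compatibility; since the vectors of $W$ are analytic and $Q$ is connected, this gives $\pi(q)w=\rho_\lambda(q)w$ for all $w\in W$ and $q\in Q$. In particular $\rho_\lambda(Q)W\subseteq\overline{W}$, so $\overline{W}$ is $\rho_\lambda(Q)$-stable; as $\rho_\lambda$ is irreducible on $L^2(\calO,\td\mu_\lambda)$ by Proposition~\ref{prop:MackeyRep} and $\overline{W}\neq 0$, we conclude $\overline{W}=L^2(\calO,\td\mu_\lambda)$. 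Consequently $\pi|_Q=\rho_\lambda$ on the whole space, whence $\pi(G^\vee)'\subseteq\rho_\lambda(Q)'=\CC\cdot\id$; the commutant of $\pi$ is trivial and $\pi$ is irreducible.

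The real work, and the step I expect to be the main obstacle, is the case-by-case verification underlying the lift in the first paragraph together with the minimality assertion. Here one must evaluate the central character of the $\frakk$-type $W_0$ — determined by $\alpha_0$ — on the finite kernel of each candidate cover and match the answer against Definition~\ref{def:minG}; this is exactly where the metaplectic subtleties enter (the four-fold cover $\Mp(k,\RR)$ and the element $(-\1,e^{\sqrt{-1}\pi k/2})$ for $\frakg=\sp(k,\RR)$, the central characters for $\su(k,k)$, $\so^*(4k)$ and $\so(2,k)$, and the $p-q\bmod 4$ dichotomy for $\so(p+1,q+1)$). Minimality of $G^\vee$ is the converse reading of Lemma~\ref{lem:LiftLemma}: any cover of $G$ to which $W$ lifts must admit a lift of $W_0$, and the computation shows $K^\vee$ is the smallest compact group through which the $\frakk$-representation $W_0$ factors.

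Finally, for $V=\Sym(k,\RR)$ the treatment of $W^-$ is entirely parallel. Infinitesimal unitarity and admissibility of $W^-$ hold by the same arguments as for $W$ (as noted after Theorem~\ref{prop:Kfinite}), the Mackey argument of Proposition~\ref{prop:MackeyRep} gives irreducibility of $\rho_\lambda$ on the section space $L^2(\calO,\calL)$, and one repeats the globalization and the $Q$-restriction verbatim. The only change is in the lifting step: the generating $\frakk$-type $W_0^-$ has highest weight $\alpha_0^-$ as in~\eqref{eq:DefGamma0-}, and its central character selects the cover $G^\vee_-$ rather than $G^\vee$, again by Lemma~\ref{lem:LiftLemma}. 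This yields the irreducible unitary representation $\pi^-$ of $G^\vee_-$ on $L^2(\calO,\calL)$ together with the minimality of $G^\vee_-$.
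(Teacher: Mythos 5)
Your proposal is correct and follows essentially the same route as the paper: reduce the lifting question to the generating $\frakk$-type $W_0$ via Lemma~\ref{lem:LiftLemma}, integrate the admissible, infinitesimally unitary $(\frakg,K^\vee)$-module (Propositions~\ref{prop:L2Unitary} and~\ref{prop:Admissible}) to a unitary representation on a closed subspace $\calH\subseteq L^2(\calO,\td\mu_\lambda)$, and then use the agreement with $\rho_\lambda$ on $\frakq^{\max}$ together with irreducibility of $\rho_\lambda|_Q$ (Proposition~\ref{prop:MackeyRep}) to force $\calH=L^2(\calO,\td\mu_\lambda)$ and irreducibility of $\pi$. The case-by-case verification you defer (evaluating the central character of $W_0$, resp.\ $W_0^-$, on the kernels of the candidate coverings and matching against Definition~\ref{def:minG}) is precisely what the paper's proof carries out, so your outline matches its structure in full.
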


We remark that $r_0=1$ is equivalent to $\frakg\cong\so(k+1,1)$, $k\geq1$. In this case $\nu=-k$ and $\sigma\in(0,2k)$ parameterizes the spherical complementary series representations of $\SO(k+1,1)$.

\begin{proof}[Proof of Theorem \ref{thm:IntgkModule}]
Denote by ${K}^\vee\subseteq{G}^\vee$ the maximal compact subgroup with Lie algebra $\frakk$. We only have to show that the $\frakk$-module $W_0$ lifts to a ${K}^\vee$-module and that the covering $G^\vee$ is minimal with this property. Then the $(\frakg,\frakk)$-module $W$ lifts to a $(\frakg,{K}^\vee)$-module. By Propositions \ref{prop:L2Unitary} and \ref{prop:Admissible} this $(\frakg,{K}^\vee)$-module is admissible, contained in $L^2(\calO,\td\mu_\lambda)$ and infinitesimally unitary with respect to the $L^2$ inner product. Hence, it integrates to a unitary representation $\pi$ of ${G}^\vee$ on a Hilbert space $\calH\subseteq L^2(\calO,\td\mu_\lambda)$. Since the Lie algebra actions of $\pi$ and $\rho_\lambda$ agree on the maximal parabolic subalgebra $\frakq^{\max}$, the representation $\pi$ descends to the group $Q$ on which it agrees with $\rho_\lambda$. But $\rho_\lambda$ is irreducible on $L^2(\calO,\td\mu_\lambda)$ and therefore, $\pi$ has to be irreducible and $\calH=L^2(\calO,\td\mu_\lambda)$.\\
It remains to show that ${G}^\vee$ is the minimal covering of $G$ to which $W$ integrates. By Lemma \ref{lem:LiftLemma} we only have to check that ${G}^\vee$ is minimal among the coverings of $G$ with the property that $W_0$ integrates to ${K}^\vee$.
\begin{enumerate}
\item[\textup{(1)}] For euclidean $V$ we note by Theorem~\ref{prop:Kfinite}\,(a) that $\frakk$ acts on $\psi_0$ by the character
\begin{align*}
 \td\xi:\frakk\to\CC,\,(u,D,-u)\mapsto\frac{d}{2}\sqrt{-1}\tr(u).
\end{align*}
We check the five cases separately:
\begin{enumerate}
\item[\textup{(a)}] $\frakg=\sp(k,\RR)$. The map $\frakk\to\fraku(k),\,(u,D,-u)\mapsto D+\sqrt{-1}u$ is an isomorphism. Under this isomorphism the character $\td\xi$ is given by $\fraku(k)\to\CC,\,X\mapsto\frac{1}{2}\Tr(X)$ (we have $d=1$). Therefore it integrates to the character $U(k)^{(2)}\to\CC^\times,\,(g,z)\mapsto z$. This character is only trivial for the elements $(g,1)$ and the claim follows by Definition~\ref{def:minG}\,(1)\,(a).
\item[\textup{(b)}] $\frakg=\su(k,k)$. The map $\frakk\to\fraks(\fraku(k)\oplus\fraku(k)),\,(u,D,-u)\mapsto(D+\sqrt{-1}u,D-\sqrt{-1}u)$ is an isomorphism. Under this isomorphism the character $\td\xi$ is given by $\fraks(\fraku(k)\oplus\fraku(k))\to\CC,\,(X,Y)\mapsto\frac{1}{2}(\Tr(X)-\Tr(Y))=\Tr(X)$ (we have $d=2$) and hence integrates to the character $S(U(k)\times U(k))\to\CC^\times,\,(g,h)\mapsto\Det(g)$. The central element $\diag(e^{\sqrt{-1}\pi\frac{j}{k}})\in\SU(k,k)$, $j=0,\ldots,2k-1$, corresponds to the element $\diag(e^{\sqrt{-1}\pi\frac{j}{k}})\in S(U(k)\times U(k))$ and the claim follows with Definition~\ref{def:minG}\,(1)\,(b).
\item[\textup{(c)}] $\frakg=\so^*(4k)$. The map $\frakk\to\fraku(2k),\,(u,D,-u)\mapsto D+\sqrt{-1}u$ is an isomorphism. Under this isomorphism the character $\td\xi$ corresponds to the character $\fraku(2k)\to\CC,\,X\mapsto2\Tr(X)$ which integrates to the character $U(2k)\to\CC^\times,\,g\mapsto\Det(g)^2$. The central element $\diag(e^{\sqrt{-1}\pi\frac{j}{2k}})\in\SO^*(4k)$, $j=0,\ldots,4k-1$, corresponds to the element $\diag(e^{\sqrt{-1}\pi\frac{j}{2k}})\in U(2k)$. Since $\Det(\diag(e^{\sqrt{-1}\pi\frac{j}{2k}}))^2=1$ all central elements act trivially and the claim follows with Definition~\ref{def:minG}\,(1)\,(c).
\item[\textup{(d)}] $\frakg=\so(2,k)$. Under the isomorphism of Example~\eqref{ex:ConfGrp}\,(2) the character $\td\xi$ corresponds to the character
\begin{align*}
 \so(2)\oplus\so(k)\to\CC,\,\left(\left(\begin{array}{cc}0&t\\-t&0\end{array}\right),X\right)\mapsto-\tfrac{k-2}{2}\sqrt{-1}t.
\end{align*}
This character integrates to $\SO(2)^{(2)}\times\SO(k)$ and factors to $\SO(2)\times\SO(k)$ if and only if $k$ is even and further to $(\SO(2)\times\SO(k))/\{\pm1\}$ if and only if $k\in4\ZZ+2$. This gives the claim by Definition~\ref{def:minG}\,(1)\,(d).
\item[\textup{(e)}] $\frakg=\mathfrak{e}_{7(-25)}$. The maximal compact subgroup $\widetilde{K}\subseteq\widetilde{G}$ is isomorphic to $\widetilde{E_6}\times\RR_+$. The center of $\widetilde{G}$ is isomorphic to $\ZZ$ (see \cite[page 48]{Tit67}) and under the isomorphism $\widetilde{K}\cong\widetilde{E_6}\times\RR_+$ a generator is given by $(z_1,z_2)$ with $z_1\in Z(\widetilde{E_6})$ non-trivial and $z_2\in\RR_+$ (see \cite[pages 46 \& 48]{Tit67}). Since $Z(\widetilde{E_6})=\ZZ_3$ (see \cite[page 46]{Tit67}) we also have $(\1,z_2^3)=(z_1,z_2)^3\in Z(\widetilde{G})$. The element $z_2$ can be written as $z_2=\exp(t({\bf e},0,-{\bf e}))$ since $Z(\frakk)=\RR({\bf e},0,-{\bf e})$ by Lemma \ref{lem:CenterK}. Since $z_2^3\in Z(\widetilde{G})$ we must have $\spec(\ad(3t({\bf e},0,-{\bf e})))\subseteq2\pi\sqrt{-1}\ZZ$ which yields $3t\in\pi\ZZ$. Now the character $\xi$ integrates to $\widetilde{K}$ and is on $(z_1,z_2)$ given by
\begin{align*}
 \xi(z_1,z_2) &= e^{\td\xi(t({\bf e},0,-{\bf e}))} = e^{\frac{d}{2}\sqrt{-1}\tr({\bf e})t} = 1
\end{align*}
since $d=8$, $\tr({\bf e})=r=3$ and $t\in\pi\ZZ$. Hence the character factors to $\widetilde{K}/Z(\widetilde{G})$ which obviously gives the minimal covering $G^\vee=G$ of $G$.
\end{enumerate}
\item[\textup{(2)}] In the case where $V$ is non-euclidean of rank $\geq3$, the vector $\psi_0$ is $\frakk$-spherical and hence $W_0$ integrates to $K^\vee=K$. We further have ${G}^\vee=G$ and hence ${G}^\vee$ is automatically the minimal covering of $G$.
\item[\textup{(3)}] Let $V=\RR^{p,q}$ with $p+q$ even, $p,q\geq2$. Then ${G}^\vee=G$ in all cases except when $p$ and $q$ are both odd and $p-q\equiv2\ (\mod\ 4)$. In this case $Z({G}^\vee)=\{\pm\1\}$. By Theorem \ref{prop:Kfinite} the minimal $\frakk$-type $W_0$ is isomorphic to $\calH^{|\frac{p-q}{2}|}(\RR^{\min(p,q)+1})$ and hence integrates to $\SO(p+1)\times\SO(q+1)$. Further the element $-\1$ acts on $W_0$ by $(-1)^{|\frac{p-q}{2}|}$ and hence we can factor out $-\1$ if and only if $p-q\equiv2\ (\mod\ 4)$.
\item[\textup{(4)}] The case of $V=\RR^{k,0}$, $k\geq1$, is similar to case (2).
\item[\textup{(5)}] For $V=\Sym(k,\RR)$ we consider the $\frakk$-module $W_0^-$. As in (1)~(a) we see, using the proof of Theorem \ref{prop:Kfinite} (e), that the action of $\frakk\cong\fraku(k)$ on $W_0^-\cong\CC^k$ integrates to the representation $U(k)^{(2)}\to\GL(k,\CC),\,(g,z)\mapsto zg$ and the claim follows.
\end{enumerate}
Therefore the proof is complete.\qedhere
\end{proof}

\begin{corollary}\label{cor:annihilator of dpi}
Assume that $V\ncong\RR^{p,q}$ with $p+q$ odd, $p,q\geq2$. If $r_0=1$, further assume that $\sigma=r\lambda\in(0,-2\nu)$. Then the kernels of $\td\pi$ (resp. $\td\pi^-$) and its restriction to $\frakk$-finite vectors agree.
\end{corollary}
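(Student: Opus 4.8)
The plan is to establish the one non-trivial inclusion of annihilators and read off the equality from it. Write $\calJ_\infty$ for the kernel of $\td\pi$ acting on $C^\infty(\calO)$ and $\calJ_W$ for the kernel of its restriction to the $\frakk$-finite submodule $W=\td\pi(U(\frakg))\psi_0$ (which is a genuine $(\frakg,\frakk)$-module under the present hypotheses by Theorems~\ref{prop:Kfinite} and \ref{prop:Kfinite-split rank 1}); the analogous notation is used for $\td\pi^-$ and $W^-$. Since $W\subseteq C^\infty(\calO)$, the inclusion $\calJ_\infty\subseteq\calJ_W$ is automatic, so the whole statement reduces to proving $\calJ_W\subseteq\calJ_\infty$. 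By Proposition~\ref{prop:D(X)-factorization} the action $\td\pi$ factors through $U(\frakg)\to\DD(\XX)$, and $\XX$ is a smooth irreducible affine variety in which $\calO$ is a Zariski-dense open subset; thus $\calJ_\infty$ coincides with the kernel of $U(\frakg)\to\DD(\XX)$, and it suffices to show: \emph{if $u\in U(\frakg)$ is such that the operator $D:=\td\pi(u)\in\DD(\XX)$ annihilates $W$, then $D=0$.}

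The key observation I would exploit is that $W$ already contains every polynomial multiple of its generator. By \eqref{eq:L2DerRep1} the subalgebra $U(\frakn)\subseteq U(\frakg)$ acts on $C^\infty(\calO)$ by multiplication with the linear functions $x\mapsto\sqrt{-1}(x|u)$, $u\in V$; as these exhaust the linear forms on $V$, their products restrict to all of $\CC[\calO]$, whence
\begin{align*}
 \CC[\calO]\cdot\psi_0 = U(\frakn)\psi_0 \subseteq W,
\end{align*}
and likewise $\CC[\calO]\cdot\psi_0^-\subseteq W^-$. Consequently the hypothesis $D|_W=0$ forces $D(p\,\psi_0)=0$ for every $p\in\CC[\calO]$ (resp.\ $D(p\,\psi_0^-)=0$).

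The last step strips off the transcendental generator using that it is generically non-zero. The function $\psi_0=\widetilde{K}_{\frac{\nu}{2}}(|x|)$ is strictly positive on all of $\calO$, since the $K$-Bessel function is positive and $|x|>0$ on $\calO$, while the section $\psi_0^-=(x|c_1)^{\frac12}\widetilde{K}_{\frac{\nu}{2}}(|x|)$ is non-vanishing on the non-empty open set $U:=\{x\in\calO:(x|c_1)\neq0\}$. On such a $U$ (with $U=\calO$ in the scalar case) the operator $\widetilde{D}:=D\circ M_{\psi_0}$, where $M_{\psi_0}$ denotes multiplication by $\psi_0$, is a differential operator with smooth coefficients that annihilates every $p\in\CC[\calO]$. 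Because $\XX$ is smooth, for each $x\in U$ and each $m$ the regular functions $\CC[\calO]$ surject onto the full space of $m$-jets at $x$; hence an operator killing all of $\CC[\calO]$ has vanishing coefficients on $U$, i.e.\ $\widetilde{D}|_U=0$, and since $\psi_0$ is invertible on $U$ this yields $D|_U=0$. As $U$ is Zariski-dense in the irreducible variety $\XX$, the regular coefficients of $D$ then vanish identically, so $D=0$, giving $\calJ_W\subseteq\calJ_\infty$. The line-bundle case $\td\pi^-$ is handled verbatim after restricting to $U$.

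I expect the crux to lie in this final step, for two related reasons. First, one must make rigorous that a differential operator annihilating all regular functions on a smooth affine variety is zero; this rests on the surjectivity of $\CC[\calO]$ onto jets at smooth points, together with matching the algebraic jets with the analytic $m$-jets of $\widetilde{D}$. Second, because the true generator $\psi_0^{\pm}$ is transcendental—and in the line-bundle case genuinely has zeros—the purely algebraic no-zero-divisor structure of $\DD(\XX)$ can only be brought to bear after conjugating by the generically invertible $M_{\psi_0^{\pm}}$ and passing to the Zariski-dense open set $U$, so the careful bookkeeping of where $\psi_0^{\pm}$ vanishes is the main thing to get right.
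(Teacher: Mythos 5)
Your proof is correct, but it follows a genuinely different route from the paper's. The paper's own argument is purely analytic and only two lines long: by Theorem~\ref{thm:IntgkModule} the $\frakk$-finite vectors are dense in $L^2(\calO,\td\mu_\lambda)$, hence dense in the space of smooth vectors of the integrated representation $\pi$, and since each $\td\pi(X)$ acts continuously on smooth vectors, killing all $\frakk$-finite vectors forces killing all smooth vectors. You instead argue algebraically: $U(\frakn)\psi_0^\pm=\CC[\calO]\,\psi_0^\pm\subseteq W^\pm$, the generator $\psi_0^\pm$ is non-vanishing on a (Euclidean-)dense open set $U$, restrictions of polynomials surject onto jets at points of the embedded submanifold $\calO\subseteq V$, and density then forces all coefficients of the differential operator to vanish. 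What the two approaches buy is different. The paper's proof needs the full strength of Theorem~\ref{thm:IntgkModule} (unitarity and integrability) and establishes equality of kernels for the derived action on \emph{smooth vectors} of $\pi$, which is the formulation quoted in Theorem~B. Your proof uses no unitarity, no integrability and no group at all, and yields the finer identification $\Ann_{U(\frakg)}(W^\pm)=\ker\bigl(U(\frakg)\to\DD(\XX)\bigr)$, i.e.\ equality with the kernel on all of $C^\infty(\calO)$ resp.\ $C^\infty(\calO,\calL)$ --- which is exactly the form in which the annihilator enters the proof of Theorem~\ref{thm:Minimality}. The one caveat is interpretive: since you work on $C^\infty(\calO)$ rather than on the smooth vectors of $\pi$, recovering the paper's smooth-vector statement from yours still requires knowing that smooth vectors embed into $C^\infty(\calO)$ compatibly with $\td\pi_\lambda$ (or, alternatively, appending the paper's density argument); conversely, the paper's proof does not give your $C^\infty(\calO)$ statement. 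Two small points of wording: $\calO$ is Zariski-dense but in general \emph{not} Zariski-open in $\XX$ (when $-\calO\neq\calO$ it is merely a Euclidean-open connected component whose complement is also Zariski-dense), and in the line-bundle case the passage from $\widetilde{D}|_U=0$ to $\td\pi^-(u)=0$ on all of $\calO$ is by continuity of the locally defined coefficients on the Euclidean-dense set $U$, not by regularity of coefficients in $\DD(\XX)$; neither issue affects the validity of the argument.
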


\begin{proof}
The space of $\frakk$-finite vectors is dense in $L^2(\calO,\td\mu_\lambda)$ by Theorem~\ref{thm:IntgkModule}, and consequently also in the space of smooth vectors of $\pi$. If $\td\pi(X)$, $X\in U(\frakg)$, annihilates all $\frakk$-finite vectors it also annihilates all smooth vectors, which implies the claim. The same argument goes through for $\td\pi^-$.
\end{proof}

\begin{corollary}\label{cor:UniqueMinRep}
Let $\pi$ (resp. $\pi^-$) be the irreducible unitary representation of $G^\vee$ (resp. $G^\vee_-$) on $L^2(\calO,\td\mu_\lambda)$ (resp. $L^2(\calO,\calL)$) constructed from a simple Jordan algebra in Theorem \ref{thm:IntgkModule}.
Assume that $V\ncong\RR^{p,q}$ with $p+q$ odd, $p,q\geq2$, and that $V$ is split or complex. Assume further that $\frakg_\CC$ is not of type $A_n$. Then the representations $\pi$ and $\pi^-$ are minimal in the sense of Definition \ref{def:minrep}. Conversely, all minimal representations of any covering group of $G$ are equivalent to $\pi$ or its dual or additionally $\pi^-$ or its dual for $V\cong\Sym(k,\RR)$.
\end{corollary}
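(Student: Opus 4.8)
The plan is to treat the two assertions separately: minimality of $\pi^\pm$ follows from the associated-variety computation of Theorem~\ref{thm:Minimality} combined with the uniqueness of the Joseph ideal, while the classification of \emph{all} minimal representations is obtained by restricting them to the parabolic $Q$ of \eqref{eq:DefQ} and reconstructing them from that restriction.

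For the forward implication I would argue as follows. By Theorem~\ref{thm:Minimality} the annihilator $\calJ:=\Ann(\td\pi_\lambda)$, for the value $\lambda$ with $\calO_\lambda=\calO$, is a completely prime two-sided ideal whose associated variety is $\overline{\calO_{\min,\frakg}^{G_\CC}}$. When $V$ is split or complex, Theorem~\ref{thm:min-split-nilpotent-orbits} (resp.\ Proposition~\ref{thm:Lagrangian} in the split case) identifies $\calO_{\min,\frakg}^{G_\CC}$ with the genuine minimal nilpotent orbit $\calO_{\min}^{G_\CC}$, so that $\mathcal V(\calJ)=\overline{\calO_{\min}^{G_\CC}}$. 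If $\frakg_\CC$ is simple and not of type $A_n$, Joseph's theorem \cite{Jo76} guarantees that there is exactly one completely prime ideal with this associated variety, the Joseph ideal; hence $\calJ$ is the Joseph ideal and $\pi$ is minimal in the sense of Definition~\ref{def:minrep}. The same computation applies to $\pi^-$, whose annihilator coincides with that of $\pi$ because $\td\pi^-$ acts on sections of the flat bundle $\calL$ by operators with the same local expressions, hence the same associated graded ideal. In the complex case $\frakg_\CC\cong\frakg_L\oplus\frakg_R$ with each simple factor not of type $A_n$; there one applies the uniqueness factorwise and the description of the Joseph ideal as $\mathcal I_L\otimes U(\frakg_R)+U(\frakg_L)\otimes\mathcal I_R$ recalled after Definition~\ref{def:minrep}.

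For the converse, let $\pi'$ be a minimal representation of an arbitrary covering group $\widetilde{G}$ of $G$, with underlying $(\frakg,\widetilde{K})$-module $M'$. By definition $\Ann(M')$ is the Joseph ideal, which by the forward part equals $\Ann(W)$; in particular $M'$ shares with $W$ both its infinitesimal character and its associated variety $\overline{\calO_{\min}^{G_\CC}}$. I would then reconstruct $\pi'$ from $\pi'|_Q$. The first step is to localize the spectrum of the abelian factor $N$: the minimality of the Gelfand--Kirillov dimension of $M'$ forces the joint spectrum of $\td\pi'(\frakn)$, a closed $L$-invariant subset of $\frakn^*\cong V$, to lie inside $\overline{\XX}=\{x\in V:\rk(P(x))\le e+1\}$ of \cite{GK98}; by homogeneity and irreducibility it is then a single minimal $L$-orbit, $\calO$ or $-\calO$. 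Mackey's theory for $Q=L\ltimes N$ now identifies $\pi'|_Q$ with the representation induced from this orbit together with an irreducible representation of the stabilizer $\Stab_L(c_1)$, the measure parameter being fixed to $\lambda=\lambda_1$ by the requirement $\calO_\lambda=\calO$ (and, when $r_0=1$, by the infinitesimal character). Since $\Stab_L(c_1)$ is connected except when $V=\Sym(k,\RR)$, where its component group is $\ZZ_2$, the only possibilities for $\pi'|_Q$ are the representation $\rho_{\lambda_1}$ of Proposition~\ref{prop:MackeyRep} on $L^2(\pm\calO,\td\mu)$ and, for $\Sym(k,\RR)$, its sign-twisted analogue on $L^2(\pm\calO,\calL)$.

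It remains to see that an irreducible unitary representation of $\widetilde{G}$ is determined by this restriction to $Q$. Because $\frakg=\frakq^{\max}\oplus\overline{\frakn}$ and the $\frakq^{\max}$-action is already fixed, the only freedom lies in the $\overline{\frakn}$-action; but the bracket relation $[\frakn,\overline{\frakn}]\subseteq\frakl$, the compatibility \eqref{eq:CompatibilityRhoPi} with $\rho_{\lambda_1}$, and symmetry with respect to the $L^2$-inner product leave no choice beyond the Bessel operators of \eqref{eq:L2DerRep3}, up to complex conjugation of the $N$-character. Passing between the two real orbits $\pm\calO$ (equivalently, conjugating the character) interchanges $\pi$ with its dual $\pi^*$, and the two characters of $\Stab_L(c_1)$ interchange $\pi$ with $\pi^-$ in the symplectic case; hence $\pi'$ is one of $\pi,\pi^*,\pi^-,(\pi^-)^*$, as claimed. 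I expect this last rigidity to be the crux of the argument: a priori a unitary $\frakg$-structure need not be recoverable from its restriction to a maximal parabolic, so making rigorous both the localization of the $\frakn$-spectrum on the minimal orbit and the uniqueness of the $\overline{\frakn}$-action as the only $\frakq^{\max}$-intertwining second-order completion is where the real work lies.
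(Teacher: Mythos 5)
Your forward direction is correct and is essentially the paper's own argument: combine Theorem~\ref{thm:Minimality} (complete primality of the annihilator together with associated variety $\overline{\calO_{\min,\frakg}^{G_\CC}}$) with Theorem~\ref{thm:min-split-nilpotent-orbits} (which identifies $\calO_{\min,\frakg}^{G_\CC}$ with $\calO_{\min}^{G_\CC}$ when $V$ is split or complex), and invoke the uniqueness of the completely prime ideal with this associated variety, i.e.\ the Joseph ideal; the complex case is handled factorwise exactly as you say. One small repair: your claim that $\td\pi^-$ has ``the same local expressions, hence the same associated graded ideal'' as $\td\pi$ is loose, since the annihilator is a global object; the clean route is to run the argument of Theorem~\ref{thm:Minimality} once more for the action on sections of $\calL$ (which also factors through a ring of differential operators on the irreducible variety $\XX$, hence has no zero divisors) and then conclude that \emph{both} annihilators equal the Joseph ideal by its uniqueness.

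The genuine gap lies in your converse, which is exactly where you depart from the paper: the paper disposes of it in one line by comparing Torasso's classification of minimal representations \cite{Tor97} with Table~\ref{tb:Groups}, whereas you attempt an a priori reconstruction of $\pi'$ from $\pi'|_Q$, and the two steps you flag as ``the real work'' are indeed unproved assertions of precisely what is needed. First, the localization of the $N$-spectrum: you need a theorem relating the associated variety of $\Ann(M')\subseteq U(\frakg)$, a subset of $\frakg_\CC^*$, to the support of the spectral measure of $\pi'|_N$, a subset of $V\cong\frakn^*$; this is not a formal consequence of Gelfand--Kirillov dimension counting. Even granting support in $\overline{\XX}=\overline{\calO}\cup(-\overline{\calO})$, irreducibility of $\pi'$ does not force a single orbit (the subspaces of $L^2$ living over $\calO$ and over $-\calO$ are each $Q$-invariant but not a priori $G$-invariant), and over a single orbit Mackey theory still permits induction from \emph{any} irreducible unitary representation of $\Stab_L(c_1)$ --- including its many unitary characters nontrivial on the identity component, and infinite-dimensional representations --- all of which must be excluded before you may conclude that $\pi'|_Q$ is $\rho_{\lambda_1}$ on $L^2(\calO,\td\mu_{\lambda_1})$ or its sign-twisted analogue on $L^2(\calO,\calL)$. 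Second, the rigidity step: the assertion that the bracket relations, the compatibility \eqref{eq:CompatibilityRhoPi}, and $L^2$-symmetry ``leave no choice beyond the Bessel operators'' is the crux, not a formality. The natural argument --- the difference of two candidate $\overline{\frakn}$-actions commutes with all multiplication operators, hence is multiplication by functions $m_v$, which are then killed by $\frakl$-equivariance and $[\overline{\frakn},\overline{\frakn}]=0$ --- involves unbounded operators whose domains of smooth vectors need not coincide, and the classification of the equivariant families $(m_v)_{v}$ actually has to be carried out. Until these points are established, your converse is a program rather than a proof; as the corollary stands in the paper, its intended justification is the citation of \cite{Tor97}.
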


\begin{proof}
Combining Theorem~\ref{thm:min-split-nilpotent-orbits} and Theorem \ref{thm:Minimality} we see that the underlying $(\frakg,\frakk)$-module $W$ of $\pi$ is a minimal representation, since our hypotheses guarantee that the annihilator ideal is the Joseph ideal. Now, the group representation $\pi$ is minimal by definition. That in fact all minimal representations are obtained in this way follows by comparing the tables in \cite{Tor97} with Table~\ref{tb:Groups}.
\end{proof}

\subsection{Two prominent examples}\label{sec:ExReps}

We show that for $V=\Sym(k,\RR)$ the representations $\pi^\pm$ of ${G}^\vee$ are isomorphic to the even and odd part of the Segal--Shale--Weil representation (see \cite[Chapter 4]{Fol89}) and for $V=\RR^{p,q}$ the representation $\pi$ is isomorphic to the minimal representation of $\upO(p+1,q+1)$ as studied by T. Kobayashi, B. {\O}rsted and G. Mano in \cite{KM07b,KM07a,KO03a,KO03b,KO03c}.

\subsubsection{The Segal--Shale--Weil representation}\label{sec:ExMetRep}

The \textit{Segal--Shale--Weil representation} is a unitary representation of the metaplectic group $\Mp(k,\RR)$, the double cover of the symplectic group $\Sp(k,\RR)$. We compare the (classical) Schr\"odinger model of $\mu$\index{mu@$\mu$} realized on $L^2(\RR^k)$ with our construction of the minimal representation associated to the Jordan algebra $V=\Sym(n,\RR)$ via the folding map \eqref{eq:SymTwoFoldCovering}. For this purpose, it is enough to work with the action $\td\mu$\index{dmu@$\td\mu$} of the Lie algebra $\sp(n,\RR)$, and we shall use the same normalization as in \cite[Chapter 4]{Fol89}:
\begin{align*}
 \td\mu\left(\begin{array}{cc}0&0\\C&0\end{array}\right) &= -\pi\sqrt{-1}\sum_{i,j=1}^k{C_{ij}y_iy_j} && \mbox{for $C\in\Sym(k,\RR)$,}\\
 \td\mu\left(\begin{array}{cc}A&0\\0&-^t\!A\end{array}\right) &= -\sum_{i,j=1}^k{A_{ij}y_j\frac{\partial}{\partial y_i}}-\frac{1}{2}\Tr(A) && \mbox{for $A\in M(k,\RR)$,}\\
 \td\mu\left(\begin{array}{cc}0&B\\0&0\end{array}\right) &= \frac{1}{4\pi\sqrt{-1}}\sum_{i,j=1}^k{B_{ij}\frac{\partial^2}{\partial y_i\partial y_j}} && \mbox{for $B\in\Sym(k,\RR)$.}
\end{align*}

The Weil representation splits into two irreducible components (see \cite[Theorem 4.56]{Fol89}):
\begin{align*}
 L^2(\RR^k) &= L^2_{\textup{even}}(\RR^k) \oplus L^2_{\textup{odd}}(\RR^k),
\end{align*}
where $L^2_{\textup{even}}(\RR^k)$ and $L^2_{\textup{odd}}(\RR^k)$ denote the spaces of even and odd $L^2$-functions, respectively. Let $\mu=\mu^+\oplus\mu^-$ be the corresponding decomposition of the representation $\mu$.

Next we recall from \eqref{eq:DefCalU} and \eqref{eq:DefCalU-} that the folding map $\RR^k\setminus\{0\},\,x\mapsto x\,{}^t\!x$ induces unitary isomorphisms
\begin{align*}
 &\calU^+:L^2(\calO)\to L^2_{\textup{even}}(\RR^k),\\
 &\calU^-:L^2(\calO,\calL)\to L^2_{\textup{odd}}(\RR^k).
\end{align*}
Third, for the Jordan algebra $V=\Sym(k,\RR)$, the conformal Lie algebra $\frakg\cong\sp(k,\RR)$ acts via $\td\pi^+$ resp. $\td\pi^-$ on the space of smooth vectors for $L^2(\calO)$ resp. $L^2(\calO,\calL)$ by skew-adjoint operators. Now we realize $\frakg\cong\sp(k,\RR)$ in the matrix form as in Example~\ref{ex:ConfGrp}\,(1) and define an automorphism of $\sp(k,\RR)$ by
\begin{align*}
 \left(\begin{array}{cc}A&C\\B&-^t\!A\end{array}\right)\mapsto&\  k_0\left(\begin{array}{cc}A&C\\B&-^t\!A\end{array}\right) k_0^{-1}%
 =\left(\begin{array}{cc}-^t\!A&-\pi B\\-\frac{1}{\pi}C&A\end{array}\right),
\end{align*}
where $k_0=\begin{pmatrix}0&\sqrt{\pi}{\mathbf 1}\\-\frac{1}{\sqrt{\pi}}\mathbf 1&0\end{pmatrix}$. We show that under these identifications the representation $\td\pi^+$ resp. $\td\pi^-$ agrees with $\td\mu^+$ resp. $\td\mu^-$. More precisely, we have the following identity of skew-adjoint operators on $L^2(\calO)$ resp. $L^2(\calO,\calL)$:

\begin{proposition}
For $A\in M(k,\RR)$ and $B,C\in\Sym(k,\RR)$ we have
\begin{align}
 \td\pi^\pm(C,A,B) &= (\calU^\pm)^{-1}\circ\td\mu^\pm\left(\begin{array}{cc}-^t\!A&-\pi B\\-\frac{1}{\pi}C&A\end{array}\right)\circ\calU^\pm.\label{eq:MetaIntertwiningFormulaDer}
\end{align}
\end{proposition}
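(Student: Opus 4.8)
The plan is to verify \eqref{eq:MetaIntertwiningFormulaDer} directly on the three summands of the Gelfand--Naimark decomposition $\frakg=\frakn\oplus\frakl\oplus\nfo$. Both sides of \eqref{eq:MetaIntertwiningFormulaDer} are $\RR$-linear in the triple $(C,A,B)$ --- the left-hand side by construction of $\td\pi^\pm$, the right-hand side because $X\mapsto\bigl(\begin{smallmatrix}-{}^t\!A&-\pi B\\-\frac1\pi C&A\end{smallmatrix}\bigr)$ is linear and $\td\mu^\pm$ is a Lie algebra representation --- so it suffices to treat $(C,0,0)$, $(0,A,0)$ and $(0,0,B)$ separately. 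Throughout we use $\calU^\pm\psi(y)=\psi(y\,{}^t\!y)$, the euclidean structure $\vartheta=\id$, $\tau(x,y)=\Tr(xy)$, and that for $V=\Sym(k,\RR)$ one has $r_0=r=k$ and $d=1$, hence $\lambda=\lambda_1=\tfrac{r_0d}{2r}=\tfrac12$.

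For the $\frakn$-part, formula \eqref{eq:L2DerRep1} shows $\td\pi^\pm(C,0,0)$ is multiplication by $\sqrt{-1}(x|C)=\sqrt{-1}\Tr(Cx)$; substituting $x=y\,{}^t\!y$ gives $\sqrt{-1}\,{}^t\!yCy=\sqrt{-1}\sum_{ij}C_{ij}y_iy_j$, which is exactly $\td\mu^\pm\bigl(\begin{smallmatrix}0&0\\-\frac1\pi C&0\end{smallmatrix}\bigr)$ by the first displayed formula for $\td\mu$. For the $\frakl$-part, \eqref{eq:L2DerRep2} gives $\td\pi^\pm(0,A,0)=D_{A^*x}+\tfrac{r\lambda}{2n}\Tr(A^*)$, where the adjoint with respect to $\tau$ of the structure action $a\mapsto Aa+a\,{}^t\!A$ is the action of ${}^t\!A$, so that $A^*x={}^t\!A\,x+x\,A$ and $\Tr(A^*)=(k+1)\Tr(A)$. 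Since the flow $y\mapsto e^{t\,{}^t\!A}y$ carries $y\,{}^t\!y$ to $e^{t\,{}^t\!A}(y\,{}^t\!y)e^{tA}$, the linear vector field $\sum_{ij}A_{ji}y_j\tfrac{\partial}{\partial y_i}$ pushes forward under the folding map to the directional derivative $D_{{}^t\!A\,x+xA}$; together with $\tfrac{r\lambda_1}{2n}(k+1)\Tr(A)=\tfrac12\Tr(A)$ this matches $\td\mu^\pm\bigl(\begin{smallmatrix}-{}^t\!A&0\\0&A\end{smallmatrix}\bigr)=\sum_{ij}A_{ji}y_j\tfrac{\partial}{\partial y_i}+\tfrac12\Tr(A)$.

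The real content, and the main obstacle, is the $\nfo$-part: by \eqref{eq:L2DerRep3} we have $\td\pi^\pm(0,0,B)\psi=\sqrt{-1}(\calB_{\lambda_1}\psi\,|\,B)$, while the right-hand side is $\tfrac{\sqrt{-1}}{4}\sum_{ij}B_{ij}\tfrac{\partial^2}{\partial y_i\partial y_j}$, so we must prove the pullback identity
$$(\calB_{\lambda_1}\psi\,|\,B)\bigl(y\,{}^t\!y\bigr)=\tfrac14\sum_{ij}B_{ij}\,\frac{\partial^2}{\partial y_i\partial y_j}\bigl(\psi(y\,{}^t\!y)\bigr)$$
for every symmetric $B$. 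The plan is a chain-rule computation: writing the gradient as a symmetric matrix, one finds $\tfrac{\partial}{\partial y_i}\bigl(\psi(y\,{}^t\!y)\bigr)=2\bigl(\tfrac{\partial\psi}{\partial x}\,y\bigr)_i$, and differentiating once more splits $\sum_{ij}B_{ij}\tfrac{\partial^2}{\partial y_i\partial y_j}$ into a first-order term $2\bigl(\tfrac{\partial\psi}{\partial x}\,\big|\,B\bigr)$ and a second-order term built from the Hessian. Using $P(x)v=xvx$ on $\Sym(k,\RR)$, the second-order term is precisely $4\,\bigl(P(\tfrac{\partial}{\partial x})x\,\big|\,B\bigr)$ evaluated at $x=y\,{}^t\!y$, while the first-order term reproduces $4\lambda_1\bigl(\tfrac{\partial\psi}{\partial x}\,\big|\,B\bigr)$ exactly because $\lambda_1=\tfrac12$; adding these recovers $4(\calB_{\lambda_1}\psi\,|\,B)$. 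This identification of the Bessel operator with the (pushed-forward) flat second-order operator is where all the Jordan-algebraic bookkeeping lives, and the appearance of the correct normalization $\lambda_1=\tfrac12$ is the crux.

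Finally, the superscript $\pm$ is handled uniformly. The three computations above are formal identities between differential operators that do not distinguish even from odd functions; in the $-$ case one simply reads them as identities for sections of the flat line bundle $\calL\to\calO$, using that $\calU^-$ is the corresponding unitary isomorphism onto $L^2_{\textup{odd}}(\RR^k)$ (cf.\ \eqref{eq:DefCalU-}). Since $\calL$ is flat, $\td\pi^-$ is given by the same local formulas as $\td\pi^+$, so the verification on $\frakn$, $\frakl$ and $\nfo$ goes through verbatim, yielding \eqref{eq:MetaIntertwiningFormulaDer} in both cases.
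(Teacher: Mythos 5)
Your proposal is correct and follows essentially the same route as the paper's proof: reduce by linearity to the three pieces $(C,0,0)$, $(0,A,0)$, $(0,0,B)$ of the Gelfand--Naimark decomposition, compute $\tfrac{\partial}{\partial y_i}\bigl(\psi(y\,{}^t\!y)\bigr)=2\bigl(\tfrac{\partial\psi}{\partial x}(y\,{}^t\!y)\,y\bigr)_i$, and for the $\nfo$-part identify the pushed-forward Hessian term with $P\bigl(\tfrac{\partial}{\partial x}\bigr)x$ and the first-order term with $\lambda_1\tfrac{\partial}{\partial x}$, using $\lambda_1=\tfrac12$. Your explicit justification that the odd case follows from the same local formulas on the flat bundle $\calL$ is slightly more detailed than the paper's one-line reduction to the $+$ case, but the substance is identical.
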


\begin{proof}
It suffices to prove the intertwining formula for $\td\pi=\td\pi^+$, $\td\mu=\td\mu^+$ and $\calU=\calU^+$. Choose an orthonormal basis $(e_\alpha)_\alpha$ of $V=\Sym(k,\RR)$ with respect to the inner product $(x|y)=\Tr(xy)$. Then for $1\leq i\leq k$:
\begin{align*}
 \frac{\partial\calU\psi}{\partial y_i}(y) &= \frac{\partial}{\partial y_i}\psi(yy^t) = \sum_\alpha{\frac{\partial\psi}{\partial x_\alpha}(yy^t)\frac{\partial(yy^t)_\alpha}{\partial y_i}} = \sum_\alpha{\frac{\partial\psi}{\partial x_\alpha}(yy^t)\frac{\partial}{\partial y_i}\Tr(yy^te_\alpha)}\\
 &= 2\sum_\alpha{\frac{\partial\psi}{\partial x_\alpha}(yy^t)(e_\alpha y)_i} = 2\left(\frac{\partial\psi}{\partial x}(yy^t)y\right)_i.
\end{align*}
\begin{enumerate}
\item[\textup{(a)}] Let $(C,0,0)\in\frakg$, $C\in\Sym(k,\RR)$. Then
 \begin{align*}
  & \left(\td\mu\left(\begin{array}{cc}0&0\\-\frac{1}{\pi}C&0\end{array}\right)\circ\calU\right)\psi(y)\\
  ={}& \sqrt{-1}\sum_{i,j=1}^k{C_{ij}y_iy_j}\,\calU\psi(y) = \sqrt{-1}\Tr(yy^tC)\calU\psi(y)\\  
  ={}& \sqrt{-1}(yy^t\psi(yy^t)|C) = \left(\calU\circ\td\pi(C,0,0)\right)\psi(y).
 \end{align*}
\item[\textup{(b)}] Let $(0,A,0)\in\frakg$, $A\in\gl(k,\RR)$. $A$ acts on $V$ by $A\cdot x=Ax+xA^t$ (see Example \ref{ex:StrGrp} (1)). Then
 \begin{align*}
  & \left(\td\mu\left(\begin{array}{cc}-A^t&0\\0&A\end{array}\right)\circ\calU\right)\psi(y)\\
  ={}& \sum_{i,j=1}^k{A_{ji}y_j\frac{\partial}{\partial y_i}\psi(yy^t)}+\frac{1}{2}\Tr(A)\psi(yy^t)\\
  ={}& 2\sum_{i=1}^k{(A^ty)_i\left(\frac{\partial\psi}{\partial x}(yy^t)y\right)_i}+\frac{1}{2}\Tr(A)\psi(yy^t)\\
  ={}& \left(A^t(yy^t)+(yy^t)A\left|\frac{\partial\psi}{\partial x}(yy^t)\right.\right)+\frac{1}{2}\Tr(A)\psi(yy^t)\\
  ={}& \left(\calU\circ\td\pi(0,A,0)\right)\psi(y),
 \end{align*}
 since
 \begin{align*}
  \Tr(V\rightarrow V,x\mapsto A\cdot x) &= (k+1)\Tr(A).
 \end{align*}
\item[\textup{(c)}] Let $(0,0,B)\in\frakg$, $B\in\Sym(k,\RR)$. Then
 \begin{align*}
  & \left(\td\mu\left(\begin{array}{cc}0&\pi B\\0&0\end{array}\right)\circ\calU\right)\psi(y)\\
  ={}& \frac{1}{4\sqrt{-1}}\sum_{i,j=1}^k{B_{ij}\frac{\partial^2}{\partial y_i\partial y_j}\psi(yy^t)} = \frac{1}{2\sqrt{-1}}\sum_{i,j=1}^k{B_{ij}\frac{\partial}{\partial y_i}\left[\sum_\alpha{\frac{\partial\psi}{\partial x_\alpha}(yy^t)(e_\alpha y)_i}\right]}\\
  ={}& \frac{1}{\sqrt{-1}}\sum_{i,j=1}^k{B_{ij}\sum_{\alpha,\beta}{\frac{\partial^2\psi}{\partial x_\alpha\partial x_\beta}(yy^t)(e_\alpha y)_i(e_\beta y)_j}}+\frac{1}{2\sqrt{-1}}\sum_{i,j=1}^k{B_{ij}\sum_\alpha{\frac{\partial\psi}{\partial x_\alpha}(yy^t)(e_\alpha)_{ij}}}\\
  ={}& \frac{1}{\sqrt{-1}}\sum_{\alpha,\beta}{\frac{\partial^2\psi}{\partial x_\alpha\partial x_\beta}(yy^t)\sum_{i,j=1}^k{B_{ij}(e_\alpha y)_i(e_\beta y)_j}}+\frac{1}{2\sqrt{-1}}\sum_{i,j=1}^k{B_{ij}\left(\frac{\partial\psi}{\partial x}(yy^t)\right)_{ij}}\\
  ={}& \frac{1}{\sqrt{-1}}\sum_{\alpha,\beta}{\frac{\partial^2\psi}{\partial x_\alpha\partial x_\beta}(yy^t)\left(\left.P(e_\alpha,e_\beta)(yy^t)\right|B\right)}+\frac{1}{2\sqrt{-1}}\left(\left.\frac{\partial\psi}{\partial x}(yy^t)\right|B\right)\\
  ={}& \frac{1}{\sqrt{-1}}\left(\left.\calB_{\frac{1}{2}}\psi(yy^t)\right|B\right) = \left(\calU\circ\td\pi(0,0,-B)\right)\psi(y).\qedhere
 \end{align*}
\end{enumerate}
\end{proof}

Note that the groups ${G}_\pm^\vee$ are by Definition~\ref{def:minG}\,(1)\,(a) always quotients of the metaplectic group $\Mp(k,\RR)$. Therefore, in order to obtain an intertwining operator between the group representations $\pi^\pm$ and $\mu^\pm$, we may and do lift $\pi^\pm$ to representations of $\Mp(k,\RR)$ which we also denote by $\pi^\pm$. Then we have the following intertwining formula:

\begin{corollary}\label{cor:IntertwinerMetaGr}
For $g\in\Mp(k,\RR)$ we have
\begin{align*}
 \calU^\pm\circ\pi^\pm(g) &= \mu^\pm\left(k_0gk_0^{-1}\right)\circ\calU^\pm.
\end{align*}
Hence $\mu^\pm(k_0^{-1})\circ\calU^\pm$ are intertwining operators between $\pi^\pm$ and $\mu^\pm$.
\end{corollary}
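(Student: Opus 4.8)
The plan is to integrate the infinitesimal intertwining relation of the preceding Proposition up to the group level; the only genuine work is the bookkeeping for the metaplectic cover. Write $\Phi:=\Ad(k_0)$ for the automorphism of $\sp(k,\RR)$ appearing in \eqref{eq:MetaIntertwiningFormulaDer}, so that the Proposition reads
\begin{align*}
 \calU^\pm\circ\td\pi^\pm(X) &= \td\mu^\pm(\Phi(X))\circ\calU^\pm, & \forall\,X\in\frakg.
\end{align*}
First I would observe that $k_0\in\Sp(k,\RR)$ --- a one-line check that $k_0$ preserves the standard symplectic form --- so that $\Phi$ is an \emph{inner} automorphism of $\sp(k,\RR)$ and hence lifts to the cover.

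Next, following the remark preceding the statement, I lift $\pi^\pm$ to a unitary representation of $\Mp(k,\RR)$, so that $\pi^\pm$ and $\mu^\pm$ become genuine unitary representations of one and the same connected group $\Mp(k,\RR)$. Choosing any lift $\widetilde{k}_0\in\Mp(k,\RR)$ of $k_0$, the conjugation $g\mapsto\widetilde{k}_0g\widetilde{k}_0^{-1}$ is independent of the choice of lift (two lifts differ by a central element, which cancels) and covers conjugation by $k_0$ in $\Sp(k,\RR)$; this is the precise meaning of $\mu^\pm(k_0gk_0^{-1})$. Since the covering map is a local isomorphism, $\Ad(\widetilde{k}_0)=\Phi$ on $\frakg$, so $g\mapsto\mu^\pm(\widetilde{k}_0g\widetilde{k}_0^{-1})$ is a unitary representation of $\Mp(k,\RR)$ whose differential is $X\mapsto\td\mu^\pm(\Phi(X))$.

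At this stage both
\begin{align*}
 g\mapsto\calU^\pm\circ\pi^\pm(g)\circ(\calU^\pm)^{-1} \qquad\text{and}\qquad g\mapsto\mu^\pm(\widetilde{k}_0g\widetilde{k}_0^{-1})
\end{align*}
are unitary representations of the connected group $\Mp(k,\RR)$ on $L^2_{\textup{even}}(\RR^k)$ (resp.\ $L^2_{\textup{odd}}(\RR^k)$) with one and the same differential, namely $X\mapsto\td\mu^\pm(\Phi(X))$, by the displayed Proposition. I would then invoke the uniqueness of integration: a Lie algebra representation determines at most one representation of a connected Lie group, since such a group is generated by the image of $\exp$ and the two representations agree on every one-parameter subgroup (density of analytic vectors makes this rigorous). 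Hence the two representations coincide, which is exactly $\calU^\pm\circ\pi^\pm(g)=\mu^\pm(k_0gk_0^{-1})\circ\calU^\pm$; composing with $\mu^\pm(k_0^{-1})$ on the left then yields $\big(\mu^\pm(k_0^{-1})\circ\calU^\pm\big)\circ\pi^\pm(g)=\mu^\pm(g)\circ\big(\mu^\pm(k_0^{-1})\circ\calU^\pm\big)$, the final assertion that $\mu^\pm(k_0^{-1})\circ\calU^\pm$ intertwines $\pi^\pm$ and $\mu^\pm$.

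The step I expect to be the main obstacle is not the integration, which is routine, but the covering-group bookkeeping of the second paragraph: one must ensure that after lifting $\pi^\pm$ the two sides really are representations of the \emph{same} group $\Mp(k,\RR)$ with matching behavior on the center, so that equality of differentials forces equality on the group rather than merely up to a scalar. Once the lift of $\pi^\pm$ and the well-definedness of conjugation by $\widetilde{k}_0$ are pinned down, the remaining argument is the standard uniqueness of the exponentiated Lie algebra action.
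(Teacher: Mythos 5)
Your proposal is correct and takes essentially the same approach as the paper: the paper's entire proof is the remark that the corollary ``follows immediately from \eqref{eq:MetaIntertwiningFormulaDer}'', which is exactly the integration-plus-uniqueness argument you spell out, with the lift of $\pi^\pm$ to $\Mp(k,\RR)$ already carried out in the remark preceding the corollary. The covering bookkeeping you flag as the main obstacle (well-definedness of conjugation by a lift of $k_0$, equality on the nose rather than up to scalar) is indeed the only content beyond the skew-adjointness of the operators in the Proposition, and your treatment of it is sound.
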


\begin{proof}
This now follows immediately from \eqref{eq:MetaIntertwiningFormulaDer}.
\end{proof}

\begin{remark}
Together with Definition~\ref{def:minG}\,(1)\,(a) the previous proposition shows that for even $k$ the two components of the Weil representation of $\Mp(k,\RR)$ descend to representations of a quotient group of index $2$ which is not a linear group. This quotient group though is different for the two components. This fact can also be seen from the explicit calculation of the cocycle of the Weil representation in \cite[Section 1.6]{LV80}.
\end{remark}

\subsubsection{The minimal representation of $\upO(p+1,q+1)$}

Let $V=\RR^{p,q}$, $p,q\geq2$. Then by Example~\ref{ex:MinimalOrbit}\,(2) the minimal orbit $\calO$ is the isotropic cone
\begin{align*}
 \calO &= \{x\in\RR^{p+q}:x_1^2+\cdots+x_p^2-x_{p+1}^2-\cdots-x_{p+q}^2=0\}\setminus\{0\},
\end{align*}
and the group ${G}^\vee$ is a quotient of $\SO(p+1,q+1)_0$ by Definition~\ref{def:minG}\,(3). In \cite{KO03c} the authors construct a realization of the minimal representation of $\upO(p+1,q+1)$ on $L^2(\calO)$. We use the notation of \cite{KO03c} and denote by $\varpi$\index{varpi@$\varpi$} the minimal representation of $\upO(p+1,q+1)$ on $L^2(\calO)$. The action $\varpi$ of the identity component $\SO(p+1,q+1)_0$ is uniquely determined by the corresponding Lie algebra action $\td\varpi$\index{dvarpi@$\td\varpi$}. Let $f:\frakg\rightarrow\so(p+1,q+1)$\index{f@$f$} be the isomorphism of Example~\ref{ex:ConfGrp}\,(2). Then by \cite[Equation (3.2.1a) and Lemma 3.2]{KO03c} we have
\begin{align*}
 \td\varpi(f(u,0,0)) &= 2\sqrt{-1}\sum_{j=1}^n{u_jx_j} = \sqrt{-1}(x|u) && \mbox{for }u\in V,\\
 \td\varpi(f(0,T,0)) &= D_{T^*x} && \mbox{for }T\in\so(p,q),\\
 \td\varpi(f(0,\1,0)) &= E+\frac{p+q-2}{2} && \mbox{for }s\in\RR,\\
 \td\varpi(f(0,0,-\vartheta(v))) &= \frac{1}{2}\sqrt{-1}\sum_{j=1}^n{v_jP_j}, && \mbox{for }v\in V,
\end{align*}
where $E$ is the \textit{Euler operator}
\begin{align*}
 E &= \sum_{j=1}^n{x_j\frac{\partial}{\partial x_j}}\index{E@$E$}
\end{align*}
and the \textit{fundamental differential operators} $P_j$ on the isotropic cone $\calO$ are the second order differential operators defined by
\begin{align*}
 P_j &= \varepsilon_jx_j\Box_\varepsilon-(2E+n-2)\frac{\partial}{\partial x_j}\index{Pj@$P_j$}
\end{align*}
with
\begin{align*}
 \Box_\varepsilon &= \sum_{j=1}^n{\varepsilon_j\frac{\partial^2}{\partial x_j^2}},\index{Box@$\Box$}\\
 \varepsilon_j &= \begin{cases}+1 & \mbox{for }1\leq j\leq p,\\-1 & \mbox{for }p+1\leq j\leq n.\end{cases}\index{epsj@$\varepsilon_j$}
\end{align*}

\begin{proposition}
For $X\in\frakg$ we have
\begin{align}
 \td\varpi(f(X)) &= \td\pi(X).\label{eq:Intertwiningopq}
\end{align}
\end{proposition}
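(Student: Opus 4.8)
The plan is to check the identity $\td\varpi(f(X))=\td\pi(X)$ summand by summand on the Gelfand--Naimark decomposition $\frakg=\frakn\oplus\frakl\oplus\nfo$ of \eqref{eq:GelfandNaimark}. Since both $X\mapsto\td\pi(X)$ and $X\mapsto\td\varpi(f(X))$ are linear in $X$ and these three subspaces span $\frakg$, agreement on each of them gives agreement on all of $\frakg$. Throughout $\td\pi=\td\pi_{\lambda_1}$, and for $V=\RR^{p,q}$ one has $r_0=r=2$, $e=0$, $d=p+q-2$, so $\lambda_1=\frac{r_0 d}{2r}=\frac{p+q-2}{2}$.

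On $\frakn$ there is nothing to do: formula \eqref{eq:L2DerRep1} and the first line of the $\td\varpi$-formulas both give multiplication by $\sqrt{-1}\,(x|u)$. On $\frakl=\str(\RR^{p,q})=\RR\,\id_V\oplus\so(p,q)$ I would split $T=s\,\id_V+T_0$ with $T_0\in\so(p,q)$. For $T_0$ one has $\Tr(T_0^*)=0$, so \eqref{eq:L2DerRep2} reduces to $D_{T_0^*x}$, matching $\td\varpi(f(0,T_0,0))$; for the scalar part $D_x=E$ and $\Tr(\id_V)=n=p+q$, so the constant term of \eqref{eq:L2DerRep2} is $\frac{r\lambda_1}{2n}\cdot n=\frac{r\lambda_1}{2}=\frac{p+q-2}{2}$, giving $\td\pi(0,\id_V,0)=E+\frac{p+q-2}{2}=\td\varpi(f(0,\id_V,0))$.

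The substance is the summand $\nfo$. Writing $X=(0,0,-\vartheta(v))$ and using $(a|b)=\tau(a,\vartheta b)$ together with $\vartheta^2=\id$, formula \eqref{eq:L2DerRep3} gives $\td\pi(X)\psi=\frac{1}{\sqrt{-1}}(\calB_{\lambda_1}\psi\mid\vartheta(v))=\frac{1}{\sqrt{-1}}\sum_j v_j\,\tau(\calB_{\lambda_1}\psi,e_j)$, so the proposition reduces on $\nfo$ to the coordinatewise identity $\tau(\calB_{\lambda_1}\psi,e_j)=-\tfrac12 P_j\psi$ (the two factors of $\sqrt{-1}$ accounting for the sign). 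To prove this I would insert the quadratic representation of the spin factor $\RR^{p,q}$, namely $P(x)y=\tau(x,y)\,x-\det(x)\,\overline y$ with $\overline y=\tr(y)\mathbf e-y=(y_1,-\mathbf y)$ the Jordan conjugate (this follows directly from $P=2L^2-L(\cdot^2)$ and the product in Example~\ref{ex:JordanAlgebras}(2)), polarize it, and feed it into the coordinate form of $\calB_{\lambda_1}$. Pairing against $e_j$ via $\tau$, the two symmetric polarization terms coming from $\tau(x,y)x$ combine to $E\partial_j\psi=\sum_\alpha x_\alpha\partial_\alpha\partial_j\psi$, the $\det(x)\overline y$ term yields $-\tfrac{\varepsilon_j}{2}x_j\Box_\varepsilon\psi$, and the first-order part $\lambda_1\frac{\partial}{\partial x}$ yields $\lambda_1\partial_j\psi$. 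Matching this against $-\tfrac12 P_j=-\tfrac{\varepsilon_j}{2}x_j\Box_\varepsilon+(E+\tfrac{n-2}{2})\partial_j$ then holds exactly, precisely because $\lambda_1=\frac{n-2}{2}$.

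The hard part, and the only place needing care, is this last computation, where three distinct sign conventions interact: the trace form $\tau$ (with $\tau(y,e_j)=2\eta_j y_j$), the inner product $(-|-)$ and its Cartan involution $\vartheta$, and the signature $\varepsilon_j$ of $\det$ built into $P_j$ and $\Box_\varepsilon$. The computation is driven by the bookkeeping identity $\eta_j\,\overline x_j=\varepsilon_j x_j$, which is what turns the $\det(x)\overline y$ contribution into the Kobayashi--\O{}rsted Laplacian $\Box_\varepsilon$. Once this is set up, the three terms of $P_j$ are reproduced one by one; notably the verification never invokes $\det(x)=0$, so the operator identity holds on all of $V$ and hence a fortiori on $\calO$.
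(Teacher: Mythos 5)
Your proposal is correct: the paper states this proposition without any written proof, treating it as a direct verification of the displayed formulas for $\td\varpi$ against \eqref{eq:L2DerRep1}--\eqref{eq:L2DerRep3}, and your computation is exactly that verification carried out in full. In particular, your reduction on $\nfo$ to the coordinatewise identity $\tau(\calB_{\lambda_1}\psi,e_j)=-\tfrac12 P_j\psi$, the spin-factor formula $P(x)y=\tau(x,y)x-\det(x)\overline{y}$, the bookkeeping identity $\eta_j\overline{x}_j=\varepsilon_j x_j$, and the final matching via $\lambda_1=\tfrac{r_0d}{2r}=\tfrac{n-2}{2}$ all check out, so the argument is complete.
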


The previous proposition now implies the following result for the group representations:

\begin{corollary}
The representation $\varpi$ of $\SO(p+1,q+1)_0$ descends to the group $G^\vee$ on which it agrees with $\pi$ if $p,q>1$.
\end{corollary}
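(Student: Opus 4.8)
The plan is to derive the group-level statement directly from the infinitesimal identity \eqref{eq:Intertwiningopq} by invoking the standard principle that a continuous unitary representation of a connected Lie group is uniquely determined by its derived $\frakg$-action on a dense subspace of analytic vectors. First I would note that both $\pi$ and the restriction $\varpi|_{\SO(p+1,q+1)_0}$ act unitarily on the \emph{same} Hilbert space $L^2(\calO)$, and that by \eqref{eq:Intertwiningopq} they induce, via the isomorphism $f:\frakg\to\so(p+1,q+1)$, the \emph{same} action of $\frakg$. In particular they induce the same $\frakk$-action, so the two representations share the same space of $\frakk$-finite vectors, namely the admissible $(\frakg,\frakk)$-module $W$ of Theorem~\ref{thm:IntgkModule}; by Harish-Chandra's theorem these vectors are analytic for each unitary representation.

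Next I would pass to the simply connected cover $\widetilde{G}$ of $G$. The covering homomorphisms $\widetilde{G}\to G^\vee$ (for $\pi$) and $\widetilde{G}\to\SO(p+1,q+1)_0$ (for $\varpi$, using $f$) allow both representations to be pulled back to unitary representations of the connected group $\widetilde{G}$. By \eqref{eq:Intertwiningopq} these two pullbacks have identical derived $\frakg$-actions on the dense space $W$ of analytic vectors, hence they coincide as representations of $\widetilde{G}$.

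It then remains to verify the descent. Since $\pi$ factors through $G^\vee$, the kernel of $\widetilde{G}\to G^\vee$ acts trivially in the common lift; as that lift equals the pullback of $\varpi$, the same central subgroup acts trivially under $\varpi$ as well. Because $G^\vee$ is, by Definition~\ref{def:minG}\,(3), a quotient of $\SO(p+1,q+1)_0$, this triviality is precisely the condition needed for $\varpi$ to descend along $\SO(p+1,q+1)_0\to G^\vee$, and the descended representation agrees with $\pi$ because both arise from the single representation of $\widetilde{G}$ constructed above. The hypothesis $p,q>1$ places us in the case $V\cong\RR^{p,q}$, $p,q\geq2$, where $\calO$ is the isotropic cone and the model of \cite{KO03c} and our $\pi$ are both available.

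I expect the main obstacle to be the integration/analyticity step: one must justify with care that equality of the $\frakg$-actions on the $\frakk$-finite vectors forces equality of the group representations of $\widetilde{G}$. This rests on knowing that $W$ consists of analytic vectors for \emph{both} unitary representations and that the uniqueness theorem for integrating a $(\frakg,\frakk)$-module applies to each. Once this is secured, the tracking of covering-group kernels dictated by Definition~\ref{def:minG}\,(3) is routine bookkeeping.
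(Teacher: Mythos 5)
Your proposal is correct and follows essentially the same route as the paper: the paper derives the corollary directly from the infinitesimal identity \eqref{eq:Intertwiningopq}, invoking the principle (stated just before the proposition) that the action of the connected group $\SO(p+1,q+1)_0$ is uniquely determined by its Lie algebra action, together with the covering-group bookkeeping of Definition~\ref{def:minG}\,(3). Your write-up merely makes explicit the standard integration argument (common dense space of $\frakk$-finite, hence analytic, vectors on the simply connected cover) that the paper leaves implicit.
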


\begin{remark}
Second order differential operators similar to the fundamental differential operators $P_j$ also appear in different contexts. See the operators $D_u$ in \cite[Section 3]{BT77} and the operators $\Phi_j$ and $\Theta_j$ in \cite[Section 2]{LS99}, for example.
\end{remark} 

\section{Relations with previous results}\label{sec:Complements}

In this section we relate our results to previous work on the subject. In particular, we interpret the representation $\pi$ as a subrepresentation of a certain (degenerate) principal series representation. This allows us to state a unified $\frakk$-type formula of our minimal representations. We further find explicit $\frakk$-finite vectors in every $\frakk$-type which establishes a connection to special functions satisfying certain differential equations of fourth order. Finally, we prove Theorem C from the introduction using the unitary inversion operator $\calF_\calO$.

\subsection{Degenerate principal series}\label{sec:PrincipalSeries}

In this subsection we show
that for the minimal non-zero orbit $\calO_\lambda=\calO$  the representation $\pi=\pi_\lambda$ is a subrepresentation of a degenerate principal series representation $\omega_s$.


We start with some general observations.

For a $\frakk$-module $V$ we set
$$V_\frakk:=\{ v\in V : \dim U(\frakk)v<\infty\}.$$
Let $K$ be a connected compact Lie group with Lie algebra $\frakk$. Although we have not assumed that there is an action of $K$ on $V$, we can define the space $V_K$ of vectors $v\in V_\frakk$ for which the $\frakk$-action on $U(\frakk)v$ lifts to $K$. Then
\begin{equation}\label{eqdef:WK}
V_K=\sum_{(\tau,V_\tau)\in\hat K}\sum_{\psi\in \Hom_\frakk(V_\tau,V)}\psi(V_\tau),
\end{equation}
where $\hat K$ is the unitary dual of $K$.

\begin{lemma}\label{lem:K-fin} Let $K$ be a connected compact Lie group, $M\subseteq K$ a closed subgroup and $U$ a connected open neighborhood of the canonical base point in $K/M$. Denote the sheaves of real analytic functions and hyperfunctions by $\cal A$ and $\cal B$ respectively. Then the Lie algebra $\frakk$ acts on both, $\mathcal B(U)$ and $\mathcal A(U)$.
 \begin{enumerate}
 \item[{\rm (1)}] $\mathcal B(U)_K =  \mathcal A(U)_K$.
 \item[{\rm (2)}] There is a natural inclusion $\mathcal A(U)_K\hookrightarrow \mathcal A(K/M_0)_K$, where $M_0$ is the identity component of $M$.
 \end{enumerate}
\end{lemma}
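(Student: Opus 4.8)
The plan is to prove both parts by exploiting the rigidity of a $\frakk$-finite vector whose $\frakk$-action integrates to $K$: such a vector satisfies an overdetermined first-order system with real-analytic coefficients, which forces analyticity and, on the homogeneous space, reconstruction from the compact group. Throughout write $x_0\in U$ for the canonical base point, let $\hat X$ be the fundamental vector field on $K/M$ attached to $X\in\frakk$ (so that $\{\hat X(y):X\in\frakk\}$ spans $T_y(K/M)$ at every $y$, by transitivity of the $K$-action), and recall that the $\frakk$-action on functions or hyperfunctions is the derived action $\tilde X=-\hat X$ of the geometric representation $(\rho(k)g)(y)=g(k^{-1}y)$.

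For part (1) the inclusion $\mathcal A(U)_K\subseteq\mathcal B(U)_K$ is immediate from $\mathcal A(U)\subseteq\mathcal B(U)$ and compatibility of the two $\frakk$-actions, so only $\mathcal B(U)_K\subseteq\mathcal A(U)_K$ needs argument. Let $f\in\mathcal B(U)_K$. By \eqref{eqdef:WK} the vector $f$ lies in a finite-dimensional subspace $E=\langle f_1,\dots,f_n\rangle\subseteq\mathcal B(U)$ on which $\frakk$ acts and such that this action integrates to $\tau\colon K\to\GL(E)$. Collecting the $f_i$ into a vector-valued hyperfunction $\mathbf f$, the relations $\tilde X f_i=\sum_j d\tau(X)_{ji}f_j$ say that $\mathbf f$ solves $\hat X\mathbf f=-d\tau(X)\mathbf f$ for each $X$ in a basis of $\frakk$. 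Since the $\hat X(y)$ span each tangent space, this system determines the full differential $d\mathbf f=\Omega\,\mathbf f$ for a matrix $\Omega$ of real-analytic one-forms. The connection $d-\Omega$ is flat, its curvature vanishing because $E$ is a genuine $\frakk$-module with $[\tilde X,\tilde Y]=\widetilde{[X,Y]}$; hence near any point there is an invertible real-analytic frame $P$ with $dP=\Omega P$, and then $d(P^{-1}\mathbf f)=0$. A hyperfunction with vanishing differential is locally constant, so $P^{-1}\mathbf f$ is constant and $\mathbf f=P\cdot\mathrm{const}$ is real-analytic; in particular $f=f_1\in\mathcal A(U)$. I expect this regularity step to be the main obstacle, namely justifying that a \emph{hyperfunction} solution of the flat system is real-analytic; the flat-frame reduction together with the standard fact that a hyperfunction with zero differential is locally constant settles it, and alternatively one may invoke analytic hypoellipticity after noting the system forces an elliptic equation with analytic coefficients on each $f_i$.

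For part (2) keep $E,\tau,x_0$ and take $f\in\mathcal A(U)_K$, so evaluation $\mathrm{ev}_{x_0}\colon E\to\CC$, $g\mapsto g(x_0)$, is a well-defined functional. Define $F\colon K\to\CC$ by $F(k):=\mathrm{ev}_{x_0}(\tau(k^{-1})f)$. As a matrix coefficient of the finite-dimensional representation $\tau$ of the compact group $K$, the function $F$ is real-analytic and $K$-finite. I claim $F$ is right $M_0$-invariant: for $Y\in\frakm$ and $k\in K$, differentiating gives $\frac{\mathrm d}{\mathrm dt}\big|_{0}F(k\exp(tY))=-(\tilde Y g)(x_0)$ with $g=\tau(k^{-1})f\in E$, and this vanishes because $\hat Y(x_0)=0$ for $Y$ in the isotropy algebra $\frakm$; connectedness of $M_0$ then yields $F(km)=F(k)$ for $m\in M_0$, so $F$ descends to $\bar F\in\mathcal A(K/M_0)_K$.

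Finally I would identify $\bar F$ with $f$ to obtain injectivity and naturality. Writing $p\colon K/M_0\to K/M$ for the projection, the claim is $\bar F=p^*f$ on the component of $p^{-1}(U)$ containing $x_0M_0$, equivalently $f(kx_0)=F(k)$ for $k$ near $e$. Both sides are real-analytic in $k$, so it suffices to match all derivatives at $k=e$; every iterated directional derivative of either side is computed from the common infinitesimal operators $d\tau(X)=\tilde X|_E$ (the geometric action $\rho$ and the representation $\tau$ inducing the same operators on $E$), each equal to $(-1)^m(\tilde X_m\cdots\tilde X_1 f)(x_0)$, whence the two functions agree. Consequently $f\mapsto\bar F$ is injective—if $\bar F=0$ then $f$ vanishes near $x_0$, hence on all of the connected $U$ by analyticity—and extends $f$, giving the asserted inclusion $\mathcal A(U)_K\hookrightarrow\mathcal A(K/M_0)_K$.
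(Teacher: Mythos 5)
Your part (2) is essentially the paper's own proof — evaluation at the base point gives a functional $v\in E^\vee$, vanishing of the fundamental vector fields of $\frakm$ at the base point shows $v$ is $\frakm$-fixed (hence $M_0$-fixed since $M_0$ is connected), and the resulting matrix coefficient is identified with $f$ near the base point by matching all derivatives of two real-analytic functions — so that half is fine. The genuine gap is in your main argument for part (1): the connection $d-\Omega$ is \emph{not} flat in general, and the step you flagged as the main worry (hyperfunction regularity for the reduced system) is actually the harmless one. If an invertible real-analytic $P$ with $dP=\Omega P$ existed, then $0=d^2P=(d\Omega-\Omega\wedge\Omega)P$ would force $d\Omega=\Omega\wedge\Omega$. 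Computing with $\Omega=-\sum_i d\tau(X_i)\otimes\omega^i$, where $\hat X_1,\dots,\hat X_m$ is a local frame with dual coframe $(\omega^i)$, and writing $\widehat{[X_i,X_j]}(y)=\sum_k a_{ij}^k(y)\hat X_k(y)$, the curvature evaluated on $(\hat X_i,\hat X_j)$ at $y$ is, up to sign, $d\tau\bigl(\sum_k a_{ij}^k(y)X_k-[X_i,X_j]\bigr)$. The argument of $d\tau$ does lie in the isotropy algebra $\frakk_y$, but it need not act by zero on $E$: the identity $[\tilde X,\tilde Y]=\widetilde{[X,Y]}$ guarantees consistency of the full overdetermined system (and does give flatness on $K$ itself, where the fundamental fields form a global frame), but after descending to $K/M$ it does not kill these isotropy terms. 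Concretely, take $K=\SO(3)$, $M=\SO(2)$, $U$ a neighborhood of the north pole $y_0\in S^2$, and $E$ the span of the three coordinate functions restricted to $S^2$ — a $K$-type genuinely occurring in $\mathcal{A}(U)_K$, so squarely within the scope of the lemma. With $X_1,X_2$ the rotations about the first two axes (a frame near $y_0$) one has $[X_1,X_2]=X_3$, whose fundamental field vanishes at $y_0$; hence $a_{12}^k(y_0)=0$ and the curvature at $y_0$ equals $\mp d\tau(X_3)$, which is nonzero on $E$ since $X_3$ rotates the first two coordinate functions into each other. So no invertible analytic solution $P$ exists and the reduction $d(P^{-1}\mathbf f)=0$ is unavailable; equivalently, genuine solutions of the full system satisfy extra pointwise constraints $d\tau(Z)\mathbf f(y)=0$ for $Z\in\frakk_y$, so they never fill out a full frame.

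The repair is exactly the one-sentence alternative you mention at the end, which is also the paper's entire proof of (1) (``follows from elliptic regularity''): since $E$ is a finite-dimensional $\frakk$-stable, hence $U(\frakk)$-stable, subspace of $\mathcal{B}(U)$, the image $\Delta=\sum_\alpha\tilde X_\alpha^2$ of $\sum_\alpha X_\alpha^2\in U(\frakk)$ (any basis $(X_\alpha)$ of $\frakk$) preserves $E$; its coefficients are real-analytic and its principal symbol $\sum_\alpha\langle\xi,\hat X_\alpha(y)\rangle^2$ is positive for $\xi\neq0$ because the $\hat X_\alpha(y)$ span $T_y(K/M)$, so $\Delta$ is elliptic. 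Taking $p$ to be the minimal polynomial of $\Delta|_E$, each $f\in E$ solves the elliptic equation $p(\Delta)f=0$ with real-analytic coefficients, and hyperfunction solutions of such equations are real-analytic. You should promote this from an afterthought to the actual proof of (1) and drop the flat-connection reduction.
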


\begin{proof} 
Part (1) follows from elliptic regularity. To prove (2), in view of
\eqref{eqdef:WK}, it suffices to show that for each
$(\tau,V_\tau)\in\widehat{K}$ and
$\psi\in\Hom_\frakk\big(V_\tau,\mathcal A(U)\big)$ the space
$\psi(V_\tau)$ can be naturally viewed as a subspace of $\mathcal
A(K/M_0)_K$. To do that, compose $\psi$ with the evaluation map
$\mathrm{ev}_o\colon \mathcal A(U)_K\to\CC$ at the base point $o=eM\in
K/M$ to obtain an element $v=v_\psi$ in the dual space $V_\tau^\vee$.
Then
\begin{equation*}
\psi(u)(0) = \langle u,v \rangle
\quad\text{for $u\in V_\tau$}.
\end{equation*}
Let $\tau^\vee$ be the 
contragredient representation on $V_{\tau^\vee}\simeq V_\tau^\vee$,
and $d\tau^\vee$ the differential representation of $\tau^\vee$.
For $Y\in \frakk$ we can calculate
\[
-\langle u,\mathrm d\tau^\vee(Y)v\rangle
=\langle \mathrm d \tau(Y)u,v\rangle
=\psi\big(\mathrm d\tau(Y)u\big)(o)
=\widetilde Y_o\psi(u).
\]
Here $\widetilde Y$ denotes the vector field on $K/M$ associated with $Y\in \frakk$ via 
$$
\widetilde Y_x:= \frac{\mathrm d}{\mathrm dt}\vert_{t=0} \exp(-tY)\cdot x\in T_x(K/M).
$$ 
We extend this Lie algebra representation to a representation of $U(\frakk)$ in the differential operators on $K/M$.

If $Y\in\frakm$, then $\widetilde Y_o=0$ and therefore $\mathrm d\tau^\vee(Y)v=0$. This means that $v\in (V_\tau^\vee)^\frakm=(V_\tau^\vee)^{M_0}$. Hence the matrix coefficient
$$f_u(k):=\langle \tau(k^{-1})u,v\rangle=\langle u,\tau^\vee(k)v\rangle\in \mathcal A(K)$$
may be regarded as a real analytic function on $K/M_0$, resulting in a $\frakk$-homomorphism
$V_\tau\to \mathcal{A}(K/M)$, $u\mapsto f_u$. It remains to be shown that $\psi(u)(k\cdot o)=f_u(k)$ for $k\in K$ close enough to the identity. Since both functions are real analytic, this follows if all derivatives agree in $o$, i.e.  from the calculation
\[
(\widetilde Y\psi(u))(o)
=\psi(\mathrm d\tau(Y)(u))(o)
=\langle \mathrm d \tau(Y)u,v\rangle
=(\widetilde Y f_u)(e),
\]
for any $Y\in U(\kf)$.
\end{proof}

Next,
let $G$ be any connected real reductive group with Iwasawa decomposition $G=KAN$. Suppose $P\subseteq G$ is a parabolic subgroup with Levi decomposition $P=LN$. Note that $P$ may have more than one connected component. Further, let $\chi:P\to\CC^\times$ be a character of $P$. We consider the induced representation $\Ind_P^G(\chi)$ of $G$ (normalized parabolic induction). It is given by the left regular action of $G$ on
\begin{align*}
 \Ind_P^G(\chi) &= \{f\in C^\infty(G): f(gp)=(\delta\chi)(p)^{-1}f(g)\ \forall\, g\in G,p\in P\},
\end{align*}
where
\begin{align}\label{eqn:deltaP}
 \delta(g) &:= |\Det(\Ad(g)|_N)|^{\frac{1}{2}} 
\quad\text{for $g\in P$}.
\end{align}
Let $\overline{N}$ be the unipotent radical of the opposite parabolic
subgroup.
Since $\overline{N}P$ is open and dense in $G$ by the Gelfand--Naimark decomposition,
a function $f\in\Ind_P^G(\chi)$ is already determined by its
restriction to $\overline{N}$. Identifying $\overline{N}$ 
  with its Lie algebra $\overline{\frakn}$ via the
exponential function, we can view  the representation $\Ind_P^G(\chi)$ 
as a representation on a subspace of
$C^\infty(\overline{\frakn})$. The differential representation of
the Lie
algebra $\frakg$ of $G$
 can be extended to a representation  on the space
$\calB(\nfo)$ of hyperfunctions,
 which we denote  by $\Ind_{\mathfrak{p}}^{\gf}(d\chi)$, 
as this differential representation depends only on $\td\chi$. In particular it does not depend on either coverings of $G$ or the values of $\chi$ on connected components of $P$ other than the identity component.

Decompose the induced representation $\Ind_{P_0}^P(\chi|_{P_0})$ into
irreducible $P$-representations. We denote by $T_{\td\chi}$ the set of
irreducible $P$-representations occurring in this decomposition. 
Using the induction by stages,
we get an isomorphism of $G$-modules:
\begin{equation}\label{eqn:indPPG}
\Ind_{P_0}^G (\chi|_{P_0}) \simeq \bigoplus_{\tau\in T_{d\chi}}
\dim\tau \, \Ind_P^G(\tau).
\end{equation}
Since $P$ centralizes the split center of $L$, we have
\begin{align}
 \td\tau &= \td\chi\cdot\id\label{eq:dtau=dchiid},
\end{align}
for any $\tau\in T_{d\chi}$.
If $P$ is connected, then $T_{d\chi}$ consists of a single element
$\chi$ which is one-dimensional.
For general $P$,
in light that $K\cap P$ meets every connected component of $P$,
we see that any $\tau\in T_{d\chi}$ is determined by the restriction 
$\tau|_{K\cap P}$,
which is still irreducible.

The following lemma will be instrumental in proving that the representation $\pi$ is a subrepresentation of a degenerate principal series representation.

\begin{lemma}\label{lem: scalar K type}
Let $\gf$ act on $\calB(\overline{\nf})$ by $\Ind_{\mathfrak{p}}^{\gf}(d\chi)$.
Suppose that $W$ is a $(\frakg,K)$-module such that $\Hom_\frakg(W,\calB(\nfo))\neq0$.
\begin{itemize}
\item[1)]
There exists at least one $\tau\in T_{d\chi}$ such that
\begin{equation}\label{eqn:WInd}
\Hom_{(\gf,K)}(W,\Ind_P^G(\tau))\ne0.
\end{equation}
\item[2)]
If $W$ admits a scalar $K$-type $\mu$, then such $\tau\in T_{d\chi}$
satisfying \eqref{eqn:WInd} exists
uniquely.
Further, this $\tau$ is one-dimensional.
It is characterized by the following three conditions:
$\tau\in T_{d\chi}$, $\tau$ is one-dimensional, and
 $\tau\vert_{K\cap P}=\mu\vert_{K\cap P}$.
\end{itemize}
\end{lemma}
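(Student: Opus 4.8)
The plan is to combine the elliptic regularity and globalization statement of Lemma~\ref{lem:K-fin} with the induction-by-stages isomorphism \eqref{eqn:indPPG} and Frobenius reciprocity for $K$. Write $M:=K\cap P$, so that $G/P\cong K/M$ and the open Gelfand--Naimark cell $\overline N\cong\nfo$ is identified with a connected open neighbourhood $U$ of the base point in $K/M$. Under this identification the restriction to $\frakk$ of the action $\Ind_\frakp^\gf(d\chi)$ on $\calB(\nfo)$ is the geometric $\frakk$-action on $\calB(U)$ appearing in Lemma~\ref{lem:K-fin}, which is what lets us apply that lemma.

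For 1), I would start from a nonzero $\Phi\in\Hom_\gf(W,\calB(\nfo))$. Because $W$ is a $(\gf,K)$-module, each of its vectors sits in a finite-dimensional $K$-subspace whose $\frakk$-action lifts to $K$; applying $\Phi$ and invoking \eqref{eqdef:WK} shows $\Phi(W)\subseteq\calB(\nfo)_K$. By Lemma~\ref{lem:K-fin}(1) this space equals $\mathcal{A}(\nfo)_K$, and by Lemma~\ref{lem:K-fin}(2) it includes into $\mathcal{A}(K/M_0)_K$, which we identify with the Harish--Chandra module of $\Ind_{P_0}^G(\chi|_{P_0})$. Since the source and target $\gf$-actions are both restrictions of the single differential action $\Ind_\frakp^\gf(d\chi)$ and the extension in Lemma~\ref{lem:K-fin}(2) is unique, this inclusion is $\gf$-equivariant. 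Hence $\Phi$ yields a nonzero element of $\Hom_{(\gf,K)}\big(W,\Ind_{P_0}^G(\chi|_{P_0})\big)$, and feeding in \eqref{eqn:indPPG} forces $\Hom_{(\gf,K)}(W,\Ind_P^G(\tau))\neq0$ for at least one $\tau\in T_{d\chi}$.

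For 2), suppose $W$ is generated by a scalar (one-dimensional) $K$-type $\mu$, as happens whenever $W$ is irreducible. If $\tau\in T_{d\chi}$ satisfies \eqref{eqn:WInd}, the resulting nonzero $(\gf,K)$-map cannot vanish on the $\mu$-isotypic component, since otherwise its kernel would contain the generating $K$-type and hence all of $W$; therefore $\mu$ occurs in $\Ind_P^G(\tau)|_K\cong\Ind_M^K(\tau|_M)$ (recall $\delta|_M=1$ as $M$ is compact). Frobenius reciprocity then gives $\Hom_M(\mu|_M,\tau|_M)\neq0$, and as $\mu|_M$ is one-dimensional while $\tau|_M$ is irreducible, this forces $\tau|_M=\mu|_M$; in particular $\tau$ is one-dimensional. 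Because every $\tau\in T_{d\chi}$ is determined by $\tau|_M$, the condition $\tau|_M=\mu|_M$ isolates at most one member of $T_{d\chi}$; together with the existence from 1) this gives uniqueness and the stated characterization. The compatibility $d\mu|_\frakm=d\chi|_\frakm$ needed for $\tau\in T_{d\chi}$ is automatic from \eqref{eq:dtau=dchiid} once such a $\tau$ exists.

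The crux will be the passage in 1) from a purely Lie-algebraic homomorphism into hyperfunctions to an honest $(\gf,K)$-homomorphism into the analytic, hence smooth, principal series: one must know that $K$-finite hyperfunctions on the open cell are automatically real analytic and extend to global sections over $K/M_0$ compatibly with the $\gf$-action. This is exactly the content harvested from Lemma~\ref{lem:K-fin}; by comparison the remaining ingredients---induction by stages, Frobenius reciprocity, and the one-dimensionality bookkeeping---are routine.
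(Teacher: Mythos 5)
Your proof is correct and follows essentially the same route as the paper: Lemma~\ref{lem:K-fin} combined with the induction-by-stages decomposition \eqref{eqn:indPPG} (plus the observation that a $\frakg$-map out of a $(\frakg,K)$-module lands in the $K$-finite, hence analytic, vectors) for part 1, and the identity $G=KP$ together with Frobenius reciprocity for part 2. The only divergence is that in part 2 you assume $W$ is \emph{generated} by the scalar $K$-type $\mu$ rather than merely admitting it; this is exactly what is needed to ensure the nonzero $(\frakg,K)$-map does not vanish on the $\mu$-isotypic component (a point the paper's proof leaves implicit), and it holds in the paper's application since there $W$ is irreducible.
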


\begin{proof} 
1)\enspace
We apply Lemma~\ref{lem:K-fin} for $M:=K\cap P$ and
$U := \overline{N}P/P \simeq \overline{\nf}$ regarded as an open dense
set in $K/K \cap P \simeq G/P$.
In view of \eqref{eqn:indPPG}, we get:
\begin{equation}\label{lem:GeneralPrincipalSeries2a}
 \calB(\overline{\frakn})_K \cong \Big(\Ind_{P_0}^G(\chi|_{P_0})\Big)_K \cong \bigoplus_{\tau\in T_{\td\chi}}{\dim\tau(\Ind_P^G(\tau))_K}.
\end{equation}

Let $W$ be a $(\frakg,K)$-module. Then \eqref{lem:GeneralPrincipalSeries2a} implies
\begin{equation}\label{lem:GeneralPrincipalSeries2}
 \Hom_\frakg(W,\calB(\overline{\frakn})) \cong \bigoplus_{\tau\in T_{\td\chi}}{\dim\tau\,\Hom_{(\frakg,K)}(W,\Ind_P^G(\tau))}.
\end{equation}
Hence the first statement follows.

2)\enspace
Since
$G=KP$ we have an isomorphism:
$\Ind_{P}^{G}(\tau) \simeq \Ind_{K\cap P}^{K}(\tau\vert_{K\cap P})$ 
as $K$-modules.
Hence, we get
\begin{equation}\label{eq:TauCondition2a}
\Hom_{K\cap P}(\mu|_{K\cap P},\tau|_{K\cap P})\cong \Hom_{K}\Big(\mu,\Ind_{K\cap P}^{K}(\tau\vert_{K\cap P})\Big)\not=0,
\end{equation}
by the Frobenius reciprocity,
if $\mu$
 occurs as a $K$-type in $\Ind_P^G(\tau)$.
Since $\tau\vert_{K\cap P}$ is irreducible,
$\tau$ must be one-dimensional by \eqref{eq:TauCondition2a} if $\mu$
is one-dimensional.
The last statement is clear because $K\cap P$ meets every connected
component of $P$.
\end{proof}


We return to the situation, where $G=\Co(V)_0$ is the identity
component of the conformal group of a simple real Jordan algebra $V$
with simple $V^+$, excluding the case $V\cong\RR^{p,q}$, $p+q$ odd,
$p,q\geq2$. Recall that $Q^{\textup{max}}$ denotes the maximal
parabolic subgroup of $G$ corresponding to the maximal parabolic
subalgebra $\frakq^{\textup{max}}$ (see Subsection
\ref{sec:KKT}). $Q^{\textup{max}}$ has a Langlands decomposition
$Q^{\textup{max}}=L^{\textup{max}}\ltimes N$ with
$L^{\textup{max}}\subseteq\Str(V)$. Recall the character $\chi$ of
$\Str(V)$ defined by \eqref{eq:DetEquiv}. For $s\in\CC$ we introduce
the character of $L^{\max}$ by
\begin{align*}
 \chi_s(g) &:= |\chi(g)|^s 
  \quad\text{for $g\in L^{\textup{max}}$}.\index{chisg@$\chi_s(g)$}
\end{align*}
We extend $\chi_s$ to the opposite parabolic $\overline{Q^{\textup{max}}}:=L^{\textup{max}}\ltimes\overline{N}$ by letting $\overline N$ act trivially. Then the character $\delta$ (see \eqref{eqn:deltaP}) amounts to
\begin{align*}
 \delta(g) &= |\Det(\Ad(g)|_{\overline{N}})|^{\frac{1}{2}} = |\chi(g)|^{-\frac{n}{2r}} = \chi_{-\frac{n}{2r}}(g) 
\quad\text{for $g\in L^{\max}$}.
\end{align*}

The degenerate principal series representation $\omega_s$ of $G$ on
$\Ind \frac{G}{Q^{\max}}(\chi_s)$ takes the form
\begin{equation}\label{eq:omega s}
 \omega_s(g)\eta(x) = (\delta\tau)(Dg^{-1}(x))^{-1}\eta(g^{-1}\cdot x)
\quad\text{for $x\in V\simeq N$, $g\in G$}.
\end{equation}
Here, $Dg^{-1}(x)$ denotes the differential of the conformal transformation 
$x\mapsto g^{-1}\cdot x$ whenever it is defined. 
Then the differential representation 
$\td\omega_s = \Ind\frac{\frakg}{\frakq^{\max}}(d\chi_s)$
is given in terms of the Jordan algebra
  as follows  (cf.\ Pevzner \cite[Lemma 2.6]{Pev02}):
\begin{align*}
 \td\omega_s(X)\eta(x) &= -D_u\eta(x) & \mbox{for }X &= (u,0,0),\\
 \td\omega_s(X)\eta(x) &= \left(\frac{rs}{n}-\frac{1}{2}\right)\Tr(T)\eta(x)-D_{Tx}\eta(x) & \mbox{for }X &= (0,T,0),\\
 \td\omega_s(X)\eta(x) &= \left(2s-\frac{n}{r}\right)\tau(x,v)\eta(x)-D_{P(x)v}\eta(x) & \mbox{for }X &= (0,0,-v).
\end{align*}

Note that $L^2(\calO_\lambda,\td\mu_\lambda)$ is contained in the
space $\calS'(V)$ of tempered distributions. For $\lambda\in\calW$ consider the Fourier transform $\calF_\lambda:L^2(\calO_\lambda,\td\mu_\lambda)\rightarrow\calS'(V)$ given by
\begin{align}\label{eq: FT}
 \calF_\lambda\psi(x) &= \int_{\calO_\lambda}{e^{-\sqrt{-1}(x|y)}\psi(y)\td\mu_\lambda(y)}, & x\in V,
\end{align}
where $(\ \mid\ )$ is the trace form from \eqref{eq:traceform}. Combining Proposition~\ref{prop:LieAlgRep} and Theorem~\ref{thm:BlambdaTangential},
we can verify that $\calF_\lambda$ intertwines the actions of $\td\pi_\lambda$ and $\td\omega_s$ for $s=\frac{1}{2}\left(\frac{n}{r}-\lambda\right)$ (see \cite[Proposition~2.2.1]{Moe10}, for the detailed calculation):

\begin{proposition}\label{prop:IntertwinerPrincipalSeries}
Let $\lambda\in\calW$ and $s=\frac{1}{2}\left(\frac{n}{r}-\lambda\right)$. Then for $X\in\frakg$ we have
\begin{align*}
 \calF_\lambda\circ\td\pi_\lambda(X) &= \td\omega_s(X)\circ\calF_\lambda
 \quad\text{on $C_c^\infty(\calO_\lambda)$}.
\end{align*}
\end{proposition}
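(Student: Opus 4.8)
The plan is to verify the intertwining identity separately on each summand of the Gelfand--Naimark decomposition $\frakg=\frakn\oplus\frakl\oplus\nfo$ from \eqref{eq:GelfandNaimark}. Since $\calF_\lambda$, $\td\pi_\lambda$ and $\td\omega_s$ are all linear in $X$, and every $X\in\frakg$ is the sum of its components in $\frakn$, $\frakl$ and $\nfo$, it suffices to treat the generators $(u,0,0)$, $(0,T,0)$ and $(0,0,-v)$; no bracket argument is needed. Throughout I use that for $\psi\in C_c^\infty(\calO_\lambda)$ the function $\calF_\lambda\psi$ is smooth on $V$ (differentiation under the integral sign in \eqref{eq: FT} is legitimate because the integration runs over a compact set), so both sides are honest smooth functions and every operator may be applied pointwise. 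The basic mechanism is that $D_u^{(x)}e^{-\sqrt{-1}(x|y)}=-\sqrt{-1}(u|y)\,e^{-\sqrt{-1}(x|y)}$ and $\frac{\partial}{\partial y}e^{-\sqrt{-1}(x|y)}=-\sqrt{-1}\,\vartheta(x)\,e^{-\sqrt{-1}(x|y)}$, which convert multiplication operators into derivatives and conversely.

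For $X=(u,0,0)$ the operator $\td\pi_\lambda(X)$ is multiplication by $\sqrt{-1}(y|u)$ by \eqref{eq:L2DerRep1}; pulling $-D_u$ out of the integral gives $\calF_\lambda\td\pi_\lambda(X)\psi=-D_u\calF_\lambda\psi=\td\omega_s(X)\calF_\lambda\psi$, with no constraint on $s$. For $X=(0,T,0)$ the first-order operator $D_{T^*y}$ of \eqref{eq:L2DerRep2} is transported across $\calF_\lambda$ by integrating by parts on $\calO_\lambda$. Here the $L$-equivariance \eqref{eq:dmulambdaEquivariance} of $\td\mu_\lambda$ is essential: the logarithmic derivative of $\chi(e^{tT^*})^\lambda=e^{\frac{r\lambda}{n}t\,\Tr(T^*)}$ produces a scalar term, while $(x|T^*y)=(Tx|y)$ and $D_{T^*y}^{(y)}e^{-\sqrt{-1}(x|y)}=D_{Tx}^{(x)}e^{-\sqrt{-1}(x|y)}$ turn $D_{T^*y}$ into $-D_{Tx}$. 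Collecting the scalar contributions from the measure and from the $\frac{r\lambda}{2n}\Tr(T^*)$ term, and using $\Tr(T^*)=\Tr(T)$, one matches $\td\omega_s(0,T,0)$ exactly when $\frac{rs}{n}-\frac12=-\frac{r\lambda}{2n}$, i.e. when $s=\frac12(\frac{n}{r}-\lambda)$.

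The decisive computation is on $\nfo$, where $\td\pi_\lambda(0,0,-v)\psi=\frac{1}{\sqrt{-1}}(\calB_\lambda\psi|v)$ involves the second-order Bessel operator. Here I would first use the symmetry of $\calB_\lambda$ (Theorem~\ref{thm:BlambdaTangential}, equivalently Proposition~\ref{prop:BnuSA}) to move $\calB_\lambda$ off $\psi$ and onto the kernel, so that $\int_{\calO_\lambda}(\calB_\lambda\psi|v)\,e^{-\sqrt{-1}(x|y)}\td\mu_\lambda$ becomes $\int_{\calO_\lambda}\psi\,(\calB_\lambda^{(y)}e^{-\sqrt{-1}(x|y)}|v)\td\mu_\lambda$. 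Because $\psi$ is compactly supported the boundary contributions vanish, so the symmetry applies even though the kernel is not rapidly decreasing; and because $\calB_\lambda$ is tangential to $\calO_\lambda$ I may compute with the ambient operator on $V$ and then restrict. A direct evaluation, using $P(z,z)=P(z)$ and $\sum_\alpha(x|e_\alpha)\overline{e}_\alpha=\vartheta(x)$, gives $\calB_\lambda^{(y)}e^{-\sqrt{-1}(x|y)}=\bigl(-P(\vartheta x)y-\sqrt{-1}\lambda\,\vartheta x\bigr)e^{-\sqrt{-1}(x|y)}$. Pairing with $v$ and using that $\vartheta$ is a Jordan automorphism (so $P(\vartheta x)=\vartheta P(x)\vartheta$) together with the $\tau$-self-adjointness of $P(\cdot)$ converts $(P(\vartheta x)y|v)$ into $(y|P(x)v)$ and $(\vartheta x|v)$ into $\tau(x,v)$. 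Finally $\int_{\calO_\lambda}\psi(y)(y|P(x)v)e^{-\sqrt{-1}(x|y)}\td\mu_\lambda$ is recognized as $\sqrt{-1}\,D_{P(x)v}\calF_\lambda\psi(x)$, where $P(x)v$ is a polynomial vector field in $x$, matching exactly the variable-coefficient operator in $\td\omega_s(0,0,-v)$. Assembling the pieces yields $\calF_\lambda\td\pi_\lambda(0,0,-v)\psi=-D_{P(x)v}\calF_\lambda\psi-\lambda\,\tau(x,v)\calF_\lambda\psi$, which equals $\td\omega_s(0,0,-v)\calF_\lambda\psi$ precisely when $-\lambda=2s-\frac{n}{r}$, again giving $s=\frac12(\frac{n}{r}-\lambda)$.

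I expect the $\nfo$ step to be the main obstacle, both for the bookkeeping in evaluating $\calB_\lambda$ on the exponential (the quadratic representation $P$ and the Cartan involution $\vartheta$ must be tracked carefully) and for justifying the transfer of $\calB_\lambda$ onto the non-decaying kernel via compact support of $\psi$ and tangentiality. The emergence of the single value $s=\frac12(\frac{n}{r}-\lambda)$ from both the $\frakl$- and the $\nfo$-computation is the reassuring cross-check that the normalization is the correct one.
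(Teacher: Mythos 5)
Your proof is correct and follows essentially the route the paper itself takes: the paper's proof of Proposition \ref{prop:IntertwinerPrincipalSeries} consists precisely of combining Proposition \ref{prop:LieAlgRep} with Theorem \ref{thm:BlambdaTangential} and deferring the detailed computation to \cite[Proposition 2.2.1]{Moe10}, and your verification on the three graded pieces $\frakn\oplus\frakl\oplus\nfo$ --- transferring $\calB_\lambda$ onto the kernel via its symmetry (legitimate for compactly supported $\psi$) and tangentiality, and matching the parameter on the $\frakl$- and $\nfo$-parts --- is exactly that calculation. In particular, your key identities $\calB_\lambda^{(y)}e^{-\sqrt{-1}(x|y)}=\bigl(-P(\vartheta x)y-\sqrt{-1}\lambda\,\vartheta x\bigr)e^{-\sqrt{-1}(x|y)}$, the conversion $(P(\vartheta x)y|v)=(y|P(x)v)$ via $P(\vartheta x)=\vartheta P(x)\vartheta$ and $\tau$-self-adjointness of $P$, and the resulting condition $s=\frac{1}{2}\left(\frac{n}{r}-\lambda\right)$ from both the $\frakl$- and $\nfo$-computations all check out, so your proposal supplies the details the paper outsources to the reference.
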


We now restrict to the case where $\lambda\in\calW$ such that $\calO_\lambda=\calO$ and we put again $s=\frac{1}{2}\left(\frac{n}{r}-\lambda\right)$. In the case of split rank $r_0=1$ as before we assume that $\sigma=r\lambda\in(0,-2\nu)$. Under these assumptions we constructed the representation $\pi$ of $G^\vee$ on $L^2(\calO,\td\mu_\lambda)$ in Subsection \ref{sec:ConstructionL2Model}. 

Denote by $\calH\subseteq\calS'(V)$ the image of $L^2(\calO,\td\mu_\lambda)$ under the Fourier transform $\calF_\lambda$ and endow it with the Hilbert space structure turning $\calF_\lambda$ into a unitary isomorphism, see \eqref{eq: FT}.

 Denote by $\overline{Q^{\textup{max}}}^\vee$ the maximal parabolic
 subgroup of $G^\vee$ which projects onto
 $\overline{Q^{\textup{max}}}$ under the covering map $G^\vee\to G$. 
The characters $\chi_s$ and $\delta$ naturally lift to $\overline{Q^{\textup{max}}}^\vee$, and we denote these lifts by the same letters.

Below, we will show that there is a unique character $\tau_s$ for
which the maximal globalization $(I_s^{\max},\omega_s)$ of the
degenerate principal series representation
$\Ind\genfrac{}{}{0pt}{}{G^\vee}{\overline{Q^{\max}}^\vee}(\tau_s)$ 
contains the unitary representation $\pi$ as a subrepresentation.

To be more precise,
we define a one-dimensional representation $\tau_s$ of
$\overline{Q^{\max}}^\vee$ as follows:
On the connected component of $\overline{Q^{\max}}^\vee$ containing
the identity,
$\tau_s$ is subject to
\begin{equation*}
d\tau_s = d\chi_s
\quad\text{with $s = \frac12\left(\frac{n}{r}-\lambda\right)$}.
\end{equation*}
For possible disconnected components of $\overline{Q^{\max}}^\vee$,
we divide the cases according to the dimension of the minimal $K$-type
$\mu$ of $\pi$ on $L^2(\calO,d\mu_\lambda)$ which we found
explicitly in Theorems \ref{prop:Kfinite} and \ref{prop:Kfinite-split rank 1}.

\medskip
\noindent
Case 1. $V\not\simeq\RR^{p,q}$

In this case, $\mu$ is one-dimensional.
Then our $\tau_s$ is characterized by 
\begin{equation}\label{eqn:tau1}
\tau_s|_{M_L} = \mu|_{M_L},
\end{equation}
where $M_L:=\overline{Q^{\max}}^\vee \cap K^\vee$.

\medskip
\noindent
Case 2. $V\simeq\RR^{p,q}$.

We note we have excluded the case $p+q$ is odd and $p,q\ge2$.
If both $p$ and $q$ are odd and $p-q\equiv0\bmod4$,
then $G^\vee=G$ and $\overline{Q^{\max}}$ is connected.
Otherwise,
$G^\vee\simeq\SO(p+1,q+1)_0$ is a double covering group of $G$
(see Definition \ref{def:minG} (3) and (4)),
and the parabolic subgroup $\overline{Q^{\textup{max}}}^\vee$ has two components $(\overline{Q^{\textup{max}}}^\vee)_0$ and $m_0(\overline{Q^{\textup{max}}}^\vee)_0$, $m_0$ being the non-trivial element of $G^\vee$ projecting onto the identity element (see \cite{KO03c}).  
Then $T_{d\chi}$ consists of two characters of
$\overline{Q^{\max}}$ according as the evaluation at $m_0$ is $1$
or $-1$.
We define $\tau_s\in T_{d\chi}$ characterized by
\begin{equation}\label{eqn:tau2}
\tau_s(m_0) = \begin{cases}
                 -1 &\text{if $p,q$ both odd, $p-q\equiv2\bmod4$,} \\
                 +1 &\text{otherwise.}
              \end{cases}
\end{equation}

\begin{theorem}\label{thm:principal series} 
Let $\lambda\in\calW$ such that $\calO_\lambda=\calO$ and set $s=\frac{1}{2}\left(\frac{n}{r}-\lambda\right)$. In the case of split rank $r_0=1$ assume that $\sigma=r\lambda\in(0,-2\nu)$. 
\begin{enumerate}
\item[{\rm(1)}]
There exists a unique character $\tau_s$ of $\overline{Q^{\textup{max}}}$ such that $\td\tau_s=\td\chi_s$
and that the degenerate principal series representation
$\Ind\genfrac{}{}{0pt}{}{G^\vee}{\overline{Q^{\max}}^\vee}(\tau_s)$ contains a
$(\gf,K^\vee)$-module which is isomorphic to the underlying
$(\gf,K^\vee)$-module of the unitary representation $\pi$ on 
$L^2(\calO,d\mu_\lambda)$.
Such a character $\tau_s\in T_{d\chi_s}$ is characterized by
\eqref{eqn:tau1} and \eqref{eqn:tau2}.
\item[{\rm(2)}]
The Fourier transform $\calF_\lambda$ is an intertwining operator from
$L^2(\calO,\td\mu_\lambda)$ into the maximal globalization of the
degenerate principal series representation 
$\Ind\genfrac{}{}{0pt}{}{G^\vee}{\overline{Q^{\max}}^\vee}(\tau_s)$.
\end{enumerate}
\end{theorem}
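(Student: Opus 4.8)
The plan is to derive both assertions from the intertwining property of the Fourier transform in Proposition~\ref{prop:IntertwinerPrincipalSeries}, fed into the abstract branching machinery of Lemmas~\ref{lem:K-fin} and~\ref{lem: scalar K type}. First I would observe that the model of $\td\omega_s$ on functions of $x\in V\cong N$ is exactly the noncompact realization of $\Ind_{\overline{\frakq}^{\textup{max}}}^{\frakg}(\td\chi_s)$: the normalizing factor $\delta$ was computed through $\Det(\Ad(g)|_{\overline{N}})$, so the parabolic being induced from is $\overline{Q^{\textup{max}}}$, whose opposite unipotent is $N\cong V$. With $P=\overline{Q^{\textup{max}}}^\vee$ and $M=M_L=\overline{Q^{\textup{max}}}^\vee\cap K^\vee$, this places us in the situation of Lemma~\ref{lem: scalar K type}, the ambient space of hyperfunctions being $\calB(V)$. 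Since $\calF_\lambda$ is injective and $\td\pi_\lambda$ exponentiates to $\pi$, Proposition~\ref{prop:IntertwinerPrincipalSeries} gives a nonzero $\frakg$-homomorphism from $W$ into $\calS'(V)\subseteq\calB(V)$; the intertwining relation, established on $C_c^\infty(\calO)$, extends to the smooth $L^2$-vectors making up $W$ by continuity of $\calF_\lambda$ on $L^2$ and density of $C_c^\infty(\calO)$. Thus $\Hom_{\frakg}(W,\calB(V))\neq0$.

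For part (1) I would invoke Lemma~\ref{lem: scalar K type}: its first part yields some $\tau_s\in T_{d\chi_s}$ with $\Hom_{(\frakg,K^\vee)}(W,\Ind_{\overline{Q^{\textup{max}}}^\vee}^{G^\vee}(\tau_s))\neq0$, and I would obtain uniqueness and the explicit form by separating the two cases of the theorem. If $V\not\cong\RR^{p,q}$, the minimal $K^\vee$-type $\mu$ of $\pi$ is one-dimensional by Theorems~\ref{prop:Kfinite} and~\ref{prop:Kfinite-split rank 1}, so the second part of Lemma~\ref{lem: scalar K type} applies directly and characterizes $\tau_s$ by $\tau_s|_{M_L}=\mu|_{M_L}$, which is \eqref{eqn:tau1}. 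If $V\cong\RR^{p,q}$ (with $p+q$ even), the members of $T_{d\chi_s}$ are one-dimensional characters of $\overline{Q^{\textup{max}}}^\vee$ distinguished solely by their value $\pm1$ at the central element $m_0=-\1$; since $m_0$ acts on the irreducible module $\mu\cong\calH^{|\frac{p-q}{2}|}(\RR^{\min(p,q)+1})$ by the scalar $(-1)^{|\frac{p-q}{2}|}$ (as recorded in the proof of Theorem~\ref{thm:IntgkModule}\,(3)), the embedding forces $\tau_s(m_0)=(-1)^{|\frac{p-q}{2}|}$. As $|\frac{p-q}{2}|$ is odd precisely when $p-q\equiv2\ (\mod\ 4)$, this is \eqref{eqn:tau2} and pins down $\tau_s$ uniquely.

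For part (2) I would lift the Harish-Chandra module embedding to globalizations. The operator $\calF_\lambda$ is continuous from $L^2(\calO,\td\mu_\lambda)$ into $\calS'(V)$, and tempered distributions are hyperfunctions, so $\calH=\calF_\lambda(L^2(\calO,\td\mu_\lambda))$ lies in $\calB(V)$. By part (1), the underlying $(\frakg,K^\vee)$-module $W$ of the unitary---hence moderate growth---representation $\pi$ embeds into the Harish-Chandra module of $\Ind_{\overline{Q^{\textup{max}}}^\vee}^{G^\vee}(\tau_s)$. Applying the maximal globalization functor to this embedding and composing with the canonical $G^\vee$-morphism from $\pi$ into the maximal globalization of its own Harish-Chandra module produces a continuous $G^\vee$-intertwiner $\pi\to I_s^{\max}$. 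It remains to identify it with $\calF_\lambda$: the two maps agree on the dense subspace of $K^\vee$-finite vectors by part (1), both are continuous, and $G^\vee$ is connected, so the $\frakg$-equivariance of $\calF_\lambda$ (Proposition~\ref{prop:IntertwinerPrincipalSeries}) upgrades to $G^\vee$-equivariance. Hence $\calF_\lambda$ maps $L^2(\calO,\td\mu_\lambda)$ into $I_s^{\max}$ as an intertwining operator.

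The step I expect to be the main obstacle is the passage from infinitesimal to global equivariance inside the hyperfunction globalization in part (2): one must use the Casselman--Wallach automatic-continuity theorem together with the Kashiwara--Schmid characterization of the maximal globalization as a terminal object to guarantee that the Harish-Chandra module map extends to a continuous $G^\vee$-morphism and genuinely coincides with $\calF_\lambda$. A subsidiary point is the justification, already used above, that the intertwining relation of Proposition~\ref{prop:IntertwinerPrincipalSeries}---proved only on $C_c^\infty(\calO)$---persists on the generators of $W$, whose elements are smooth and square-integrable but singular at the origin; this is handled by continuity of $\calF_\lambda$ and density of $C_c^\infty(\calO)$ in $L^2(\calO,\td\mu_\lambda)$.
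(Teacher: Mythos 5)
Your architecture for part (1) is the paper's own: realize a nonzero element of $\Hom_\frakg(W,\calB(V))$ via $\calF_\lambda$ and Proposition~\ref{prop:IntertwinerPrincipalSeries}, then apply Lemma~\ref{lem: scalar K type} with $P=\overline{Q^{\textup{max}}}^\vee$, splitting into the scalar-minimal-$K^\vee$-type case (giving \eqref{eqn:tau1} from part 2) of the lemma) and the case $V\cong\RR^{p,q}$, where the two candidate characters in $T_{\td\chi_s}$ are separated by their value at $m_0$ and the embedding forces $\tau_s(m_0)=\mu(m_0)=(-1)^{|\frac{p-q}{2}|}$, i.e.\ \eqref{eqn:tau2}. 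For part (2) you are more explicit than the paper: the paper treats (2) as immediate from the fact that $\calF_\lambda$ itself realizes the embedding inside the hyperfunction model $\calB(\overline{\frakn})$ decomposed in Lemma~\ref{lem:K-fin}, whereas you route through Casselman--Wallach automatic continuity and the Kashiwara--Schmid characterization of the maximal globalization as a terminal object. That is heavier machinery than the paper uses, but it is a legitimate way to upgrade the $(\frakg,K^\vee)$-embedding to a continuous $G^\vee$-intertwiner and then identify it with $\calF_\lambda$ on the dense space of $K^\vee$-finite vectors.

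The one step whose justification would fail as you state it is the extension of the intertwining relation from $C_c^\infty(\calO)$ to the $K^\vee$-finite vectors of $W$. You propose ``continuity of $\calF_\lambda$ on $L^2$ and density of $C_c^\infty(\calO)$,'' but the identity $\calF_\lambda\circ\td\pi_\lambda(X)=\td\omega_s(X)\circ\calF_\lambda$ carries second-order differential operators on both sides, and these are unbounded on $L^2(\calO,\td\mu_\lambda)$: if $\psi_n\in C_c^\infty(\calO)$ and $\psi_n\to\psi$ in $L^2$, nothing forces $\td\pi_\lambda(X)\psi_n\to\td\pi_\lambda(X)\psi$ in $L^2$ (or even in $\calS'(V)$), so you cannot pass to the limit on the left-hand side along an arbitrary approximating sequence. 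What actually closes this step --- and what the paper invokes --- is the explicit asymptotic information of Proposition~\ref{prop:L2ness}: $K^\vee$-finite vectors are finite sums of terms $\widetilde{K}_\alpha\otimes(\textup{polynomial})$, smooth on $\calO$, rapidly decreasing at infinity, with controlled behaviour at the origin; this lets one choose cutoff approximations whose images under $\td\pi_\lambda(X)$ also converge (equivalently, justifies directly the integrations by parts proving the relation for such $\psi$). With that input your argument is complete and coincides with the paper's; without it, the density claim is a genuine gap.
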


\begin{proof}
We apply Lemma~\ref{lem: scalar K type} for $P=\overline{Q^{\textup{max}}}^\vee$, $\chi=\chi_s$, and the $(\frakg,K^\vee)$-module $W=\td\pi_\lambda(U(\frakg))\psi_0$ from Section~\ref{sec:ConstructiongkModule}.
The formula in 
 Proposition \ref{prop:IntertwinerPrincipalSeries} on
$C_c^\infty(\calO)$ still holds for
$K$-finite vectors of $L^2(\calO,d\mu_\lambda)$ in light of the
asymptotic behaviours of $K$-finite vectors (see
 Proposition
\ref{prop:L2ness}), and therefore we have $\Hom_\frakg(W,\calS'(V))\neq0$.
Therefore, Theorem \ref{thm:principal series} follows from Lemma
\ref{lem: scalar K type} if $W$ admits a scalar $K^\vee$-type $\mu$.

In the remaining case where
 $W$ does not necessarily admit a scalar $K^\vee$-type, i.e., 
for $V=\RR^{p,q}$,
Theorem \ref{thm:principal series} was proved
 in
 \cite[Theorem 4.9]{KO03c}.
For the sake of completeness,
we give a proof 
along the same line of argument here.
We already know that $\overline{Q^{\max}}^\vee$ has at most two
 connected components and therefore
 that $T_{\td\chi}$ consists of characters. Thus 
it is sufficient to determine a one-dimensional representation
$\tau\in T_{d\chi}$ such that $\tau(m_0)=\mu(m_0)$ when
$G^\vee=\SO(p+1,q+1)_0$.
 If  $p\geq q$, we have $\mu=\1\boxtimes\calH^{\frac{p-q}{2}}(\RR^{q+1})$ on which $m_0$ acts as the scalar $(-1)^{\frac{p-q}{2}}$. Therefore, the $\tau $ we are looking for is characterized by
\begin{align*}
 \tau_s(g) &= \chi_s(g), & g\in(\overline{Q^{\textup{max}}}^\vee)_0.\\
 \tau_s(m_0) &= (-1)^{\frac{p-q}{2}}.
\end{align*}
The case $q\geq p$ is treated similarly.
\end{proof}

\begin{remark}
In the split rank $1$ case, i.e. $\frakg\cong\so(k+1,1)$, $k\geq1$, we obtain the entire complementary series for $\SO(k+1,1)_0$. For all parameters $\lambda$ for which $W$ is contained in $L^2(\calO,d\mu_\lambda)$ the Fourier transform ${\mathcal F}_\lambda$ intertwines the principal series realizations with the $L^2$-models. Moreover, these $L^2$-models coincide with the ``commutative models'' studied by Vershik--Graev \cite{VG06}.
\end{remark}

\begin{remark}
In the 1990s a number of papers appeared dealing with the structure of
degenerate principal series representations. Among others
\cite{Sah93}, \cite{Sah95}, and \cite{Zha95} determine the irreducible
and unitarizable constituents of the degenerate principal series
representations associated to conformal groups of euclidean and
non-euclidean Jordan algebras. The proofs are of an algebraic
nature. Using these results, A. Dvorsky and S. Sahi as well as
L. Barchini, M. Sepanski and R. Zierau considered unitary
representations of the corresponding groups on $L^2$-spaces of orbits
of the structure group. In \cite{Sah92} the case of a euclidean Jordan
algebra is treated and the non-euclidean case is studied in
\cite{DS99}, \cite{DS03} and \cite[Section 8]{BSZ06}. However, they
all exclude the case $V=\RR^{p,q}$ for general $p$ and $q$ such that
$\pi$ does not contain a scalar $K$-type (i.e.\ $p+q$: even,
$p,q\ge2$, and $p\neq q$). 
In fact, contrary to what was claimed in \cite[p. 206]{DS99},
it is possible to extend the Mackey representation of $\overline{Q^{\textup{max}}}^\vee$ to the whole group $G^\vee = SO(p+1,q+1)_0$.
In this case  the $L^2$-model of the minimal representation was established by T. Kobayashi and B. {\O}rsted in \cite{KO03c}.
\end{remark}

The interpretation of the minimal representation  $\td\pi$ as a subrepresentation of a degenerate principal series representation allows us to compute its infinitesimal character. We parameterize the infinitesimal character by the Harish-Chandra isomorphism
 $$\mathrm{Hom}_{\text{$\CC$-algebra}}(Z(\mathfrak g),\CC)\cong \mathfrak h^*/W,$$
where $Z(\mathfrak g)$ is the center of $U(\mathfrak g)$, $\mathfrak h$ is a Cartan subalgebra containing $(0,\id,0)\in\frakl_\CC\subseteq\frakg_\CC$, and $W$ is the corresponding Weyl group. It is possible to choose $\frakh\subseteq\frakl_\CC$. We denote by $\rho_{\frakl_\CC}$ the half-sum of all positive roots of $\frakl_\CC$ with respect to $\frakh$.

Since the infinitesimal character is preserved by the normalized parabolic induction, we get

\begin{theorem}\label{thm: inf char}
The representation $\td\pi$ of $\frakg$ has the infinitesimal character $\td\chi_s+\rho_{\frakl_\CC}$, where $s=\frac{1}{2}\left(\frac{n}{r}-\lambda\right)$.
\end{theorem}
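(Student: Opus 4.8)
The plan is to read off the infinitesimal character of $\td\pi$ from its realization inside a degenerate principal series, established in Theorem~\ref{thm:principal series}, combined with the classical fact that the infinitesimal character is preserved under normalized parabolic induction and inherited by submodules. Concretely, the first step is to invoke Theorem~\ref{thm:principal series}\,(1): the underlying $(\frakg,K^\vee)$-module of $\pi$ embeds as a submodule of $\Ind\genfrac{}{}{0pt}{}{G^\vee}{\overline{Q^{\max}}^\vee}(\tau_s)$, where $\td\tau_s=\td\chi_s$ and $s=\frac12(\frac{n}{r}-\lambda)$. A degenerate principal series induced from a one-dimensional character admits an infinitesimal character, i.e.\ $Z(\frakg)$ acts on it by scalars, and these same scalars act on any submodule. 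Hence $\td\pi$ has a well-defined infinitesimal character which equals that of the induced representation, and it remains only to compute the latter.

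The second step is that computation. Since $\frakh\subseteq\frakl_\CC$ is a Cartan subalgebra of $\frakg_\CC$ and $\frakl_\CC$ is the common Levi factor of $\frakq^{\max}$ and its opposite, I would apply the Harish--Chandra homomorphism for $\frakl_\CC$ to the one-dimensional $\frakl_\CC$-module $\CC_{\td\chi_s}$: a one-dimensional module is its own highest weight space, and $\td\chi_s$ vanishes on $[\frakl_\CC,\frakl_\CC]$, so its $\frakl_\CC$-infinitesimal character is $\td\chi_s+\rho_{\frakl_\CC}$. The standard compatibility of the Harish--Chandra isomorphisms of $\frakg_\CC$ and $\frakl_\CC$ under normalized parabolic induction then yields that the induced representation has infinitesimal character the image of $\td\chi_s+\rho_{\frakl_\CC}$ in $\frakh^*/W$. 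Combining this with the first step gives the asserted value $\td\chi_s+\rho_{\frakl_\CC}$.

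Once Theorem~\ref{thm:principal series} is available the argument is essentially routine; the only delicate points are bookkeeping ones, and these are where I would concentrate the care. The $\rho$-shift that is built into normalized induction must be reconciled with the $\rho_{\frakl_\CC}$-shift occurring in the Harish--Chandra parametrization for $\frakl_\CC$, so that no spurious contribution from the nilradical survives: it is precisely the use of normalized (rather than unnormalized) induction that makes the answer come out as $\td\chi_s+\rho_{\frakl_\CC}$ with no further correction. I would also note explicitly that inducing from $\overline{Q^{\max}}$ in place of $Q^{\max}$ does not affect the result, since the two parabolics share the Levi $\frakl_\CC$ and hence the same $\rho_{\frakl_\CC}$, and that only the differential $\td\tau_s=\td\chi_s$ enters the infinitesimal character, so the values of $\tau_s$ on the non-identity components of $\overline{Q^{\max}}^\vee$ (and thus the case distinctions of \eqref{eqn:tau1} and \eqref{eqn:tau2}) are irrelevant here.
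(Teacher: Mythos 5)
Your proposal is correct and follows exactly the paper's route: the paper likewise deduces the theorem from the embedding of $\td\pi$ into the degenerate principal series (Theorem~\ref{thm:principal series}) together with the fact that the infinitesimal character is preserved under normalized parabolic induction, computed via the Harish--Chandra parametrization relative to the Cartan subalgebra $\frakh\subseteq\frakl_\CC$. The paper states this in a single sentence; your write-up simply makes explicit the same bookkeeping (the $\rho_{\frakl_\CC}$-shift for the one-dimensional module $\CC_{\td\chi_s}$, the irrelevance of the components of $\overline{Q^{\max}}^\vee$ outside the identity component), so no gap remains.
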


\subsection{Special functions in the $L^2$-models}\label{sec:SpecialFcts}

In this section we find explicit $\frakk$-finite vectors in each $\frakk$-type of the $L^2$-models for the representations from Theorem~\ref{thm:IntgkModule}. These $\frakk$-finite vectors are essentially one-variable functions solving a fourth-order differential equation. The differential equation as well as its corresponding solutions are studied in detail in \cite{HKMM09a,HKMM09b} building on the minimal representation of $O(p,q)$. It is noteworthy that this same set of special functions appears in a uniform fashion for the $L^2$-models for minimal representations of all other groups that were constructed in the previous section.\\

In view of Proposition \ref{prop:IntertwinerPrincipalSeries} and Theorem \ref{thm:principal series}, the $(\frakg,\frakk)$-module $W$ is realized as a subrepresentation of the (degenerate) principal series representation $I_s$. Using the structural results for the composition series of $I_s$, \cite[Equation (7)]{Sah93} for the euclidean case, \cite[\S 0 and Theorems 4.A and 4.B]{Sah95} for the case of a non-euclidean Jordan algebra $\ncong\RR^{p,q}$, $p,q\geq2$, and \cite[Lemma 2.6~(2)]{KO03c} for the case of $V=\RR^{p,q}$, we find the $\frakk$-type decomposition of the $(\frakg,\frakk)$-module $W$. To this end we put
\begin{align}
 W^j &:= E^{\alpha_0+j\gamma_1}.\label{eq:DefWj}
\end{align}
Here $\alpha_0$ denotes the highest weight of the minimal $\frakk$-type $W_0\cong W^0$ (see Theorems \ref{prop:Kfinite} and \ref{prop:Kfinite-split rank 1}), $\gamma_1$ was defined in Subsection \ref{subsec:ConfGrpRoots}, and $E^\alpha$ was introduced in Subsection~\ref{ssec: kl-fix vector}.

\begin{theorem}\label{thm: K-type formula} The $\frakk$-type decomposition of the $(\frakg,\frakk)$-module $W$
is given by
\begin{align*}
 W &\cong \bigoplus_{j=0}^\infty{W^j}.
\end{align*}
In each $\frakk$-type $W^j$ the space of $\frakk_\frakl$-fixed vectors is one-dimensional.
\end{theorem}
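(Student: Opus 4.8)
The plan is to exploit the realization of $W$ inside a degenerate principal series established in Theorem~\ref{thm:principal series} and to read off the $\frakk$-types from the restriction of that principal series to the maximal compact subgroup. By Theorem~\ref{thm:principal series} the Fourier transform $\calF_\lambda$ identifies the underlying $(\frakg,K^\vee)$-module $W$ with a $(\frakg,K^\vee)$-submodule of $I_s=\Ind\genfrac{}{}{0pt}{}{G^\vee}{\overline{Q^{\max}}^\vee}(\tau_s)$. Since $\frakk$-types are unchanged under passing to submodules, it suffices to (i) determine the $\frakk$-type decomposition of $I_s$, and then (ii) single out the sub-collection of $\frakk$-types belonging to $W$.

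For (i), I would use that as a $K^\vee$-module one has $I_s|_{K^\vee}\cong\Ind_{M_L}^{K^\vee}(\tau_s|_{M_L})$, since $G^\vee=K^\vee\overline{Q^{\max}}^\vee$ and $K^\vee\cap\overline{Q^{\max}}^\vee=M_L$. The character $\tau_s$ is one-dimensional, its restriction to the identity component $K_L$ of $M_L$ is trivial (because $|\chi(g)|=1$ on $K_L$), and $M_L$ is the compact group with Lie algebra $\frakk_\frakl$. Hence by Frobenius reciprocity the multiplicity of an irreducible $K^\vee$-representation $E^\alpha$ in $I_s$ equals $\dim\Hom_{M_L}(E^\alpha|_{M_L},\tau_s)$, which is nonzero only for the $\frakk_\frakl$-spherical $E^\alpha$ and is then governed by the one-dimensional space of $\frakk_\frakl$-fixed vectors. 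By Proposition~\ref{prop:klSphericalsReps} these $\frakk$-types are precisely the $E^\alpha$ with $\alpha\in\Lambda_{\frakk_\frakl}^+(\frakk)$ compatible with $\tau_s$ on the component group, each occurring with multiplicity one; thus $I_s|_{K^\vee}$ is multiplicity-free.

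For (ii), the submodule $W$ is generated by $\psi_0$, whose $\frakk$-type is the minimal one $W_0\cong E^{\alpha_0}$ (Theorems~\ref{prop:Kfinite} and~\ref{prop:Kfinite-split rank 1}), and $W$ is irreducible by Theorem~\ref{thm:IntgkModule}; by multiplicity-freeness it is the unique irreducible $(\frakg,K^\vee)$-submodule of $I_s$ whose minimal $\frakk$-type is $E^{\alpha_0}$. I would then invoke the known composition series of $I_s$ --- \cite[Equation~(7)]{Sah93} in the euclidean case, \cite[\S 0 and Theorems~4.A and~4.B]{Sah95} for non-euclidean $V\ncong\RR^{p,q}$, and \cite[Lemma~2.6~(2)]{KO03c} for $V=\RR^{p,q}$ --- to conclude that the $\frakk$-types of this submodule are exactly $\alpha_0+j\gamma_1$, $j\in\NN$. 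In parallel one must check that each $\alpha_0+j\gamma_1$ indeed lies in $\Lambda_{\frakk_\frakl}^+(\frakk)$, a direct verification in the cases $A$, $C$, $D$ from the explicit form of $\alpha_0$ in \eqref{eq:DefGamma0} and the ordering $\gamma_1>\dots>\gamma_{r_0}$; that consecutive $\frakk$-types differ by the single highest direction $\gamma_1$ reflects the ladder structure $W=U(\frakq^{\max})W_0$, the raising being governed by the highest piece of $\nfo$.

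Finally, the second assertion is immediate: by \eqref{eq:DefWj} each $W^j$ equals $E^{\alpha_0+j\gamma_1}$, which is $\frakk_\frakl$-spherical by construction, so the last clause of Proposition~\ref{prop:klSphericalsReps} gives that its space of $\frakk_\frakl$-fixed vectors is one-dimensional. The main obstacle is step~(ii): there is no uniform source for the composition series, so one is forced into a case-by-case passage through the three families of references, together with the separate confirmation in each case that the $\frakk$-types produced there form exactly the arithmetic progression $\{\alpha_0+j\gamma_1\}_{j\in\NN}$ and stay inside $\Lambda_{\frakk_\frakl}^+(\frakk)$.
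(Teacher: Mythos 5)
Your proposal takes essentially the same route as the paper: the paper likewise realizes $W$ inside the degenerate principal series $I_s$ via Proposition~\ref{prop:IntertwinerPrincipalSeries} and Theorem~\ref{thm:principal series}, then reads off the $\frakk$-type ladder $\{\alpha_0+j\gamma_1\}_{j\in\NN}$ from the composition-series results of \cite{Sah93}, \cite{Sah95} and \cite{KO03c}, and deduces the one-dimensionality of the $\frakk_\frakl$-fixed vectors from Proposition~\ref{prop:klSphericalsReps}. Your Frobenius-reciprocity argument for the multiplicity-freeness of $I_s|_{K^\vee}$ merely makes explicit a point the paper leaves implicit, so the two proofs coincide in substance.
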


The $\frakk_\frakl$-fixed vectors are exactly the radial functions $\psi(x)=f(|x|)$, $x\in\calO$. Denote by $L^2(\calO,\td\mu_\lambda)_{\rad}\subseteq L^2(\calO,\td\mu_\lambda)$ the subspace of radial functions. By \eqref{eq:dmuIntFormula}, the map $\calO\rightarrow\RR_+,\,x\mapsto|x|,$ induces an isomorphism $L^2(\calO,\td\mu_\lambda)_{\rad}\cong L^2(\RR_+,t^{r\lambda-1}\td t)$. Let $W^j_{\rad}:=W^j\cap L^2(\calO,\td\mu_\lambda)_{\rad}$. We then obtain that the algebraic direct sum
\begin{align*}
 \bigoplus_{j=0}^\infty{W^j_{\rad}}\subseteq L^2(\calO,\td\mu_\lambda)_{\rad}
\end{align*}
is dense. To determine a generator for the one-dimensional subspaces $W^j_{\rad}$ we compute the action of the $\frakk$-Casimir on radial functions.\\

On $\frakk$ we define an $\ad$-invariant inner product by
\begin{align*}
 \langle X_1,X_2\rangle &:= B_\frakl(T_1,T_2^*)+2\Tr(T_1T_2^*)+\frac{8n}{r}(u_1|u_2),
\end{align*}
where $B_\frakl$ denotes the Killing form of $\frakl$ (cf. \cite[proof of Proposition 1.1]{Pev02}). Choose an orthonormal basis $(X_\alpha)_\alpha$ with respect to this inner product and define the Casimir element $C_\frakk$ of $\frakk$ by
\begin{align*}
 C_\frakk &:= \sum_\alpha{X_\alpha^2}.
\end{align*}
$C_\frakk$ is independent of the choice of the basis $(X_\alpha)_\alpha$ and defines a central element of degree $2$ in $U(\frakk)$. Therefore, thanks to Schur's Lemma, $\td\pi(C_\frakk)$ acts on each $\frakk$-type $W^j$ by a scalar. This scalar can be determined using root data. For this we define a constant $\mu=\mu(V)$ by
\begin{equation}\label{eq:mu}
 \mu = \mu(V) := \frac{n}{r_0}+\left|d_0-\frac{d}{2}\right|-2.
\end{equation}
Table \ref{tb:MuNu} lists the possible values of $\mu$ and $\nu$ (see \eqref{eqn: def nu} for the definition of $\nu$) for all simple real Jordan algebras.

\begin{proposition}\label{prop:CasimirScalar}
The Casimir operator $\td\pi(C_\frakk)$ acts on every $\frakk$-type $W^j$ of $W$ by the scalar
\begin{align*}
 -\frac{r_0}{8n}\left(4j(j+\mu+1)+\frac{r_0d}{2}\left|d_0-\frac{d}{2}\right|\right).
\end{align*}
\end{proposition}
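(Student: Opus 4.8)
The plan is to combine Schur's Lemma with the classical Freudenthal--Harish-Chandra formula for the Casimir eigenvalue on a highest-weight module. Since $\td\pi(C_\frakk)$ is central in $U(\frakk)$ and each $\frakk$-type $W^j\cong E^{\alpha_0+j\gamma_1}$ is irreducible (Theorem~\ref{thm: K-type formula}), $\td\pi(C_\frakk)$ acts on $W^j$ by a scalar, which equals $\langle\Lambda_j,\Lambda_j+2\rho_\frakk\rangle^*$ with $\Lambda_j:=\alpha_0+j\gamma_1$, where $\rho_\frakk$ is the half-sum of the positive roots of $(\frakk_\CC,\frakt^c_\CC)$ and $\langle-,-\rangle^*$ is the form on $(\frakt^c_\CC)^*$ dual to the $\ad$-invariant inner product $\langle-,-\rangle$ from the statement. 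The overall minus sign in the asserted scalar reflects that $\langle-,-\rangle$ is positive definite on the compact algebra $\frakk$, so the $\gamma_i$ are imaginary-valued on $\frakt$ and $\langle-,-\rangle^*$ is \emph{negative} on them.

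First I would compute the dual form on the relevant weights. Putting $H_j:=(c_j,0,-c_j)\in\frakt$, the defining formula for $\langle-,-\rangle$ together with $(c_i\mid c_j)=\tr(c_ic_j)=\tfrac{r}{r_0}\delta_{ij}$ (using \eqref{eq:noverr0} to get $\tr(c_j)=\tfrac{r}{n}\Tr L(c_j)=\tfrac{r}{r_0}$) yields $\langle H_i,H_j\rangle=\tfrac{8n}{r_0}\delta_{ij}$. Since $\gamma_i(H_j)=2\sqrt{-1}\delta_{ij}$, transporting to the dual gives $\langle\gamma_i,\gamma_j\rangle^*=-\tfrac{r_0}{2n}\delta_{ij}$.

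Next I would determine $\rho_\frakk$. Because every $\Lambda_j$ vanishes on $\frakt^c\cap\frakk_\frakl$ (Proposition~\ref{prop:klSphericalsReps}) and $\frakt\perp(\frakt^c\cap\frakk_\frakl)$, the roots of $\frakk_\frakl$ contribute nothing to $\langle\Lambda_j,2\rho_\frakk\rangle^*$, so it suffices to use the projection of $2\rho_\frakk$ to $\frakt^*$: the sum over $\Sigma^+(\frakk_\CC,\frakt_\CC)$ of the restricted roots weighted by their $\frakk_\CC$-multiplicities $d_0$ (for $\tfrac{\gamma_i-\gamma_j}{2}$), $d-d_0$ (for $\tfrac{\gamma_i+\gamma_j}{2}$), and $e$ (for $\gamma_i$), as recorded in Subsection~\ref{subsec:ConfGrpRoots}. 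A direct count then gives the coefficient of $\gamma_k$ in $\rho_\frakk|_\frakt$ as $\tfrac14(dr_0-d+2d_0-2d_0k)+\tfrac{e}{2}$.

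Finally I would substitute the explicit $\alpha_0$ from \eqref{eq:DefGamma0} (together with $\alpha_0=0$ in split rank one) into $\langle\Lambda_j,\Lambda_j+2\rho_\frakk\rangle^*$ and expand. Collecting powers of $j$, the $j^2$-coefficient is $-\tfrac{r_0}{2n}$; the linear coefficient simplifies to $-\tfrac{r_0}{2n}(\mu+1)$ upon using $\mu+1=\tfrac{n}{r_0}+|d_0-\tfrac{d}{2}|-1$ from \eqref{eq:mu} and \eqref{eq:noverr0}; and the $j$-independent term collapses to $-\tfrac{r_0}{2n}\cdot\tfrac{r_0 d}{8}|d_0-\tfrac{d}{2}|$, which matches the claimed scalar after factoring $-\tfrac{r_0}{8n}$. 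The main obstacle will be the case-by-case bookkeeping: one must verify the single uniform answer against the three different forms of $\alpha_0$ and the correspondingly different root systems of type $A$, $C$, $D$ (and the degenerate rank-one situation, where $d=d_0=0$, $e=k-1$), and one must fix the normalization and sign of the eigenvalue formula relative to the nonstandard inner product $\langle-,-\rangle$, keeping in mind that $\frakt$ is \emph{not} a Cartan subalgebra of $\frakk$.
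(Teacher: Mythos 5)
Your proposal is correct and follows exactly the route the paper indicates for this proposition (Schur's Lemma plus a Freudenthal-type eigenvalue computation $\langle\Lambda_j,\Lambda_j+2\rho_\frakk\rangle^*$ using the restricted root data of $(\frakk_\CC,\frakt_\CC)$ and the reduction to $\frakt$ via orthogonality of $\frakt$ and $\frakt^c\cap\frakk_\frakl$); the paper itself only sketches this by saying the scalar "can be determined using root data." Your normalizations check out — $\langle H_i,H_j\rangle=\frac{8n}{r_0}\delta_{ij}$, hence $\langle\gamma_i,\gamma_j\rangle^*=-\frac{r_0}{2n}\delta_{ij}$, the coefficient $\frac{e}{2}+\frac{1}{4}\left(dr_0-d+2d_0-2d_0k\right)$ of $\gamma_k$ in $\rho_\frakk|_{\frakt}$ agrees with the multiplicities $d_0$, $d-d_0$, $e$, and substituting the three forms of $\alpha_0$ (plus the rank-one case $d=d_0=0$, $e=k-1$) does reproduce the single stated formula.
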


Recall from \cite{HKMM09a} the  fourth order differential operator  $\widetilde{\calD}_{\alpha,\beta}$ in one variable given by
\begin{align*}
 \widetilde{\calD}_{\alpha,\beta} &= \frac{1}{t^2}\left((\theta+\alpha+\beta)(\theta+\alpha)-t^2\right)\left(\theta(\theta+\beta)-t^2\right),
\end{align*}
for $\alpha,\beta\in\CC$, where $\theta=t\frac{\td}{\td t}$ denotes the one-dimensional Euler operator. 
\begin{remark}\label{rem:Dab}
To be precise,
we have introduced in \cite[(1.11)]{HKMM09a} the following differential
operator:
\[
\calD_{\alpha,\beta}
:= \widetilde{\calD}_{\alpha,\beta}
   -\frac{1}{2}(\alpha-\beta)(\alpha+\beta+2),
\]
so that the symmetry
$\calD_{\alpha,\beta}=\calD_{\beta,\alpha}$ holds.
\end{remark}
The action of $\td\pi(C_\frakk)$ on radial functions can be expressed in terms of $\widetilde{\calD}_{\alpha,\beta}$:

\begin{theorem}\label{thm:CasimirAction}
Let $\psi(x)=f(|x|)$ ($x\in\calO$) be a radial function for some $f\in C^\infty(\RR_+)$.
\begin{enumerate}
\item[\textup{(1)}] For $V$ of split rank $r_0>1$ we have:
\begin{align*}
 \td\pi(C_\frakk)\psi(x) &= -\frac{r_0}{8n}\left(\widetilde{\calD}_{\mu,\nu}+\frac{r_0d}{2}\left|d_0-\frac{d}{2}\right|\right)f(|x|).
\end{align*}
\item[\textup{(2)}] For $V$ of split rank $r_0=1$ we have with $\lambda=\frac{r_0}{r}\sigma$, $\sigma\in(0,-2\nu)$:
\begin{align*}
 \td\pi(C_\frakk)\psi(x) &= -\frac{r_0}{8n}\left(\widetilde{\calD}_{\mu,\nu+\sigma}+\frac{r_0d}{2}\left|d_0-\frac{d}{2}\right|\right)f(|x|).
\end{align*}
\end{enumerate}
\end{theorem}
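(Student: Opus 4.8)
The plan is to reduce the $\frakk$-Casimir to the part transverse to $\frakk_\frakl$ and then compute directly with the radial Bessel formulas. Using the $\ad$-invariant inner product $\langle\cdot,\cdot\rangle$, I would split $\frakk$ orthogonally as $\frakk=\frakk_\frakl\oplus\frakm$ with $\frakm:=\{(u,0,-\vartheta(u)):u\in V\}$; the two summands are orthogonal because the mixed terms between $T$- and $u$-directions in $\langle\cdot,\cdot\rangle$ vanish, and on $\frakm$ the form is $\frac{8n}{r}(u_1|u_2)$. Choosing an orthonormal basis adapted to this splitting yields $C_\frakk=C_{\frakk_\frakl}+C_\frakm$. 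A radial function $\psi(x)=f(|x|)$ is $K_L$-invariant, so $\td\pi(\frakk_\frakl)\psi=0$ (as already used in the proof of Theorem~\ref{prop:Kfinite}), whence $\td\pi(C_{\frakk_\frakl})\psi=0$ and the computation reduces to $\td\pi(C_\frakm)\psi=\sum_i\td\pi(Y_i)^2\psi$ with $Y_i=(u_i,0,-\vartheta(u_i))$ and $(u_i|u_j)=\frac{r}{8n}\delta_{ij}$.

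The heart of the proof is the double application of the $\frakk$-action. By \eqref{eq:KActionInTermsOfBesselOp}, $\td\pi(Y_i)$ acts as $\psi\mapsto\frac{1}{\sqrt{-1}}\tau((\calB_\lambda-\vartheta(x))\psi,u_i)$. For radial $\psi$ the inner expression is given explicitly by Corollary~\ref{cor:BnuRadial}, so the first application produces functions that are no longer radial. Applying the second generator then calls for the full Bessel operator, which I would expand via the product rule of Proposition~\ref{prop:BesselOpProperties}\,(2) combined with the radial formula of Proposition~\ref{prop:BnuRadial}. Summing over the orthonormal basis, the angular dependence collapses through the completeness relation $\sum_i u_i(\,\cdot\mid u_i)=\frac{r}{8n}\,\mathrm{id}$, so the outcome is again radial; the two successive second-order Bessel operators compose into a fourth-order ordinary differential operator in $t=|x|$, assembled from the one-variable operators $B_\alpha$ of \eqref{eq:OrdinaryBesselOp}.

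It then remains to identify this operator with $\widetilde{\calD}_{\alpha,\beta}$. Writing everything through the Euler operator $\theta=t\frac{\td}{\td t}$ and comparing with the factorization $\widetilde{\calD}_{\alpha,\beta}=\frac{1}{t^2}\big((\theta+\alpha+\beta)(\theta+\alpha)-t^2\big)\big(\theta(\theta+\beta)-t^2\big)$, whose second factor is exactly $t^2 B_{\beta/2}$ (reflecting the iterated Bessel structure), the parameters read off as $(\alpha,\beta)=(\mu,\nu)$ when $r_0>1$ and $(\alpha,\beta)=(\mu,\nu+\sigma)$ when $r_0=1$, the shift by $\sigma$ originating from the split-rank-one normalization in Corollary~\ref{cor:BnuRadial}\,(4). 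The additive constant $\frac{r_0d}{2}|d_0-\tfrac d2|$ comes from the extra term $\vartheta(k{\bf e})$ in Proposition~\ref{prop:BnuRadial}, which is present only in the $\RR^{p,q}$ case with $d_0\neq\frac d2$.

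The main obstacle is the second step: after one application the intermediate functions leave the radial category, and the product rule generates cross terms of the form $P(\frac{\partial f}{\partial x},\frac{\partial g}{\partial x})x$ that must be tracked carefully; only after summing over the entire orthonormal basis of $\frakm$ is rotational symmetry restored and these terms reorganize into the clean factored operator. As an independent check I would confirm that the eigenvalues of $\widetilde{\calD}_{\mu,\nu}$ on the radial generators of the $\frakk$-types $W^j$ (Theorem~\ref{thm: K-type formula}) match the scalars $4j(j+\mu+1)$ dictated by Proposition~\ref{prop:CasimirScalar}, which simultaneously pins down the leading coefficient $-\frac{r_0}{8n}$ and the additive constant.
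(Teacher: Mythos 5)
You should know at the outset that the paper prints Theorem \ref{thm:CasimirAction} with no proof at all (the computation is deferred to the thesis \cite{Moe10}), so there is no argument of the paper to compare against; judged on its own merits, your outline has the right skeleton but a genuine gap at its center. The correct part: the splitting $\frakk=\frakk_\frakl\oplus\frakm$ with $\frakm=\{(u,0,-\vartheta(u)):u\in V\}$ is indeed $\langle\cdot,\cdot\rangle$-orthogonal, radial functions are $K_L$-invariant so $\td\pi(C_{\frakk_\frakl})$ kills them, the problem reduces to $\sum_i\td\pi(Y_i)^2\psi$ with $(u_i|u_j)=\tfrac{r}{8n}\delta_{ij}$, and the first application of \eqref{eq:KActionInTermsOfBesselOp} via Corollary \ref{cor:BnuRadial} does produce $t^2B_{\nu/2}=\theta(\theta+\nu)-t^2$, i.e.\ the second factor of $\widetilde{\calD}_{\mu,\nu}$, so the identification $\beta=\nu$ (resp.\ $\nu+\sigma$) genuinely "reads off". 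What does not read off is $\alpha=\mu$: the second application must produce the first factor $(\theta+\mu+\nu)(\theta+\mu)-t^2$ with $\mu=\tfrac{n}{r_0}+|d_0-\tfrac{d}{2}|-2$, and the constant $\tfrac{n}{r_0}$ appears nowhere in the inputs you cite (Corollary \ref{cor:BnuRadial} only yields the indices $\tfrac{\nu}{2},\tfrac{p-2}{2},\tfrac{q-2}{2},\tfrac{\nu+\sigma}{2}$). It can only emerge when one pairs the product-rule cross term $2P\bigl(\tfrac{\partial}{\partial x}g(|x|),\vartheta u_i\bigr)x$ and the first-order term $g(|x|)\,\lambda\vartheta u_i$ from Proposition \ref{prop:BesselOpProperties}\,(2) against $u_i$ and sums over $i$: this produces traces of Jordan-algebra operators (e.g.\ of $v\mapsto P(\vartheta x,\vartheta v)x$) that must be evaluated through the Peirce decomposition, and none of this is set up in your sketch. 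Asserting that "the parameters read off" at this point is assuming the conclusion. Your fallback, the eigenvalue check, cannot repair this: identifying the radial vectors of $W^j$ as $\widetilde{\calD}_{\mu,\nu}$-eigenfunctions is Corollary \ref{cor:3.14}, which is deduced from this very theorem, so using it to pin down the coefficients is circular (only the $j=0$ generator $\psi_0=\widetilde{K}_{\nu/2}$ is known a priori, and it determines the additive constant but not $\mu$ or the leading coefficient).

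There is also a concrete error. You assert that the $\vartheta(k{\bf e})$ term of Proposition \ref{prop:BnuRadial} — and hence the additive constant $\tfrac{r_0d}{2}|d_0-\tfrac{d}{2}|$ — is "present only in the $\RR^{p,q}$ case with $d_0\neq\tfrac{d}{2}$". Its coefficient is $\tfrac{r_0}{r}(d_0-\tfrac{d}{2})f'$, and $d_0\neq\tfrac{d}{2}$ also holds for every euclidean $V$ of rank at least $2$, where $d_0=d$: Corollary \ref{cor:BnuRadial}\,(1) displays the term explicitly as $\tfrac{d}{2}f'(|x|)\,{\bf e}$ (here $k{\bf e}={\bf e}$ since $K_L\subseteq\Aut(V)$ fixes the identity). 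Accordingly the constant equals $\tfrac{r_0d^2}{4}\neq0$ in the euclidean case — e.g.\ $\tfrac{k}{4}$ for $V=\Sym(k,\RR)$ — exactly as recorded in Proposition \ref{prop:CasimirScalar} and in part (1) of the theorem. A computation organized around your case division would drop this contribution precisely in the euclidean case, i.e.\ for the Weil representation, and would contradict the statement being proved there.
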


\begin{corollary}
\begin{enumerate}
\item[\textup{(1)}] For $V$ of split rank $r_0>1$ let $u\in L^2(\RR_+,t^{\mu+\nu+1}\td t)$ be any eigenfunction of $\widetilde{\calD}_{\mu,\nu}$ for the eigenvalue $4j(j+\mu+1)$, $j\in\NN_0$.
\item[\textup{(2)}] For $V$ of split rank $r_0=1$ and $\lambda=\frac{r_0}{r}\sigma$, $\sigma\in(0,-2\nu)$, let $u\in L^2(\RR_+,t^{\mu+\nu+\sigma+1}\td t)$ be any eigenfunction of $\widetilde{\calD}_{\mu,\nu+\sigma}$ for the eigenvalue $4j(j+\mu+1)$, $j\in\NN_0$.
\end{enumerate}
Then
\begin{align*}
 \psi(x) &:= u(|x|), & x\in\calO
\end{align*}
defines a $\frakk$-finite vector in the $\frakk$-type $W^j$.
\end{corollary}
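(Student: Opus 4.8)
The plan is to combine the two computations of the Casimir action that precede the statement. Proposition~\ref{prop:CasimirScalar} gives the scalar by which $\td\pi(C_\frakk)$ acts on the $\frakk$-type $W^j$, while Theorem~\ref{thm:CasimirAction} expresses $\td\pi(C_\frakk)$ on radial functions through $\widetilde{\calD}_{\mu,\nu}$ (resp.\ $\widetilde{\calD}_{\mu,\nu+\sigma}$); the corollary follows once these two pieces of information are reconciled on the radial subspace.

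First I would verify that $\psi(x)=u(|x|)$ really lies in $L^2(\calO,\td\mu_\lambda)_{\rad}$. Under the isomorphism $L^2(\calO,\td\mu_\lambda)_{\rad}\cong L^2(\RR_+,t^{r\lambda-1}\,\td t)$ this is a matching of weights: using \eqref{eq:noverr0} one finds $\mu+\nu=\frac{r_0d}{2}-2$, so that $\mu+\nu+1=r\lambda-1$ for $r_0>1$ (where $\lambda=\lambda_1=\frac{r_0d}{2r}$), and $\mu+\nu=-2$, so that $\mu+\nu+\sigma+1=\sigma-1=r\lambda-1$ for $r_0=1$. Hence the hypothesis on $u$ is exactly the condition $\psi\in L^2(\calO,\td\mu_\lambda)_{\rad}$. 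Since $\psi$ is radial it is $\frakk_\frakl$-fixed, and feeding the eigenvalue equation for $u$ into Theorem~\ref{thm:CasimirAction} gives
\[
 \td\pi(C_\frakk)\psi=-\frac{r_0}{8n}\Big(4j(j+\mu+1)+\frac{r_0d}{2}\big|d_0-\tfrac{d}{2}\big|\Big)\psi=:c_j\,\psi,
\]
which is precisely the scalar of Proposition~\ref{prop:CasimirScalar}. Moreover the $c_j$ are pairwise distinct: by \eqref{eq:noverr0} we have $\frac{n}{r_0}\geq1$, hence $\mu+1\geq0$ and $j\mapsto j(j+\mu+1)$ is strictly increasing on $\NN_0$.

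I would then conclude that $\psi\in W^j_{\rad}$. By Theorem~\ref{thm: K-type formula} each $W^k_{\rad}$ is one-dimensional and $\bigoplus_k W^k_{\rad}$ is dense in $L^2(\calO,\td\mu_\lambda)_{\rad}$; being the $\frakk_\frakl$-fixed lines of the pairwise inequivalent $\frakk$-types $W^k=E^{\alpha_0+k\gamma_1}$ of the unitary representation $\pi$, they are mutually orthogonal. Since $\td\pi$ acts by skew-adjoint operators (Proposition~\ref{prop:L2Unitary}), $\td\pi(C_\frakk)=\sum_\alpha\td\pi(X_\alpha)^2$ is symmetric, so for a generator $\hat\psi_k\in W^k_{\rad}$ with $k\neq j$ one obtains $(c_j-c_k)\langle\psi,\hat\psi_k\rangle=0$ and therefore $\langle\psi,\hat\psi_k\rangle=0$. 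Thus $\psi$ is orthogonal to $\bigoplus_{k\neq j}W^k_{\rad}$, and by density together with the orthogonal decomposition it lies in $W^j_{\rad}$. As $W^j$ is a finite-dimensional irreducible $\frakk$-module, every vector of $W^j_{\rad}\subseteq W^j$ is $\frakk$-finite, which is the assertion.

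The step I expect to be the main obstacle is the symmetry identity $\langle\td\pi(C_\frakk)\psi,\hat\psi_k\rangle=\langle\psi,\td\pi(C_\frakk)\hat\psi_k\rangle$ used above: although $\psi$ and $\hat\psi_k$ are smooth $L^2$-functions, integrating the fourth-order operator $\widetilde{\calD}_{\mu,\nu}$ by parts produces boundary contributions as $t\to0$ and $t\to\infty$. To discard them one must control the asymptotics of both functions — the exponential decay of the $K$-Bessel functions at infinity (Proposition~\ref{prop:L2ness}, Lemma~\ref{lem:Bessel}) and the behaviour of $L^2$-solutions of $\widetilde{\calD}_{\mu,\nu}u=\xi u$ near $t=0$ dictated by the indicial roots. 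Equivalently, this amounts to the essential self-adjointness of $\td\pi(C_\frakk)$ on the $\frakk$-finite radial vectors, which guarantees that no $L^2$-eigenfunction can escape $W^j_{\rad}$; the required ordinary differential equation analysis is carried out in \cite{HKMM09a,HKMM09b}.
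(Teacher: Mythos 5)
Your proof is correct and is essentially the paper's own (implicit) argument: the corollary is stated without proof as an immediate consequence of Proposition~\ref{prop:CasimirScalar}, Theorem~\ref{thm:CasimirAction} and Theorem~\ref{thm: K-type formula}, and your reconstruction --- the weight matching $\mu+\nu+1=r\lambda-1$, the identification of the Casimir eigenvalue, the strict monotonicity of $j\mapsto j(j+\mu+1)$, and the orthogonality/density argument on the one-dimensional radial lines $W^k_{\rad}$ --- is exactly that derivation spelled out. The boundary-term issue you flag in justifying $\langle\td\pi(C_\frakk)\psi,\hat\psi_k\rangle=\langle\psi,\td\pi(C_\frakk)\hat\psi_k\rangle$ is the one analytic point the paper likewise leaves implicit (resting on the asymptotic analysis of $L^2$-solutions of $\widetilde{\calD}_{\alpha,\beta}$ at $t=0$ and $t=\infty$ from \cite{HKMM09a,HKMM09b}), so it marks no divergence from the paper's approach.
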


For a moment we now allow general real parameters $\alpha,\beta\in\RR$. To find explicit $L^2$-eigenfunctions of $\widetilde{\calD}_{\alpha,\beta}$, we recall from \cite[(3.2)]{HKMM09a} the generating functions $G^{\alpha,\beta}(t,s)$ by
\begin{align*}
 G^{\alpha,\beta}(t,s) &:= \frac{1}{(1-t)^{\frac{\alpha+\beta+2}{2}}}\widetilde{I}_{\frac{\alpha}{2}}\left(\frac{st}{1-t}\right)\widetilde{K}_{\frac{\beta}{2}}\left(\frac{s}{1-t}\right),
\end{align*}
where we have renormalized the $I$-Bessel function as
\[
\widetilde{I}_\lambda(z)
:=\left(\frac{z}{2}\right)^{-\lambda} I_\lambda(z)
= \sum_{n=0}^\infty \frac{1}{\Gamma(n+\lambda+1)n!}
  \left(\frac{z}{2}\right)^{2n}.
\]
The function $G^{\alpha,\beta}(t,s)$ is analytic at $t=0$ and defines a series $(\Lambda_j^{\alpha,\beta}(s))_{j=0,1,2,\ldots}$ of real-analytic functions on $\RR_+$ by
\begin{align*}
 G^{\alpha,\beta}(t,s) &= \sum_{j=0}^\infty{\Lambda_j^{\alpha,\beta}(s)t^j}.
\end{align*}

We then have
\begin{equation}
\Lambda_0^{\alpha,\beta}(s)
= \frac{1}{\Gamma(\frac{\alpha+2}{2})}
  \widetilde{K}_{\frac{\rho}{2}}(s).
\end{equation}

We refer to \cite{HKMM09a, HKMM09b, KM10} for basic properties of
$(\Lambda_j^{\alpha,\beta}(s))_{j=0,1,2,\dotsc}$
as `special functions'.
Among others, we recall from \cite{HKMM09a}:
\begin{theorem}
For $\alpha+\beta,\alpha-\beta>-2$ the function $\Lambda_j^{\alpha,\beta}(s)$ is non-zero, contained in $L^2(\RR_+,s^{\alpha+\beta+1}\td s)$ and an eigenfunction of $\widetilde{\calD}_{\alpha,\beta}$ for the eigenvalue $4j(j+\alpha+1)$.
\end{theorem}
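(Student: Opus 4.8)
The plan is to deduce all three assertions from a single ``master identity'' for the generating function: the operator $\widetilde{\calD}_{\alpha,\beta}$, read in the variable $s$, is intertwined through $G^{\alpha,\beta}(t,s)$ with an Euler operator in $t$. Concretely, I would prove
\[
 \widetilde{\calD}_{\alpha,\beta}^{(s)}\,G^{\alpha,\beta}(t,s) = 4\,\theta(\theta+\alpha+1)\,G^{\alpha,\beta}(t,s),
\]
where $\widetilde{\calD}_{\alpha,\beta}^{(s)}$ denotes $\widetilde{\calD}_{\alpha,\beta}$ with its variable taken to be $s$, while on the right $\theta=t\frac{\td}{\td t}$ acts in $t$. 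Granting this identity, I substitute the defining expansion $G^{\alpha,\beta}(t,s)=\sum_{j\geq0}\Lambda_j^{\alpha,\beta}(s)\,t^j$ and compare coefficients of $t^j$: since $\widetilde{\calD}_{\alpha,\beta}^{(s)}$ acts only in $s$ it commutes with extraction of the coefficient of $t^j$, and $\theta(\theta+\alpha+1)t^j=j(j+\alpha+1)t^j$, so
\[
 \widetilde{\calD}_{\alpha,\beta}\,\Lambda_j^{\alpha,\beta} = 4j(j+\alpha+1)\,\Lambda_j^{\alpha,\beta}.
\]
This gives the eigenvalue assertion at once; here one only needs that $G^{\alpha,\beta}(\cdot,s)$ is holomorphic in $t$ near $0$, which is built into its definition, to justify the term-by-term manipulations.

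The verification of the master identity is the heart of the matter and the step I expect to be the main obstacle. The factored form of $\widetilde{\calD}_{\alpha,\beta}$ is designed around the two Bessel factors: in Euler form the inner factor equals $t^2 B_{\beta/2}$ from \eqref{eq:OrdinaryBesselOp}, so it annihilates $\widetilde{K}_{\beta/2}$, while the outer factor is a $\beta$-shift of the analogous operator attached to $\widetilde{I}_{\alpha/2}$; recall that both $\widetilde{I}_{\alpha/2}$ and $\widetilde{K}_{\beta/2}$ solve the modified Bessel equation. I would introduce the variable $w=\frac{s}{1-t}$, in which the two factors of $G^{\alpha,\beta}$ have arguments $tw$ and $w$ and in which, at fixed $t$, one has $s\frac{\td}{\td s}=w\frac{\td}{\td w}$ while $s^2=(1-t)^2w^2$. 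The computation then consists of applying the fourth-order operator to the product $\widetilde{I}_{\alpha/2}(tw)\widetilde{K}_{\beta/2}(w)$, repeatedly using the two Bessel equations to trade $w$-derivatives for multiplication, and checking that the prefactor $(1-t)^{-(\alpha+\beta+2)/2}$ is exactly what converts the surviving terms into $4\theta(\theta+\alpha+1)$ acting in $t$. The delicate point is the bookkeeping: the Euler operator in $w$ differentiates the product, so the two Bessel relations become entangled through the product rule, and one must track the interplay of the scaling $s=(1-t)w$, the shift in the outer factor, and the prefactor simultaneously.

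For square-integrability and non-vanishing I would analyze the behavior of $\Lambda_j^{\alpha,\beta}$ at the two ends of $\RR_+$ directly from the Bessel series. As $s\to\infty$ the product $\widetilde{I}_{\alpha/2}(\frac{st}{1-t})\widetilde{K}_{\beta/2}(\frac{s}{1-t})$ decays like $e^{st/(1-t)}e^{-s/(1-t)}=e^{-s}$, so each coefficient $\Lambda_j^{\alpha,\beta}$ decays exponentially and integrability at infinity is automatic for all parameters. The behavior at $s\to0$ is governed by the leading term of $\widetilde{K}_{\beta/2}$, which is of order $s^{-\beta}$ for $\beta>0$ and bounded (up to a logarithm when $\beta=0$) for $\beta\leq0$; extracting the coefficient of $t^j$ of the corresponding leading part of $G^{\alpha,\beta}$ produces a Pochhammer symbol, namely a nonzero multiple of $\big(\tfrac{\alpha-\beta+2}{2}\big)_j$ for $\beta>0$ and of $\big(\tfrac{\alpha+\beta+2}{2}\big)_j$ for $\beta\leq0$. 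Since the hypotheses $\alpha-\beta>-2$ and $\alpha+\beta>-2$ make these Pochhammer symbols strictly positive, the leading small-$s$ coefficient of $\Lambda_j^{\alpha,\beta}$ is nonzero, which simultaneously proves that $\Lambda_j^{\alpha,\beta}\not\equiv0$ and pins the integrability threshold at the origin: the binding inequality is $\alpha-\beta>-2$ when $\beta>0$ and $\alpha+\beta>-2$ when $\beta\leq0$, so both hypotheses together yield $\Lambda_j^{\alpha,\beta}\in L^2(\RR_+,s^{\alpha+\beta+1}\td s)$.
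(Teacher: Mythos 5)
The paper itself contains no proof of this theorem: it is recalled verbatim from the authors' earlier work \cite{HKMM09a}, so the only meaningful comparison is with the proof given there, and your proposal is essentially a reconstruction of it. In that reference, too, the eigenvalue statement is obtained by showing that the generating function satisfies $\widetilde{\calD}^{(s)}_{\alpha,\beta}G^{\alpha,\beta}(t,s)=4\,\theta_t(\theta_t+\alpha+1)\,G^{\alpha,\beta}(t,s)$ (with $\theta_t=t\frac{\td}{\td t}$) and comparing Taylor coefficients in $t$, while non-vanishing and square-integrability are read off from the asymptotics of $\Lambda_j^{\alpha,\beta}$ at $s=0$ and $s=\infty$. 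Your master identity is correct, and your plan for proving it goes through; a tidy way to organize the computation, complementary to your substitution $w=s/(1-t)$, is to write $\widetilde{\calD}_{\alpha,\beta}=s^{-2}QP$ with $P=\theta(\theta+\beta)-s^2$ and $Q=(\theta+\alpha+\beta)(\theta+\alpha)-s^2$ and to exploit the commutation relations $P\theta=\theta P+2s^2$ and $s^{-2}Qs^2=(\theta+\alpha+\beta+2)(\theta+\alpha+2)-s^2$ together with $P\widetilde{K}_{\beta/2}=0$; this instantly confirms the identity on the coefficients $j=0,1$ and makes the bookkeeping you worry about mechanical. Two spots in your asymptotic analysis need slightly more care than you give them: first, passing from the exponential decay of $G^{\alpha,\beta}(t,\cdot)$ to that of each coefficient $\Lambda_j^{\alpha,\beta}$ requires uniform bounds on a complex circle $|t|=\rho$ (Cauchy estimates, available because $\widetilde{I}_{\alpha/2}$ has non-negative Taylor coefficients and $\Re\frac{s}{1-t}\geq c_\rho s$ there), or alternatively the observation that $\frac{\td}{\td z}\widetilde{K}_\nu(z)=-\frac{z}{2}\widetilde{K}_{\nu+1}(z)$ exhibits each $\Lambda_j^{\alpha,\beta}$ as a finite sum of terms $s^a\widetilde{K}_{\beta/2+b}(s)$; second, when $\beta$ is a positive even integer the expansion of $\widetilde{K}_{\beta/2}$ contains logarithmic terms, but these enter at relative order $s^{\beta}\log s$ and therefore do not affect the leading coefficient proportional to $\Gamma\bigl(\tfrac{\beta}{2}\bigr)\Gamma\bigl(\tfrac{\alpha-\beta+2}{2}+j\bigr)(s/2)^{-\beta}$ from which you deduce non-vanishing and the integrability threshold. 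With these routine repairs your argument is complete and agrees with the source's.
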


We apply this theorem to our setting.
Let $\mu=\mu(V)$ and $\nu=\nu(V)$ be the parameters (see Table
\ref{tb:MuNu}) belonging to a simple real Jordan algebra $V$ for which we constructed the representation $\pi$.

\begin{corollary}\label{cor:3.14}
\begin{enumerate}
\item[\textup{(1)}] Let $V$ be of split rank $r_0>1$. Then for every $j\in\NN_0$, the function
\begin{align*}
 \psi_j(x) &:= \Lambda_j^{\mu,\nu}(|x|), & x\in\calO,
\end{align*}
is a non-zero $\frakk$-finite vector in the $\frakk$-type $W^j$.
\item[\textup{(2)}] Let $V$ be of split rank $r_0=1$ and $\lambda=\frac{r_0}{r}\sigma$, $\sigma\in(0,-2\nu)$. Then for every $j\in\NN_0$, the function
\begin{align*}
 \psi_j(x) &:= \Lambda_j^{\mu,\nu+\sigma}(|x|), & x\in\calO,
\end{align*}
is a non-zero $\frakk$-finite vector in the $\frakk$-type $W^j$ for any $j\in\NN_0$.
\end{enumerate}
\end{corollary}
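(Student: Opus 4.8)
The plan is to obtain both statements by composing the two results that immediately precede the corollary. The theorem on the special functions $\Lambda_j^{\alpha,\beta}$ guarantees, under the hypothesis $\alpha\pm\beta>-2$, that $\Lambda_j^{\alpha,\beta}$ is non-zero, lies in $L^2(\RR_+,s^{\alpha+\beta+1}\,\td s)$, and is an eigenfunction of $\widetilde{\calD}_{\alpha,\beta}$ for the eigenvalue $4j(j+\alpha+1)$; the Corollary to Theorem~\ref{thm:CasimirAction} then says that any eigenfunction of $\widetilde{\calD}_{\mu,\nu}$ (resp.\ $\widetilde{\calD}_{\mu,\nu+\sigma}$) in the corresponding $L^2$-space, for the eigenvalue $4j(j+\mu+1)$, produces a $\frakk$-finite vector in the $\frakk$-type $W^j$ via $\psi(x):=u(|x|)$. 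I would apply the former with $(\alpha,\beta)=(\mu,\nu)$ when $r_0>1$ and $(\alpha,\beta)=(\mu,\nu+\sigma)$ when $r_0=1$; in each case the weight $s^{\alpha+\beta+1}$ is exactly the weight occurring in the Corollary to Theorem~\ref{thm:CasimirAction}, so the two results compose, and the eigenvalue $4j(j+\alpha+1)=4j(j+\mu+1)$ (note $\alpha=\mu$ in both cases) matches the Casimir eigenvalue of $W^j$ recorded in Proposition~\ref{prop:CasimirScalar}. The conclusion $\psi_j\in W^j$ and $\psi_j\neq0$ then follows, the latter because $x\mapsto|x|$ surjects onto $\RR_+$ on $\calO$.

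Hence the whole proof reduces to verifying the inequalities $\mu\pm\nu>-2$ for $r_0>1$ and $\mu\pm(\nu+\sigma)>-2$ for $r_0=1$. For this I would first record, using \eqref{eqn: def nu}, \eqref{eq:mu} and the identity $\frac{n}{r_0}=e+1+(r_0-1)\frac{d}{2}$ from \eqref{eq:noverr0}, the two closed forms
\begin{align*}
 \mu+\nu &= \frac{r_0 d}{2}-2, & \mu-\nu &= 2e+(r_0-2)\frac{d}{2}+2\left|d_0-\frac{d}{2}\right|.
\end{align*}
For $r_0>1$ simplicity of $V$ forces $d\geq1$, so $\mu+\nu\geq-1>-2$, while every term of $\mu-\nu$ is non-negative, giving $\mu-\nu\geq0>-2$; this disposes of part (1) uniformly across the euclidean, higher-rank non-euclidean, and $\RR^{p,q}$ ($p+q$ even) cases.

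For part (2) we are in the case $V\cong\RR^{k,0}$ of Proposition~\ref{prop:ClassificationEuclSph}\,(4), where the single-idempotent frame forces $d=d_0=0$ and $e=k-1$, so that the two closed forms degenerate to $\mu+\nu=-2$ and $\mu-\nu=2(k-1)$, consistent with $\mu=k-2$, $\nu=-k$. Taking $\beta=\nu+\sigma$ gives $\mu+\beta=\sigma-2$ and $\mu-\beta=2k-2-\sigma$, whence the pair of inequalities $\mu\pm\beta>-2$ is equivalent to $0<\sigma<2k=-2\nu$, i.e.\ exactly the standing hypothesis $\sigma\in(0,-2\nu)$. The main point — and essentially the only computation — is thus the derivation of the two displayed identities together with the observation that in split rank one they collapse so cleanly that the Bessel-operator admissibility range $\sigma\in(0,-2\nu)$ of Lemma~\ref{lem:Bessel} reappears verbatim as the hypothesis of the special-function theorem; the remainder is a direct chaining of the two cited results.
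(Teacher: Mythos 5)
Your proposal is correct and is essentially the proof the paper intends: the corollary follows by chaining the eigenfunction theorem for $\Lambda_j^{\alpha,\beta}$ (with $(\alpha,\beta)=(\mu,\nu)$, resp.\ $(\mu,\nu+\sigma)$) with the corollary to Theorem~\ref{thm:CasimirAction}, the weights $s^{\alpha+\beta+1}$ and eigenvalues $4j(j+\mu+1)$ matching on the nose. Your closed-form identities $\mu+\nu=\tfrac{r_0d}{2}-2$ and $\mu-\nu=2e+(r_0-2)\tfrac{d}{2}+2\left|d_0-\tfrac{d}{2}\right|$ are a clean, uniform way to check the hypotheses $\mu\pm\beta>-2$ (which the paper leaves implicit, verifiable case-by-case from Table~\ref{tb:MuNu}), and your reduction of the $r_0=1$ case to exactly the condition $\sigma\in(0,-2\nu)$ is correct.
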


The function $\psi_j(x)$ with $j=0$ in Corollary \ref{cor:3.14}
coincides,
up to a Gamma factor,
with the generating function $\psi_0$ which was introduced in
\eqref{def:Psi0} and \eqref{def:Psi0-r_0=1}.

\begin{remark}\label{rem:KBessel}
In $L^2$-model of a number of `small' unitary representations,
we can observe that functions belonging to minimal $K$-types are given
by means of the modified $K$-Bessel function $\widetilde{K}_\alpha(x)$ 
(see \cite{DS99,KO03c,xkop,Sah92}).
It is known that  Hermite/Laguerre polynomials
  appear  in the higher $K$-types of the minimal representations
for $\mathfrak{g} = \mathfrak{sp}(n, \mathbb R)$
and $\mathfrak{o}(n,2)$.
The idea behind `special functions' $\Lambda_j^{\mu,\nu}(x)$ is to find
an analog of these classical polynomials
for the  minimal representations that we have constructed.
This idea is pursued in \cite{HKMM09b, KM10}.
\end{remark}

\subsection{The unitary inversion operator $\calF_\calO$}\label{sec: inversion operator}

The proof of Theorem C parallels the argument in \cite[Chapter2]{KM07b}, where we introduce an involutive unitary operator that we call the \textit{unitary inversion operator} $\calF_\calO$ on $L^2(\calO,\td\mu_\lambda)$. This operator intertwines the Bessel operators with multiplication by coordinate functions. 
The unitary inversion operator
$\calF_\calO$ is not only a tool to prove Theorem C, but is of
interest on its own. In fact, it is the Euclidean Fourier transform up
to a phase factor if $V=\Sym(k,\RR)$; for $V=\RR^{p,q}$, the principal
object of the paper \cite{KM07a} is the unitary inversion operator
$\calF_{\calO}$ for the ``light cone'' $\calO$ in the Minkowski space
$\RR^{1,n-1}$ and that of the book \cite{KM07b} is $\calF_{\calO}$ for
the isotropic cone $\calO$ in $\RR^{p,q}$. We refer to \cite{K10} 
and \cite[Chapter 1]{KM07b} for the general program of the $L^2$-model of minimal representations
  and further perspectives on the role of the unitary 
inversion operator $\calF_{\calO}$. 
{}From the representation theoretic viewpoint, the operator
$\calF_\calO$ generates the action $\pi$ of the whole group
${G}^\vee$ 
 together with the (relatively simple) action of a maximal parabolic subgroup on $L^2(\calO,\td\mu_\lambda)$. \\

Let $V$ be a simple real Jordan algebra with $V^+$ simple.
As before, we assume $V\ncong\RR^{p,q}$ with $p+q$ odd, $p,q\geq2$.
Let $\lambda\in\calW$ such that $\calO_\lambda=\calO$. For $V$ of split rank $r_0=1$ we further assume that $\lambda\in(0,-2\nu)$. Then $\td\pi_\lambda$ integrates to an irreducible unitary representation $\pi$ of $G^\vee$ on $L^2(\calO,\td\mu_\lambda)$ as we proved in Theorem \ref{thm:IntgkModule}. Let $$j^\vee:=\exp_{G^\vee}\left(\frac{\pi}{2}({\bf e},0,-{\bf e})\right)\in G^\vee,$$ then $j^\vee$ projects to the conformal inversion $j\in G$ under the covering map $G^\vee\rightarrow G$.

Further recall the Cartan involution $\vartheta\in\Str(V)$. We define the action $\rho_\lambda$ on $L^2(\calO,\td\mu_\lambda)$ also for $\vartheta$, extending formula \eqref{eq:L2Rep2}:
\begin{align*}
 \rho_\lambda(\vartheta)\psi(x) &:= \chi(\vartheta^*)^{\frac{\lambda}{2}}\psi(\vartheta^*x) = \psi(\vartheta x), & \psi\in L^2(\calO,\td\mu_\lambda).
\end{align*}
Then we define the unitary operator $\calF_\calO$ on $L^2(\calO,\td\mu_\lambda)$ by
\begin{align}
 \calF_\calO := e^{-\pi\sqrt{-1}\frac{r_0}{2}(d_0-\frac{d}{2})_+}\rho_\lambda(\vartheta)\pi(j^\vee).\label{eq:DefFO}
\end{align}

\begin{remark}
We define an element of order two in $\Co(V)$ by
\begin{equation}\label{eqn:w0}
w_0:=j\circ\vartheta=\vartheta\circ j\in\Co(V).
\end{equation}
In general $w_0$ is not contained in the identity component $G=\Co(V)_0$. In fact, since the Cartan involution of $\Co(V)$ is given by conjugation with $w_0$, we have $w_0\in G$ if and only if $\rank(G)=\rank(K)$. In particular, $w_0\in G$ if $V$ is euclidean and $w_0\notin G$ if $V$ is complex. We can extend our unitary representation $\pi$ to a (possibly)
disconnected group generated by $G^\vee$ and $\vartheta$, and lift $w_0$ to $w_0^\vee:=j^\vee\circ\vartheta=\vartheta\circ j^\vee$, an element of order $2\ell$ with $\ell\in\NN$ denoting the smallest positive integer such that
\begin{align}
 \ell\frac{r_0}{2}\left(d_0-\frac{d}{2}\right)_+ &\in \ZZ.\label{eq:defk}
\end{align}
Here we use the notation
\begin{align*}
 x_+ &:= \frac{1}{2}(x+|x|) = \begin{cases}x & \mbox{if }x\geq0,\\0 & \mbox{if }x<0.\end{cases}\index{xplus@$x_+$}
\end{align*}
for the positive part of a real number $x\in\RR$. Note that the integer $\ell$ is either $1$, $2$ or $4$ (see Table \ref{tb:Constants}). Then we have
\[
\calF_{\calO}=e^{-\pi\sqrt{-1}\frac{r_0}{2}(d_0-\frac{d}{2})_+}\pi(w_0^\vee)
\]
with the same letter $\pi$ to denote the extension. Details of this extension can be found in \cite[Section 2.4]{Moe10}.
\end{remark}

\begin{proposition}
\begin{enumerate}
\item[\textup{(1)}] $\calF_\calO$ is an involutive unitary operator on $L^2(\calO,\td\mu_\lambda)$.
\item[\textup{(2)}] The following intertwining formulas hold:
\begin{align}
\begin{split}
 \calF_\calO\circ\vartheta x &= -\calB_\lambda\circ\calF_\calO,\\
 \calF_\calO\circ\calB_\lambda &= -\vartheta x\circ\calF_\calO.
\end{split}\label{eq:IntertwiningFormulas}
\end{align}
Moreover, any other operator on $L^2(\calO,\td\mu_\lambda)$ with these properties is a scalar multiple of $\calF_\calO$.
\end{enumerate}
\end{proposition}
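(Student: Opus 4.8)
The plan is to reduce everything to the single group element $w_0^\vee$ by using the identity $\calF_\calO = e^{-\pi\sqrt{-1}\frac{r_0}{2}(d_0-\frac{d}{2})_+}\pi(w_0^\vee)$ recorded in the preceding remark, where $w_0^\vee = j^\vee\circ\vartheta = \vartheta\circ j^\vee$. For part~(1), unitarity is immediate: $\calF_\calO$ is the product of a phase factor of modulus one, the unitary operator $\rho_\lambda(\vartheta)$ (here $\vartheta$ preserves $\td\mu_\lambda$ since $\chi(\vartheta)=1$), and the unitary operator $\pi(j^\vee)$. For involutivity I would first compute, using that $j^\vee$ and $\vartheta$ commute and $\vartheta^2=\id$ together with $(j^\vee)^2=\exp_{G^\vee}(\pi H_0)$ for $H_0:=({\bf e},0,-{\bf e})\in\frakt\subseteq\frakk$ (see \eqref{eq:frakk}),
\begin{align*}
 \calF_\calO^2 &= e^{-\pi\sqrt{-1}r_0(d_0-\frac{d}{2})_+}\,\pi\big((w_0^\vee)^2\big) = e^{-\pi\sqrt{-1}r_0(d_0-\frac{d}{2})_+}\,\pi\big(\exp_{G^\vee}(\pi H_0)\big).
\end{align*}

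The crux is then that $\pi(\exp(\pi H_0))$ is a \emph{scalar} operator. Since $\gamma_j(H_0)=2\sqrt{-1}$ for every $j$, the element $\exp(\pi H_0)$ acts on a weight vector of weight $\sum_i t_i\gamma_i$ by $e^{2\pi\sqrt{-1}\sum_i t_i}$. Within a fixed $\frakk$-type the weights differ by roots $\pm\frac{\gamma_i\pm\gamma_j}{2}$, each of which shifts $\sum_i t_i$ by an integer, while consecutive $\frakk$-types $W^j=E^{\alpha_0+j\gamma_1}$ have highest weights differing by $\gamma_1$, which again shifts $\sum_i t_i$ by $1$ (Theorem~\ref{thm: K-type formula}). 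Hence $e^{2\pi\sqrt{-1}\sum_i t_i}=e^{\pi\alpha_0(H_0)}$ is constant on the dense subspace $W$, so $\pi(\exp(\pi H_0))=e^{\pi\alpha_0(H_0)}\id$. Evaluating $\alpha_0(H_0)$ from the list of highest weights in Theorem~\ref{prop:Kfinite} (and $\alpha_0=0$ with trivial phase in the non-euclidean rank $\geq3$ and the split rank one cases of Theorem~\ref{prop:Kfinite-split rank 1}, where $d=2d_0$ resp.\ $d=d_0=0$) yields $e^{\pi\alpha_0(H_0)}=e^{\pi\sqrt{-1}r_0(d_0-\frac{d}{2})_+}$, which cancels the phase and gives $\calF_\calO^2=\id$. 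This case-by-case bookkeeping across the four cases of Proposition~\ref{prop:ClassificationEuclSph} is the main obstacle, although it is routine once the weights of the minimal $\frakk$-type are in hand.

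For the intertwining formulas in part~(2) I would use that conjugation by $w_0^\vee$ realizes the Cartan involution, so that on smooth vectors $\pi(w_0^\vee)\td\pi_\lambda(X)\pi(w_0^\vee)^{-1}=\td\pi_\lambda(\theta X)$ for all $X\in\frakg$. By \eqref{eq:ThetaOng} we have $\theta(u,0,0)=(0,0,-\vartheta u)$ and $\theta(0,0,-v)=(\vartheta v,0,0)$, so $\theta$ interchanges $\frakn$ and $\overline{\frakn}$. Substituting the explicit actions $\td\pi_\lambda(u,0,0)\psi(x)=\sqrt{-1}(x\psi(x)|u)$ from \eqref{eq:L2DerRep1} and $\td\pi_\lambda(0,0,-v)\psi(x)=\frac{1}{\sqrt{-1}}(\calB_\lambda\psi(x)|v)$ from \eqref{eq:L2DerRep3}, and absorbing the central phase factor into $\calF_\calO$, the relation for $X=(u,0,0)$ becomes $\calF_\calO(x|u)\calF_\calO^{-1}=-(\calB_\lambda|\vartheta u)$; putting $u=\vartheta v$ and using $(x|\vartheta v)=(\vartheta x|v)=\tau(x,v)$ gives the first stated formula $\calF_\calO\circ\vartheta x=-\calB_\lambda\circ\calF_\calO$, and the relation for $X=(0,0,-v)$ gives symmetrically $\calF_\calO\circ\calB_\lambda=-\vartheta x\circ\calF_\calO$. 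The one subtlety to address is the passage from smooth vectors to the (closed) operators on $L^2(\calO,\td\mu_\lambda)$, which is handled by density of the $\frakk$-finite vectors (Theorem~\ref{thm:IntgkModule}) and symmetry of $\calB_\lambda$ (Theorem~\ref{thm:BlambdaTangential}).

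Finally, for uniqueness, suppose $T$ satisfies both relations and set $S:=\calF_\calO T$, so that $T=\calF_\calO S$ since $\calF_\calO^2=\id$. Chaining the two relations for $T$ with those for $\calF_\calO$ shows that $S$ commutes both with multiplication by every component of $\vartheta x$ and with $\calB_\lambda$. Commuting with all coordinate multiplications forces $S$ to be multiplication by an $L^\infty$-function $m$, as the algebra generated by $\CC[\calO]$ is maximal abelian in $B(L^2(\calO,\td\mu_\lambda))$; commuting additionally with $\calB_\lambda$ then forces $m$ to be constant, since by the product rule of Proposition~\ref{prop:BesselOpProperties}\,(2) the identity $[\calB_\lambda,m]=0$ reduces to the vanishing, for all test functions $\psi$, of the first-order term $2P\big(\frac{\partial m}{\partial x},\frac{\partial \psi}{\partial x}\big)x$, whence $m$ is locally constant on the connected manifold $\calO=K_L\RR_+c_1$. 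Therefore $S$ is a scalar and $T=\calF_\calO S$ is a scalar multiple of $\calF_\calO$.
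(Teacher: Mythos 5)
Your proposal is necessarily a different route from the paper's, because the paper gives no argument at all: its proof of this proposition is a pure citation to \cite{KM07b} (Theorem 2.5.2, for $O(p,q)$) and \cite{Moe10} (Theorem 2.4.1 and Corollary 3.8.4, for the general case). What you have done is reconstruct a self-contained proof from machinery the paper itself provides, and the core of it is sound. In particular the heart of part (1) is correct: since $H_0:=({\bf e},0,-{\bf e})$ is central in $\frakk$ \emph{only} in the euclidean case (Lemma \ref{lem:CenterK}), a naive Schur argument is unavailable, and your weight-lattice argument is the right substitute --- every root $\pm\frac{\gamma_i\pm\gamma_j}{2}$ shifts $\sum_i t_i$ by an integer, so $\pi(\exp_{G^\vee}(\pi H_0))$ acts by the single scalar $e^{\pi\alpha_0(H_0)}$ on the dense subspace $W$, and one checks from Theorems \ref{prop:Kfinite} and \ref{prop:Kfinite-split rank 1} that this scalar equals $e^{\pi\sqrt{-1}r_0(d_0-\frac{d}{2})_+}$ in all four cases of Proposition \ref{prop:ClassificationEuclSph}, cancelling the square of the normalizing phase. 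The derivation of \eqref{eq:IntertwiningFormulas} from $\Ad(w_0^\vee)=\theta$ together with \eqref{eq:ThetaOng}, \eqref{eq:L2DerRep1} and \eqref{eq:L2DerRep3} is also correct. Two caveats, though: you lean on the extension of $\pi$ to the group generated by $G^\vee$ and $\vartheta$ and on its covariance property $\pi(w_0^\vee)\td\pi_\lambda(X)\pi(w_0^\vee)^{-1}=\td\pi_\lambda(\theta X)$, which the preceding Remark merely asserts with a pointer to \cite{Moe10}; to be self-contained you should verify $\rho_\lambda(\vartheta)\td\pi_\lambda(X)\rho_\lambda(\vartheta)^{-1}=\td\pi_\lambda(\Ad(\vartheta)X)$ directly (an elementary computation). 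Likewise, unitarity of $\rho_\lambda(\vartheta)$ does not follow from ``$\chi(\vartheta)=1$'' alone, since \eqref{eq:dmulambdaEquivariance} is stated only for $g\in L$ and $\vartheta\notin L$ in general; one needs the uniqueness of equivariant measures in Proposition \ref{thm:EquivMeasures} plus $\vartheta^2=\id$ to conclude $\td\mu_\lambda\circ\vartheta=\td\mu_\lambda$.

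The one step with a genuine gap is the end of your uniqueness argument. Having reduced to $S=M_m$ with $m\in L^\infty(\calO,\td\mu_\lambda)$, you apply the product rule of Proposition \ref{prop:BesselOpProperties}\,(2) to $m$, but an $L^\infty$ function is not a priori differentiable, and the Bessel system is not elliptic, so smoothness of $m$ cannot simply be assumed or bootstrapped; moreover, even granting smoothness, the implication ``$P\big(\frac{\partial m}{\partial x},\frac{\partial\psi}{\partial x}\big)x=0$ for all $\psi$ implies $dm$ vanishes tangentially'' requires knowing that $\{\eta:P(\eta,\cdot)\,x\equiv0\}$ is exactly the conormal space of $\calO$ at $x$ (this is true, via the identity $P(\eta,\xi)x=(\eta\,\Box\, x)\xi$ and the Peirce decomposition relative to $c_1$, but you assert rather than prove it). A cleaner finish, entirely within the paper's toolkit: $S$ commutes with $\td\pi_\lambda(X)$ for $X\in\frakn$ (the coordinate multiplications) and for $X\in\overline{\frakn}$ (the Bessel operators); since $[\frakn,\overline{\frakn}]=\frakl$ by \eqref{eq:LieBracket}, these two subalgebras generate $\frakg$, so $S$ commutes with the whole of $\td\pi_\lambda(\frakg)$, hence with the irreducible unitary representation $\pi$ of Theorem \ref{thm:IntgkModule}, and Schur's lemma gives $S\in\CC\,\id$. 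This also keeps all manipulations on the $\pi$-invariant dense space of $\frakk$-finite (hence analytic) vectors, which is the natural way to discharge the domain issues you defer in part (2).
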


\begin{proof}
See \cite[Theorem 2.5.2]{KM07b} for $G=O(p,q)$ and \cite[Theorem 2.4.1 and Corollary 3.8.4]{Moe10} for the general case.
\end{proof}

Now, we are ready to give a proof of Theorem C:

\begin{theorem}\label{thm:BesselRing}
The ring of differential operators on $\calO$ generated by the Bessel operators $\phi(\calB_\lambda)=\langle\phi,\calB_\lambda\rangle$, $\phi\in V^*$, is isomorphic to the ring of functions on $\calO$ which are restrictions of polynomials on $V$.
\end{theorem}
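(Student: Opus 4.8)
Throughout I work under the standing hypothesis, which by Theorem~A and Theorem~\ref{thm:IntgkModule} guarantees that the unitary representation $\pi$ of $G^\vee$ on $L^2(\calO,\td\mu_\lambda)$, and hence the unitary inversion operator $\calF_\calO$ of \eqref{eq:DefFO}, are available. The plan is to transport the ring generated by the Bessel operators onto a ring of multiplication operators by means of $\calF_\calO$, where the assertion becomes transparent. Since the operators $\langle\phi,\calB_\lambda\rangle$, $\phi\in V^*$, pairwise commute by Proposition~\ref{prop:BesselOpProperties}\,(1), the linear map $\phi\mapsto\langle\phi,\calB_\lambda\rangle$ extends to an algebra homomorphism
\[
 \Phi\colon\CC[V]=S(V^*)\longrightarrow\End(\calH^\infty),
\]
whose image is exactly the ring $\mathcal R$ in the statement. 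Here $\calH^\infty$ denotes the space of smooth vectors of $\pi$: the Bessel operators act on it as the $\overline{\frakn}$-part of $\td\pi_\lambda$, it is stable under multiplication by polynomials (these are generated by the $\frakn$-action \eqref{eq:L2DerRep1}), and it is preserved by $\calF_\calO$, which is a scalar multiple of $\pi(w_0^\vee)$. Writing $\CC[\calO]=\CC[V]/I(\calO)$ with $I(\calO)$ the ideal of polynomials vanishing on $\calO$, it then suffices to show that $\ker\Phi=I(\calO)$.

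First I would conjugate by $\calF_\calO$. As $\calF_\calO$ is involutive, hence invertible, $A\mapsto\calF_\calO A\calF_\calO^{-1}$ is an algebra automorphism of $\End(\calH^\infty)$, and pairing the intertwining identity \eqref{eq:IntertwiningFormulas} with $\phi$ gives
\[
 \calF_\calO\circ\langle\phi,\calB_\lambda\rangle\circ\calF_\calO^{-1}=-\langle\phi,\vartheta x\rangle,
\]
i.e.\ multiplication by the linear function $x\mapsto\langle\phi,\vartheta x\rangle$. Extending multiplicatively and using that $\vartheta$ is a linear automorphism of $V$, one obtains for every $p\in\CC[V]$ that $\calF_\calO\circ\Phi(p)\circ\calF_\calO^{-1}$ is multiplication by the polynomial $p\circ(-\vartheta)$, the sign on a homogeneous component $p_k$ arising from $p_k\circ(-\vartheta)=(-1)^k\,p_k\circ\vartheta$. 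Thus conjugation by $\calF_\calO$ identifies $\mathcal R$ with the algebra of multiplication operators $\{\,x\mapsto p(-\vartheta x):p\in\CC[V]\,\}$ on functions over $\calO$; conceptually this is just the fact that $\calF_\calO$ realizes the element swapping $\overline{\frakn}$ and $\frakn$.

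It remains to compute the kernel. A multiplication operator annihilates all functions on $\calO$ precisely when its multiplier vanishes on $\calO$, so $\Phi(p)=0$ if and only if $p\circ(-\vartheta)$ vanishes on $\calO$, i.e.\ $p$ vanishes on $(-\vartheta)\calO=-\calO$ (using $\vartheta\calO=\calO$, since $\vartheta$ is a Jordan automorphism fixing $c_1$). To pass from $-\calO$ to $\calO$ I would invoke $I(-\calO)=I(\calO)$: the set $\overline{\XX}=\overline{\calO}\cup(-\overline{\calO})$ is an irreducible real affine variety stable under $x\mapsto-x$, and $\calO$ (resp.\ $-\calO$), being a nonempty Euclidean-open subset of the real points of $\overline{\XX}$, is Zariski dense in it; hence $I(\calO)=I(\overline{\XX})=I(-\calO)$. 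This yields $\ker\Phi=I(\calO)$ and therefore $\mathcal R\cong\CC[V]/I(\calO)=\CC[\calO]$. This is exactly the argument of \cite[Chapter~2]{KM07b} for $\frakg\cong\so(p,q)$, now made available for all $\frakg$ through Theorem~A and the intertwining relations \eqref{eq:IntertwiningFormulas}.

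I expect the main obstacle to be of a technical, functional-analytic nature: justifying that conjugation by $\calF_\calO$ is a genuine algebra isomorphism for the \emph{unbounded} operators $\langle\phi,\calB_\lambda\rangle$. This requires exhibiting a single $\calF_\calO$-stable dense core, namely the smooth vectors $\calH^\infty$, on which the Bessel operators, the multiplication operators, and $\calF_\calO$ are simultaneously defined and which all three preserve, so that the identities above are literal operator identities on $\calH^\infty$. The only other point needing care is the algebro-geometric input, the Zariski density of $\calO$ in $\overline{\XX}$, which is what legitimizes replacing vanishing on $-\calO$ by vanishing on $\calO$.
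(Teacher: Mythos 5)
Your proposal is correct and takes essentially the same route as the paper's own proof: the paper likewise extends $\phi\mapsto\phi(\calB_\lambda)$ to a surjection of $\CC[V]$ onto the ring in question and then uses the intertwining formulas \eqref{eq:IntertwiningFormulas} for the unitary inversion operator $\calF_\calO$ to identify the kernel with the ideal of polynomials vanishing on $-\calO$. The only divergence is the final step: the paper sidesteps your Zariski-density argument (whose stated justification ``nonempty Euclidean-open $\Rightarrow$ Zariski dense'' is false for general real varieties and needs the extra observation that $\calO$ consists of smooth points of full dimension in the irreducible variety $\overline{\XX}$) by simply noting that the restriction rings $\CC[V]|_{-\calO}$ and $\CC[V]|_{\calO}$ are canonically isomorphic via pullback along $x\mapsto-x$.
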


\begin{proof}
The map $V^*\ni\phi\mapsto{\phi(\calB_\lambda)}$ extends to a surjection of the ring of polynomials on $V$ onto the ring of differential operators on $\calO$ generated by the Bessel operators $\phi(\calB_\lambda)$, $\phi\in V^*$. Using \eqref{eq:IntertwiningFormulas}, we find that for $\phi\in\CC[V]$ in the kernel we have
\begin{align*}
 && \phi(\calB_\lambda) &= 0 && \mbox{on $\calO$}\\
 &\Leftrightarrow & \calF_\calO\circ\phi(\calB_\lambda) &= 0 && \mbox{on $\calO$}\\
 &\Leftrightarrow & \phi(-\vartheta x)\circ\calF_\calO &= 0 && \mbox{on $\calO$}\\
 &\Leftrightarrow & \phi(-\vartheta x) &= 0 && \mbox{on $\calO$}\\
 &\Leftrightarrow & \phi &= 0 && \mbox{on $(-\calO)$.}
\end{align*}
Therefore, the ring of differential operators on $\calO$ generated by the Bessel operators $\phi(\calB_\lambda)$, $\phi\in V^*$, is isomorphic to the ring of functions on $(-\calO)$ which are restrictions of polynomials. The latter is canonically isomorphic to the ring of functions on $\calO$ which are restrictions of polynomials and the proof is complete.
\end{proof}

\begin{remark}
For $V=\RR^{p,q}$ the unitary inversion operator $\calF_\calO$ was
studied in detail by the second author and G. Mano in the book
\cite{KM07b}, where the global formula of the group action of
$O(p+1,q+1)$ on $L^2(\calO)$ is obtained with an explicit integral kernel. 
The global formula of $\calF_{\calO}$ for a general
non-euclidean Jordan algebra $V$ is not known except when $V=\RR^{p,q}$.
The relation of $\calF_\calO$ to the special functions
$\Lambda_j^{\mu,\nu}(x)$ is studied in \cite[Sections 2.4 and 3.8]{Moe10}.
Further, the operator $\calF_{\calO}$ may be regarded as a `boundary
value' of a holomorphic semigroup in the case where $V$ is euclidean
(see \cite{howe,K10,KM07b}).
\end{remark} 

\setcounter{table}{1}
\begin{sidewaystable}[ht]
\begin{center}
\begin{tabular}{r||c|c|c|c|c||c|c|c|c|}
  \cline{2-10}
  & $V$ & $n$ & $r$ & $d$ & $e$ & $V^+$ & $n_0$ & $r_0$ & $d_0$\\
  \hline
  \hline
  \multicolumn{1}{|c||}{\multirow{6}*{\begin{tabular}{c}euclidean\\split\end{tabular}}} & $\RR$ & $1$ & $1$ & $0$ & $0$ & $\RR^{1,0}$ & $1$ & $1$ & $0$\\
  \multicolumn{1}{|c||}{} & $\Sym(k,\RR)$ ($k\geq2$) & $\frac{k}{2}(k+1)$ & $k$ & $1$ & $0$ & $\Sym(k,\RR)$ & $\frac{k}{2}(k+1)$ & $k$ & $1$\\
  \multicolumn{1}{|c||}{} & $\Herm(k,\CC)$ ($k\geq2$) & $k^2$ & $k$ & $2$ & $0$ & $\Herm(k,\CC)$ & $k^2$ & $k$ & $2$\\
  \multicolumn{1}{|c||}{} & $\Herm(k,\HH)$ ($k\geq2$) & $k(2k-1)$ & $k$ & $4$ & $0$ & $\Herm(k,\HH)$ & $k(2k-1)$ & $k$ & $4$\\
  \multicolumn{1}{|c||}{} & $\RR^{1,k-1}$ ($k\geq3$) & $k$ & $2$ & $k-2$ & $0$ & $\RR^{1,k-1}$ & $k$ & $2$ & $k-2$\\
  \multicolumn{1}{|c||}{} & $\Herm(3,\OO)$ & $27$ & $3$ & $8$ & $0$ & $\Herm(3,\OO)$ & $27$ & $3$ & $8$\\
  \hline
  \multicolumn{1}{|c||}{\multirow{4}*{\begin{tabular}{c}non-euclidean\\split\end{tabular}}} & $M(k,\RR)$ ($k\geq2$) & $k^2$ & $k$ & $2$ & $0$ & $\Sym(k,\RR)$ & $\frac{k}{2}(k+1)$ & $k$ & $1$\\
  \multicolumn{1}{|c||}{} & $\Skew(2k,\RR)$ ($k\geq2$) & $k(2k-1)$ & $k$ & $4$ & $0$ & $\Herm(k,\CC)$ & $k^2$ & $k$ & $2$\\
  \multicolumn{1}{|c||}{} & $\RR^{p,q}$ ($p\geq2$, $q\geq1$) & $p+q$ & $2$ & $p+q-2$ & $0$ & $\RR^{1,q}$ & $q+1$ & $2$ & $q-1$\\
  \multicolumn{1}{|c||}{} & $\Herm(3,\OO_s)$ & $27$ & $3$ & $8$ & $0$ & $\Herm(3,\HH)$ & $15$ & $3$ & $4$\\
  \hline
  \multicolumn{1}{|c||}{\multirow{5}*{\begin{tabular}{c}complex\\non-split\end{tabular}}} & $\Sym(k,\CC)$ ($k\geq2$) & $k(k+1)$ & $2k$ & $2$ & $1$ & $\Sym(k,\RR)$ & $\frac{k}{2}(k+1)$ & $k$ & $1$\\
  \multicolumn{1}{|c||}{} & $M(k,\CC)$ ($k\geq2$) & $2k^2$ & $2k$ & $4$ & $1$ & $\Herm(k,\CC)$ & $k^2$ & $k$ & $2$\\
  \multicolumn{1}{|c||}{} & $\Skew(2k,\CC)$ ($k\geq2$) & $2k(2k-1)$ & $2k$& $8$ & $1$ & $\Herm(k,\HH)$ & $k(2k-1)$ & $k$ & $4$\\
  \multicolumn{1}{|c||}{} & $\CC^k$ ($k\geq3$) & $2k$ & $4$ & $2(k-2)$ & $1$ & $\RR^{1,k-1}$ & $k$ & $2$ & $k-2$\\
  \multicolumn{1}{|c||}{} & $\Herm(3,\OO)_\CC$ & $54$ & $6$ & $16$ & $1$ & $\Herm(3,\OO)$ & $27$ & $3$ & $8$\\
  \hline
  \multicolumn{1}{|c||}{\multirow{3}*{\begin{tabular}{c}non-euclidean\\non-split\end{tabular}}} & $\Sym(2k,\CC)\cap M(k,\HH)$ ($k\geq2$) & $k(2k+1)$ & $2k$ & $4$ & $2$ & $\Herm(k,\CC)$ & $k^2$ & $k$ & $2$\\
  \multicolumn{1}{|c||}{} & $M(k,\HH)$ ($k\geq2$) & $4k^2$ & $2k$ & $8$ & $3$ & $\Herm(k,\HH)$ & $k(2k-1)$ & $k$ & $4$\\
  \multicolumn{1}{|c||}{} & $\RR^{k,0}$ ($k\geq2)$ & $k$ & $2$ & $0$ & $k-1$ & $\RR^{1,0}$ & $1$ & $1$ & $0$\\
  \hline
\end{tabular}
\caption{Simple real Jordan algebras and their structure constants \label{tb:Constants}}
\end{center}
\end{sidewaystable}

\begin{sidewaystable}[ht]
\begin{center}
\begin{tabular}{r||c|c|c|}
  \cline{2-4}
  & $V$ & $\mu$ & $\nu$\\
  \hline
  \hline
  \multicolumn{1}{|c||}{\multirow{6}*{\begin{tabular}{c}euclidean\\split\end{tabular}}} & $\RR$ & $-1$ & $-1$\\
  \multicolumn{1}{|c||}{} & $\Sym(k,\RR)$ ($k\geq2$) & $\frac{k-2}{2}$ & $-1$\\
  \multicolumn{1}{|c||}{} & $\Herm(k,\CC)$ ($k\geq2$) & $k-1$ & $-1$\\
  \multicolumn{1}{|c||}{} & $\Herm(k,\HH)$ ($k\geq2$) & $2k-1$ & $-1$\\
  \multicolumn{1}{|c||}{} & $\RR^{1,k-1}$ ($k\geq3$) & $k-3$ & $-1$\\
  \multicolumn{1}{|c||}{} & $\Herm(3,\OO)$ & $11$ & $-1$\\
  \hline
  \multicolumn{1}{|c||}{\multirow{4}*{\begin{tabular}{c}non-euclidean\\split\end{tabular}}} & $M(k,\RR)$ ($k\geq2$) & $k-2$ & $0$\\
  \multicolumn{1}{|c||}{} & $\Skew(2k,\RR)$ ($k\geq2$) & $2k-3$ & $1$\\
  \multicolumn{1}{|c||}{} & $\RR^{p,q}$ ($p\geq2$, $q\geq1$) & $\max(p,q)-2$ & $\min(p,q)-2$\\
  \multicolumn{1}{|c||}{} & $\Herm(3,\OO_s)$ & $7$ & $3$\\
  \hline
  \multicolumn{1}{|c||}{\multirow{5}*{\begin{tabular}{c}complex\\non-split\end{tabular}}} & $\Sym(k,\CC)$ ($k\geq2$) & $k-1$ & $-1$\\
  \multicolumn{1}{|c||}{} & $M(k,\CC)$ ($k\geq2$) & $2(k-1)$ & $0$\\
  \multicolumn{1}{|c||}{} & $\Skew(2k,\CC)$ ($k\geq2$) & $2(2k-2)$ & $2$\\
  \multicolumn{1}{|c||}{} & $\CC^k$ ($k\geq3$) & $k-2$ & $k-4$\\
  \multicolumn{1}{|c||}{} & $\Herm(3,\OO)_\CC$ & $16$ & $6$\\
  \hline
  \multicolumn{1}{|c||}{\multirow{3}*{\begin{tabular}{c}non-euclidean\\non-split\end{tabular}}} & $\Sym(2k,\CC)\cap M(k,\HH)$ ($k\geq2$) & $2k-1$ & $-1$\\
  \multicolumn{1}{|c||}{} & $M(k,\HH)$ ($k\geq2$) & $4k-2$ & $0$\\
  \multicolumn{1}{|c||}{} & $\RR^{k,0}$ ($k\geq2)$ & $k-2$ & $-k$\\
  \hline
\end{tabular}
\caption{Simple real Jordan algebras and the constants $\mu$ and $\nu$\label{tb:MuNu}}
\end{center}
\end{sidewaystable}

\begin{sidewaystable}[ht]
\begin{center}
\begin{tabular}{r||c|c|c|c|c|}
  \cline{2-6}
  & $V$ & $\frakg=\co(V)$ & $\frakk=\co(V)^\theta$ & $\frakl=\str(V)$ & $\frakk_\frakl=\str(V)^\theta$\\
  \hline\hline
  \multicolumn{1}{|c||}{\multirow{6}*{\begin{tabular}{c}euclidean\\split\end{tabular}}} & $\RR$ & $\sl(2,\RR)$ & $\so(2)$ & $\RR$ & $0$\\
  \multicolumn{1}{|c||}{} & $\Sym(k,\RR)$ ($k\geq2$) & $\sp(k,\RR)$ & $\su(k)\oplus\RR$ & $\sl(k,\RR)\oplus\RR$ & $\so(k)$\\
  \multicolumn{1}{|c||}{} & $\Herm(k,\CC)$ ($k\geq2$) & $\su(k,k)$ & $\su(k)\oplus\su(k)\oplus\RR$ & $\sl(k,\CC)\oplus\RR$ & $\su(k)$\\
  \multicolumn{1}{|c||}{} & $\Herm(k,\HH)$ ($k\geq2$) & $\so^*(4k)$ & $\su(2k)\oplus\RR$ & $\su^*(2k)\oplus\RR$ & $\sp(k)$\\
  \multicolumn{1}{|c||}{} & $\RR^{1,k-1}$ ($k\geq3$) & $\so(2,k)$ & $\so(k)\oplus\RR$ & $\so(1,k-1)\oplus\RR$ & $\so(k-1)$\\
  \multicolumn{1}{|c||}{} & $\Herm(3,\OO)$ & $\mathfrak{e}_{7(-25)}$ & $\mathfrak{e}_6\oplus\RR$ & $\mathfrak{e}_{6(-26)}\oplus\RR$ & $\mathfrak{f}_4$\\
  \hline
  \multicolumn{1}{|c||}{\multirow{4}*{\begin{tabular}{c}non-euclidean\\split\end{tabular}}} & $M(k,\RR)$ ($k\geq2$) & $\sl(2k,\RR)$ & $\so(2k)$ & $\sl(k,\RR)\oplus\sl(k,\RR)\oplus\RR$ & $\so(k)\oplus\so(k)$\\
  \multicolumn{1}{|c||}{} & $\Skew(2k,\RR)$ ($k\geq2$) & $\so(2k,2k)$ & $\so(2k)\oplus\so(2k)$ & $\sl(2k,\RR)\oplus\RR$ & $\so(2k)$\\
  \multicolumn{1}{|c||}{} & $\RR^{p,q}$ ($p\geq2$, $q\geq1$) & $\so(p+1,q+1)$ & $\so(p+1)\oplus\so(q+1)$ & $\so(p,q)\oplus\RR$ & $\so(p)\oplus\so(q)$\\
  \multicolumn{1}{|c||}{} & $\Herm(3,\OO_s)$ & $\mathfrak{e}_{7(7)}$ & $\su(8)$ & $\mathfrak{e}_{6(6)}\oplus\RR$ & $\sp(4)$\\
  \hline
  \multicolumn{1}{|c||}{\multirow{5}*{\begin{tabular}{c}complex\\non-split\end{tabular}}} & $\Sym(k,\CC)$ ($k\geq2$) & $\sp(k,\CC)$ & $\sp(k)$ & $\sl(k,\CC)\oplus\CC$ & $\su(k)\oplus\RR$\\
  \multicolumn{1}{|c||}{} & $M(k,\CC)$ ($k\geq2$) & $\sl(2k,\CC)$ & $\su(2k)$ & $\sl(k,\CC)\oplus\sl(k,\CC)\oplus\CC$ & $\su(k)\oplus\su(k)\oplus\RR$\\
  \multicolumn{1}{|c||}{} & $\Skew(2k,\CC)$ ($k\geq2$) & $\so(4k,\CC)$ & $\so(4k)$ & $\sl(2k,\CC)\oplus\CC$ & $\su(2k)\oplus\RR$\\
  \multicolumn{1}{|c||}{} & $\CC^k$ ($k\geq3$) & $\so(k+2,\CC)$ & $\so(k+2)$ & $\so(k,\CC)\oplus\CC$ & $\so(k)\oplus\RR$\\
  \multicolumn{1}{|c||}{} & $\Herm(3,\OO)_\CC$ & $\mathfrak{e}_7(\CC)$ & $\mathfrak{e}_7$ & $\mathfrak{e}_6(\CC)\oplus\CC$ & $\mathfrak{e}_6\oplus\RR$\\
  \hline
  \multicolumn{1}{|c||}{\multirow{3}*{\begin{tabular}{c}non-euclidean\\non-split\end{tabular}}} & $\Sym(2k,\CC)\cap M(k,\HH)$ ($k\geq2$) & $\sp(k,k)$ & $\sp(k)\oplus\sp(k)$ & $\su^*(2k)\oplus\RR$ & $\sp(k)$\\
  \multicolumn{1}{|c||}{} & $M(k,\HH)$ ($k\geq2$) & $\su^*(4k)$ & $\sp(2k)$ & $\su^*(2k)\oplus\su^*(2k)\oplus\RR$ & $\sp(k)\oplus\sp(k)$\\
  \multicolumn{1}{|c||}{} & $\RR^{k,0}$ ($k\geq2$) & $\so(k+1,1)$ & $\so(k+1)$ & $\so(k)\oplus\RR$ & $\so(k)$\\
  \hline
\end{tabular}
\caption{Simple real Jordan algebras and their corresponding Lie algebras \label{tb:Groups}}
\end{center}
\end{sidewaystable}

\addcontentsline{toc}{section}{References}
\bibliographystyle{amsplain}
\bibliography{bibdb}

\vspace{30pt}

\textsc{Joachim Hilgert\\Institut f\"ur Mathematik, Universit\"at Paderborn, Warburger Str. 100, 33098 Paderborn, Germany.}\\
\textit{E-mail address:} \texttt{hilgert@math.uni-paderborn.de}\\

\textsc{Toshiyuki Kobayashi\\IPMU and Graduate School of Mathematical Sciences, The University of Tokyo, 3-8-1 Komaba, Meguro, Tokyo, 153-8914, Japan.}\\
\textit{E-mail address:} \texttt{toshi@ms.u-tokyo.ac.jp}\\

\textsc{Jan M\"ollers\\Institut for Matematiske Fag, Aarhus Universitet, Ny Munkegade 118, Bygning 1530, Lokale 423, 8000 Aarhus C, Danmark.}\\
\textit{E-mail address:} \texttt{moellers@imf.au.dk}



\end{document}